\documentclass[12pt, reqno, a4paper]{amsart}
\usepackage{graphicx, epsfig, psfrag}
\usepackage{amsfonts,amsmath,amssymb,amsbsy,amsthm}
\usepackage{bm}
\usepackage{color}
\usepackage{float}
\usepackage{hyperref}
\usepackage{mathrsfs}
\usepackage{subfigure}
\usepackage{bbm}

\setcounter{tocdepth}{3}

\usepackage{pdfsync}

\newtheorem{assumption}{Assumption}

\numberwithin{equation}{section}

\usepackage{environments}

\oddsidemargin 6pt
\evensidemargin 6pt
\marginparwidth 48pt
\marginparsep 10pt 
\topmargin -18pt
\headheight 12pt
\headsep 25pt
\footskip 30pt 
\textheight 645pt
\textwidth 451pt
\columnsep 10pt
\columnseprule 0pt

\renewcommand{\cases}[1]{\left\{ \begin{array}{rl} #1 \end{array} \right.}
\newcommand{\smfrac}[2]{{\textstyle \frac{#1}{#2}}}

\newcommand{\mymat}[1]{\left[ \begin{matrix} #1 \end{matrix} \right]}

\renewcommand{\paragraph}[1]{\subsubsection{#1}}

\def\transpose{{\hspace{-1pt}\top}}

\def\R{\mathbb{R}}
\def\N{\mathbb{N}}
\def\Z{\mathbb{Z}}

\def\WW{{\rm W}}
\def\CC{{\rm C}}
\def\HH{{\rm H}}
\def\LL{{\rm L}}

\def\dr{\,{\rm d}r}
\def\dt{\,{\rm d}t}
\def\ds{\,{\rm d}s}
\def\dd{{\rm d}}
\def\pp{\partial}
\def\dV{\,{\rm dV}}

\def\db{\,{\rm db}}

\def\<{\langle}
\def\>{\rangle}

\def\argmin{{\rm argmin}}

\def\conv{{\rm conv}}

\def\ol{\overline}

\def\mA{{\sf A}}
\def\mB{{\sf B}}
\def\mF{{\sf F}}
\def\mI{\mathbbm{1}}
\def\mQ{{\sf Q}}
\def\mR{{\sf R}}
\def\mG{{\sf G}}

\def\eps{\varepsilon}

\def\qc{{\rm ac}}
\def\a{{\rm a}}
\def\c{{\rm c}}
\def\i{{\rm i}}

\newcommand{\Da}[1]{D_{\hspace{-1pt}#1}}
\newcommand{\Dc}[1]{\nabla_{\hspace{-2pt}#1}}
\def\D{\nabla}

\def\aa{{\sf a}}

\def\Lhex{\mathbb{L}^\#}
\def\L{\mathcal{L}}
\def\Lper{\L^\#}
\def\Lrep{\mathcal{L}_{\rm rep}}
\def\Vac{\mathbb{V}}

\def\OmL{\mathbb{L}}
\def\Ldir{\mathbb{L}_*}
\def\Rnn{\mathbb{L}_\nn}

\def\Om{\Omega}
\def\Oma{\Om_\a}
\def\Omc{\Om_\c}
\def\Us{\mathscr{U}}
\def\Ush{\Us_h}
\def\Ys{\mathscr{Y}}
\def\YsB{\Ys_\mB}
\def\YsBh{\Ys_{\mB, h}}
\def\Ysh{\Ys_h}

\def\B{\mathcal{B}}
\def\Bc{\B_\c}
\def\Ba{\B_\a}
\def\Btot{\mathbb{B}}
\def\nn{{\rm nn}}
\def\Btotnn{\mathbb{B}_{\nn}}
\def\Bnn{\B_\nn}

\def\Ea{\mathscr{E}_\a}
\def\Eqc{\mathscr{E}_\qc}

\def\rturn{{s_{\rm turn}}}
\def\Teps{\mathcal{T}_\a}
\def\Th{\mathcal{T}_h}
\def\Thc{\Th^\c}
\def\Thcper{(\Thc)^\per}

\def\yB{y_\mB}

\def\Ta{\Teps}

\def\Econs{\mathcal{E}^{\rm cons}}
\def\Emodel{\mathcal{E}^{\rm model}}
\def\Ecoarse{\mathcal{E}^{\rm coarse}}

\def\del{\delta\hspace{-1pt}}
\def\ddel{\delta^2\hspace{-1pt}}

\def\per{\#}

\def\mBhex{\mA_6}
\def\H{\mathcal{H}}
\def\sym{{\rm sym}}
\def\tr{{\rm tr}}
\def\bbC{{\mathbb{C}}}
\def\olTh{\Th}

\def\SO{{\rm SO}}

\def\Xint#1{\mathchoice
{\XXint\displaystyle\textstyle{#1}}{\XXint\textstyle\scriptstyle{#1}}{\XXint\scriptstyle\scriptscriptstyle{#1}}{\XXint\scriptscriptstyle\scriptscriptstyle{#1}}\!\int}
\def\XXint#1#2#3{{\setbox0=\hbox{$#1{#2#3}{\int}$ }
\vcenter{\hbox{$#2#3$ }}\kern-.6\wd0}}
\def\mint{\Xint-}

\def\b{\big}

\begin{document}

\title[Analysis of an Energy-based A/C Approximation]{Analysis of an
  Energy-based\\ Atomistic/Continuum Coupling Approximation \\ of a Vacancy in the
  2D Triangular Lattice}

\author{C. Ortner}
\address{C. Ortner\\ Mathematical Institute\\
  24-29 St Giles' \\ Oxford OX1 3LB \\ UK}
\email{ortner@maths.ox.ac.uk}

\author{A. V. Shapeev} 
\address{A. V. Shapeev\\ Section of Mathematics, Swiss Federal
  Institute of Technology (EPFL), Station 8, CH-1015, Lausanne,
  Switzerland} 
\email{alexander.shapeev@epfl.ch}

\date{\today}

\thanks{This work was supported by the EPSRC Critical Mass Programme
  ``New Frontiers in the Mathematics of Solids'' (OxMoS), by the EPSRC
  grant ``Analysis of atomistic-to-continuum coupling methods'', and
  by the ANMC Chair at EPFL (Prof.\ Assyr Abdulle)}

\subjclass[2000]{65N12, 65N15, 70C20}

\keywords{atomistic models, atomistic-to-continuum coupling, coarse graining}

\begin{abstract}
  We present a comprehensive {\it a priori} error analysis of a practical
  energy based atomistic/continuum coupling method (Shapeev,
  arXiv:1010.0512) in two dimensions, for finite-range pair-potential
  interactions, in the presence of vacancy defects.

  The majority of the work is devoted to the analysis of consistency
  and stability of the method. These yield {\it a priori} error
  estimates in the $\HH^1$-norm and the energy, which depend on the
  mesh size and the ``smoothness'' of the atomistic solution in the
  continuum region. Based on these error estimates, we present
  heuristics for an optimal choice of the atomistic region and the
  finite element mesh, which yields convergence rates in terms of the
  number of degrees of freedom. The analytical predictions are
  supported by extensive numerical tests.
\end{abstract}

\maketitle

\section{Introduction}
\label{sec:intro}
The purpose of this work is a rigorous study of a new
computational multiscale method coupling an atomistic description of a
defect to a continuum model of the elastic far field.

The accurate computational modelling of crystal defects requires an
atomistic description of the defect core, as well as an accurate
resolution of the elastic far field. Using an atomistic model for the
latter would be prohibitively expensive; hence, atomistic-to-continuum
coupling methods (a/c methods) have been proposed to combine the
accuracy of atomistic modelling with the efficiency of continuum
mechanics (see \cite{Kohlhoff:1989, Ortiz:1995a, Shenoy:1999a,
  Shimokawa:2004, XiBe:2004} for selected references, and
\cite{Miller:2008} for a recent overview).

Constructing accurate energy-based a/c methods has been proven
particularly challenging, due to the so-called ``ghost-forces'' at the
interface between the atomistic and continuum regions. This issue has
been discussed at great length in \cite{Shenoy:1999a, Dobson:2008b,
  E:2006, emingyang}, and several interface corrections have been
proposed to either remove or reduce the ghost forces
\cite{Shimokawa:2004, E:2006, KlZi:2006, XiBe:2004, Shapeev:2010a,
  IyGa:2011}, however, the challenge of ghost-force removal still
remains unsolved in general.

A growing body of literature exists on the rigorous analysis of a/c
methods (we refer to \cite{Shapeev:2010a, Or:2011a,
  MakrOrtSul:qcf.nonlin} for recent overviews), which has been largely
restricted to one-dimensional model problems. We are currently aware
of only two rigorous analyses in more than one dimension: (1) In
\cite{Or:2011a} it is shown that, in 2D, any a/c method that has no
ghost forces is automatically first-order consistent. This work
provides a general consistency analysis, but does not discuss
stability of a/c methods. (2) In \cite{MiLu:2011}, a force-based a/c
method with an overlap region is analyzed in arbitrary dimension, in
particular providing sharp stability conditions. The techniques used
in \cite{MiLu:2011} cannot accommodate defects, require a
prohibitively large overlap region, and require that the continuum
region is discretized with full atomistic resolution.

In the present work, we give a comprehensive {\it a priori} error analysis
of a {\em practical} energy-based a/c method proposed by Shapeev
\cite{Shapeev:2010a}, in the presence of simple defects. The
formulation of the method (and its analysis) is restricted to
pair interactions in two dimensions.

\subsection{Outline} In \S\ref{sec:a} we formulate an atomistic model
for the 2D triangular lattice, with periodic boundary conditions, and
two-body interactions. We then introduce a convenient notation for bonds.

In \S\ref{sec:qc}, we formulate the a/c method studied in this paper:
the ECC method introduced in \cite{Shapeev:2010a}, but with periodic
boundary conditions. This section contains all necessary results and
notation required for an implementation of the a/c method. In
\S\ref{sec:qc:errana}, we present a very brief sketch of the proof of
the {\it a priori} error estimate, in order to motivate the analysis
of \S\ref{sec:interp}-\S\ref{sec:stab}, which establishes the main
results required.

The purpose of \S\ref{sec:interp} is to collect auxiliary results,
which are largely technical results for finite element spaces. 
In this section we also introduce a new idea to measure ``smoothness''
of discrete functions.

In \S\ref{sec:cons} we prove consistency error estimates in discrete
variants of the $\WW^{-1,p}$-norm, $p \in [1, \infty]$. Our estimates
are stronger and require fewer technical assumptions than the general
result given in \cite{Or:2011a}.

In \S\ref{sec:stab} we develop the stability analysis. We define a
``vacancy stability index'', which allows us to reduce the proof of
stability of a lattice with vacancies to the proof of stability for a
homogeneous lattice without defects. We provide numerical examples and
one analytical computation of stability indices.

In \S\ref{sec:apriori} we assemble all our previous steps to obtain
{\it a priori} error estimates in the $\HH^1$-norm and for the
energy. In \S\ref{sec:mesh_refinement} we translate these error
estimates, which are stated in terms of the smoothness of the
solution, into estimates in terms of degrees of freedom. This
discussion also provides heuristics on how to choose the atomistic
region and the finite element mesh in the continuum region in an
optimal way.

Finally, in \S\ref{sec:numerics}, we present extensive numerical
examples to confirm our analytical results, and to provide further
discussions of points where our rigorous analysis is not sharp.

\subsection{Basic notational conventions}
\label{sec:intro:notation}
For $s,t\in\R$, we write $s \wedge t := \min\{s,t\}$.

The $\ell^p$-norms in $\R^k$ are denoted by $|\cdot|_p$. In addition,
we define $|\cdot| := |\cdot|_2$. We do not normally distinguish
between row and column vectors, but instead define the following three
vector products: if $a, b \in \R^k$, then $a \cdot b := \sum_{j = 1}^k
a_j b_j$, and $a \otimes b := (a_i b_j)_{i, j = 1}^k$, where $i$
denotes the row index and $j$ the column index. In addition, if $a, b
\in \R^2$, then we define $a \times b := a_1 b_2 - a_2 b_1$.

Matrices are usually denoted by sans serif symbols, $\mA, \mB, \mF,
\mG$, and so forth. The set of $k \times k$ matrices with positive
determinant is denoted by $\R^{k \times k}_+$. The set of rotations of
$\R^2$ is denoted by ${\rm SO}(2)$. Throughout we will denote a
rotation through angle $\pi/2$ by $\mQ_4$ and a rotation through angle
$\pi/3$ by $\mQ_6$. If $\mG \in\R^{k \times k}$, then $\|\mG\|$
denotes its $\ell^2$-operator norm, and $|\mG|_p$ the $\ell^p(\R^{k
  \times k})$-norm.  In particular, $|\mG|$ is the Frobenius norm,
with the associated inner product $\mF : \mG$. The symmetric component of
a matrix $\mG \in \R^{k \times k}$ is denoted by $\mG^\sym :=
\smfrac12(\mG + \mG^\transpose)$.

If $A \subset \R^k$ is (Lebesgue-)measurable, then $|A|$ denotes its
measure. If $A \subset \R^2$ has Hausdorff dimension one, then we will
denote its length by ${\rm length}(A)$.  Volume integrals are denoted
by $\dV$, while surface (1D) integrals are denoted by $\ds$. For
bonds, which are specific one-dimensional objects, it will be
convenient to introduce a slightly different notation (see
\S\ref{sec:a:bonds} and \S\ref{sec:qc:bond_integrals}).

The interior and closure of a set $A \subset R^k$ are denoted,
respectively, by ${\rm int}(A)$ and ${\rm clos}(A)$. If $A \subset
\R^2$ is understood as a one-dimensional object, then we will also use
${\rm int}(A)$ to denote its relative interior, but will normally
specify this explicitly.

The Lebesgue norms $\|\cdot\|_{\LL^p(A)}$ for measurable sets $A$
(either one- or two-dimensional) are defined in the usual way for
scalar functions. If $w : A \to \R^k$ is measurable, then
$\|w\|_{\LL^p(A)} := \| |w|_2 \|_{\LL^p(A)}$. If $w$ is differentiable
at a point $x$, then $\D w(x)$ denotes its Jacobi matrix. The symbol
$D$ is reserved for finite differences, and will be introduced in
\S\ref{sec:a:atom_energy}.

\section{The Atomistic Model}
\label{sec:a}
In this section we define an atomistic model problem of a general
two-body interaction energy in a 2D periodic domain. Although the
model itself could be equally formulated in any space dimension, the
presentation is restricted to 2D since the a/c method introduced in
\S\ref{sec:qc} is restricted to 2D.

\subsection{Periodic deformations of a triangular lattice with
  vacancy defects}
\label{sec:a:lattice}
\paragraph{The triangular lattice}
The triangular lattice is the set
\begin{displaymath}
  \Lhex := \mBhex \Z^2, \qquad \text{ where } 
  \mBhex := \big[ \aa_1,
  \aa_2 \big]  := \mymat{ 1 & 1/2 \\ 0 & \sqrt{3}/2 },
\end{displaymath}
where $\aa_i$, $i = 1,2$, are called the {\em lattice vectors}. We
furthermore set $\aa_3 = (-1/2, \sqrt{3}/2)^\transpose$ and $\aa_{i+3} = -\aa_i$ for
$i \in \Z$, so that the set of {\em nearest-neighbour directions} is
given by
\begin{displaymath}
  \Rnn := \big\{ \aa_j : j = 1, \dots, 6 \big\} = \big\{ \mQ_6^{j-1} \aa_1 :
  j = 1, \dots, 6 \big\},
\end{displaymath}
where $\mQ_6 \in \SO(2)$ denotes the rotation through $\pi/3$. Finally,
we denote the set of all {\em lattice directions} by $\Ldir := \Lhex
\setminus \{0\}$.

The hexagonal symmetry of $\Lhex$ yields the following result, which
decomposes the triangular lattice into lattice vectors of equal
distance.

\begin{lemma}
  \label{th:L6_decomposition}
  There exists a sequence $(r_n)_{n = 1}^\infty \subset \Ldir$ such
  that $\ell_n = |r_n|$ is monotonically increasing and the triangular
  lattice can be written as a union of disjoint sets
  \begin{displaymath}
    \Ldir = \bigcup_{n = 1}^\infty \big\{ \mQ_6^j r_n : j =
    1, \dots, 6 \big\}.
  \end{displaymath}
\end{lemma}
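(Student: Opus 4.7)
The plan is to exploit the hexagonal symmetry of $\Lhex$ by using the $\langle \mQ_6 \rangle$-action on the lattice and showing that the orbits give exactly the sets described.

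First, I would verify that $\mQ_6$ maps $\Lhex$ to itself. Since $\mQ_6 \aa_1 = \aa_2$ and $\mQ_6 \aa_2 = \aa_3 = -\aa_1 + \aa_2$ (by direct computation using the formula for $\aa_3$), the basis vectors map to $\Z$-linear combinations of $\aa_1, \aa_2$, i.e.\ $\mQ_6 \mBhex = \mBhex \mR$ for some $\mR \in \Z^{2\times 2}$. Hence $\mQ_6 \Lhex = \Lhex$, and because $\mQ_6$ is an isometry, the cyclic group $\langle \mQ_6 \rangle$ of order $6$ acts on $\Ldir$ preserving $|\cdot|$.

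Next, I would show that every $\langle\mQ_6\rangle$-orbit in $\Ldir$ has exactly six elements. If two elements $\mQ_6^i r$ and $\mQ_6^j r$ of the orbit of $r \in \Ldir$ coincided for some $0 \le i < j \le 5$, then $\mQ_6^{j-i} r = r$, which forces $r$ to be a fixed point of a nontrivial rotation about the origin. The only such fixed point is $0 \notin \Ldir$, a contradiction. Thus the orbits partition $\Ldir$ into disjoint six-element subsets of the form $\{\mQ_6^j r : j = 1,\dots,6\}$, and all elements of a given orbit share the same length.

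Finally, I would enumerate these orbits in order of increasing length. Because $\Lhex$ is discrete (distinct lattice points are at distance $\ge 1$), for any $L > 0$ the set $\{r \in \Lhex : |r| \le L\}$ is finite, so only finitely many orbits have representatives of length $\le L$. I can therefore choose one representative $r_n$ from each orbit and order them so that $\ell_n = |r_n|$ is non-decreasing (breaking ties between orbits of equal length arbitrarily, as happens, e.g., for $\aa_1 + 2\aa_2$ and $2\aa_1 + \aa_2$, which lie in distinct orbits of squared length $7$). This produces the desired sequence, and the decomposition of $\Ldir$ follows directly from the partition into orbits.

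The proof is essentially bookkeeping; the only point requiring any care is the possibility of different orbits having the same length, which is why I would interpret ``monotonically increasing'' as non-decreasing rather than strict. There is no significant obstacle, and none of the steps require technical machinery beyond the discreteness of $\Lhex$ and the absence of nontrivial lattice fixed points of $\mQ_6^j$.
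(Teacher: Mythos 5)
Your proof is correct; the paper states this lemma without proof, treating it as an immediate consequence of the hexagonal symmetry of $\Lhex$, and your orbit argument (invariance of $\Lhex$ under $\mQ_6$, free action on $\Ldir$ since only $0$ is fixed by a nontrivial rotation, enumeration of orbits by length using discreteness) is exactly the standard formalization of that. Your remark that ``monotonically increasing'' must be read as non-decreasing is also accurate --- distinct $\mQ_6$-orbits of equal length do occur (e.g.\ squared length $7$) --- and this reading is consistent with how the lemma is used later in the paper.
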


Lemma \ref{th:L6_decomposition} motivates splitting certain lattice
sums over hexagonally symmetric sets. In these calculations we will
use the following two identities, which exploit the relation between
hexagonal symmetry and isotropy. The proofs are given in Appendix \ref{sec:app_proofs}.

\begin{lemma}
  \label{th:hex_identities}
  Let $\mG \in \R^{2 \times 2}$, and $r \in \R^2$, $|r| = 1$; then
 \begin{align}
   \label{eq:quadratic_form_identity} 
   \sum_{j = 1}^6 \b| \mG \mQ_6^j r \b|^2 =~& 3 |\mG|^2, \qquad \text{and} \\[-2mm]
    \label{eq:quartic_form_identity}
    \sum_{j = 1}^6 \big[(\mQ_6^j r)^\transpose \mG (\mQ_6^j r)
    \big]^2 =~& \smfrac32 |\mG^\sym|^2 + \smfrac34 |\tr \mG|^2.
  \end{align}
\end{lemma}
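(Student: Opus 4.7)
The plan is to recognize both identities as manifestations of a kind of ``discrete isotropy'' of the hexagonal group $\{ \mQ_6^j : j = 1, \dots, 6\}$ acting on $\R^2$: when one averages a tensor built by repeating $\mQ_6^j r$ sufficiently often, the result must be invariant under $\mQ_6$, and in low tensor degrees this forces the averaged tensor to be isotropic (a multiple of an appropriate identity tensor). The constants are then pinned down by taking traces.

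For \eqref{eq:quadratic_form_identity}, I would introduce the matrix
$$ \mathsf{S} := \sum_{j=1}^6 (\mQ_6^j r) \otimes (\mQ_6^j r), $$
which is symmetric by construction. Since $\{\mQ_6^j r\}_{j=1}^6$ is permuted by $r \mapsto \mQ_6 r$, one has $\mQ_6 \mathsf{S} \mQ_6^\transpose = \mathsf{S}$, i.e.\ $\mathsf{S}$ commutes with $\mQ_6$. In $\R^2$, a real symmetric matrix commuting with the order-six rotation $\mQ_6$ must be a scalar multiple of the identity (its eigenvectors would otherwise have to be eigenvectors of $\mQ_6$, but $\mQ_6$ has none over $\R$). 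Hence $\mathsf{S} = c\,I$, and taking trace with $|r|=1$ gives $c=3$. Then
$$ \sum_{j=1}^6 |\mG \mQ_6^j r|^2 = \sum_{j=1}^6 (\mQ_6^j r)^\transpose (\mG^\transpose \mG)(\mQ_6^j r) = (\mG^\transpose \mG) : \mathsf{S} = 3\,\tr(\mG^\transpose \mG) = 3|\mG|^2. $$

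For \eqref{eq:quartic_form_identity} I would first observe that $(\mQ_6^j r)^\transpose \mG (\mQ_6^j r) = (\mQ_6^j r)^\transpose \mG^\sym (\mQ_6^j r)$ since the antisymmetric part of $\mG$ contracts with $r\otimes r$ to zero, so it suffices to treat symmetric $\mG$. Diagonalising $\mG = \mathrm{diag}(\lambda_1,\lambda_2)$ in an orthonormal basis and writing $\mQ_6^j r = (\cos\phi_j, \sin\phi_j)$ with $\phi_j = \theta + (j-1)\pi/3$, I would expand
$$ \big[(\mQ_6^j r)^\transpose \mG (\mQ_6^j r)\big]^2 = \lambda_1^2 \cos^4\phi_j + 2\lambda_1\lambda_2 \cos^2\phi_j \sin^2\phi_j + \lambda_2^2 \sin^4\phi_j, $$
and invoke the elementary Fourier identities $\cos^4\phi = \tfrac38 + \tfrac12\cos 2\phi + \tfrac18 \cos 4\phi$ (and analogues). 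Because the six phases $\phi_j$ are equally spaced by $\pi/3$, the sums $\sum_j \cos 2\phi_j$ and $\sum_j \cos 4\phi_j$ vanish (they are sums over non-trivial sixth roots of unity), so only the constant terms survive: $\sum_j \cos^4\phi_j = \sum_j \sin^4\phi_j = 9/4$ and $\sum_j \cos^2\phi_j \sin^2\phi_j = 3/4$. This gives $\tfrac{9}{4}(\lambda_1^2+\lambda_2^2) + \tfrac{3}{2}\lambda_1 \lambda_2$, which matches $\tfrac{3}{2}|\mG^\sym|^2 + \tfrac{3}{4}(\tr\mG)^2 = \tfrac{3}{2}(\lambda_1^2+\lambda_2^2) + \tfrac{3}{4}(\lambda_1+\lambda_2)^2$.

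An alternative, more structural route would be to consider the fully symmetric fourth-order tensor $T_{iklm} := \sum_j (\mQ_6^j r)_i (\mQ_6^j r)_k (\mQ_6^j r)_l (\mQ_6^j r)_m$, note that $\mQ_6$-invariance together with full index symmetry forces $T_{iklm} = A(\delta_{ik}\delta_{lm} + \delta_{il}\delta_{km} + \delta_{im}\delta_{kl})$, and then determine $A = 3/4$ by contracting with $\mG = I$. Combined with the identity $|\mG|^2 + \mG : \mG^\transpose = 2|\mG^\sym|^2$, this yields \eqref{eq:quartic_form_identity}.

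The only real subtlety is the jump from discrete $C_6$-symmetry to full $\mathrm{SO}(2)$-isotropy of the averaged tensor; this is automatic at tensor degree $2$ for the reason given above, and at degree $4$ it follows because the non-trivial harmonic components present in $(r\otimes r)^{\otimes 2}$ have angular frequencies $2$ and $4$, both of which average to zero under rotation by multiples of $\pi/3$. Beyond this, the proof is bookkeeping; I would expect the trigonometric route to be the most transparent for presentation in an appendix.
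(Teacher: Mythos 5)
Your proposal is correct and takes essentially the same approach as the paper: a $\mQ_6$-invariance argument forcing a symmetric $2\times 2$ matrix to be a multiple of the identity for \eqref{eq:quadratic_form_identity}, followed by a reduction to $\mG^\sym$ and a direct trigonometric computation exploiting the vanishing of the frequency-2 and frequency-4 sums over angles spaced by $\pi/3$ for \eqref{eq:quartic_form_identity}. The only cosmetic differences are that you apply the invariance to the moment matrix $\sum_{j=1}^6 (\mQ_6^j r)\otimes(\mQ_6^j r)$ and fix the constant via the trace (the paper works with the quadratic form in $r$ and evaluates at $e_1$), and that you diagonalise $\mG^\sym$ where the paper rotates $r$ to $e_1$ and keeps a general symmetric matrix; the paper likewise mentions, but does not carry out, the isotropic fourth-order tensor argument you sketch as an alternative.
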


\paragraph{A periodic domain with defects} 
Throughout the paper we fix a periodicity parameter $N \in \N$. We say
that a set $A \subset \R^2$ is $N$-periodic if $A + N \Lhex = A$. For
any set $A \subset \R^2$ we denote its periodic continuation by
$A^\per = A + N \Lhex$. If $\mathscr{A}$ is a family of sets, then we
define $\mathscr{A}^\per = \{ A^\per : A \in \mathscr{A}\}$.

Throughout our analysis we fix $N$-periodic continuous and discrete
cells
\begin{displaymath}
   \Om := \mBhex (0, N]^2 \quad \text{and} \quad
   \OmL := \Lhex \cap \Om.
\end{displaymath}
We fix a set of {\em vacancy} sites $\Vac \subset \OmL$ and define the
{\em discrete computational domain} as
\begin{displaymath}
  \L := \OmL \setminus \Vac.
\end{displaymath}
The infinite perfect lattice $\Lhex$, the lattice with a periodic
array of defects $\Lper$, and the discrete computational domain $\L$
are visualized in Figure \ref{fig:comp_domain}.

\begin{figure}
  \begin{center}
    \includegraphics[height=5.5cm]{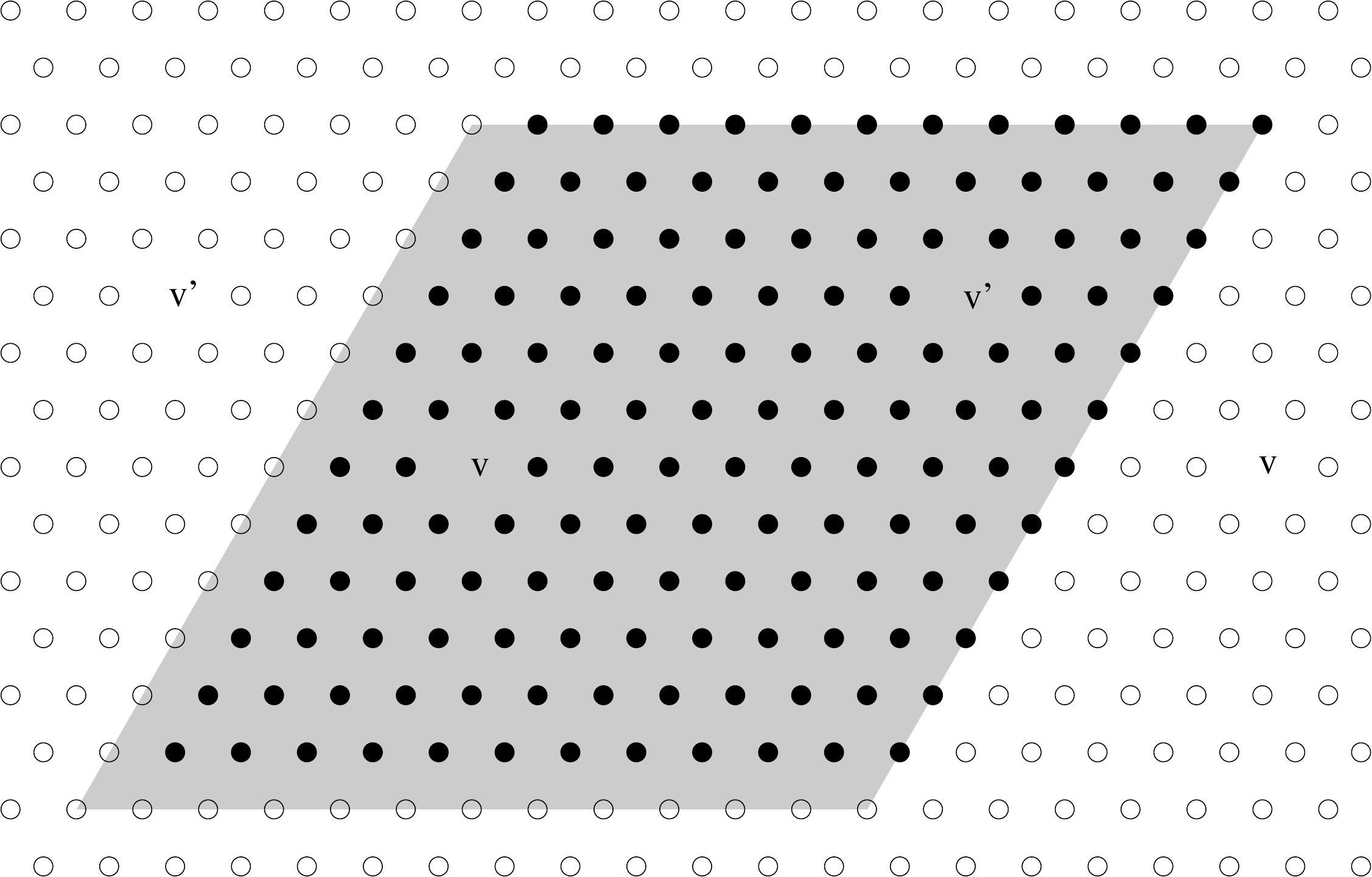}
    \caption{\label{fig:comp_domain} The lattice and the computational
      domain with $N = 12$ and two vacancies. The black disks denote
      the atoms belonging to the computational domain $\L$, the white
      disks denote the atoms belonging to $\Lper \setminus \L$, and
      the vacancies are denoted by $v$ and $v'$ (periodic images of
      the same vacancy have the same symbol).}
  \end{center}
\end{figure}

\paragraph{Periodic deformations of $\Lper$} A {\em homogeneous
  deformation} of $\Lper$ is a map $y_\mB : \Lper \to \R^2$ defined,
for $\mB \in \R^{2 \times 2}_+$, as
\begin{displaymath}
  \quad y_\mB(x) := \mB x \quad \text{for } 
  x \in \Lper,
\end{displaymath}
The set of {\em periodic displacements} of $\Lper$ is denoted by
\begin{displaymath}
  \Us = \big\{ u : \Lper \to \R^2 : 
  u(x + N \aa_j) = u(x) \text{ for $x \in \Lper$ and $j = 1, 2$}
  \big\}.
\end{displaymath}
A map $y : \Lper \to \R^2$ is said to be a {\em periodic deformation}
with underlying macroscopic strain $\mB \in \R^{2\times 2}_+$, if $y -
\yB \in \Us$ and if $y$ is {\em invertible}. To quantify the
invertibility condition we define
\begin{displaymath}
  \mu_\a(y) = \inf_{x \neq x' \in \Lper} \frac{|y(x')
    - y(x)|}{|x - x'|}
\end{displaymath}
and denote
\begin{align*}
  \YsB := \big\{ y : \Lper \to \R^2 : y - \yB \in \Us \text{ and }
  \mu_\a(y) > 0 \big\}, \quad \text{and} \quad
  \Ys := { \textstyle \bigcup_{\mB \in \R^{2 \times 2}_+ }} \YsB. 
\end{align*}

\subsection{The atomistic model}
\label{sec:a:atom_energy}
\paragraph{The atomistic energy} We assume that there exists a
potential $\varphi \in \CC^2(0, +\infty)$, such that the internal
atomistic energy (per period) of a deformation $y \in \Ys$ is given by
\begin{displaymath}
  \Ea(y) := \sum_{x \in \L} \sum_{x' \in \Lper \setminus\{x\}}
  \varphi\big(|y(x') - y(x)| \big).
\end{displaymath}
The energy $\Ea$ is twice continuously Gateaux differentiable at every point $y
\in \Ys$. We understand the first variation $\del\Ea(y)$ as an element
of $\Us^*$, and the second variation $\ddel\Ea(y)$ as a linear
operator from $\Us$ to $\Us^*$, formally defined as
\begin{align*}
  \b\< \del\Ea(y), u \b\> =~& \smfrac{\dd}{\dd t} \Ea(y + t u) |_{t =
    0}, \quad\text{for } u \in \Us, \text{ and}  \\
  \b\< \ddel\Ea(y) u, v \b\> =~& \smfrac{\dd}{\dd t} \< \del\Ea(y+tu),
  v \> |_{t = 0}, \quad \text{for } u, v \in \Us.
\end{align*}

For notational reasons it is convenient to also define a
potential $\phi \in \CC^2(\R^2 \setminus \{0\})$, $\phi(r) :=
\varphi(|r|)$, so that $\Ea$ can be rewritten as
\begin{equation}
  \label{eq:a:Ea_phi_vecphi}
  \Ea(y) = \sum_{x \in \L} \sum_{x' \in \Lper \setminus\{x\}}
  \phi\big(y(x') - y(x)\big).
\end{equation}

\begin{remark}
  The more general form of the interaction potential admitted by
  \eqref{eq:a:Ea_phi_vecphi} is useful since it includes plane-strain
  models of 3D crystals \cite{LiLuOrVK:2011a}. Our results remain
  largely valid for this general form of the interaction
  potential. The consistency analysis never uses the fact that
  $\phi(r) = \varphi(|r|)$. We shall nevertheless use the potential
  $\phi$ mostly for notational convenience, since it renders our
  stability analysis more concrete. It would require some additional
  work to quantify our stability assumptions in the general case.
\end{remark}

\paragraph{The variational problem}
\label{sec:a:minproblem}
For some macroscopic strain $\mB \in \R^{2 \times 2}_+$, which shall
be fixed throughout, the atomistic problem is to find
\begin{equation}
  \label{eq:min_a}
  y_\a \in \argmin\, \Ea(\Ys_{\mB}),
\end{equation}
where ``$\argmin$'' denotes the set of {\em local} minimizers. If $y_\a
\in \YsB$ is a solution to \eqref{eq:min_a}, then it satisfies the
first order necessary optimality condition
\begin{equation}
  \label{eq:min_a_crit1}
  \<\del\Ea(y_\a), u \> = 0 \qquad \forall u \in \Us.
\end{equation}

\paragraph{External forces} External forces are often used to model,
for example, a substrate or an indenter. In order avoid an additional
level of complexity into our analysis we have decided against
incorporating external forces. To obtain non-trivial solutions in our
numerical experiments, we have instead allowed for defects in the
atomistic lattice.

\paragraph{Bonds}
\label{sec:a:bonds}
A {\em bond} is an ordered pair $(x, x') \in \Lhex \times \Lhex$, $x
\neq x'$.  When convenient we identify the bond $b=(x, x')$ with the
line segment $\conv\{x, x'\}$, for example, to integrate over the
segment, and correspondingly define $|b| := |x - x'|$. The set of
bonds between atoms in the computational domain $\L$ and all other
atoms is denoted by
\begin{displaymath}
  \B := \big\{ (x, x') \in \L \times \Lper : x \neq x' \big\}.
\end{displaymath}
The {\em direction} of a bond $b$ will be denoted by $r_b$, that is $b =
(x, x+r_b)$ for some $x \in \Lhex$.

For a map $v : \Lhex \to \R^k$ and a bond $b = (x, x + r)$, $r \in
\Ldir$, we define the finite difference operators
\begin{equation}
  \label{eq:defn_Da}
  \Da{b}v := \Da{r}v(x) := v(x+r) - v(x).
\end{equation}
With this notation the atomistic energy can be rewritten, once again,
as
\begin{equation}
  \label{eq:defn_Ea}
  \Ea(y) = \sum_{b \in \B} \phi( \Da{b} y ).
\end{equation}
We also remark that, with this notation, we have $\mu_\a(y) = \min_{b
  \in \B} |\Da{b}y|/|b|$.

Finally, we define the set of {\em all} bonds, including those
involving vacancy sites, as
\begin{displaymath}
  \Btot := \big\{ (x, x+r) : x \in \OmL, r \in \Ldir \big\}.
\end{displaymath}

\subsection{Properties of the interaction potential} 
\label{sec:a:prop_phi}
A crucial assumption in our analysis is that $\phi(r)$ and its
derivatives decay rapidly as $|r| \to +\infty$. For example, our
analysis is invalid for the slowly decaying Coulomb interactions. To
quantify this assumption, we define the monotonically decreasing
functions $M_k : (0, +\infty) \to [0, +\infty)$, $k = 0, \dots, 3$,
\begin{equation}
  \label{eq:a:decay_phi}
  M_k(s) = \sup_{\substack{r \in \R^2 \\ |r| \geq s}} \|\phi^{(k)}(r)\|,
\end{equation}
were $\phi^{(k)}$ denotes the $k$th Frechet derivative of $\phi$,
e.g., $\phi^{(1)} = \phi' : \R^2\setminus\{0\} \to \R^2$, $\phi^{(2)}
= \phi'' : \R^{2} \setminus \{0\} \to \R^{2 \times 2}$, and so forth,
and $\|\cdot\|$ denotes the Euclidean norm of a vector, or the
operator norm of a matrix or tensor. We remark that, in terms of
$\varphi$, 
\begin{displaymath}
  M_1(s) = \sup_{t \geq s} |\varphi'(t)| \quad \text{and} \quad
  M_2(s) = \sup_{t \geq s} \big(
  \big|\smfrac{\varphi''(t)}{t^2}\big|^2 +
  \big|\smfrac{\varphi'(t)}{t}\big|^2 \big)^{1/2}.
\end{displaymath}

\section{An A/C Coupling Method}
\label{sec:qc}
In this section we formulate the a/c
coupling method introduced in \cite{Shapeev:2010a} for periodic
boundary conditions.

\subsection{Preliminaries}

\paragraph{The atomistic and continuum regions}
Let $\Oma \subset {\rm int}(\Om)$, the {\em atomistic region}, be a
closed polygonal set with corners belonging to $\L$. We assume
throughout that $\Vac \subset {\rm int}(\Oma)$, that is, the interior
of the atomistic region contains all vacancies in the lattice. The
corresponding {\em continuum region} is defined as
\begin{displaymath}
  \Omc := {\rm clos}(\Om \setminus \Oma) \cap \Om.
\end{displaymath}

\paragraph{The finite element mesh}  
\label{sec:qc:prelims:mesh}
Let $\Lrep^\c \subset \L \cap \Omc$ be a set of finite element nodes,
or, in the language of the quasicontinuum method \cite{Ortiz:1995a},
{\em representative atoms} or simply {\em repatoms}. We assume that
the corners of the atomistic region belong to $\Lrep^\c$. We also
define $\Lrep^\a = \L \cap {\rm int}(\Oma)$, and $\Lrep = \Lrep^\a
\cup \Lrep^\c$.

Let $\Thc$ be a regular (and shape regular) triangulation of $\Omc$
with vertices belonging to $(\Lrep^\c)^\per$, which can be extended
periodically to a regular triangulation $\Thcper$ of $\Omc^\per$.  An
example of such a construction is displayed in Figure
\ref{fig:mesh_lge}. We adopt the convention that lattice functions
that are piecewise affine with respect to the triangulation $\Thcper$
are in fact understood as piecewise affine functions on {\em all of}
$\Omc^\per$, that is, they may be evaluated at any point $x \in
\Omc^\per$ and not only at lattice sites.

For each $T \in \Thcper$ we define $h_T := {\rm diam}(T)$, and we
define the mesh size function $h(x) := \max \{ h_T : T \in \Thcper, x
\in T \}$, for $x \in \Omc^\per$.

Whenever we refer to the {\em shape regularity of $\Thc$} (and later
$\Th$), we mean the ratio between the largest and smallest angle
between any two adjacent edges in $\Th$. We will assume throughout
that this is moderate.

\begin{figure}
  \includegraphics[height=5.5cm]{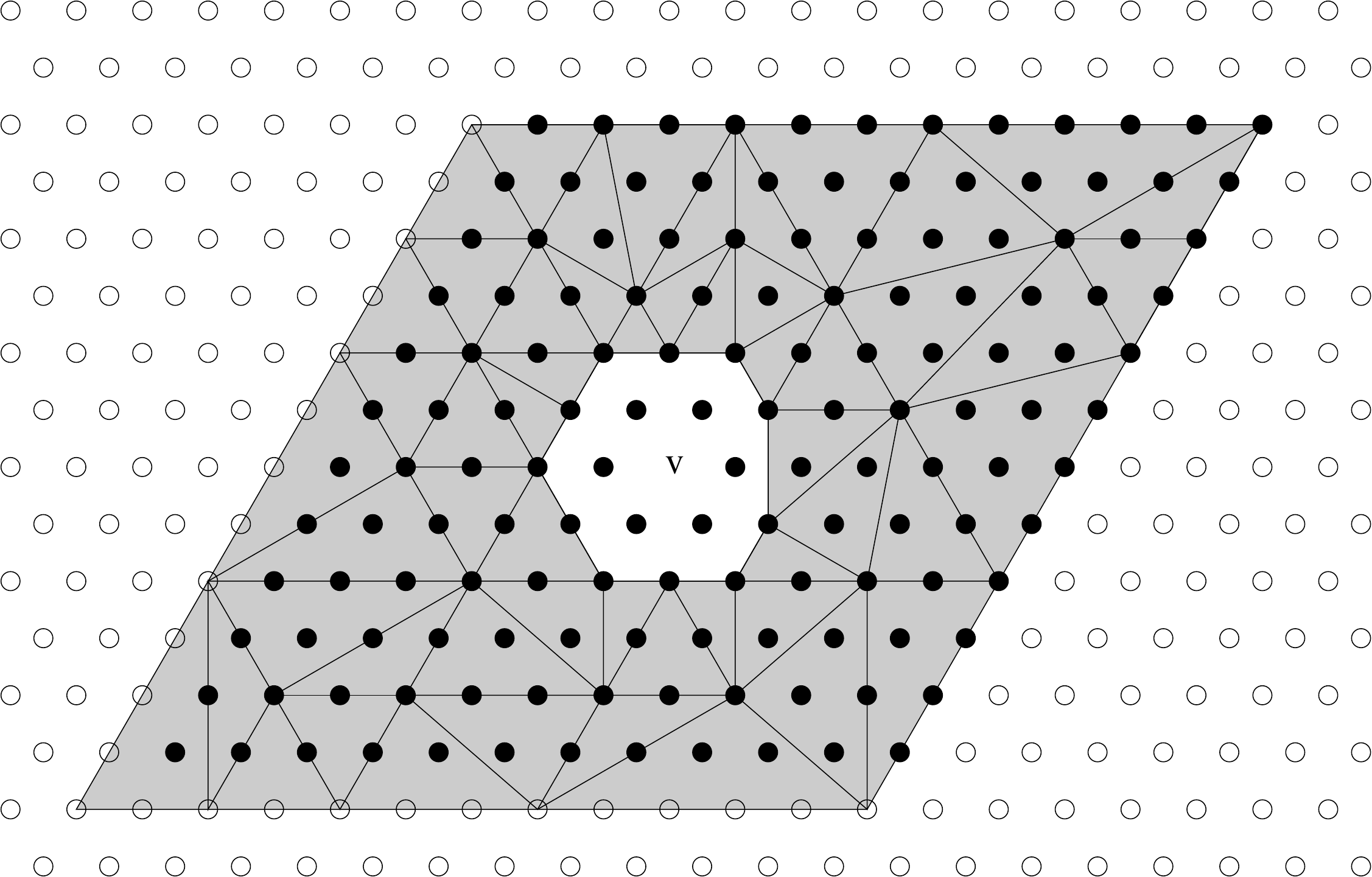}
  \caption{\label{fig:mesh_lge} Example of a triangulation $\Thc$ of
    the continuum region $\Omc$ (shaded area), with nodes on $\pp\Omc$
    are such that the mesh can be extended periodically to a regular
    triangulation of $\Omc^\per$. Note that the boundary of the
    atomistic region need not be aligned with nearest-neighbour
    directions.}
\end{figure}

We define the set of admissible coarse-grained displacements and deformations,
respectively, as
\begin{align*}
  \Us_h =~& \big\{ u_h \in \Us  : \text{ $u_h$ 
    is p.w.\ affine w.r.t.\ } \Thc \big\},  \\
  \YsBh =~& \big\{ y_h \in \Ys :  y_h - \yB \in \Us_h
  \text{ and } \mu_\c(y_h) > 0 \big\},
  \quad \text{and} \quad \Ysh = {\textstyle \bigcup_{\mB \in
        \R^{2 \times 2}_+}} \YsBh,
\end{align*}
where $\mu_\c$ is defined as
\begin{equation}
  \label{eq:qc:defn_muc}
  \mu_\c(y_h) := \inf_{\substack{x, x' \in \Omc \\x \neq x'}}
  \frac{|y_h(x) - y_h(x')|}{|x - x'|}
  \leq \underset{x \in \Omc}{\rm ess.inf} \, \min_{\substack{r \in \R^2 \\
      |r| = 1}} \b| \D y_h(x) r \b|.
\end{equation}
Note that we are requiring a more stringent invertibility condition on
coarse-grained deformations $y_h$. This is due to the fact that a
continuous interpolant of an invertible atomistic deformation need not
necessarily be invertible.

Finally, we define the nodal interpolation operator $I_h : \Us \to
\Us_h$ by
\begin{displaymath}
  I_h u(x) = u(x) \qquad \forall x \in \Lrep,
\end{displaymath}
and extend its definition to deformations by $I_h y - \yB = I_h(y -
\yB)$, for all $y \in \YsB$, $\mB \in \R^{2 \times 2}_+$.

\paragraph{Bond integrals} 
\label{sec:qc:bond_integrals}
There are two crucial steps in the construction of the a/c method we
are about to present. In a first step, all bonds $b$ that are entirely
contained within the continuum region are replaced by line
integrals. We collect these bonds into the set
\begin{displaymath}
  \Bc = \big\{ b \in \B : b \subset {\rm int}(\Omc^\per) \big\},
\end{displaymath}
and we define the complement to be the set of atomistic bonds $\Ba =
\B \setminus \Bc$.
We recall that
we identify $b$ with the line segment spanned by its endpoints whenever
convenient. Next we define, for any function $v$ that is measurable on
the segment $b = (x, x+r_b)$, the {\em bond integral}
\begin{displaymath}
  \mint_b v \db = \mint_x^{x+r_b} \!\!\!v \db  = \int_0^1 v(x+tr_b) \dt.
\end{displaymath}

\subsection{The a/c method} 
\paragraph{Formulation in terms of bond integrals}
Let $b = (x, x+ r) \in \Bc$ then for any function $v_h \in \Us_h \cup
\Ys_h$ the following one-sided directional derivatives are
well-defined at almost every point of $b$:
\begin{displaymath}
  \Dc{b}v_h(x) = \Dc{r}v_h(x) = \lim_{t \searrow 0} \frac{v_h(x + t r) - v_h(x)}{t}.
\end{displaymath}
We remark that, if $x$ lies in the interior of an element $T$ then
$v_h$ is differentiable at $x$ and hence $\Dc{r}v_h(x) = \D v_h(x)
r$. Moreover, even if $x$ lies on an edge or a vertex of the
triangulation, the one-sided directional derivative of a continuous
piecewise affine function is always well-defined. The directional
derivative $\Dc{r}v_h(x)$ is only undefined at points $x \in \pp\Oma$
if $r$ points to the interior of $\Oma$. For future reference we note
the following useful identity:
\begin{equation}
  \label{eq:mintDc_Da}
  \Da{r} y_h(x) = \mint_{x}^{x+r} \Dc{r} y_h \db, \qquad \text{for } 
  y \in \Ys, x \in \Lhex, r \in \Ldir.
\end{equation}

We use this notation to define the following {\em continuum bond
  energies} to approximate the atomistic bond energies
\begin{equation}
  \label{eq:bond_approx}
  \phi(\Da{b}y_h) \approx \mint_b \phi(\Dc{b} y_h) \db.
\end{equation}
The motivation behind this idea is that, if $\D y_h$ does not vary too
much along the bond $b$, then $\Da{b} y_h \approx \Dc{b} y_h(x)$ for
all $x \in {\rm int}(b)$.

This leads to the following definition of an a/c coupling method,
which is labelled the ECC method in \cite{Shapeev:2010a}:
\begin{equation}
  \label{eq:defn_Eqc}
  \Eqc(y_h) = \sum_{b \in \Ba} \phi(\Da{b} y_h)
  + \sum_{b \in \Bc} \mint_b \phi(\Dc{b} y_h) \db.
\end{equation}
We will use this formulation of the a/c method heavily in our
analysis, however, it does not yet reduce the complexity of the energy
evaluation. This will be achieved in the next paragraph, where we will
show that bond integrals can be transformed into volume integrals.

It is again easy to see that $\Eqc$ is twice continuously Gateaux
differentiable in $\YsBh$, for all $\mB \in \R^{2 \times 2}_+$, and we
define the first and second variations $\del\Eqc$ and $\ddel\Eqc$
analogously to $\del\Ea$ and $\ddel\Ea$ in \S\ref{sec:a:atom_energy}.

\begin{remark}
  The origin of the approximation \eqref{eq:bond_approx} lies in the
  quasinonlocal QC method proposed by Shimokawa {\it et al}
  \cite{Shimokawa:2004} and the geometrically consistent coupling
  method \cite{E:2006}. Their approximation of second neighbour bonds,
  although much more general, reduces for 1D pair interaction models
  to
  \begin{displaymath}
    \phi(y(x+1) - y(x-1)) \approx \smfrac12 \phi(2 \Da{-1} y(x))
    + \smfrac12 \phi(2\Da{1}y(x)),
  \end{displaymath}
  which was also the starting point for recent analyses of the
  quasinonlocal QC method \cite{Ortner:qnl.1d,OrtnerWang:2009a}; a
  similar observation was also used in \cite{emingyang}. (By contrast,
  \cite{Dobson:2008b} worked directly with absence of a ghost force.)

  Generalisations beyond second neighbour interactions were proposed
  in \cite{XHLi:3n,Shapeev:2010a}; the formulation in terms of bond
  integrals is due to Shapeev \cite{Shapeev:2010a}. 
    \end{remark}

\paragraph{The bond-density lemma} 
The second crucial step in the formulation of the a/c method
\eqref{eq:defn_Eqc} is to rewrite the energy in terms of volume
integrals over the Cauchy--Born stored energy density. The main tool
in achieving this is the following lemma established in
\cite{Shapeev:2010a}, which requires the definition of a pointwise {\em
  characteristic function}. For any polygonal set $U \subset \R^2$ we
define
\begin{equation}
  \label{eq:defn_chi}
  \chi_U(x) = \lim_{t \to 0} \frac{| U \cap B_t(x) |}{|B_t(x)|}
  \qquad \text{for } x \in \R^2,
\end{equation}
where $B_t(x)$ denotes the closed ball in $\R^2$ with radius $t$ and
centre $x$. The characteristic functions are additive in the following
sense: if $U_1, U_2 \subset \R^2$ are polygonal sets with $|{\rm
  int}(U_1) \cap {\rm int}(U_2)| = 0$, then $\chi_{U_1\cup U_2} =
\chi_{U_1} + \chi_{U_2}$.  This follows immediately from the
definition of the characteristic function.

We remark that the following result is false for general tetrahedra in
three dimensions, which is the main reason the method has not been
extended to that case.

\def\tempstring{\cite[Lemma 4.4]{Shapeev:2010a}}

\begin{lemma}[Bond-Density Lemma \tempstring]
  \label{th:bond-vol}
  Let $T \subset \R^2$ be a triangle with vertices belonging to
  $\Lhex$ and let $r \in \Ldir$, then
  \begin{displaymath} 
    \sum_{x \in \Lhex} \mint_x^{x+r} \chi_T \db = \frac{1}{\det \mBhex}\, |T|,
  \end{displaymath}
  that is, $\frac{1}{\det \mBhex}$ is the effective density of bonds in $T$.
\end{lemma}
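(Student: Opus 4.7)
The plan is to foliate the lattice along lines parallel to $r$ and reduce the claimed identity to a one-dimensional exact-quadrature statement.

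First, let $r' \in \Ldir$ be the primitive lattice vector along $r$, so that $r = m r'$ for some $m \in \N$, and partition $\Lhex$ into the disjoint union of one-dimensional sublattices lying on the parallel lines $\{L_\eta\}_\eta$ in direction $r'$, indexed by a transversal for the quotient $\Lhex / \Z r'$. Along any such line $L$, lattice points are equispaced at distance $|r'|$, and the translated segments $\{[x,x+r] : x \in L \cap \Lhex\}$ tile $L$ with constant multiplicity $m$. A short change of variables then yields
\begin{align*}
\sum_{x \in L \cap \Lhex} \mint_x^{x+r} \chi_T \, db \;=\; \frac{m}{|r|}\int_L \chi_T \, ds \;=\; \frac{1}{|r'|}\int_L \chi_T \, ds,
\end{align*}
and summing over the transversal gives $S := \sum_{x\in\Lhex} \mint_x^{x+r} \chi_T \, db = \frac{1}{|r'|} \sum_\eta \int_{L_\eta} \chi_T \, ds$.

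It then suffices to prove that $\sum_\eta \int_{L_\eta} \chi_T \, ds = |T|/d$, where $d = \det\mBhex / |r'|$ is the common transverse spacing between consecutive lines. I would parameterise the family $\{L_s\}$ by transverse arclength and study the cross-section $E(s) := \int_{L_s} \chi_T \, ds$. Being the chord length (with boundary corrections) of the convex polygon $T$ in a fixed direction, $E$ is piecewise affine in $s$, and its breakpoints are precisely the $s$-coordinates of the vertices of $T$. Since those vertices lie in $\Lhex$ and the orthogonal projection of $\Lhex$ onto the direction perpendicular to $r'$ is exactly the one-dimensional lattice $d\Z$, every breakpoint of $E$ coincides with the sample point of one of the lines $L_\eta$. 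For piecewise affine functions whose breakpoints lie at the sample points, the trapezoidal rule is exact, so
\begin{align*}
d \sum_\eta E(L_\eta) \;=\; \int_\R E(s)\, ds \;=\; |T|
\end{align*}
by Fubini, and combining with the previous reduction delivers $S = |T|/(|r'|\,d) = |T|/\det\mBhex$.

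The main subtlety, and the only real obstacle, is the exceptional case in which a line $L_s$ contains an edge of $T$: the function $E$ then has a jump discontinuity at that $s$, and a naive trapezoidal rule would fail. This is precisely where the pointwise density definition \eqref{eq:defn_chi} does its work: on the relative interior of an edge of $T$ one has $\chi_T \equiv 1/2$, so $E$ automatically equals the arithmetic mean of its left and right limits at any jump point. With this convention the trapezoidal identity above survives unchanged (the half-weight on the edge exactly absorbs the jump contribution), and the calculation closes.
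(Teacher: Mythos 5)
Your argument is correct, and it is a genuinely self-contained proof: note that the paper itself does not prove Lemma \ref{th:bond-vol} at all, but imports it from \cite[Lemma 4.4]{Shapeev:2010a}, so the comparison is with that external source rather than with anything in this text. The three pillars of your proof all check out. (i) Writing $r = m r'$ with $r'$ primitive, the segments $\{[x,x+r]: x \in L\cap\Lhex\}$ cover each line $L$ with multiplicity $m$ off a countable (hence length-null) set, which justifies $\sum_{x\in L\cap\Lhex} \mint_x^{x+r}\chi_T\,\db = \tfrac1{|r'|}\int_L \chi_T\,\ds$ by Tonelli for nonnegative integrands. (ii) Because $r'$ is primitive, one can complete it to a lattice basis $(r',w)$ of $\Lhex$, so the transverse coordinates of lattice points form exactly $d\Z$ with $d=\det\mBhex/|r'|$; this is the step that forces every breakpoint of the cross-section function $E(s)$ (a projection of a vertex of $T$, $T$ being convex with vertices in $\Lhex$) to sit at a sample point, and hence makes the trapezoidal rule exact, since $E$ is affine on each open interval between consecutive samples and compactly supported, while $\int_\R E\,\ds = |T|$ by Fubini. (iii) Your treatment of the only degenerate configuration --- a line $L_{s^*}$ containing an edge of $T$ --- is exactly right: the pointwise density definition \eqref{eq:defn_chi} forces $\chi_T\equiv 1/2$ on the relative interior of that edge, so $E(s^*)$ is the mean of its one-sided limits, which is precisely the condition under which $d\sum_k E(kd)$ still reproduces $\int_\R E\,\ds$. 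Two cosmetic points you could tighten: say ``breakpoints occur at most at the vertex projections'' rather than ``precisely'', and state explicitly that jumps of $E$ can only occur when an edge of $T$ is parallel to $r'$, whose transverse coordinate again lies in $d\Z$. What your route buys, compared with simply citing Shapeev, is a transparent reduction to one-dimensional exact quadrature that also makes clear why the vertices must be lattice points and why the $1/2$-density convention on edges is not merely a convention but is needed for the identity; it also shows immediately why the statement is insensitive to whether $r$ is primitive.
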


\medskip \noindent Next, we formulate a variant that is more suitable
in the context of periodic boundary conditions. Recall that $T^\per = T
+ N \Lhex$.

\begin{lemma}[Periodic Bond-Density Lemma]
  \label{th:bond-dens-per}
  Let $T \subset {\rm clos}(\Om)$ be a non-degenerate triangle with
  vertices belonging to $\Lhex$, and $r \in \Ldir$, then
  \begin{displaymath}
    \sum_{x \in \OmL} \mint_x^{x+r} \chi_{T^\per} \db
    = \frac{1}{\det \mBhex}\, |T|.
  \end{displaymath}
  \end{lemma}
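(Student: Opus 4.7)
The plan is to reduce the periodic statement to the non-periodic Bond-Density Lemma (Lemma \ref{th:bond-vol}) by ``unfolding'' the periodicity. The starting observation is that $T^\per = \bigcup_{z \in N\Lhex} (T+z)$, and the shifts $\{T+z\}_{z \in N\Lhex}$ have pairwise disjoint interiors, because $\mathrm{clos}(\Om) + z$ for $z \in N\Lhex$ tile $\R^2$ with overlap only on a set of measure zero, and $T \subset \mathrm{clos}(\Om)$. By the additivity property of the pointwise characteristic function stated just before the lemma, this implies
\[
\chi_{T^\per}(\xi) = \sum_{z \in N\Lhex} \chi_{T+z}(\xi) = \sum_{z \in N\Lhex} \chi_{T}(\xi - z)
\]
for every $\xi \in \R^2$.

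Next I would substitute this into the left-hand side. For each $x \in \OmL$ and $z \in N\Lhex$, a change of integration variable gives
\[
\mint_{x}^{x+r} \chi_{T+z} \db = \int_0^1 \chi_T(x + tr - z) \dt = \mint_{x-z}^{x-z+r} \chi_T \db,
\]
so after interchanging the two sums (all terms are nonnegative, so Fubini/Tonelli applies), the left-hand side becomes
\[
\sum_{z \in N\Lhex} \sum_{x \in \OmL} \mint_{x-z}^{x-z+r} \chi_T \db.
\]
The key combinatorial fact is that the map $(x,z) \mapsto x - z$ is a bijection from $\OmL \times N\Lhex$ onto $\Lhex$. This is because $\Om = \mBhex(0,N]^2$ is a (half-open) fundamental domain for the action of $N\Lhex$ on $\Lhex$, so every lattice point in $\Lhex$ has a unique representative in $\OmL$ modulo $N\Lhex$. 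Applying this bijection reindexes the double sum as $\sum_{x' \in \Lhex} \mint_{x'}^{x'+r} \chi_T \db$.

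Finally, Lemma \ref{th:bond-vol} identifies this last sum with $\frac{1}{\det \mBhex}\,|T|$, completing the proof. The only mild subtlety lies in the disjointness of the shifts $T+z$ and in the bijection of the index sets; both ultimately rest on $\Om$ being a fundamental domain, so I do not expect any substantial obstacle — the argument is essentially bookkeeping once the unfolding decomposition of $\chi_{T^\per}$ is in place.
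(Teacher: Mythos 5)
Your proof is correct and follows essentially the same route as the paper's: both rest on the decomposition of $\Lhex$ into $\OmL$ modulo $N\Lhex$, a shift of the integration variable, the additivity identity $\chi_{T^\per}=\sum_{z\in N\Lhex}\chi_{T+z}$, and an appeal to Lemma \ref{th:bond-vol}. The only difference is cosmetic — you unfold the periodic sum starting from the left-hand side, while the paper folds the non-periodic sum starting from Lemma \ref{th:bond-vol} — so no further comparison is needed.
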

\begin{proof}
  According to Lemma \ref{th:bond-vol} we have
  \begin{displaymath}
    \frac{1}{\det \mBhex}\,|T| = \sum_{x \in \Lhex} \mint_x^{x+r} \chi_T \db 
    = \sum_{x \in \OmL} \sum_{z \in N \Lhex} \mint_{x+z}^{x+z+r} \chi_T db.
  \end{displaymath}
  Upon shifting the integration variable by $-z$, we can
  rewrite this as
  \begin{displaymath}
    \frac{1}{\det \mBhex}\,|T| 
          = \sum_{x \in \OmL}\mint_x^{x+r} \sum_{z \in
        N\Lhex} \chi_{T-z} \db 
      = \sum_{x \in \OmL}\mint_x^{x+r} \chi_{T^\per}
      \db. \qedhere
  \end{displaymath}
\end{proof}

\paragraph{Practical reformulation of $\Eqc$}
Equipped with the bond-density lemma, we can now derive a practical
formulation of the a/c method~\eqref{eq:defn_Eqc}. The proof of this
result for Dirichlet boundary conditions is contained in
\cite{Shapeev:2010a}. With the modification of the bond-density lemma
for periodic boundary conditions the necessary changes to the proof, detailed in
Appendix \ref{sec:app_proofs}, are straightforward.

\begin{theorem}
  \label{th:Eqc_practical}
  The a/c energy $\Eqc$ defined in \eqref{eq:defn_Eqc} can be
  rewritten as
  \begin{align}
    \label{eq:Eqc_practical}
    \Eqc(y_h) =  \sum_{b \in \Ba} \phi(\Da{b}y_h)
    + \int_{\Omc} W(\D y_h) \dV + \Phi_\i(y_h),~& \qquad \text{where}
    \\[1mm]
    \notag
    \Phi_\i(y_h) := 
    - \sum_{b \in \Btot \setminus \Bc} 
      \mint_b \chi_{\Omc^\per} \phi(\Dc{b}y_h) \db,~&
  \end{align}
  and where $W : \R^{2\times 2} \to \R \cup \{+\infty\}$ is the
  Cauchy--Born stored energy function,
  \begin{displaymath}
    W(\mF) :=  \frac{1}{\det \mBhex}\, \sum_{r \in \Ldir} \phi(\mF r).
  \end{displaymath}
\end{theorem}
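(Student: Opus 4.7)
The plan is to start from the definition
$$\Eqc(y_h) = \sum_{b \in \Ba} \phi(\Da{b} y_h) + \sum_{b \in \Bc} \mint_b \phi(\Dc{b} y_h) \db$$
and only manipulate the continuum bond sum, since the atomistic term already matches. The atomistic sum is transported unchanged to the right-hand side, so all the work is in transforming the continuum bond integrals into the Cauchy--Born volume integral plus the interface correction $\Phi_\i$.

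First I would insert the characteristic function $\chi_{\Omc^\per}$ into each continuum bond integral; this is a cost-free insertion because every $b\in\Bc$ is contained in ${\rm int}(\Omc^\per)$ so $\chi_{\Omc^\per}\equiv 1$ along $b$. Then I would extend the range of summation from $\Bc$ to $\Btot$ by adding and subtracting the missing bonds, writing
$$\sum_{b\in\Bc}\mint_b \phi(\Dc{b}y_h)\db \;=\; \sum_{b\in\Btot}\mint_b \chi_{\Omc^\per}\phi(\Dc{b}y_h)\db \;+\; \Phi_\i(y_h),$$
with $\Phi_\i$ exactly as in the statement. This step is essentially bookkeeping and uses only that $\chi_{\Omc^\per}$ is well-defined a.e.\ along any bond and that $\Dc{b}y_h$ makes sense along any bond entering $\Omc^\per$ because $y_h$ is interpreted as a piecewise affine function on all of $\Omc^\per$.

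The heart of the proof is then to identify
$$\sum_{b\in\Btot}\mint_b \chi_{\Omc^\per}\phi(\Dc{b}y_h)\db \;=\; \int_{\Omc} W(\D y_h)\dV.$$
I would expand $\Btot$ as $\{(x,x+r):x\in\OmL,\,r\in\Ldir\}$, interchange sums, and use the partition $\chi_{\Omc^\per}=\sum_{T\in\Thc}\chi_{T^\per}$ (valid off a set of surface measure zero, hence a.e.\ along any bond). On the periodic image $T^\per$ the piecewise affine function $y_h$ has constant gradient $\D y_h|_T$, so $\phi(\Dc{r}y_h)$ equals the constant $\phi(\D y_h|_T\, r)$ almost everywhere on $T^\per$. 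Pulling this constant out of the bond integral and applying the Periodic Bond-Density Lemma \ref{th:bond-dens-per} yields
$$\sum_{x\in\OmL}\mint_x^{x+r}\chi_{T^\per}\,\phi(\Dc{r}y_h)\db \;=\; \phi\bigl(\D y_h|_T\, r\bigr)\,\frac{|T|}{\det\mBhex}.$$
Summing over $r\in\Ldir$ and over $T\in\Thc$ and recognising the definition of $W$ gives $\sum_T |T|\,W(\D y_h|_T)=\int_{\Omc}W(\D y_h)\dV$, which combined with the previous paragraph produces the claimed formula.

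The step I expect to be the main obstacle is justifying the interchange of the (infinite) sum over $r\in\Ldir$ with the bond integral and the sum over $x\in\OmL$. This is where the decay properties of $\phi$ collected in \S\ref{sec:a:prop_phi} enter: the bounds on $M_0,M_1$ together with $\mu_\c(y_h)>0$ ensure absolute convergence uniformly in $x$, which legitimises Fubini and the rearrangement. A secondary technical point is that $\Dc{r}y_h$ is genuinely undefined on the measure-zero subset of a bond $b$ lying on element edges or on $\pp\Omc$; since these form a one-dimensional set of length zero on $b$, they can be ignored in $\mint_b$, but this deserves a line of comment. Once these two points are dispatched, the remainder of the argument is a direct application of Lemma \ref{th:bond-dens-per} in the manner described.
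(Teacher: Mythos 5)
Your proposal is correct and follows essentially the same route as the paper's proof: insert $\chi_{\Omc^\per}$ along continuum bonds, extend the sum from $\Bc$ to $\Btot$ at the cost of the interface term $\Phi_\i$, split $\chi_{\Omc^\per}$ into the element characteristic functions $\chi_{T^\per}$ where $\D y_h$ is constant, and apply the periodic bond-density Lemma \ref{th:bond-dens-per} to recover $\int_{\Omc} W(\D y_h)\dV$. The extra remarks on absolute convergence of the lattice-direction sum and on the measure-zero set where $\Dc{r}y_h$ is undefined are sensible refinements of points the paper passes over silently, not a different argument.
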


\begin{remark}
  \label{rem:higher_order}
  While the bond-integral formulation \eqref{eq:defn_Eqc} is easily
  extended to higher dimensions and to higher order finite element
  spaces, Theorem \ref{th:Eqc_practical} holds only for piecewise
  affine trial functions in 2D.
       \end{remark}

\paragraph{The coarse grained variational problem}
\label{sec:qc:min_qc}
In the a/c method we wish to compute
\begin{equation}
  \label{eq:qc:min_qc}
  y_\qc \in \argmin\, \Eqc(\YsBh).
\end{equation}
If $y_\qc \in \YsBh$ is a solution to \eqref{eq:qc:min_qc}, then it
satisfies the {\em first order necessary optimality condition}
\begin{equation}
  \label{eq:qc:EL}
  \b\< \del\Eqc(y_\qc), u_h \b\> = 0 \qquad \forall u_h \in \Ush,
\end{equation}
as well as the {\em second order necessary optimality condition}
\begin{equation}
  \label{eq:qc:sec_nec}
  \b\< \ddel\Eqc(y_\qc) u_h, u_h \b\> \geq 0 \qquad \forall u_h \in \Ush.
\end{equation}

Condition \eqref{eq:qc:sec_nec} is insufficient for error estimates;
hence we will aim to prove the stronger {\em second order sufficient
  optimality condition}
\begin{equation}
 \label{eq:qc:sec_suf}
  \b\< \ddel\Eqc(y_\qc) u_h, u_h \b\> \geq \gamma \|\D u_h
  \|_{\LL^2(\Om)}^2 \qquad \forall u_h \in \Ush.
\end{equation}
for some $\gamma > 0$, where the norm $\|\D u_h \|_{\LL^2(\Om)}^2$ is yet to be
defined for $u_h \in \Ush$. The choice of norm on the right-hand side of
\eqref{eq:qc:sec_suf} is motivated by the fact that the equations
\eqref{eq:qc:EL} have a similar structure as finite element
discretisations of second order elliptic equations.

\subsection{Brief outline of the error analysis}
\label{sec:qc:errana}
\quad We give a brief sketch of the main result, Theorem
\ref{th:mainthm}, in order to motivate the subsequent technical
details that we provide in \S\ref{sec:cons}--\S\ref{sec:apriori}. The
following discussion is merely schematic, and some steps are not
properly defined at this point.

Let $y_\a$ be a solution of \eqref{eq:min_a}, and $y_\qc$ a solution
of \eqref{eq:qc:min_qc}, and assume that $y_\a, y_\qc$, and $I_h
y_\a$ are ``close'' in a sense to be made precise. Suppose, moreover,
that \eqref{eq:qc:sec_suf} holds. Let $e_h := I_h y_\a - y_\qc$, then
we can estimate
\begin{align*}
  \gamma \| \D e_h \|_{\LL^2(\Om)}^2 \leq~& \b\< \ddel\Eqc(y_\qc) e_h,
  e_h \b\> \\
  \approx~& \b\< \del\Eqc(I_h y_\a) - \del\Eqc(y_\qc), e_h \b\>
  = \< \del\Eqc(I_h y_\a), e_h \>.
\end{align*}
The first inequality in the above estimate is the focus of the
stability analysis in \S\ref{sec:stab}. The purpose of the consistency
analysis \S\ref{sec:cons} is to estimate
\begin{displaymath}
  \b\< \del\Eqc(I_h y_\a), e_h \b\> \leq \Econs \| \D e_h \|_{\LL^2(\Om)},
\end{displaymath}
which immediately yields an {\it a priori} error estimate:
\begin{displaymath}
  \| \D I_h y_\a - \D y_h \|_{\LL^2(\Om)} \lesssim \gamma^{-1} \Econs.
\end{displaymath}
In \S\ref{sec:interp} we will give an interpretation to $\D
y_\a$, and establish interpolation error estimates, so that we can
also estimate $\|\D y_\a - \D y_\qc\|_{\LL^2(\Om)}$. In
\S\ref{sec:apriori}, we will make the above arguments rigorous, and in
addition establish an error estimate for the energy.

\def\Tm{\mathcal{T}_{\a}}
\def\Gm{\Gamma_{\!\a}}
\def\Fm{\mathcal{F}_\a}
\def\CIhtil{\tilde{C}_h}
\def\CImtil{\tilde{C}_\a}
\def\CLa{C_{{\rm L}}}
\def\Ccoarse{C^{\rm coarse}}
\def\Cbarh{\bar{C}_\a}
\def\Cbarhc{\bar{C}_\Om}
\def\Cybaryh{\bar{C}_{I_h}}
\def\Nj{N_{\rm j}}
\def\Jmp{J}
\def\Ncross{N_{\rm cross}}
\def\Cmodel{C^{\rm model}}
\def\Ccons{C^{\rm cons}}
\def\Fh{\mathcal{F}_h}
\def\Fhper{\Fh^\per}
\def\Fhc{\Fh^\c}
\def\Cf{C_f}
\def\Ext{E}

\section{Auxiliary Results}
\label{sec:interp}
\subsection{Extension to the vacancy set}
\label{sec:interp:vac}
A substantial simplification of the subsequent analysis and notation
can be achieved if we extend all function values to the vacancy set
$\Vac$. We have also considered other approaches, but have found that
they are significantly more technical and would yield only minor
quantitative improvements over our results. (The reason for this is
that some non-nearest neighbour bonds ``cross'' into the vacancy
neighbourhoods, and therefore cannot be easily controlled by
nearest-neighbour bonds.) A different approach might be required,
however, if one were to extend the analysis to more general classes of
defects.

We define the extension operator as the solution of a variational
problem. Let
\begin{displaymath}
  \Us_\Ext := \b\{ v : \Lhex \to \R^2 : v(x+N\a_j) = v(x)
  \text{ for } x \in \Lhex, j = 1, 2 \b\};
\end{displaymath}
then, for $u \in \Us$, we define
\begin{equation}
  \label{eq:defn_Ext}
  \Ext u := \underset{\substack{v \in \Us_\Ext \\ v = u \text{ on }
      \L}}{\rm argmin}\, \Phi_{\Btotnn}(v), \quad \text{where} \quad
  \Phi_{\Btotnn}(v) := \sum_{b \in \Btotnn} \b| r_b \cdot \Da{b} v \b|^2.
\end{equation}
This definition is motivated by the stability analysis, more precisely
the definition of the {\em vacancy stability index} in
\S\ref{sec:stab:defn_kappa}.

\begin{proposition}
  The extension operator $\Ext$ is well-defined, that is, the
  variational problem \eqref{eq:defn_Ext} has a unique
  solution. Moreover, $\Ext : \Us \to \Us_\Ext$ is
  linear.
\end{proposition}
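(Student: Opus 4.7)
The plan is to recognize \eqref{eq:defn_Ext} as a quadratic minimization problem on a finite-dimensional affine subspace, so that well-posedness reduces to verifying positive definiteness of $\Phi_{\Btotnn}$ on the subspace of admissible perturbations. Linearity of $\Ext$ is then automatic from the linearity of the resulting Euler--Lagrange equations.

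First I would fix any reference extension $v_0 \in \Us_\Ext$ with $v_0 = u$ on $\L$ (for instance, $v_0 \equiv 0$ on $\Vac$). The admissible set of \eqref{eq:defn_Ext} is then the finite-dimensional affine space $v_0 + W$, where
\[
W := \b\{ w \in \Us_\Ext : w \equiv 0 \text{ on } \L \b\}
\]
is parametrized by the values of $w$ at the vacancy sites in one periodic cell. Since $\Phi_{\Btotnn}$ is quadratic and nonnegative, it suffices to show that $\Phi_{\Btotnn}|_W$ is positive definite: strict convexity and coercivity of $w \mapsto \Phi_{\Btotnn}(v_0 + w)$ on $W$ then yield a unique minimizer $w^*$, and we set $\Ext u := v_0 + w^*$.

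For the positive definiteness step, suppose $w \in W$ satisfies $\Phi_{\Btotnn}(w) = 0$; then $r_b \cdot \Da{b} w = 0$ for every nearest-neighbour bond $b$. This is the classical infinitesimal rigidity constraint for the triangular lattice framework, and a standard Henneberg-type argument (a single lattice triangle is infinitesimally rigid, and the full triangular lattice is built by attaching each new vertex to several existing ones through NN edges) shows that the kernel of the rigidity map on $\Us_\Ext$ consists precisely of affine infinitesimal rigid motions $x \mapsto c + \alpha \mQ_4 x$ with $c \in \R^2$, $\alpha \in \R$. The $N$-periodicity built into $\Us_\Ext$ forces $\alpha \mQ_4 (N \aa_j) = 0$ for $j = 1, 2$, hence $\alpha = 0$, so $w$ is constant; the boundary condition $w \equiv 0$ on the nonempty set $\L$ then yields $w = 0$. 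Linearity of $\Ext$ follows because the Euler--Lagrange characterization
\[
\sum_{b \in \Btotnn} \b(r_b \cdot \Da{b} \Ext u\b)\b(r_b \cdot \Da{b} \tilde{w}\b) = 0 \quad \forall \tilde{w} \in W, \qquad \Ext u = u \text{ on } \L,
\]
is a linear system in the vacancy-site values of $\Ext u$ whose coefficient matrix is invertible (by the positive definiteness just shown) and whose right-hand side depends linearly on $u$.

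The main obstacle is the positive-definiteness step, which is where the geometry of the triangular lattice enters essentially. For isolated vacancies a short direct argument suffices (each vacancy has six NN in $\L$, supplying two linearly independent constraints that immediately pin $w$ down), but allowing $\Vac$ to contain clusters requires either an inductive layer-by-layer argument through the cluster or, more cleanly, the global rigidity argument above. This infinitesimal rigidity of the triangular NN framework is essential and would fail for weaker lattice graphs (e.g., a square lattice with only NN bonds), which is presumably one reason for the particular choice of $\Phi_{\Btotnn}$ in~\eqref{eq:defn_Ext}.
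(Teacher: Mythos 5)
Your proof is correct, and it takes a genuinely different route from the paper at the one nontrivial step. Both arguments share the same skeleton: reduce \eqref{eq:defn_Ext} to a finite-dimensional quadratic minimization over the vacancy-site values, show that $\Phi_{\Btotnn}$ is positive definite on the space of perturbations vanishing on $\L$, and read off existence, uniqueness and linearity from the resulting linear Euler--Lagrange system. Where you differ is in how definiteness is proved: the paper passes to the micro-interpolant $\bar v$, applies the bond-density lemma together with the hexagonal identity \eqref{eq:quartic_form_identity} to rewrite $\Phi_{\Btotnn}(v) = \int_\Om \{\smfrac32 |(\D\bar v)^\sym|^2 + \smfrac34 |\tr \D \bar v|^2\}\dV$, and then invokes a Korn-type argument (with $\bar v$ fixed in the continuum region), so that the kernel is identified at the continuum level with infinitesimal rigid motions; you instead stay entirely discrete and use infinitesimal rigidity of the triangular nearest-neighbour framework (Henneberg-type induction from a single triangle), then eliminate the infinitesimal rotation by periodicity and the translation by the constraint on $\L$. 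Your route is more elementary and self-contained -- it needs neither the bond-density lemma nor Lemma \ref{th:hex_identities} -- and it also covers vacancy clusters cleanly, whereas your ``isolated vacancy'' shortcut alone would not. What the paper's route buys in exchange is the explicit identification of $\Ext$ as the solution of an isotropic linear elasticity problem (the observation recorded at the end of its proof, which connects to the stability analysis in \S\ref{sec:stab}) and a quantitative coercivity constant rather than bare definiteness; note also that in this finite-dimensional setting kernel-triviality is all that is actually needed, so your rigidity argument is fully sufficient. The only soft spot is that you cite the Henneberg argument as standard rather than spelling it out, but that is comparable in rigour to the paper's one-line appeal to Korn's inequality.
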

\begin{proof}
  To prove that \eqref{eq:defn_Ext} has a unique solution it is
  sufficient to show that $\Phi_{\Btotnn}$ is a positive definite
  quadratic form on the affine subspace of $\Us_\Ext$ defined through
  the constraint $v = u$ on $\L$. The linearity is a straightforward
  consequence.

  To establish this, we need to employ notation that will be properly
  defined in \S\ref{sec:interp:P1}: let $\Tm$ denote the canonical
  triangulation of $\Lhex$, and, for each $v \in \Us_\Ext$, let
  $\bar{v}$ denote the corresponding continuous piecewise affine
  interpolant. In particular, we then have $\Da{b} v = \Dc{b} \bar{v}$
  for all bonds $b \in \Btotnn$.

  Moreover, applying the bond density lemma, and Lemma
  \ref{th:hex_identities}, \eqref{eq:quartic_form_identity}, we obtain
  \begin{displaymath}
    \Phi_{\Btotnn}(v) = \int_\Om \sum_{r \in \Rnn} \b| r \cdot \D
    \bar{v} r \b|^2 \dV = \int_\Om \Big\{ \smfrac32 \b| (\D
    \bar{v})^\sym \b|^2 + \smfrac34 \b| {\rm tr} (\D \bar{v}) \b|^2
    \Big\} \dV.
  \end{displaymath}
  Since $\bar{v}$ is fixed in the continuum region, Korn's inequality
  shows that $\Phi_{\Btotnn}$ is indeed coercive.

  This proof shows that, in fact, $\Ext$ is defined through the
  solution of an isotropic linear elasticity problem, with boundary
  data provided on the edge of a suitably defined neighbourhood of the
  vacancy set.
\end{proof}

We extend the definition of $\Ext$ to include deformations $y \in
\Ys$, via $\Ext (\yB + u) = \yB + \Ext u$ for all $\mB \in \R^{2
  \times 2}_+$. We stress, however, that none of our results depend
(explicitly or implicitly) on the extension of deformations. By
contrast, the extension of displacements enters our analysis heavily.

For the sake of simplicity of notation, we will henceforth identify
$\Ext w \equiv w$, except where we need to strictly distinguish the
original function $w$ and its extension.

\subsection{Micro-triangulation and extension of $\Thc$}
\label{sec:interp:P1}
The triangular lattice $\Lhex$ has a ``canonical'' triangulation
$\Tm^\per$, which is defined so that every nearest-neighbour bond is
the edge of a triangle; see Figure \ref{fig:mesh_ext}. The subset of
triangles $\tau \in \Tm^\per$ that are contained in ${\rm clos}(\Om)$
is denoted by $\Tm$. We will assume throughout that the following
assumption holds, but only cite it explicitly in the main results.

\begin{figure}
  \includegraphics[height=5.5cm]{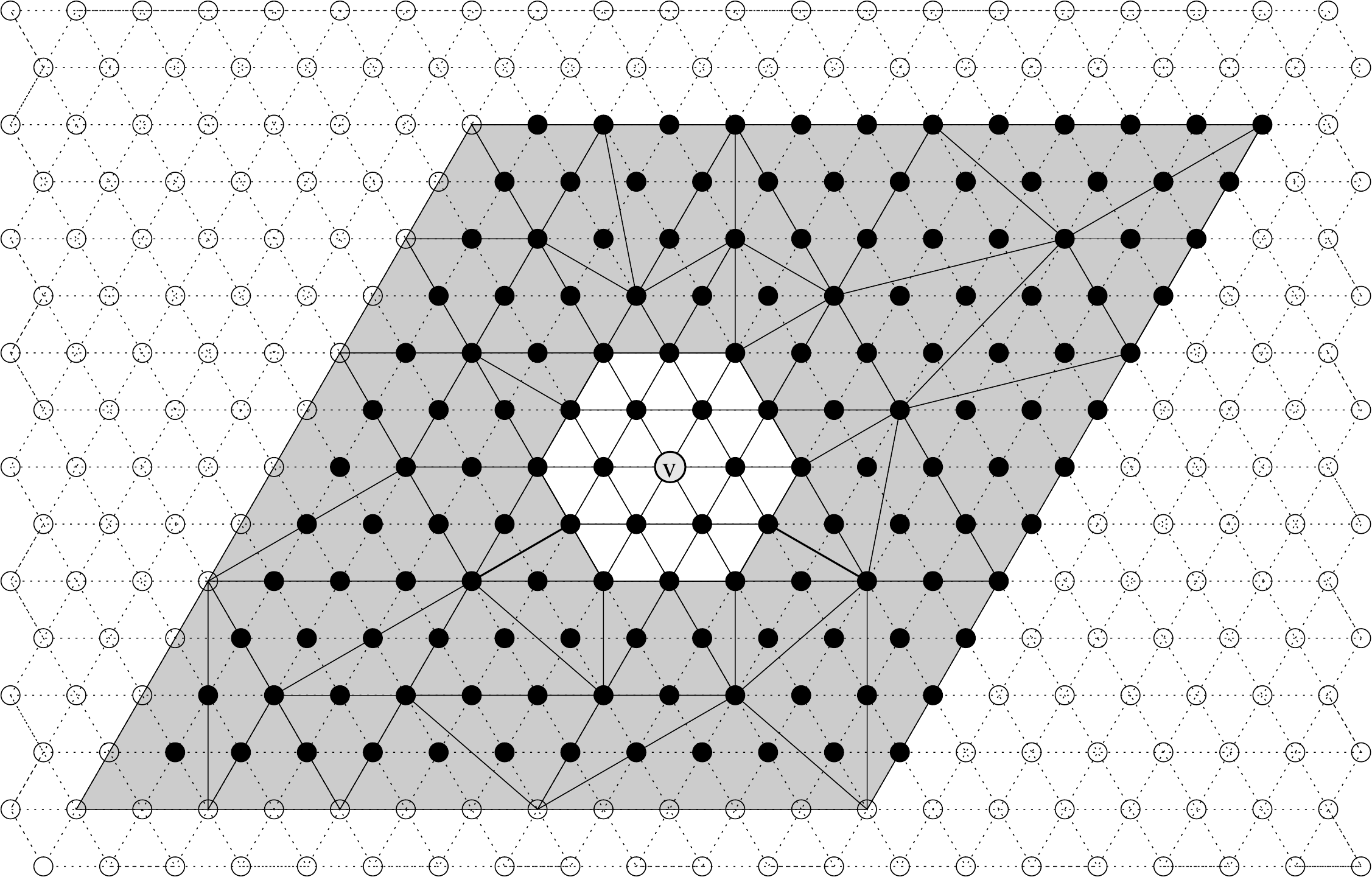}
  \caption{\label{fig:mesh_ext} The micro-triangulation $\Tm$ (dotted
    lines) and the extension $\Th$ of the macro-triangulation to the
    atomistic domain. Note that in $\Oma$, $\Th$ coincides with $\Tm$ and has no hanging nodes.  }
\end{figure}

\begin{assumption} \label{AsmMesh}
The boundary of $\Oma$ is aligned with edges of $\Tm$ and the mesh size on
$\pp\Oma$ is equal to the lattice spacing.
\end{assumption}

Assumption \ref{AsmMesh} implies that any microelement $\tau \in \Tm$ must
belong either entirely to $\Oma$ or to $\Omc$. This yields a natural
extension $\Th$ of $\Thc$, which is obtained by adding all
micro-elements $\tau \in \Tm$, $\tau \subset \Oma$, so that $\Th$ and
$\Tm$ coincide in $\Oma$. The requirement that the mesh size on
$\pp\Oma$ is equal to the lattice spacing implies that the extended mesh $\Th$ {\em
  has no hanging nodes}, which requires that the mesh size on
$\pp\Oma$ is equal to the lattice spacing.

\medskip \noindent The definitions of the element size $h_T$, the mesh
size function $h(x)$, and the shape regularity, from
\S\ref{sec:qc:prelims:mesh}, are extended to $\Th$ and $\Th^\per$.

For any lattice function $w : \Lhex \to \R^k$ we define the P1
micro-interpolant $\bar w$, that is, $\bar w \in \WW^{1,\infty}_{\rm
  loc}(\R^2)^k$ and $\bar w(x) = w(x)$ on the lattice sites $x \in
\Lhex$. In particular, the gradient $\D \bar{w}$, which is a piecewise
constant function, is also well-defined.

Note that, if $y_h \in \Ysh$, then $y_h$ is interpreted as the
continuous P1 interpolant with respect to the mesh $\Th$ (the {\em
  macro-interpolant}), while $\bar{y}_h$ is understood as the P1
interpolant with respect to the mesh $\Tm$ (the {\em
  micro-interpolant}). In our analysis we will require some technical
results to compare $\bar{y}_h$ and $y_h$. The following Lemma gives a
global comparison result, while a local variant is established in
Lemma \ref{th:Dbaryh_Dyh_micro} below. The proof is given in Appendix \ref{sec:app_proofs}.

\begin{lemma}
  \label{th:Dbaryh_Dyh_macro}
  Let $y_h \in \Ysh$, and $p \in [1,\infty]$; then
  \begin{align}
    \label{eq:interp:Dbaryh_Dyh_Omc}
    \| \D \bar{y}_h \|_{\LL^p(\Om)} \leq~& \Cbarhc \|\D y_h \|_{\LL^p(\Om)},
  \end{align}
  where $\Cbarhc = \max(3^{(p-2)/(2p)}, 3^{(2-p)/(2p)}) \leq \sqrt{3}$.
\end{lemma}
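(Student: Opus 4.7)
The plan is to analyse the elementwise sum $\|\D\bar{y}_h\|_{\LL^p(\Om)}^p = \sum_{\tau \in \Tm} |\tau|\,|\mG_\tau|^p$ (where $\mG_\tau := \D\bar{y}_h|_\tau$) by passing to edge integrals of $\D y_h$ and then to volume integrals via the bond-density lemma, with the hexagonal identities controlling the loss at each transfer.

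First I would exploit that, on each micro-triangle $\tau$, the three edge directions $r_{e_1},r_{e_2},r_{e_3}$ form a ``half-set'' of $\Rnn$. Combining \eqref{eq:quadratic_form_identity} with the symmetry $\aa_{j+3} = -\aa_j$ yields the local identity $|\mG|^2 = \tfrac{2}{3}\sum_{k=1}^{3} |\mG r_{e_k}|^2$ for every $\mG \in \R^{2 \times 2}$. Raising to the $p/2$ power and using the $\ell^p$--$\ell^2$ inequalities in $\R^3$ (applied in opposite directions according to whether $p \leq 2$ or $p \geq 2$) produces a pointwise bound
\begin{displaymath}
  |\mG_\tau|^p \leq C_0(p)\, \sum_{k=1}^{3} |\mG_\tau r_{e_k}|^p,
\end{displaymath}
with $C_0(p) = (2/3)^{p/2}$ for $p \leq 2$ and $C_0(p) = 2^{p/2}/3$ for $p \geq 2$. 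I would then use \eqref{eq:mintDc_Da} to rewrite $\mG_\tau r_{e_k} = \Da{r_{e_k}} y_h(v_k) = \int_0^1 \D y_h(v_k + tr_{e_k}) r_{e_k}\,\dt$, and Jensen's inequality to convert $|\mG_\tau r_{e_k}|^p$ into the edge integral $\int_{e_k} |\D y_h \cdot r_{e_k}|^p\,\ds$.

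Next I would multiply by $|\tau| = \sqrt{3}/4$ and sum over $\tau \in \Tm$. Since each unordered nearest-neighbour bond bounds exactly two triangles, and $\Btotnn$ contains both orientations of each bond, the double sum $\sum_\tau \sum_{k=1}^{3} \int_{e_k}$ reassembles exactly as $\sum_{b \in \Btotnn} \int_b$. Because $|\D y_h \cdot r|^p$ is piecewise constant on $\Th^\per$, applying the periodic bond-density lemma (Lemma \ref{th:bond-dens-per}) element by element for each direction $r \in \Rnn$ converts this into a volume integral:
\begin{displaymath}
  \sum_{b \in \Btotnn} \int_b |\D y_h \cdot r_b|^p \ds = \frac{1}{\det \mBhex} \int_\Om \sum_{r \in \Rnn} |\D y_h \cdot r|^p \dV.
\end{displaymath}

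A second application of \eqref{eq:quadratic_form_identity} together with the $\ell^p$--$\ell^2$ inequalities bounds the integrand by $C_1(p)\,|\D y_h|^p$ pointwise, with $C_1(p) = 3 \cdot 2^{1-p/2}$ for $p \leq 2$ and $C_1(p) = 2^{1-p/2}\, 3^{p/2}$ for $p \geq 2$. Collecting the constants (the prefactor $|\tau|/\det\mBhex$ simplifies to $1/2$), a short direct computation shows $\tfrac{1}{2}\,C_0(p)\,C_1(p) = 3^{|p/2 - 1|}$ in both regimes, whence
\begin{displaymath}
  \|\D\bar{y}_h\|_{\LL^p(\Om)}^p \leq 3^{|p/2 - 1|}\, \|\D y_h\|_{\LL^p(\Om)}^p = \Cbarhc^{\,p}\,\|\D y_h\|_{\LL^p(\Om)}^p,
\end{displaymath}
using $|p/2-1| = p\,|1/2 - 1/p|$. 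The case $p = \infty$ follows by passing to the limit $p \to \infty$ on a bounded domain (or by running the same argument with suprema in place of integrals). The main obstacle is purely combinatorial-algebraic bookkeeping: the $\ell^p$--$\ell^2$ inequality must be applied in opposite directions for $p \leq 2$ and $p \geq 2$, and the two independent applications of \eqref{eq:quadratic_form_identity} (local to $\tau$ and pointwise in $\Om$) each contribute one factor whose $p$-dependence must be tracked so that the symmetric expression $3^{|1/2 - 1/p|}$ emerges cleanly on both sides of $p=2$.
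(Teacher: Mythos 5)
Your proposal is correct and follows essentially the same route as the paper's proof: both pass from $|\D \bar{y}_h|$ to nearest-neighbour directional derivatives via the hexagonal identity \eqref{eq:quadratic_form_identity}, compare the finite differences $\Da{r} y_h$ with bond averages of $\Dc{r} y_h$ via \eqref{eq:mintDc_Da} and Jensen's inequality, and convert bond sums to volume integrals with the periodic bond-density lemma, arriving at the identical constant $\Cbarhc = 3^{|1/2-1/p|}$. The only difference is bookkeeping: you organize the first transfer locally over micro-triangles (each edge counted twice, with the prefactor $|\tau|/\det\mBhex = 1/2$), whereas the paper argues globally direction-by-direction, invoking the bond-density lemma on the micro-triangulation for that step as well.
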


\subsection{$\WW^{2,\infty}$-conforming interpolants}
\label{sec:interp:C1}
Smoothness of the atomistic solution in the continuum region is one of
the key requirements for error estimates in a/c methods
\cite{Dobson:arXiv0903.0610,Ortner:qnl.1d}. In previous 1D analyses of
a/c methods smoothness was measured via second and third order finite
differences. Although this is in principle still possible in 2D, it is
more convenient in the analysis to make use of the smoothness of
interpolants that belong to $\WW^{2,\infty}_{\rm loc}(\R^2)$. One
possible approach is to choose one of the $\WW^{2,\infty}$-conforming
finite elements (see Remark \ref{rem:hct}), however, it turns out that
our analysis requires no explicit construction and it is therefore
more convenient to define the class of all $\WW^{2,
  \infty}$-conforming interpolants of deformations $y \in \YsB$:
\begin{align*}
  \Pi_2(y) := \b\{ \tilde{y} \in \WW^{2,\infty}(\R^2)^2 ~:~&
  \tilde{y}(x) = y(x) \text{ for all } x \in \Lhex, \text{ and } \\
  &
  \tilde{y}(x + N \a_j) = \mB (N\a_j) + \tilde{y}(x) \text{ for all } x \in \R^2, j
    = 1,2 \b\}.
\end{align*}
We immediately obtain the following results.

\begin{lemma}[Interpolation Error Estimates]
  \label{th:interp:int_err}
  Let $p \in [1, \infty]$, then there exists a constant $\CIhtil$ that
  depends only on $p$ and on the shape regularity of $\Th$, such that,
  for all $y \in \Ys$,
  \begin{equation}
    \label{eq:interp:int_err_1}
    \b\| \D \tilde{y} - \D I_h y \b\|_{\LL^p(T)} \leq \CIhtil h_T \b\|
    \D^2 \tilde{y} \b\|_{\LL^p(T)} \qquad \forall T \in \Th \quad
    \forall \tilde y \in \Pi_2(y).
  \end{equation}

  Moreover, there exists a constant $\CImtil$, which depends only on
  $p$, such that
  \begin{equation}
    \label{eq:interp:int_err_mu}
    \b\| \D \tilde{y} -\D \bar{y} \b\|_{\LL^p(\tau)} \leq \CImtil \b\|
    \D^2 \tilde{y} \b\|_{\LL^p(\tau)} \qquad \forall \tau \in \Tm \quad
    \forall \tilde y \in \Pi_2(y).
  \end{equation}
\end{lemma}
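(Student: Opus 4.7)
The plan is to recognise both estimates as standard P1 Lagrange interpolation error bounds, which follow from the Bramble--Hilbert lemma on a reference triangle together with the usual affine scaling argument.

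For the first estimate \eqref{eq:interp:int_err_1}, fix $T \in \Th$ with vertices $x_1, x_2, x_3$. By Assumption \ref{AsmMesh}, every vertex of $\Th$ lies in $\Lhex$, so the defining property of $\tilde{y} \in \Pi_2(y)$ gives $\tilde{y}(x_j) = y(x_j)$ for $j = 1, 2, 3$. On the other hand, $I_h y$ is, by definition, the continuous piecewise-affine function on $\Th$ that agrees with $y$ at vertices of $\Th$ (at least at repatoms; vertices which are not repatoms lie in $\Oma$ and are also in $\Lhex$, so the identity still holds). Hence $I_h y|_T$ is the unique affine function that coincides with $\tilde{y}$ at the vertices of $T$, i.e.\ it is the P1 Lagrange interpolant of $\tilde{y}|_T$. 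The desired bound is then the classical interpolation estimate: pull back to a fixed reference triangle $\hat T$, apply the Bramble--Hilbert lemma (or equivalently Deny--Lions) to kill affine functions, and push forward, which produces a factor of $h_T$ from the Jacobian rescaling of $\D$ and $\D^2$. Shape regularity of $\Th$ bounds the norms of the affine transformation and its inverse, yielding the constant $\CIhtil$ depending only on $p$ and the shape regularity constant.

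For the second estimate \eqref{eq:interp:int_err_mu}, the same argument applies to each micro-element $\tau \in \Tm$, noting that $\bar y|_\tau$ is the P1 Lagrange interpolant of $\tilde y|_\tau$ because both functions agree with $y$ at the three lattice vertices of $\tau$. The simplification is that every $\tau \in \Tm$ is congruent to a fixed equilateral triangle with edge length $1$ (up to rotation by powers of $\mQ_6$ and reflection), so the reference-element estimate applies directly without a shape-regularity factor, and the resulting constant $\CImtil$ depends only on $p$. The unit lattice spacing also explains why no factor $h_\tau$ appears.

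There is no substantial obstacle here; the only point to be a little careful about is that $\tilde y$ is only required to lie in $\WW^{2,\infty}$, not to be $\CC^2$, so the Bramble--Hilbert step should be carried out in the $\WW^{2,p}$ setting (valid for all $p \in [1,\infty]$). The periodicity condition in the definition of $\Pi_2(y)$ plays no role in either estimate, since both bounds are local on a single element of $\Th$ or $\Tm$.
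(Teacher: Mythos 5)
Your proposal is correct and follows essentially the same route as the paper, which simply invokes the standard P1 interpolation error estimate (Ciarlet) for \eqref{eq:interp:int_err_1} and observes that $\bar{y}$ is the P1 interpolant of $\tilde{y}$ on the micro-triangulation for \eqref{eq:interp:int_err_mu}, with $\CImtil$ independent of mesh quality since $\Tm$ has a single element shape. Your added detail (Bramble--Hilbert in the $\WW^{2,p}$ setting, unit lattice spacing explaining the absence of an $h$ factor) is a faithful expansion of the same argument rather than a different one.
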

\begin{proof}
  The estimate \eqref{eq:interp:int_err_1} is a standard interpolation
  error estimate \cite{Ciarlet:1978}. The estimate
  \eqref{eq:interp:int_err_mu} follows from the fact that $\bar{y}$ is
  the P1-interpolant of $\tilde{y}$ on the micro-triangulation. The
  constant $\CImtil$ is independent of the mesh quality since $\Tm$
  contains only a single element shape.
\end{proof}

We conclude this section with a remark on a specific choice of
interpolant $\tilde{y} \in \Pi_2(y)$, which can be used to establish
an equivalence between $\D^2\tilde{y}$, for some $\tilde{y} \in
\Pi_2(y)$, and jumps of $\D\bar{y}$ across micro-element edges.
Measuring smoothness of $y$ in terms of these jumps would in fact be a natural extension of second order finite differences to 2D.
To this end, we define $\Fm^\per$ to be the set of edges of
$\Tm^\per$. The set of edges $f \in \Fm^\per$ such that ${\rm int}(f)
\subset \Om$ is denoted by $\Fm$, where ${\rm int}(f)$ denotes the
relative interior.

\begin{remark}[The HCT interpolant]
  \label{rem:hct}
  The Hsieh--Clough--Tocher (HCT) element is a $\CC^1$-conforming
  element for which the degrees of freedom are point values, gradient
  values, and normal derivatives; see Figure
  \ref{fig:cloughtocher}. We refer to \cite[Sec. 6.1]{Ciarlet:1978}
  for a detailed discussion and further references.

  For each micro-element $\tau \in \Tm^\per$, let $Q_\tau \subset
  \Lhex$ denote the set of vertices, $F_\tau \subset \Fm^\per$ the set
  of edges, and $q_f$ the edge midpoint of an edge $f \in \Fm$. We
  denote the basis function associated with the nodal value at a
  vertex $q$ by $\psi_q$, the basis function associated with the
  partial derivatives $\pp_\alpha, \alpha = 1,2$, at a vertex $q$ by
  $\Psi_{q, \alpha}$, and the basis function associated with the
  normal derivative at an edge midpoint $q_f$, $f \in \Fm$, by
  $\psi_f$.

\begin{figure}
  \includegraphics[width=2.5cm]{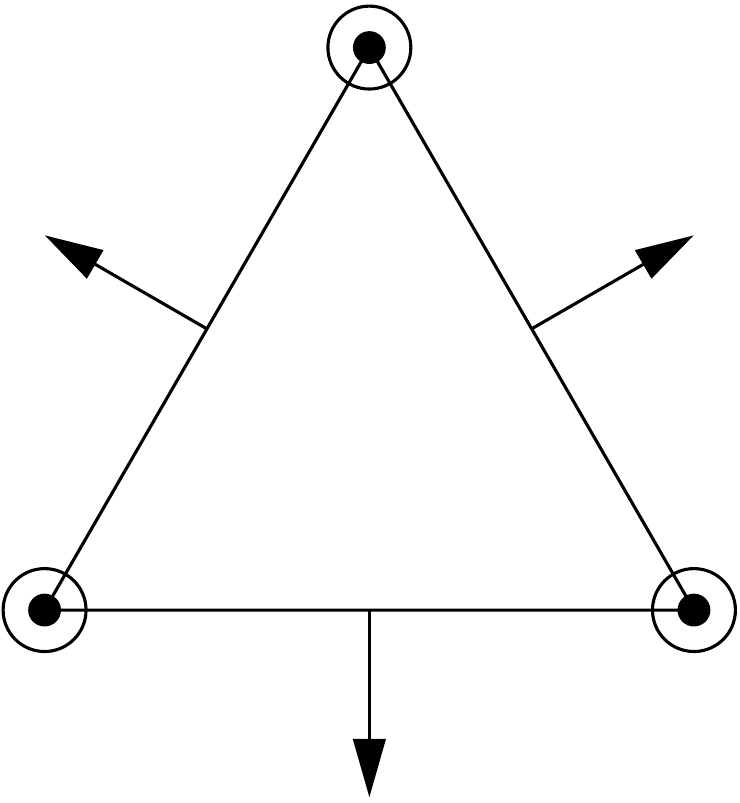}
  \caption{\label{fig:cloughtocher} Illustration of the degrees of
    freedom in the $\CC^1$-conforming Hsieh--Clough--Tocher element:
    black dots denote point values, circles denote gradient values,
    arrows denote directional derivatives.}
\end{figure}

For each $q \in \Lhex$ and $f \in \Fm^\per$ we define the patches
\begin{displaymath}
  \omega_q := \bigcup \b\{ \tau \in \Tm^\per : q \in \tau
  \b\}, \quad \text{and} \quad
  \omega_f := \bigcup \b\{ \tau \in \Tm^\per : f \subset
  \tau \b\}.
\end{displaymath}
We define the HCT interpolant $\tilde w$ of a lattice function $w :
\Lhex \to \R$ by
\begin{displaymath}
  \label{eq:interp:defnHCT}
  \tilde w_{\rm hct} := \sum_{q \in \Lhex} \psi_q w(q) 
  + \sum_{q \in \Lhex} \sum_{\alpha = 1}^2 \Psi_{q,\alpha}
  \mint_{\omega_q} \pp_\alpha \bar{w} \dV
  + \sum_{f \in \Fm^\per} \psi_f \mint_{\omega_f} \pp_{\nu_f} \bar{w} \dV.
\end{displaymath}
According to \cite[Thm.\ 6.1.2]{Ciarlet:1978}, the HCT interpolant
$\tilde{w}_{\rm hct}$ admits one classical and two weak derivatives. For vector
valued functions the HCT interpolant is defined componentwise.

With these definitions it is fairly straightforward to prove the
following chain of inequalities: 
\begin{equation}
  \label{eq:equiv_hct_p1}
  c_1 \| \D^2 \tilde{w}_{\rm hct} \|_{\LL^p(\tau)} \leq \b\| [\D
  \bar{w}]
  \b\|_{\LL^p(\Gamma_\tau)} \leq c_2 \| \D^2 \tilde{w}_{\rm hct} \|_{\LL^p(\omega_\tau)},
\end{equation}
for all micro-elements $\tau \in \Tm^\per$ and lattice functions $w$;
where $c_1, c_2 > 0$,
\begin{displaymath}
  \Gamma_\tau = \bigcup \b\{ f \in \Fm^\per : f\cap\tau \neq \emptyset
  \b\}, \quad \text{and} \quad
  \omega_\tau = \bigcup \b\{ \tau' \in \Tm^\per : \tau' \cap \tau \neq
  \emptyset \b\},
\end{displaymath}
and where $[\D\bar{w}]$ denotes the jump of $\D\bar{w}$ across the
element edges.

In particular, the inequalities in \eqref{eq:equiv_hct_p1} show a
local equivalence between second derivatives of ``good''
$\WW^{2,\infty}$-conforming interpolants and jumps of $\D\bar{w}$.
\end{remark}

\subsection{Notation for edges}
\label{sec:cons:edges}
Several of our estimates will be phrased in terms of the jumps of $\D
y_h$, $y_h \in \Ysh$, across element edges, for which we now introduce
some notation: let $\Fhper$ denote the set of (closed) edges of the
triangulation $\Th^\per$, and let
\begin{align*}
  \Fh =~& \b\{ f \in \Fhper : {\rm int}(f) \subset \Om \b\}, \quad
  \text{and} \quad
  \Fhc = \b\{ f \in \Fh : f \not\subset \Oma \b\},
\end{align*}
where ${\rm int}(f)$ denotes the relative interior of $f$. That is,
the set $\Fh$ includes one periodic copy of all element edges
contained in $\Om$, and $\Fhc$ excludes all edges that are subsets of
$\Oma$.

Let $f \in \Fhper$, $f = T_+ \cap T_-$, $T_\pm \in \Th$, and suppose
that $w : {\rm int}(T_+) \cup {\rm int}(T_-) \to \R^k$ has
well-defined traces $w^\pm$ from $T^{\pm}$, then we define the jump
$[w](x) := w_+(x) - w_-(x)$ for all $x \in {\rm int}(f)$.

Whenever we write $\int_{\Fhc}$, $\LL^p(\Fhc)$, {\it etc.}, we
identify $\Fhc$ with the union of its elements.

\subsection{Further auxiliary results}
\label{sec:aux:aux}
Our next lemma provides a tool to estimate jumps across edges. The
proof is given in Appendix \ref{sec:app_proofs}.

\begin{lemma}
  \label{th:cons:est_jmp_Dyh}
  Let $y \in \Ys$ and let $f \in \Fhper$, $f = T_+ \cap T_-$ for
  $T_\pm \in \Th$; then
  \begin{equation}
    \label{eq:cons:est_jmp_Dyh}
   \b\| [\D I_h y] \b\|_{\LL^p(f)} \leq \Cf \b\| h^{1/p'} \D^2
    \tilde{y} \b\|_{\LL^p(T_+ \cup T_-)}
    \qquad \forall \tilde{y} \in \Pi_2(y),
  \end{equation}
  where $\Cf$ depends only on the shape regularity of $\Th$.  In
  particular, we also have
  \begin{equation}
    \label{eq:cons:est_jmp_Dyh_Omc}
    \b\| [\D I_h y] \b\|_{\LL^p(\Fhc)} \leq \Cf 3^{1/p} \b\| h^{1/p'}
    \D^2 \tilde{y} \b\|_{\LL^p(\Omc)}
    \qquad \forall \tilde{y} \in \Pi_2(y).
  \end{equation}
\end{lemma}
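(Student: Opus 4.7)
\emph{Plan.} The key observation is that for any $\tilde y \in \Pi_2(y)$, the interpolant $I_h y$ agrees nodally with $\tilde y$ on the triangulation $\Th$ (both reproduce $y$ at the vertices in $\Lrep$), so I can rewrite the jump relative to a globally smooth function. Since $\tilde y \in \WW^{2,\infty}(\R^2)^2$, its gradient $\D\tilde y$ is (Lipschitz) continuous, so $[\D\tilde y] \equiv 0$ across any edge of $\Th^\per$. Hence on $f = T_+ \cap T_-$,
\begin{displaymath}
  [\D I_h y] = [\D(I_h y - \tilde y)].
\end{displaymath}
This reduces the problem to controlling $\D(I_h y - \tilde y)$ on each adjacent element.

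\emph{Key steps.} First, on each triangle $T = T_\pm$, I apply a standard scaled trace inequality (relying on shape regularity of $\Th$) to $v := \D(I_h y - \tilde y)$ componentwise,
\begin{displaymath}
  \| v \|_{\LL^p(f)} \leq C \b( h_T^{-1/p} \| v \|_{\LL^p(T)} + h_T^{1/p'} \| \D v \|_{\LL^p(T)} \b).
\end{displaymath}
Since $I_h y$ is piecewise affine on $\Th$, $\D^2 I_h y \equiv 0$ on $T$, so $\D v = -\D^2 \tilde y$ there. Next, I invoke the interpolation error estimate \eqref{eq:interp:int_err_1} from Lemma \ref{th:interp:int_err} to replace $\|v\|_{\LL^p(T)}$ by $\CIhtil h_T \| \D^2 \tilde y \|_{\LL^p(T)}$. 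The two contributions then combine as $h_T^{1/p'}\|\D^2\tilde y\|_{\LL^p(T)}$, and summing the two sides $T_\pm$ of $f$ yields \eqref{eq:cons:est_jmp_Dyh} with a constant $\Cf$ depending only on the shape regularity of $\Th$.

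\emph{From local to global.} To obtain \eqref{eq:cons:est_jmp_Dyh_Omc}, I raise \eqref{eq:cons:est_jmp_Dyh} to the $p$-th power and sum over $f \in \Fhc$. The essential point is that, under Assumption \ref{AsmMesh}, every $T \in \Th$ lies entirely either in $\Oma$ or in $\Omc$; since $\Oma$ is closed, any edge $f$ with $T_\pm \cap \Oma \neq \emptyset$ on either side satisfies $f \subset \Oma$ and is therefore excluded from $\Fhc$. Consequently, for $f \in \Fhc$ both $T_\pm \subset \Omc$. Each triangle $T$ has exactly three edges, hence appears at most three times in the sum, giving
\begin{displaymath}
  \sum_{f \in \Fhc} \b\| h^{1/p'} \D^2 \tilde y \b\|_{\LL^p(T_+ \cup T_-)}^p \leq 3 \, \b\| h^{1/p'} \D^2 \tilde y \b\|_{\LL^p(\Omc)}^p,
\end{displaymath}
and the factor $3^{1/p}$ emerges after taking the $p$-th root. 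The only mildly delicate point is verifying that the implicit constant in the trace inequality depends only on shape regularity, not on the (possibly wildly varying) element sizes; this is standard via a Bramble--Hilbert / affine-equivalence argument, so I do not foresee any substantial obstacle.
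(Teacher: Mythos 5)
Your proof is correct and follows the same basic strategy as the paper's: replace $[\D I_h y]$ by $[\D(I_h y - \tilde y)]$ using the continuity of $\D \tilde y$, bound the contribution from each of $T_\pm$ by a trace estimate combined with the interpolation bound \eqref{eq:interp:int_err_1} and the fact that $\D^2 I_h y = 0$ elementwise, and then sum over $\Fhc$ with the factor $3^{1/p}$ coming from each triangle having three edges (your observation that $f \in \Fhc$ forces $T_\pm \subset \Omc$ is exactly the point needed there). The only genuine difference is the trace step: you invoke a scaled $L^p$ trace inequality directly for every $p$, whereas the paper exploits that $[\D I_h y]$ is constant along $f$, applies the exact trace identity of Lemma \ref{th:app_trace_id} to the edge average, proves the endpoint cases $p=1$ and $p=\infty$, and recovers intermediate $p$ by Riesz--Thorin interpolation. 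Your route is slightly more elementary and avoids the interpolation theorem; the paper's route yields a constant $\Cf$ that is manifestly independent of $p$ (and sharper than what H\"older applied to the $p=1$ bound would give). With your argument, to justify the claim that $\Cf$ depends only on the shape regularity of $\Th$, you should note that both the scaled trace constant and the interpolation constant $\CIhtil$ can be chosen uniformly in $p$ on shape-regular triangles --- a standard fact, but it is the one mildly delicate point you correctly flag, and it deserves a sentence rather than being left implicit.
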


\medskip \noindent The previous lemma shows that we can admit jumps in
our estimates, and subsequently bound them in terms of the smooth
interpolants. The following local version of Lemma
\ref{th:Dbaryh_Dyh_macro} and its corollary, Lemma
\ref{th:cor_interpbar}, are motivated by this observation. The proof
of Lemma \ref{th:Dbaryh_Dyh_micro} is again given in Appendix \ref{sec:app_proofs}. We
remark that the constant $\Cbarh$ is fairly moderate as the discussion
at the end of the proof shows.

\begin{lemma}
  \label{th:Dbaryh_Dyh_micro}
  Let $y_h \in \Ysh$, $\tau \in \Tm$, and $p \in [1, \infty]$; then
  \begin{equation}
    \label{eq:interp:Dbaryh_Dyh_tau}
    \| \D \bar{y}_h \|_{\LL^p(\tau)} \leq \Cbarh \Big( \|\D y_h
    \|_{\LL^p(\tau)}^p
    + \b\| [\D y_h] \b\|_{\LL^p(\Fhper \cap {\rm int}(\tau))}^p\Big)^{1/p},
  \end{equation}
  where $\Cbarh$ depends only on the shape regularity of $\Th$.
\end{lemma}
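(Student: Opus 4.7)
Since $\bar y_h|_\tau$ is the P1 interpolant of $y_h$ at the three lattice vertices of $\tau$, it is affine on $\tau$, and I denote its constant Jacobian by $\bar G := \D \bar y_h|_\tau$. Let $\{\tau_j\}_{j=1}^m$ be the subdivision of $\tau$ induced by the macro-elements $T \in \Th^\per$ meeting ${\rm int}(\tau)$; on each $\tau_j$ the gradient $\D y_h$ is a constant matrix $G_j$. Shape regularity of $\Th$ bounds both $m$ and the geometric shapes of the $\tau_j$. The auxiliary function $w := \bar y_h - y_h$ is then continuous, piecewise affine on $\{\tau_j\}$, vanishes at the three vertices of $\tau$, and its jump satisfies $[\D w] = -[\D y_h]$ across each macro-edge in ${\rm int}(\tau)$ and is zero elsewhere. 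By the triangle inequality, together with $(a+b)^p \leq 2^{p-1}(a^p + b^p)$, the Lemma reduces to the discrete Poincar\'e-type estimate
\begin{equation*}
\|\D w\|_{\LL^p(\tau)} \leq C\, \|[\D w]\|_{\LL^p(\Fhper \cap {\rm int}(\tau))},
\end{equation*}
for every such piecewise-affine $w$, with $C$ depending only on $p$ and the shape regularity of $\Th$.

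To prove this Poincar\'e estimate, the key identity is $\bar G e_i = y_h(v_{i+1}) - y_h(v_i) = \sum_j s_{ij}\, G_j e_i$ for each edge $e_i$ of $\tau$, where $s_{ij}$ is the length fraction of $e_i$ lying in $\tau_j$; combined with the normal-jump condition $[\D y_h] = c_f \otimes n_f$ across each macro-edge $f$, this allows me to express $\D w|_{\tau_j} = \bar G - G_j$ as an explicit linear combination of the jumps $\{[\D y_h]_f\}$ whose coefficients are rational functions of the intersection lengths and angles. Multiplying by $|\tau_j|^{1/p}$, summing over $j$, and applying discrete H\"older yields the $\LL^p$ bound with a constant controlled purely by the shape regularity.

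The main obstacle is verifying that the constant $C$ stays bounded when a macro-edge clips a small corner of $\tau$. If a macro-edge lies at distance $\delta$ from a vertex of $\tau$, then the clipped subregion has area $\sim \delta^2$ and the intersecting macro-edge piece has length $\sim \delta$, so the right-hand side scales as $\delta |[\D y_h]|^p$ while the subregion contributions to the left-hand side scale at most as $\delta^p |[\D y_h]|^p + \delta^2 |[\D y_h]|^p$, both of which are absorbed since $\delta \leq \mathrm{diam}(\tau) = 1$ and $p \geq 1$. An induction on the number of macro-edges crossing $\tau$ (itself bounded by shape regularity) then closes the argument and shows that the resulting $\Cbarh$ is only moderately large, consistent with the paper's remark.
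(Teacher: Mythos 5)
Your proposal is correct in outline, but it establishes the lemma by a genuinely different mechanism than the paper. Both arguments start from the same key identity, namely that $(\D\bar y_h|_\tau)\,\a_i$ equals the average of the piecewise constant directional derivative $\D y_h\,\a_i$ over the corresponding edge of $\tau$; the paper then finishes in one stroke by a trace inequality for piecewise constant functions (Lemma~\ref{th:app_traceineq_P0}, proved from the simplex trace identity of Lemma~\ref{th:app_trace_id} and a BV density argument), which converts the edge average into a volume term plus jump terms with \emph{no} case analysis of how the macro-edges cut $\tau$. You instead subtract $y_h$, reduce to the Poincar\'e-type bound $\|\D w\|_{\LL^p(\tau)}\lesssim\|[\D w]\|_{\LL^p}$ for $w=\bar y_h-y_h$, and prove it by telescoping the gradient jumps across the chords in which the macro-edges meet $\tau$. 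This can be completed: since edges of $\Th^\per$ can neither cross nor terminate in ${\rm int}(\tau)$, these chords are pairwise disjoint and their number is bounded by shape regularity, and writing $(\D\bar y_h|_\tau-\D y_h|_{\tau_j})\a_i$ as a signed sum of jumps weighted by edge-length fractions shows that a chord with short intersection length $\ell_f$ enters the large pieces with coefficient $O(\ell_f)$ while the region it cuts off has area $O(\ell_f^2)$ — precisely your corner-clipping scaling, and (because $\tau$ is a fixed equilateral triangle) short chords can only arise that way. Note, however, that this bookkeeping, not a generic ``discrete H\"older'' step, is what produces a shape-regularity-only constant: contrary to your opening claim, shape regularity of $\Th$ does \emph{not} control the shapes of the pieces $\tau_j$ (they may be arbitrarily thin slivers), only their number, and your ``rational functions of lengths and angles'' coefficients are harmless only because they reduce to bounded length fractions; your ``induction'' is then simply a sum over the boundedly many, non-crossing chords. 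In exchange for this degeneracy analysis, which the paper's trace-identity device sidesteps entirely, your route is more elementary (no BV machinery) and makes the role of the jumps transparent, whereas the paper's proof is shorter, uniform in the intersection geometry, and yields cleaner explicit constants (e.g.\ the stated $p=2$ bound).
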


\medskip \noindent Combining Lemma \ref{th:Dbaryh_Dyh_micro} and Lemma
\ref{th:interp:int_err}, we obtain the following corollary. Since this
is such a central tool in our analysis we give its complete proof in
the present section.

\begin{lemma}
  \label{th:cor_interpbar}
  Let $y \in \Ys$, $y_h \in \Ysh$, and $p \in [1, \infty]$; then
  \begin{equation}
    \label{eq:interp:interr_bar}
    \b\| \D \bar{y} - \D \ol{I_h y} \b\|_{\LL^p(\Omc)} \leq \Cybaryh 
    \b\| h \D^2 \tilde{y} \b\|_{\LL^p(\Omc)} \qquad \forall \tilde{y} \in \Pi_2(y),
  \end{equation}
  where $\Cybaryh$ depends only on the shape regularity of $\Th$.
\end{lemma}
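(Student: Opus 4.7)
The plan is to apply the triangle inequality with both $\tilde{y}$ and $I_h y$ as intermediate quantities, giving
\[
\|\D \bar{y} - \D \overline{I_h y}\|_{\LL^p(\Omc)} \leq \|\D \bar{y} - \D \tilde{y}\|_{\LL^p(\Omc)} + \|\D \tilde{y} - \D I_h y\|_{\LL^p(\Omc)} + \|\D I_h y - \D \overline{I_h y}\|_{\LL^p(\Omc)}.
\]
The first two terms are bounded directly by Lemma \ref{th:interp:int_err}: the first, summed over micro-elements $\tau \in \Tm$ with $\tau \subset \Omc$, is controlled by \eqref{eq:interp:int_err_mu} as $\CImtil \|\D^2 \tilde{y}\|_{\LL^p(\Omc)}$; the second, summed over macro-elements, is controlled by \eqref{eq:interp:int_err_1} as $\CIhtil \|h \D^2 \tilde{y}\|_{\LL^p(\Omc)}$. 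Since the macro mesh size $h$ is bounded below by the lattice spacing on all of $\Omc$, the unweighted first-term bound is absorbed into $\CImtil \|h \D^2 \tilde{y}\|_{\LL^p(\Omc)}$.

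For the third term, the key observation is that on any micro-element $\tau \in \Tm$ that lies entirely in a single macro-element of $\Thc$, $I_h y$ is already affine on $\tau$ and hence coincides with $\overline{I_h y}$ there; only micro-elements cut by macro-edges can contribute. On each such cut $\tau$ I would apply Lemma \ref{th:Dbaryh_Dyh_micro} to $y_h = I_h y$, bounding $\|\D \overline{I_h y}\|_{\LL^p(\tau)}$ in terms of $\|\D I_h y\|_{\LL^p(\tau)}$ together with the jump $\|[\D I_h y]\|_{\LL^p(\Fhper \cap {\rm int}(\tau))}$. The jump contributions are then controlled by Lemma \ref{th:cons:est_jmp_Dyh}: summing over $\Fhc$ yields $\|[\D I_h y]\|_{\LL^p(\Fhc)} \leq \Cf 3^{1/p} \|h^{1/p'} \D^2 \tilde{y}\|_{\LL^p(\Omc)} \leq C \|h \D^2 \tilde{y}\|_{\LL^p(\Omc)}$, again using $h \geq 1$.

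The main obstacle is the $\|\D I_h y\|_{\LL^p(\tau)}$ term on the right-hand side of Lemma \ref{th:Dbaryh_Dyh_micro}, which is not directly dominated by $\|h \D^2 \tilde{y}\|_{\LL^p(\Omc)}$ as desired. To circumvent this I would invoke the lemma on the shifted function $I_h y - \ell$ (for a locally chosen affine approximant $\ell$ of $\tilde{y}$) rather than on $I_h y$ itself: the jumps are unchanged by subtracting a smooth function, while $\|\D(I_h y - \ell)\|_{\LL^p(\tau)}$ becomes small by the standard interpolation estimate \eqref{eq:interp:int_err_1}. Aggregating these local contributions over the thin strip of micro-elements adjacent to macro-edges and combining with the first two terms yields the claimed bound, with a constant $\Cybaryh$ depending only on the shape regularity of $\Th$.
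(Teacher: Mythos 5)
Your proposal is correct and follows essentially the same route as the paper's proof: there too the argument works micro-element by micro-element, subtracts a local affine function (the paper's $z$ with $\D z = \D \bar{y}|_\tau$) so that Lemma \ref{th:Dbaryh_Dyh_micro} applies with the jumps left unchanged and then controlled via Lemma \ref{th:cons:est_jmp_Dyh}, while the remaining gradient term is reduced to the interpolation estimates of Lemma \ref{th:interp:int_err}. The only small imprecision is your claim that $\|\D(I_h y - \ell)\|_{\LL^p(\tau)}$ is handled by \eqref{eq:interp:int_err_1} alone: you also need $\|\D(\tilde{y}-\ell)\|_{\LL^p(\tau)}$ to be small, which follows if you pick $\ell$ with $\D\ell = \D\bar{y}|_\tau$ and invoke \eqref{eq:interp:int_err_mu}, exactly as the paper does.
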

\begin{proof}
  We cannot immediately use the interpolation error estimates
  \eqref{eq:interp:int_err_1} and \eqref{eq:interp:int_err_mu} to
  estimate the term $\| \D (\bar{y} - \ol{I_h y} )\|_{\LL^p(\Om)}$,
  due to the occurrence of ${\ol{I_h y}}$. Instead, we first fix a
  micro-element $\tau \subset \Omc$, define $z(x) := (\D \bar{y}|_\tau) x$ for all $x \in \R^2$, and use
  \eqref{eq:interp:Dbaryh_Dyh_tau} to estimate
  \begin{align*}
    \b\| \D (\bar{y} - \ol{I_h y})\b\|_{\LL^p(\tau)}^p 
        =~& \b\| \D \ol{I_h(y-z)} \b\|_{\LL^p(\tau)}^p \\
    \leq~& \Cbarh^p \Big[ \b\| \D I_h(y-z) \b\|_{\LL^p(\tau)}^p + \b\|
    [\D I_h (y - z)] \b\|_{\LL^p(\Fhc \cap {\rm int}(\tau))}^p \Big]
    \\
        =~& \Cbarh^p \Big[ \b\| \D (I_h y - \bar{y}) \b\|_{\LL^p(\tau)}^p + \b\|
    [\D I_h y ] \b\|_{\LL^p(\Fhc \cap {\rm int}(\tau))}^p \Big].
  \end{align*}

  We will next sum this estimate for all $\tau\in\Ta$.
  Using the fact that $\bar y = I_h y$ in $\Oma$, as well as the
  interpolation error estimates \eqref{eq:interp:int_err_1} and
  \eqref{eq:interp:int_err_mu}, and the jump estimate
  \eqref{eq:cons:est_jmp_Dyh_Omc}, we obtain, for any $\tilde{y} \in
  \Pi_2(y)$,
  \begin{align*}
    \b\| \D (\bar{y} - \ol{I_h y} ) \b\|_{\LL^p(\Om)}
        \leq~& \Cbarh \Big[ 
    \b\| \D (I_h y - \bar{y}) \b\|_{\LL^p(\Omc)} + \b\|
    [\D I_h y ] \b\|_{\LL^p(\Fhc)} \Big] \\
    \leq~& \Cbarh \Big[ \b\| \D (I_h y - \tilde{y}) \b\|_{\LL^p(\Omc)}
    + \b\| \D (\tilde{y} - \bar{y}) \b\|_{\LL^p(\Omc)}
    + \b\| [\D I_h y ] \b\|_{\LL^p(\Fhc)} \Big] \\
    \leq~& \Cbarh \Big[  
    \CIhtil \| h \D^2\tilde{y} \|_{\LL^p(\Omc)}
    + \CImtil \| \D^2 \tilde y \|_{\LL^p(\Omc)}
    + \Cf 3^{1/p} \| h^{1/p'} \D^2\tilde{y} \|_{\LL^p(\Omc)} \Big].
 \end{align*}
 Since $h \geq 1$, the stated result follows.
\end{proof}

\section{Consistency}
\label{sec:cons}
Recall from our preliminary discussion in \S\ref{sec:qc:errana} that
the {\em total consistency error} associated with the atomistic
solution $y^\a$ is
\begin{displaymath}
  \b\| \del\Eqc(I_h y^\a) \|_{\WW^{-1,p}_h}
  = \b\| \del\Eqc(I_h y^\a) - \del\Ea(y^\a) \b\|_{\WW^{-1,p}_h} =: \Econs_p(y^\a),
\end{displaymath}
where, for a functional $\Psi \in \Ush^*$, the negative Sobolev norm
is defined as
\begin{displaymath}
  \| \Psi \|_{\WW^{-1,p}_h} := \sup_{\substack{u_h \in
      \Ush \\ \| \D u_h \|_{\LL^{p'}(\Om)} = 1}} \b\< \Psi, u_h \b\>.
\end{displaymath}
The purpose of the present section is to prove the following estimate
on $\Econs_p$.

\begin{theorem}[Consistency]
  \label{th:cons:mainest}
  Suppose that Assumption \ref{AsmMesh} holds.  Let $y \in \Ys$ such that
  $\mu_\a(y) > 0$ and $\mu_\c(I_h y) > 0$. Then, for each $p \in
  [1,\infty]$, we have
  \begin{equation}
    \label{eq:cons:mainest}
    \Econs_p(y) \leq \Ccons \inf_{\tilde y \in \Pi_2(y)}
    \| h \D^2 \tilde y \|_{\LL^p(\Omc)},
      \end{equation}
  where $\Ccons$ depends on $\mu_a(y_\a)$, on $\mu_\c(I_h y)$, and on
  the shape regularity of $\Th$.
\end{theorem}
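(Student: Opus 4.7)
\emph{Proof plan.} The basic idea is to expand the residual $\b\<\del\Eqc(I_h y) - \del\Ea(y), u_h\b\>$ bond by bond and make the atomistic and continuum pieces directly comparable by invoking the identity $\Da{b}u_h = \mint_b \Dc{b} u_h \db$, valid since $u_h$ is piecewise affine. After this rewriting, each $b \in \Bc$ contributes $\mint_b [\phi'(\Dc{b} I_h y) - \phi'(\Da{b} y)] \cdot \Dc{b} u_h \db$, while each $b \in \Ba$ contributes the finite-difference analogue $[\phi'(\Da{b} I_h y) - \phi'(\Da{b} y)] \cdot \Da{b} u_h$, which vanishes whenever both endpoints of $b$ lie in $\Lrep$.

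The central task is then to bound $\phi'(\Dc{b} I_h y) - \phi'(\Da{b} y)$ by quantities depending only on $h\D^2\tilde y$. I would fix an arbitrary $\tilde y \in \Pi_2(y)$ and use the decomposition
\begin{equation*}
\Dc{b} I_h y - \Da{b} y \;=\; \Dc{b}(I_h y - \tilde y) \;+\; \b[\Dc{b}\tilde y - \mint_b \Dc{b}\tilde y \,\db\b],
\end{equation*}
which exploits $\Da{b} y = \Da{b} \tilde y = \mint_b \Dc{b}\tilde y \,\db$. The first term is controlled on each element of $\Th$ by Lemma \ref{th:interp:int_err}, while the second is a pointwise-minus-average bond quantity bounded by $|b|\, \|\D^2\tilde y\|$ on the supporting segment. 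A mean-value expansion of $\phi'$, combined with the invertibility hypotheses $\mu_\a(y) > 0$ and $\mu_\c(I_h y) > 0$ (which bound both arguments $\Dc{b} I_h y$ and $\Da{b} y$ away from $0$ uniformly in $b$), then turns the Lipschitz prefactor into a quantity controlled by $M_2(c|r_b|)$ for some fixed $c>0$ depending on $\mu_\a$ and $\mu_\c$.

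Summation over bonds would use Hölder in the bond parameter together with the hexagonal decomposition of Lemma \ref{th:L6_decomposition} and the periodic bond-density Lemma \ref{th:bond-dens-per} to convert $\sum_b (\cdots)$ into volume integrals; summability of $M_2(|r_b|)\,|r_b|^\alpha$ in the shells then absorbs the pile-up of long-range bonds. Interface bonds $b \in \Ba \setminus \Oma$ are handled in the same spirit, controlling $|\Da{b}(I_h y - y)|$ by the line integral of $|\D(I_h y - \tilde y)|$ along $b$ and invoking the local interpolation bound of Lemma \ref{th:cor_interpbar} on the micro-triangulation. Contributions from bonds crossing or terminating in the vacancy set $\Vac$ are tamed by the extension operator $\Ext$ of \S\ref{sec:interp:vac}, which keeps every quantity well-defined globally and allows the summation arguments to proceed as in the defect-free case.

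The main obstacle I anticipate is the careful bookkeeping at the atomistic/continuum interface and near the vacancies: bonds straddling $\pp\Oma$ produce mixed atomistic/continuum error contributions that must be matched across $\Fhc$ and bounded via the edge-jump estimate \eqref{eq:cons:est_jmp_Dyh_Omc}, and the hexagonal bond-density argument must be applied shell by shell in a way that tolerates both the boundary-layer geometry and the vacancy extensions while still producing the clean $\|h\D^2\tilde y\|_{\LL^p(\Omc)}$ bound on the right-hand side.
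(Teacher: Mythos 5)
Your plan has a genuine gap at the summation step, and it is exactly the step the paper's proof is built to circumvent. After your pointwise decomposition $\Dc{b} I_h y - \Da{b} y = \Dc{b}(I_h y - \tilde y) + [\Dc{b}\tilde y - \mint_b \Dc{b}\tilde y\,\db]$, both pieces are bounded by quantities involving $\D^2\tilde y$ evaluated \emph{along the one-dimensional bond segments} (or by elementwise sup norms of $\D^2\tilde y$). You then propose to ``use H\"older in the bond parameter together with the periodic bond-density lemma to convert $\sum_b(\cdots)$ into volume integrals.'' But the bond-density lemma only converts bond sums of integrands that are piecewise constant with respect to a triangulation with lattice vertices (equivalently, characteristic functions of lattice-vertex polygons); it does not apply to $\mint_b |\D^2\tilde y|^p\,\db$ for a general $\WW^{2,\infty}$ interpolant. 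For $p<\infty$ there is no bound of $\sum_{b: r_b=r}\mint_b g\,\db$ by $C\int_{\Omc} g\,\dV$ for general $g\ge 0$ (take $g$ concentrated near a single lattice line), so the clean right-hand side $\|h\D^2\tilde y\|_{\LL^p(\Omc)}$ cannot be reached this way. The best your route delivers, after replacing $|\D^2\tilde y|$ by piecewise-constant elementwise or patchwise suprema so that the bond-density lemma does apply, is a bound in terms of $\sum_T|T|\,(h_T\|\D^2\tilde y\|_{\LL^\infty(T)})^p$ and patch quantities $\|\D^2\tilde y\|_{\LL^\infty(\omega_{\tau,r})}$ — i.e.\ essentially the weaker simplified estimate of Theorem \ref{th:simplecons} in Appendix \ref{sec:simplecons}, not the stated estimate \eqref{eq:cons:mainest}. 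The same objection applies to your coarsening piece $\Dc{b}(I_h y-\tilde y)$: since $\tilde y$ is not piecewise affine, its bond sums also cannot be converted exactly, which is precisely the point made in Remark \ref{rem:choice_of_splitting}.

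The paper avoids this by a different splitting and an extra layer of combinatorics that your proposal omits. The consistency error is split through $\del\Ea(I_h y)$: the coarsening part $\del\Ea(I_h y)-\del\Ea(y)$ involves only lattice finite differences of the piecewise affine micro-interpolants $\ol{I_h y}$ and $\bar y$, for which the bond-density lemma applies exactly (Lemmas \ref{th:cons:aux_bdl}, \ref{th:cons:lip_delEa}), and the resulting $\|\D\bar y-\D\ol{I_h y}\|_{\LL^p}$ is bounded by $\|h\D^2\tilde y\|_{\LL^p(\Omc)}$ via Lemma \ref{th:cor_interpbar}. The modelling part $\del\Eqc(I_h y)-\del\Ea(I_h y)$ is kept entirely in terms of the piecewise constant field $\D I_h y$: the bond deviation $\mint_b|\Dc{b}y_h-\Da{b}y_h|^p\db$ is controlled by the jumps $[\D y_h]$ across the faces the bond crosses, and the crossing counts are controlled by the $\Nj$ and $\Ncross$ estimates (Lemma \ref{th:cons:Nj_est} and the parallelogram argument, which is itself a bond-density argument for lattice-vertex polygons); only at the very end are the face jumps converted to $h^{1/p'}\D^2\tilde y$ by the trace-based estimate of Lemma \ref{th:cons:est_jmp_Dyh}. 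Your sketch contains neither the crossing-count machinery nor a substitute for it, so as written it cannot produce the $\LL^p$ bound of the theorem for $p<\infty$ (it would be fine for $p=\infty$, and it would prove the weaker appendix-type result).
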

\begin{proof}
  To prove this result, we first split the consistency error into a
  {\em coarsening error} and a {\em modelling error}:
  \begin{align*}
    \Econs_p(y) =~& \b\| \del\Eqc(I_h y) - \del\Ea(y)
    \b\|_{\WW^{-1,p}_h} \\
    \leq~& \b\| \del\Eqc(I_h y) - \del\Ea(I_h y)
    \b\|_{\WW^{-1,p}_h} + \b\| \del\Ea(I_h y) - \del\Ea(y)
    \b\|_{\WW^{-1,p}_h}, \\
    =:~& \Emodel_p(y) + \Ecoarse_p(y).
  \end{align*}
  We note, however, that due to the fact that we estimate the
  modelling error at the interpolant $I_h y$, the mesh dependence is
  not entirely removed from $\Emodel$.

  The estimate for the coarsening error is given in Lemma
  \ref{th:cons:Ecoarse}, and the estimate for the modelling error in
  Lemma \ref{th:cons:modelest_C}, which together yield
  \eqref{eq:cons:mainest} with $\Ccons = \Ccoarse + \Cmodel$. Note
  that we have ignored the improved mesh size dependence of the
  modelling error and estimated $1 \leq h$ to obtain $\Emodel_p(y)
  \leq \Cmodel \| h \D^2 \tilde{y} \|_{\LL^p(\Omc)}$ for all
  $\tilde{y} \in \Pi_2(y)$.
\end{proof}

\begin{remark}
  \label{rem:cons:simplecons}
  The proof of Theorem \ref{th:cons:mainest} is fairly involved. This
  is due to the relatively weak assumptions that we made on the mesh
  $\Th$, as well as the fact that we insisted to estimate the
  consistency error in terms of $\| h \D^2 \tilde y \|_{\LL^p(\Omc)}$
  only. Simpler arguments can be given if weaker estimates are
  sufficient; see Appendix~\ref{sec:simplecons}.
\end{remark}

\subsection{Coarsening error}
\label{sec:cons:coarse}
In this section, we establish the coarsening error estimate. The two
main ingredients are a local Lipschitz bound on $\del\Ea$, and the
interpolation error estimate established in Lemma
\ref{th:cor_interpbar}.  We begin by stating a useful auxiliary lemma.

\begin{lemma}
  \label{th:cons:aux_bdl}
  Let $r \in \Ldir$ and $q \in [1, \infty)$, then
  \begin{align}
    \label{eq:cons:aux_bdl_h}
    \sum_{x \in \OmL} \b| \Da{r} u_h(x) \b|^q 
    \leq \sum_{x \in \OmL} \mint_x^{x+r} | \Dc{r} u_h |^q \db =~& \|
    \Dc{r} u_h \|_{\LL^q(\Om)}^q \qquad \forall u_h \in \Ush, 
    \quad \text{and} \\
    \label{eq:cons:aux_bdl_bar}
    \sum_{x \in \OmL} \b| \Da{r} u(x)\b|^q 
    \leq \sum_{x \in \OmL} \mint_x^{x+r} | \Dc{r} \bar{u} |^q \db =~& \|
    \Dc{r} \bar{u} \|_{\LL^q(\Om)}^q \qquad \forall u \in \Us.
  \end{align}
\end{lemma}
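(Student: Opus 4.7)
The approach combines the identity \eqref{eq:mintDc_Da}, which expresses the finite difference $\Da{r}$ as a bond integral of the piecewise-constant directional derivative $\Dc{r}$, with (i) Jensen's inequality for the inequality parts, and (ii) the Periodic Bond-Density Lemma \ref{th:bond-dens-per} for the equality parts.

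For the first inequality in \eqref{eq:cons:aux_bdl_h}, I would apply \eqref{eq:mintDc_Da} (which carries over to $u_h \in \Ush$ by linearity, or directly from piecewise affinity of $u_h$) to write
\[
  \Da{r} u_h(x) = \mint_x^{x+r} \Dc{r} u_h \db = \int_0^1 \Dc{r} u_h(x+tr) \dt.
\]
Since $\dt$ is a probability measure on $[0,1]$ and $s \mapsto |s|^q$ is convex for $q \in [1,\infty)$, Jensen's inequality yields the pointwise bound $|\Da{r} u_h(x)|^q \leq \mint_x^{x+r} |\Dc{r} u_h|^q \db$, and summing over $x \in \OmL$ gives the first inequality. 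The argument for \eqref{eq:cons:aux_bdl_bar} is identical with the micro-interpolant $\bar u \in \WW^{1,\infty}_{\rm loc}(\R^2)^2$ in place of $u_h$: continuity and piecewise affinity of $\bar u$ on $\Tm^\per$ ensure $\Da{r} u(x) = \bar u(x+r) - \bar u(x) = \int_0^1 \Dc{r}\bar u(x+tr) \dt$, and Jensen again provides the pointwise estimate.

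For the equality I would exploit that $|\Dc{r} u_h|^q$ is piecewise constant on the extended macro-triangulation $\Th^\per$, all of whose triangles have vertices in $\Lhex$ (by Assumption \ref{AsmMesh} and the construction in \S\ref{sec:interp:P1}). Writing $|\Dc{r} u_h|^q = \sum_{T \in \Th} c_T \chi_{T^\per}$, where $c_T$ denotes the constant value of $|\Dc{r} u_h|^q$ on $T$, I can interchange summation and bond integration and apply Lemma \ref{th:bond-dens-per} once per triangle to collapse everything into a weighted area sum that reproduces $\|\Dc{r} u_h\|_{\LL^q(\Om)}^q$ (modulo the lattice density factor $1/\det\mBhex$ that is absorbed into the norm/bond-integral conventions). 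The equality in \eqref{eq:cons:aux_bdl_bar} follows by the same argument, with the micro-triangulation $\Tm^\per$ (whose triangles automatically have vertices in $\Lhex$) playing the role of $\Th^\per$.

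I do not anticipate a serious obstacle: the lemma is essentially an immediate corollary of Jensen's inequality combined with the periodic bond-density machinery already built up in \S\ref{sec:qc:bond_integrals}. The only minor technical point is that $\Dc{r}$ is a one-sided directional derivative along $\pp\Oma$ and mesh edges, but the associated ambiguity lives on a null subset of each bond segment and is therefore invisible to the bond integrals.
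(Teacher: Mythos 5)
Your proposal is correct and follows essentially the same route as the paper: Jensen's inequality applied to the bond-integral representation \eqref{eq:mintDc_Da} for the inequality, and the partition of unity $\{\chi_{T^\per} : T \in \Th\}$ together with the periodic bond-density lemma (Lemma \ref{th:bond-dens-per}) for the equality, with \eqref{eq:cons:aux_bdl_bar} obtained as the special case on the micro-triangulation. One small correction to your closing remark: for a bond lying along a mesh edge parallel to $r$ the overlap with that edge is not a null subset of the bond, and the justification the paper uses instead is that $\Dc{r} u_h$ is continuous across faces of direction $r$, so no ambiguity arises there.
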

\begin{proof}
  The result is a straightforward application of the periodic bond
  density lemma. We give the proof for \eqref{eq:cons:aux_bdl_h},
  since \eqref{eq:cons:aux_bdl_bar} is a particular case.

  First, we use Jensen's inequality to establish the inequality in
  \eqref{eq:cons:aux_bdl_h}:
  \begin{displaymath}
    \b| \Da{r} u_h(x) \b|^q = \bigg| \mint_{x}^{x+r} \Dc{r} u_h \db
    \bigg|^q \leq \mint_{x}^{x+r} \b| \Dc{r} u_h \b|^q \db.
  \end{displaymath}
  Using (i) the fact that $\{\chi_T^\per : T \in \Th\}$ is a partition
  of unity; (ii) continuity of $\Dc{r} u_h$ across faces that have
  direction $r$; and (iii) Lemma \ref{th:bond-dens-per}, we have
  \begin{align*}
    \sum_{x \in \OmL} \mint_x^{x+r} | \Dc{r} u_h |^q \db =~&
    \sum_{T \in \Th} \sum_{x \in \OmL}
    \mint_x^{x+r} \chi_{T^\per} | \Dc{r} u_h |^q \db \\
    =~& \sum_{T \in \Th} |\Dc{r} u_h|_T|^q \sum_{x \in \OmL}
    \mint_{x}^{x+r} \chi_{T^\per} \db \\
    =~& \sum_{T \in \Th} |T| |\Dc{r} u_h|_T|^q. \qedhere
  \end{align*}
\end{proof}

The next auxiliary result is a Lipschitz bound on $\del\Ea$.

\begin{lemma}
  \label{th:cons:lip_delEa}
  Let $y^{(i)} \in \Ys$, $i = 1, 2$, and let $\mu := \min
  \{\mu_\a(y^{(1)}), \mu_\a(y^{(2)})\} > 0$; then
  \begin{equation}
    \label{eq:cons:lip_delEa}
    \b| \b\< \del\Ea(y^{(1)}) - \del\Ea(y^{(2)}), u_h \b\> \b| \leq
    \CLa \b\| \D \bar{y}^{(1)} - \D \bar{y}^{(2)} \b\|_{\LL^p(\Om)} \|
    \D u_h \|_{\LL^{p'}(\Om)} \qquad \forall u_h \in \Ush,
  \end{equation}
  where $\CLa = \CLa(\mu) := \sum_{r \in \Ldir} |r|^2 M_2(\mu
  |r|)$. 
                  \end{lemma}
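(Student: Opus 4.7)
The plan is to expand the pairing bond by bond using (\ref{eq:defn_Ea}), apply the fundamental theorem of calculus to each bond term, and then convert the resulting bond sum into an $\LL^p$--$\LL^{p'}$ pairing over $\Om$ via Lemma~\ref{th:cons:aux_bdl}.

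\textbf{Main steps.} Differentiating (\ref{eq:defn_Ea}) gives
\begin{align*}
  \b\<\del\Ea(y^{(1)}) - \del\Ea(y^{(2)}), u_h\b\>
  = \sum_{b \in \B}\bigl[\phi'(\Da{b}y^{(1)}) - \phi'(\Da{b}y^{(2)})\bigr]\cdot \Da{b}u_h.
\end{align*}
For each bond $b$, the fundamental theorem of calculus along the segment $\xi_{b,t} := (1-t)\Da{b}y^{(2)} + t\Da{b}y^{(1)}$, combined with $|\Da{b}y^{(i)}| \geq \mu|r_b|$ (hence $|\xi_{b,t}| \geq \mu|r_b|$) and the definition of $M_2$ in (\ref{eq:a:decay_phi}), yields the bond-wise Lipschitz estimate
\begin{align*}
  \bigl|\phi'(\Da{b}y^{(1)}) - \phi'(\Da{b}y^{(2)})\bigr| \leq M_2(\mu|r_b|) \, \bigl|\Da{b}y^{(1)} - \Da{b}y^{(2)}\bigr|.
\end{align*}
Grouping bonds by direction $r \in \Ldir$ and applying H\"older's inequality in $\LL^p$--$\LL^{p'}$ duality gives
\begin{align*}
  \bigl|\b\<\del\Ea(y^{(1)}) - \del\Ea(y^{(2)}), u_h\b\>\bigr|
  \leq \sum_{r \in \Ldir} M_2(\mu|r|) \Bigl[\sum_{b:r_b=r}|\Da{b}(y^{(1)}-y^{(2)})|^p\Bigr]^{1/p}\Bigl[\sum_{b:r_b=r}|\Da{b}u_h|^{p'}\Bigr]^{1/p'}.
\end{align*}
Each bracketed sum is then controlled by using (\ref{eq:mintDc_Da}) to write $\Da{b}v = \mint_b \Dc{r}v\,\db$, applying Jensen's inequality, and invoking Lemma~\ref{th:cons:aux_bdl} (with the obvious extension from periodic displacements to piecewise affine functions with periodic gradient) together with the operator-norm bound $|\Dc{r}v| \leq |r|\,|\D v|$. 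This yields $[\sum_{b:r_b=r} |\Da{b}v|^q]^{1/q} \leq |r|\,\|\D v\|_{\LL^q(\Om)}$. Inserting this with $v = \ol{y^{(1)}} - \ol{y^{(2)}}$ ($q=p$) and $v = u_h$ ($q = p'$), and pulling the $r$-independent norms outside the sum, produces (\ref{eq:cons:lip_delEa}) with $\CLa(\mu) = \sum_{r \in \Ldir} |r|^2 M_2(\mu|r|)$.

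\textbf{Main obstacle.} The delicate point is ensuring $|\xi_{b,t}| \geq \mu|r_b|$ along the entire convex segment: although both endpoints have magnitude at least $\mu|r_b|$, the straight segment could in principle pass near the origin when $\Da{b}y^{(1)}$ and $\Da{b}y^{(2)}$ point in nearly opposite directions. In typical applications $y^{(1)}$ and $y^{(2)}$ are close enough that $\xi_{b,t}$ stays near either endpoint; more generally one can replace the straight segment by a path in $\R^2 \setminus B_{\mu|r_b|}(0)$ and absorb the resulting geometric constant into $\CLa$. A minor auxiliary point is the finiteness of $\CLa(\mu)$, which follows from the decay hypothesis (\ref{eq:a:decay_phi}) on $M_2$.
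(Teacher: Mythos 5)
Your argument is essentially the paper's own proof: the same bond-wise Lipschitz bound with constant $M_2(\mu|b|)$, followed by H\"older's inequality (grouped by direction rather than weighted over all bonds, which yields the identical constant $\CLa$) and conversion of the bond sums into $\LL^p$/$\LL^{p'}$ norms via Lemma~\ref{th:cons:aux_bdl} and $|\Dc{r}v|\leq |r|\,|\D v|$. The non-convexity issue you flag in the bond-wise Lipschitz step is not addressed in the paper either --- its proof asserts the same estimate without comment --- so your remark points to a shared, constant-level gloss (fixable by a path argument at the price of a factor such as $\pi/2$ in $\CLa$) rather than a defect specific to your proof.
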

\begin{proof}
  Fix $u \in \Us$, $y^{(i)} \in \Ys$, $i = 1,2$, and $p \in (1,
  \infty)$; then
  \begin{align*}
    \b|\b\< \del\Ea(y^{(1)}) - \del\Ea(y^{(2)}), u_h \b>\b| \leq~& 
    \sum_{b \in \B} \b| \phi'(\Da{b} y^{(1)}) - \phi'(\Da{b} y^{(2)}) \b|
    \, | \Da{b} u_h | \\
    \leq~&  \sum_{b \in \B} M_{|b|}' \b| \smfrac{\Da{b} y^{(1)} -
      \Da{b} y^{(2)}}{|b|} \b|
    \, \b| \smfrac{\Da{b} u_h}{|b|} \b|,
 \end{align*}
 where $M_{\rho}' = M_2(\mu \rho) \rho^2$. Let $w = y^{(1)} -
 y^{(2)}$, then, applying a H\"{o}lder inequality, we obtain that
 \begin{displaymath}
   \b|\b\< \del\Ea(y^{(1)}) - \del\Ea(y^{(2)}), u_h \b>\b|
   \leq \bigg( \sum_{b \in \B} M_{|b|}' \b| \smfrac{\Da{b} w}{|b|}
   \b|^p \bigg)^{1/p}
   \bigg( \sum_{b \in \B} M_{|b|}' \b|  \smfrac{\Da{b} u_h}{|b|}
   \b|^{p'} \bigg)^{1/p'}.
 \end{displaymath}
 
 Each of the two groups can be estimated using Lemma
 \ref{th:cons:aux_bdl}, for example, 
 \begin{align*}
   \sum_{b \in \B} M_{|b|}' \b| \smfrac{\Da{b} w}{|b|} \b|^p
      \leq~& \sum_{b \in \Btot} M_{|b|}' \b| \smfrac{\Da{b} w}{|b|} \b|^p
      = \sum_{r \in \Ldir} M_{|r|}' |r|^{-p} \sum_{x \in \OmL} \b|
   \Da{r} w(x) \b|^p  \\ 
      \leq~& \sum_{r \in \Ldir} M_{|r|}' |r|^{-p} \| \Dc{r} \bar{w}
   \|_{\LL^p(\Om)}^p
      = \| \D \bar{w} \|_{\LL^p(\Om)}^p\,\sum_{r \in \Ldir} M_{|r|}'.
 \end{align*}
 By the same argument, using \eqref{eq:cons:aux_bdl_h} instead of
 \eqref{eq:cons:aux_bdl_bar}, we obtain
 \begin{displaymath}
   \sum_{b \in \B} M_{|b|}' \b|  \smfrac{\Da{b} u_h}{|b|}
   \b|^{p'} \leq \sum_{r \in \Ldir} M_{|r|}' \| \D u_h \|_{\LL^{p'}(\Om)}^{p'}.
 \end{displaymath}

 This establishes \eqref{eq:cons:lip_delEa} for $p \in (1, \infty)$.
 The cases $p \in \{1, \infty\}$ are obtained by taking the
 corresponding limits as $p \to 1$, or as $p \to \infty$, or with
 minor modifications of the above argument.
    \end{proof}

We can now formulate the coarsening error estimate.

\begin{lemma}
  \label{th:cons:Ecoarse}
  Let $y \in \Ys$ and suppose that $\mu := \min(\mu_\a(y), \mu_\a(I_h
  y)) > 0$; then,
  \begin{equation}
    \label{eq:cons:Ecoars_est}
    \Ecoarse_p(y) \leq \Ccoarse
    \b\| h \D^2 \tilde y \b\|_{\LL^p(\Omc)},
  \end{equation}
  for all $p \in [1, \infty]$ and for all $\tilde{y} \in \Pi_2(y)$,
  where $\Ccoarse = \CLa(\mu) \Cybaryh$.
\end{lemma}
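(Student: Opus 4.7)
\bigskip

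\noindent\textbf{Proof proposal.} The plan is a direct two-step reduction: estimate the consistency error $\langle \del\Ea(I_h y) - \del\Ea(y), u_h\rangle$ using the Lipschitz bound of Lemma \ref{th:cons:lip_delEa}, then control the resulting interpolation error using Lemma \ref{th:cor_interpbar}. No genuinely new analysis is needed; the work is just to thread the hypotheses through the two lemmas correctly.

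\smallskip

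First, I would invoke Lemma \ref{th:cons:lip_delEa} with $y^{(1)} = y$ and $y^{(2)} = I_h y$. The hypothesis $\mu := \min(\mu_\a(y), \mu_\a(I_h y)) > 0$ is precisely what is needed to apply that lemma, giving, for every $u_h \in \Ush$,
\begin{equation*}
  \b| \b\< \del\Ea(y) - \del\Ea(I_h y), u_h \b\> \b|
  \leq \CLa(\mu) \, \b\| \D \bar{y} - \D \ol{I_h y} \b\|_{\LL^p(\Om)} \, \| \D u_h \|_{\LL^{p'}(\Om)}.
\end{equation*}
Dividing by $\|\D u_h\|_{\LL^{p'}(\Om)}$ and taking the supremum yields
\begin{equation*}
  \Ecoarse_p(y) = \b\| \del\Ea(y) - \del\Ea(I_h y) \b\|_{\WW^{-1,p}_h}
  \leq \CLa(\mu) \, \b\| \D \bar{y} - \D \ol{I_h y} \b\|_{\LL^p(\Om)}.
\end{equation*}

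\smallskip

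Second, I would reduce the $\LL^p(\Om)$-norm on the right-hand side to an $\LL^p(\Omc)$-norm. By Assumption \ref{AsmMesh} and the construction of $\Th$ in \S\ref{sec:interp:P1}, the restriction of $\Th$ to $\Oma$ coincides with the micro-triangulation $\Tm|_{\Oma}$. Since $I_h y = y$ at every lattice node in $\Oma$, the P1 macro-interpolant $\ol{I_h y}$ and the P1 micro-interpolant $\bar y$ agree identically on $\Oma$, so their gradients cancel there. (The extension operator of \S\ref{sec:interp:vac} is applied identically to both $u = y - \yB$ and $u = I_h y - \yB$ away from $\L$, so this argument is undisturbed by the presence of vacancies.) Consequently
\begin{equation*}
  \b\| \D \bar{y} - \D \ol{I_h y} \b\|_{\LL^p(\Om)}
  = \b\| \D \bar{y} - \D \ol{I_h y} \b\|_{\LL^p(\Omc)}.
\end{equation*}

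\smallskip

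Third, Lemma \ref{th:cor_interpbar} applied to $y$ and $y_h = I_h y$ bounds this last quantity by $\Cybaryh \, \| h \D^2 \tilde y \|_{\LL^p(\Omc)}$ for every $\tilde y \in \Pi_2(y)$. Combining the three steps yields the stated estimate with $\Ccoarse = \CLa(\mu)\,\Cybaryh$. There is no real obstacle here; the only subtle point is the second step, where one must recognize that Assumption \ref{AsmMesh} forces exact agreement of the two interpolants on $\Oma$ (not merely closeness), so that the Lipschitz bound, which is global on $\Om$, automatically localizes to the continuum region $\Omc$ where the coarsening error actually lives.
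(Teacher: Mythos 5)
Your proposal is correct and follows essentially the same route as the paper: apply the Lipschitz bound of Lemma \ref{th:cons:lip_delEa} with $y^{(1)}=y$, $y^{(2)}=I_h y$, then invoke Lemma \ref{th:cor_interpbar}, giving $\Ccoarse = \CLa(\mu)\,\Cybaryh$. Your explicit localization step (that $\bar y = \ol{I_h y}$ on $\Oma$, so the $\LL^p(\Om)$-norm equals the $\LL^p(\Omc)$-norm) is a harmless elaboration of what the paper leaves implicit, since the proof of Lemma \ref{th:cor_interpbar} already uses exactly this fact and in effect bounds the $\LL^p(\Om)$-norm.
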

\begin{proof}
  According to Lemma \ref{th:cons:lip_delEa} we have
  \begin{align*}
    \b\<\del\Ea(y) - \del\Ea(I_h y), u_h \b\> \leq \CLa \b\| \D (\bar{y}
    - \ol{I_h y} )\b\|_{\LL^p(\Om)} \| \D {u}_h \|_{\LL^{p'}(\Om)}.
  \end{align*}
  From Lemma
  \ref{th:cor_interpbar} we obtain that
  \begin{align*}
                \| \D (\bar{y} - \ol{I_h y})  \|_{\LL^p(\Om)}
    \leq~& \Cybaryh \b\| h \D^2 \tilde{y} \b\|_{\LL^p(\Omc)}
    \qquad \forall \tilde{y} \in \Pi_2(y),
  \end{align*}
  which yields \eqref{eq:cons:Ecoars_est} with $\Ccoarse = \CLa 
  \Cybaryh$.
\end{proof}

\begin{remark}
  \label{rem:choice_of_splitting}
  We are now in a position to comment on our choice of splitting the
  consistency error. If we had estimated the coarsening on the level
  of $\Eqc$, then we would have needed a Lipschitz estimate on
  $\del\Eqc$. Defining $\Eqc(\bar{y})$ in a canonical way, our proof
  above is easily modified to yield
  \begin{align*}
    \b| \b\<\del\Eqc(I_h y) - \del\Eqc(\bar{y}), u_h \b\>\b| \leq
    \bigg\{\,& \sum_{b \in \Ba} M_{|b|}' \mint_b \b| \Dc{b} \ol{I_h y} -
    \Dc{b} \bar{y} \b|^p \db \\
    +~& \sum_{b \in \Bc} M_{|b|}' \mint_b \b|
    \Dc{b} y_h - \Dc{b} \bar{y} \b|^p \db \bigg\}^{1/p} \CLa^{1/p'} \|
    \D u_h \|_{\LL^{p'}}.
  \end{align*}
  The first group we can again convert into volume integrals and
  estimate using Lemma \ref{th:cor_interpbar}. However, the second
  group contains integrals over both macro- and micro-interpolants,
  and therefore cannot be converted into volume integrals using the
  bond density lemma. 

  However, as we demonstrate in Appendix~\ref{sec:simplecons}, weaker
  (though technically less demanding) estimates can be obtained in
  this way.
\end{remark}

\subsection{Modelling error}
\label{sec:cons:model}
In \S\ref{sec:cons:coarse} we estimated the coarsening error
$\Ecoarse$. We will now analyze the second contribution to the
consistency error: the modelling error $\Emodel$.

For the majority of this analysis we can replace $I_h y$ by an
arbitrary discrete deformation $y_h \in \Ysh$. Hence, we fix $y_h \in
\Ysh$ such that $\mu := \min(\mu_a(y_h), \mu_\c(y_h)) > 0$. Moreover,
we fix constants $a_r > 0$, $r \in \Ldir$, which will be determined
later, $a_b := a_{r_b}$ for all bonds $b \in \Btot$, and $M_\rho' :=
M_2(\mu \rho) \rho^2$ for $\rho > 0$.

With this notation, and using \eqref{eq:mintDc_Da}, we have
\begin{align*}
  \notag
  \b\< \del\Eqc(y_h) - \del\Ea(y_h), u_h \b\> 
    =~& \sum_{b \in \Bc}
  \mint_b \phi'(\Dc{b} y_h) \cdot \Dc{b}
  u_h \db
  - \sum_{b \in \Bc} \phi'(\Da{b} y_h) \Da{b} u_h
  \\
  =~& \sum_{b \in \Bc}
  \mint_b \b[ \phi'(\Dc{b} y_h) - \phi'(\Da{b} y_h) \b] \cdot \Dc{b}
  u_h \db \\
    \notag
  \leq~& \sum_{b \in \Bc} M_2(\mu|b|)
  \mint_b \b|\Dc{b} y_h - \Da{b} y_h \b| |\Dc{b}
  u_h| \db \\
    \notag
  =~& \sum_{b \in \Bc} M_{|b|}'
  \mint_b \b( a_b^{-1} |b|^{-1} \b|\Dc{b} y_h - \Da{b} y_h \b|\b)
  \b(a_b |b|^{-1} |\Dc{b} u_h|\b) \db.
\end{align*}
Following a similar procedure as in the proof of Lemma
\ref{th:cons:lip_delEa} (applying a H\"{o}lder inequality and Lemma
\ref{th:cons:aux_bdl}), we obtain
\begin{align}
  \notag
  \b\< \del\Eqc(y_h) - \del\Ea(y_h), u_h \b\>
    \leq~& \bigg( \sum_{b \in \Bc} M_{|b|}' |b|^{-p} a_b^{-p}
  \mint_{b} \b| \Dc{b} y_h - \Da{b} y_h \b|^p \db \bigg)^{1/p} \,
  C_1^{1/p'} \| \D u_h \|_{\LL^{p'}(\Om)} \\
    \label{eq:cons:cons_10}
  =:~& C_1^{1/p'} E(y_h)  \| \D u_h \|_{\LL^{p'}(\Om)},
\end{align}
where $C_1 = \sum_{r \in \Ldir} M_{|r|}' a_r^{p'}$, and where
\begin{equation}
  \label{eq:cons:cons_11}
  E(y_h)^p :=  \sum_{b \in \Bc} M_{|b|}' |b|^{-p} a_b^{-p} E_b(y_h)^p, \qquad 
  E_b(y_h)^p := \mint_{b} \b| \Dc{b} y_h - \Da{b}
  y_h \b|^p \db.
\end{equation}

Next, we investigate a single bond $b \in \Bc$. We will estimate the
term $E_b(y_h)^p$ in terms of the jumps of $\D y_h$ across element
faces. To that end, we define the jump sets
\begin{equation}
  \label{eq:cons:defn_jumpsets}
  \Jmp(b) := \b\{ f \in \Fh : \#(f \cap {\rm int}(b)) = 1 \b\},
\end{equation}
where ${\rm int}(b)$ denotes the {\em relative interior} of $b$. Faces
parallel to $b$ are ignored since the directional derivative $\Dc{r_b}
y_h$ is continuous across these faces. For each $f \in \Jmp(b)$ we
define weights $w_{b,f}$,
\begin{displaymath}
  w_{b,f} = \cases{ 
    1, & \text{ if } f \cap {\rm int}(b) \subset {\rm
      int}(f),  \\
    1/2, & \text{ otherwise};
  }
\end{displaymath}
that is, $w_{b,f} = 1$ if $b$ crosses $f$ in its relative interior,
and $w_{b,f} = 1/2$ if $b$ crosses $f$ at one of its
endpoints. Finally we define the quantities
\begin{equation}
  \label{eq:cons:num_jumps}
  \Nj(b) := \sum_{f \in \Jmp(b)} w_{b,f},
  \quad \text{and} \quad
  \Nj(r) := \max_{\substack{b \in \Bc \\ r_b = r}} \Nj(b).
\end{equation}
With these definitions we obtain the following lemma.

\begin{lemma}
  Let $b \in \Bc$, then
  \begin{equation}
    \label{eq:cons:cons_est_Eb}
    E_b(y_h)^p \leq \Nj(b)^{p-1} \sum_{f \in \Jmp(b)} w_{b,f} \b|
    [\Dc{b} y_h]_f \b|^p. 
  \end{equation}
\end{lemma}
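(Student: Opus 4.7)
The plan is to parametrize the bond $b = (x, x+r_b)$ by $t \in [0,1]$ via $\gamma(t) := x + tr_b$ and set $g(t) := \Dc{b} y_h(\gamma(t))$, a piecewise constant function of $t$. By \eqref{eq:mintDc_Da}, we have $\Da{b} y_h = \bar{g} := \int_0^1 g(t) \dt$, so that
\[
E_b(y_h)^p = \int_0^1 |g(t) - \bar g|^p \dt.
\]
The first step is the standard identity $g(t) - \bar g = \int_0^1 (g(t) - g(s)) \ds$ combined with Jensen's inequality (since $|\cdot|^p$ is convex for $p \geq 1$), giving
\[
|g(t) - \bar g|^p \leq \int_0^1 |g(t) - g(s)|^p \ds.
\]

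Next, I would express $g(t) - g(s)$ as a signed telescoping sum of face jumps of $\Dc{b} y_h$ across the faces $f \in \Jmp(b)$ whose crossing parameters $t_f$ lie between $s$ and $t$. When $b$ crosses such an $f$ in its relative interior (so $w_{b,f} = 1$), it contributes the single clean jump $\pm [\Dc{b} y_h]_f$. When $b$ passes through a triangulation vertex $q$ at which several faces $f_1, \ldots, f_k$ of $\Jmp(b)$ meet, the pointwise jump at that parameter can be written as a signed sum $\sum_j \sigma_j [\Dc{b} y_h]_{f_j}$ obtained by telescoping $\D y_h \cdot r_b$ through the cyclic sequence of triangles around $q$; the cycle sum of jumps around $q$ vanishes, so both orientations around $q$ yield the same value. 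In all cases the triangle inequality produces a pointwise estimate of $|g(t) - g(s)|$ by a sum of $|[\Dc{b} y_h]_f|$ with multiplicities compatible with the weights $w_{b,f}$.

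Applying the weighted discrete Jensen inequality $(\sum_f w_f a_f)^p \leq (\sum_f w_f)^{p-1} \sum_f w_f a_f^p$ with $w_f = w_{b,f}$ now extracts the factor $\Nj(b)^{p-1} = (\sum_{f \in \Jmp(b)} w_{b,f})^{p-1}$, reducing the estimate to a sum of the form $\sum_f w_{b,f} |[\Dc{b} y_h]_f|^p$ times an indicator for whether $t_f$ lies between $s$ and $t$. Integrating in $t$ and swapping the order of integration, this indicator integrates in $s$ to at most $2 t_f (1-t_f) \leq 1$, yielding \eqref{eq:cons:cons_est_Eb}.

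The main obstacle is the bookkeeping at vertex crossings. When $b$ passes through a triangulation vertex $q$, several faces of $\Jmp(b)$ share the crossing parameter, and the actual pointwise jump in $g$ at that parameter is not given by any single face jump but rather by a telescoping sum around $q$ whose signs depend on a choice of orientation. The weight $w_{b,f} = 1/2$ for such faces is calibrated precisely so that the $p$-th power estimate comes out with the clean constant $\Nj(b)^{p-1}$ claimed in the lemma; carrying out this calibration, and ensuring consistent sign conventions across the two possible orientations around $q$, is the delicate part of the argument.
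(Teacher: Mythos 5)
Your proposal is correct and follows essentially the same route as the paper's proof: both estimates telescope the piecewise constant function $t \mapsto \Dc{b}y_h(x+tr_b)$ through its face jumps, apply a discrete H\"{o}lder/Jensen step to extract the factor $\Nj(b)^{p-1}$, and handle crossings at triangulation vertices by averaging the two one-sided telescopings (the paper's ``two paths circling the vertex''), which is exactly what produces the weights $w_{b,f}=\smfrac12$. Your variant of keeping the double integral in $(s,t)$ and integrating the indicator only sharpens the constant and is otherwise the same argument.
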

\begin{proof}
  Define $\psi(t) = \Dc{b}y_h(x + t r_b)$ and let $J_\psi \subset (0, 1)$ be
  the set of jumps of $\psi$, then
  \begin{equation}
    \label{eq:cons:cons_15}
    E_b(y_h)^p = \int_0^1 \bigg| \psi(t) - \int_0^1 \psi(s) \ds \bigg|^p \dt.
  \end{equation}
  For any point $t \in (0, 1) \setminus J_\psi$ we can estimate
  \begin{align*}
    \bigg| \psi(t) - \int_0^1 \psi(s) \ds \bigg| \leq~& \int_0^1 \b|
    \psi(t) - \psi(s) \b| \ds \\
        \leq~& \int_0^1 \int_{r \in (t, s)} |\psi'(r)| \dr \ds \\
        \leq~& \int_0^1 |\psi'(r)| \dr = \sum_{r \in J_\psi} |\psi(r+) - \psi(r-)|,
  \end{align*}
  where $|\psi'| \dr$ is understood as the measure that represents the
  distributional derivative of $\psi$. Inserting this estimate into
  \eqref{eq:cons:cons_15}, yields
  \begin{displaymath}
    E_b(y_h)^p \leq \Big| \sum_{r \in J_\psi} |\psi(r+) - \psi(r-)| \Big|^p
    \leq (\# J_\psi)^{p-1} \sum_{r \in J_\psi} \b| \psi(r+) - \psi(r-)\b|^p,
  \end{displaymath}
  which translates directly into \eqref{eq:cons:cons_est_Eb}, in the
  case that $b$ does not intersect any faces in their endpoints.

  If $b$ does intersect certain faces in endpoints then one replaces
  the path $\{x + t r_b: t \in (0, 1)\}$ by two paths that ``circle''
  around the endpoints, each weighted with a factor $1/2$. 
\end{proof}

Recall the detail of the definition of $\Fhc$ from
\S\ref{sec:cons:edges}. Since only bonds $b \in \Bc$ contribute to the
consistency error, it follows that only jumps across faces $f \in
\Fhc$ occur in the following estimate. Interchanging the order of
summation, we obtain
\begin{align}
  \notag
  E(y_h)^p \leq~& \sum_{b \in \Bc} M_{|b|}' |b|^{-p} a_b^{-p} \Nj(b)^{p-1}
  \sum_{f \in \Jmp(b)} w_{b,f} \b|
  [\Dc{b} y_h]_f \b|^p \\
    \notag
  \leq~& \sum_{r \in \Ldir}  M_{|r|}' |r|^{-p} a_r^{-p}  \Nj(r)^{p-1} \sum_{\substack{b \in \Bc
      \\ r_b = r}} \sum_{f \in \Jmp(b)} w_{b,f} \b| [\Dc{b} y_h]_f \b|^p \\
    \label{eq:cons:cons_20}
  =~& \sum_{r \in \Ldir}  M_{|r|}' |r|^{-p} a_r^{-p}  \Nj(r)^{p-1} \sum_{f \in \Fhc} 
  \Ncross(f, r) \b|[\Dc{r} y_h]_f \b|^p,
\end{align}
where $\Ncross(f, r)$ is the (weighted) number of bonds $b$ with
direction $r_b$ and crossing the face $f$; more precisely,
\begin{displaymath}
  \Ncross(f, r) := \sum_{\substack{b \in \Bc, r_b = r \\ f \in \Jmp(b)}} w_{b,f}.
\end{displaymath}
In the next lemma, we estimate $\Ncross$.

\begin{lemma}
  Let $f \in \Fhc$, $r \in \Ldir$ such that the angle between the
  face $f$ and the vector $r$ is $\theta$; then
  \begin{equation}
    \label{eq:cons:cons_25}
    \Ncross(f, r) \leq 2 |r| h_f \, |\sin(\theta)|,
  \end{equation}
  where $h_f = {\rm length}(f)$.
\end{lemma}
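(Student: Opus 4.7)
The plan is to identify each $r$-directional bond $b = (x, x+r)$ crossing $f$ (as recorded by $\Jmp(b)$, with weight $w_{b,f}$) with its starting lattice point $x$, and to observe that these $x$ range over a lattice polygon. Define the parallelogram
\[
P := \{f(s) - \alpha r : s \in [0, h_f],\ \alpha \in [0, 1]\},
\]
which has area $|P| = h_f |r| |\sin\theta|$ and whose four vertices $f_0, f_1, f_0 - r, f_1 - r$ all lie in $\Lhex$ (since $f_0, f_1 \in \L \subset \Lhex$ are vertices of $\Th^\per$, and $r \in \Ldir \subset \Lhex$). This reduces the problem to a lattice-point count in $P$, which Pick's theorem will control.

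First I dispose of the degenerate case $\sin\theta = 0$: a bond $b$ with $r_b = r$ parallel to $f$ can only meet $f$ in a shared endpoint, a positive-length overlap, or not at all, and in each subcase $\#(f \cap {\rm int}(b)) \neq 1$, so $f \notin \Jmp(b)$ and $\Ncross(f,r) = 0$. Assume now $\sin\theta \neq 0$, so $P$ is non-degenerate. Every $x \in \Lhex$ admits a unique representation $x = f(s) - \alpha r$ with $(s, \alpha) \in \R^2$. A short case analysis shows: $x \in {\rm int}(P)$ (i.e.\ $s \in (0, h_f)$, $\alpha \in (0, 1)$) corresponds to $b = (x, x+r)$ crossing ${\rm int}(f)$ transversally inside ${\rm int}(b)$, weight $w_{b,f} = 1$; $x$ on the relative interior of a side of $P$ parallel to $r$ (i.e.\ $s \in \{0, h_f\}$, $\alpha \in (0,1)$) corresponds to a crossing at an endpoint of $f$, weight $1/2$; and $x$ with $\alpha \in \{0, 1\}$ (on the sides of $P$ parallel to $f$) corresponds to a bond endpoint lying on $f$, which is \emph{excluded} from $\Jmp(b)$ by the condition $\#(f \cap {\rm int}(b)) = 1$. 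Hence $\Ncross(f, r) = I + B_\parallel/2$, where $I$ counts interior lattice points of $P$ and $B_\parallel$ counts lattice points on the relative interior of the two $r$-parallel sides.

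The final step applies Pick's theorem to $P$ in the form natural for $\Lhex = \mBhex\Z^2$ (obtained by conjugating the standard $\Z^2$-statement by $\mBhex^{-1}$): $I + B/2 = |P|/\det(\mBhex) + 1$, where $B$ is the total boundary lattice-point count. Writing $B = B_\parallel + B_\perp + 4$ (the four corner vertices always lie in $\Lhex$), substituting, and rearranging yields
\[
\Ncross(f, r) = \frac{|P|}{\det(\mBhex)} - 1 - \frac{B_\perp}{2} \,\leq\, \frac{2}{\sqrt{3}}\, h_f |r| |\sin\theta| - 1 \,\leq\, 2\, h_f |r| |\sin\theta|,
\]
where the last inequality uses $2/\sqrt{3} < 2$ (and is automatic when the Pick bound is negative, in which case it forces $\Ncross(f,r) = 0$).

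The main subtlety is the weight bookkeeping in the second paragraph: one must carefully \emph{exclude} the lattice points of $P$ with $\alpha \in \{0,1\}$, since a naive count attaching weight $1/2$ to every boundary lattice point would contribute an extra $B_\perp/2$ that Pick's theorem cannot absorb for very thin parallelograms. Once the correspondence between the three categories of lattice points and the weights $1, 1/2, 0$ is correctly set up, Pick's theorem closes the argument with substantial slack.
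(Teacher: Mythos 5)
Your proof is correct, but it takes a genuinely different route from the paper. The paper's proof works with the two-sided parallelogram $P = \{z + t_1 s + t_2 r : t_1\in[0,1],\ t_2\in(-1,1)\}$ around $f$ and observes that each weight $w_{b,f}$ coincides with the bond integral $\mint_b \chi_P \db$ (a bond crossing ${\rm int}(f)$ lies inside $P$, a bond crossing an endpoint of $f$ runs along an edge of $P$ where $\chi_P = \smfrac12$); extending the sum to all $x\in\Lhex$ and invoking the bond-density lemma for the two triangles making up $P$ then gives the bound directly, and in a form that meshes with the periodic machinery already set up in the paper. You instead count the \emph{tails} $x$ of the crossing bonds, which land in the one-sided parallelogram $\{f(s)-\alpha r\}$, sort them into interior points (weight $1$), points on the relative interiors of the $r$-parallel sides (weight $\smfrac12$), and points with $\alpha\in\{0,1\}$ (correctly excluded by the condition $\#(f\cap{\rm int}(b))=1$), and close with Pick's theorem for the lattice $\mBhex\Z^2$. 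Your approach is more elementary and self-contained, and it actually buys a sharper constant: you retain the density factor $1/\det\mBhex = 2/\sqrt{3}$ and use the halved parallelogram, so you genuinely establish the stated bound $2|r|h_f|\sin\theta|$ (indeed $\smfrac{2}{\sqrt3}|r|h_f|\sin\theta|-1$), whereas the paper's proof, read literally, drops the $1/\det\mBhex$ from the bond-density lemma and so only yields $\smfrac{2}{\sqrt3}\cdot 2|r|h_f\sin\theta$ — a harmless constant discrepancy downstream, but your argument repairs it. Two cosmetic points: your identity $\Ncross(f,r) = I + B_\parallel/2$ should be an inequality ``$\leq$'', since a tail $x\in\Lhex$ need not generate a bond in $\Bc$ (it may be a vacancy site, lie outside $\L$, or give a bond not contained in ${\rm int}(\Omc^\per)$) — the direction you need is the one that holds; and the parenthetical about a ``negative Pick bound'' is vacuous, since a nondegenerate parallelogram with vertices in $\Lhex$ has area at least $\det\mBhex$, so $I + B_\parallel/2 = |P|/\det\mBhex - 1 - B_\perp/2$ is automatically nonnegative.
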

\begin{proof}
  Suppose that the face $f$ is given by $f = \{ z + t s : t \in [0, 1]\}$,
  and define the parallelogram
  \begin{displaymath}
    P = \{ z + t_1 s + t_2 r : t_1 \in [0, 1], t_2 \in (-1,1) \},
  \end{displaymath}
  Then we have
  \begin{align*}
    \Ncross(f, r) =~& \sum_{\substack{b \in \Bc, r_b = r \\ f \in \Jmp(b)}}
    \mint_{b} \chi_P \db  
    \leq \sum_{x \in \Lhex} \mint_{x}^{x+r} \chi_P \db = |P|,
  \end{align*}
  where, in the last equality, we have used the fact that $P$ is the
  union of two triangles, which implies that the bond density lemma
  holds for $P$ as well. To obtain the result we simply note that $|P|
  = 2 |r| h_f \sin(\theta)$.
\end{proof}

If we crudely estimate $|\sin(\theta)| \leq 1$ and $|[\Dc{r} y_h]_f|
\leq |r| |[\D y_h]_f|$ then we arrive at the following estimate:
\begin{equation}
  \label{eq:cons:cons_mainest_A}
  E(y_h)^p \leq C_2 \,\bigg(\sum_{f \in \Fhc} h_f \b|[ \D y_h]_f
  \b|^p\bigg),
\end{equation}
where $C_2 = \sum_{r \in \Ldir} 2 M_{|r|}' |r| a_r^{-p} \Nj(r)^{p-1}$.

We choose the constants $a_r$ such that $C_1$ and $C_2$ are
proportional, for example, as
\begin{displaymath}
  2|r| a_r^{-p} \Nj(r)^{p-1} = a_r^{p'} = (2|r|)^{1/p} \Nj(r)^{1/p'}.
\end{displaymath}
This choice yields
\begin{equation}
  \label{eq:cons:defn_C1C2}
  C_1 = C_2 =  2^{1/p} \sum_{r \in \Ldir} M_2(\mu|r|) |r|^{2+1/p} \Nj(r)^{1/p'}.
\end{equation}

To obtain a more explicit constant, we estimate $\Nj(r)$ next.

\def\Is{\mathcal{I}}
\def\dist{{\rm dist}}
\def\Cj{C_{\Nj}}

\begin{lemma}
  \label{th:cons:Nj_est}
  There exists a constant $\Cj$, which depends only on the shape
  regularity of $\Th$, such that
 \begin{equation}
   \label{eq:Nj_estimate}
   \Nj(b) \leq \Cj (|b|+1) \qquad \forall b \in \Bc.
  \end{equation}
\end{lemma}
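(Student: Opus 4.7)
The plan is to bound $\Nj(b)$ by splitting each crossing of $b$ with a face of $\Th^\per$ into one of two types: \emph{bulk} crossings, where $b$ passes through a triangle at positive distance from all its vertices, and \emph{corner} crossings, where $b$ clips a thin sliver off a vertex. Fix a threshold $\rho>0$ depending only on the shape regularity of $\Th$. Declare $f\in\Jmp(b)$, incident to $T_\pm\in\Th^\per$, a corner crossing if $b$ passes within distance $\rho$ of some vertex of $T_+\cup T_-$, and a bulk crossing otherwise.

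For bulk crossings I would invoke an elementary geometric fact: there is a constant $c_\rho>0$, depending only on $\rho$ and the shape regularity of $\Th$, such that whenever a straight segment enters a triangle $T\in\Th^\per$ through one edge and exits through a different edge, and stays at distance at least $\rho$ from every vertex of $T$, one has $|b\cap T|\geq c_\rho$. Since each bulk crossing separates two consecutive triangles traversed by $b$, at least one of which is entered and exited by $b$ in the bulk sense, the bulk crossings consume a length of at least $c_\rho/2$ each from $|b|$; summing yields at most $2|b|/c_\rho$ bulk crossings.

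For corner crossings I would count the vertices of $\Th^\per$ within distance $\rho$ of $b$. By shape regularity, only a bounded number $M$ of faces meet at any single vertex, and the vertices of $\Th^\per$ form a $1$-separated subset of $\Lhex$ (since $\Lrep^\c\subset\Lhex$). A standard disjoint-ball argument inside the $(\rho+\tfrac12)$-tubular neighbourhood of $b$, whose area is at most $2(\rho+\tfrac12)|b|+\pi(\rho+\tfrac12)^2$, shows that at most $C_1(|b|+1)$ such vertices exist, with $C_1$ depending only on $\rho$. Summing the weighted contributions $w_{b,f}\leq 1$ of the corner crossings gives at most $M C_1(|b|+1)$. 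Combining the two bounds yields \eqref{eq:Nj_estimate} with $\Cj=2/c_\rho+M C_1$.

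The main obstacle is the geometric lemma used for bulk crossings, namely that a chord of a triangle $T\in\Th^\per$ staying $\rho$-away from every vertex must spend length $\geq c_\rho$ inside $T$. This is routine trigonometry once one observes that every such $T$ has in-radius uniformly bounded below --- by shape regularity together with the edge-length lower bound $|e|\geq 1$ that follows from vertices lying in $\Lhex$ --- and that $\rho$ may then be chosen strictly smaller than this uniform lower bound. Everything else in the argument is counting; the $w_{b,f}$ weight of $1/2$ for vertex-hitting crossings only improves the constants.
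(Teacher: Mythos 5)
Your argument is correct, but it follows a genuinely different route from the paper's. The paper walks along $b$, sorts the crossed faces, observes that consecutive crossed faces share a vertex, groups the faces by these shared vertices (boundedly many faces per group, by shape regularity), and then bounds the distance along $b$ between successive groups from below by a $\sin\alpha$-type estimate using shape regularity and unit edge lengths; vertices lying on $b$ are handled separately by splitting $b$ at those (at most $|b|$) points. You instead use a local dichotomy at each crossing: far-from-vertex (``bulk'') crossings are counted by your chord-length lemma, each forcing a length $\geq c_\rho$ of $b$ inside an adjacent fully traversed triangle (and indeed the elementary estimate $|p_1p_2|\geq 2\rho\sin(\gamma/2)$ at the vertex shared by the entry and exit edges gives this directly, without even needing the in-radius observation), while near-vertex (``corner'') crossings are charged to triangulation vertices within distance $\rho$ of $b$, whose number is $\lesssim |b|+1$ by a packing argument using the $1$-separation of $\Lhex$, with boundedly many chargeable faces per vertex by shape regularity. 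Two small patches are needed but are harmless: the charging for a bulk crossing fails only when $b$ has a single crossing (neither neighbouring triangle is entered \emph{and} exited), which contributes $1$ and is absorbed by the $+1$; and the corner count must charge all faces of triangles in the star of the nearby vertex, not only faces containing it, which changes only the constant. Compared with the paper, your proof trades the sequential bookkeeping with sorted faces and duplicate shared vertices for a more modular geometric argument (chord lower bound plus tube-area vertex count), at the price of explicitly invoking the lattice separation of the mesh vertices in the packing step — a fact the paper uses only to handle vertices lying on $b$; both proofs ultimately rest on the same two ingredients, shape regularity and the unit atomic length scale.
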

\begin{proof}
    \def\Vh{\mathcal{V}_h}
    We will in fact prove a stronger statement: that \eqref{eq:Nj_estimate} is true for any segment $b = (x, x+r)$
  with arbitrary $x, r \in \R^2$.
  We hence extend the definitions
  of $J(b)$ and $\Nj(b)$ canonically to all such segments $b$.

  Throughout this proof, we denote the set of vertices of $\Th$ by
  $\Vh$. An inequality $\lesssim$ denotes a bound up to a constant
  that may only depend on the mesh regularity.
    
  The idea of the proof is the following:
  we will first reduce the statement to the case ${\rm int}(b) \cap
  \Vh=\emptyset$ (recall that ${\rm int}(b)$ denotes the relative
  interior of $b$) and $\Nj(b) \neq 0$, and then estimate the lengths
  between points of intersections of $b$ with $f \in J(b)$ and
  compare these lengths to $|b|$.

  {\it Case 1. (${\rm int}(b) \cap\Vh \neq \emptyset$) }
      Denote $x_0=x$, $x_n=x+r$ and let ${\rm int}(b)\cap\Vh =
  \{x_1, \ldots, x_{n-1}\}$, $n>1$, where $x_1, \ldots, x_n$ are
  sorted by increasing distance to $x$.  Since any two points in $\Vh$
  have at least distance $1$, $n \leq |b|$.

  If \eqref{eq:Nj_estimate} holds for all $b_i=(x_{i-1},x_i)$
  ($i=1,\ldots,n$) then we can estimate $\Nj(b)$ by respective
  contributions of $b_i$ and contributions of those $f\in\Fh$ that
  contain any of points $x_i$.  We will show that $\Nj(b_i) \lesssim
  |b_i| + 1$ (it falls under Case 2), and hence we can estimate
  \begin{displaymath}
    \Nj(b)
    \lesssim
    n + \sum_{i=1}^n \Nj(b_i)
    \lesssim
    n + \sum_{i=1}^n (|b_i|+1)
    = 2n + |b|
    \leq 3|b| + 2,
  \end{displaymath}
  which proves \eqref{eq:Nj_estimate} for $b$.

  {\it Case 2.1. (${\rm int}(b)\cap\Vh = \emptyset$ and $\Nj(b)=0$) }
  The estimate \eqref{eq:Nj_estimate} is trivial in this case.

  {\it Case 2.2. (${\rm int}(b)\cap\Vh = \emptyset$ and $\Nj(b)\neq0$)
  }
    In this case, $\Nj := \Nj(b)$ is simply the number of faces that
  cross $b$.  Let $J(b) = \{f_1, \ldots, f_m\}$, where $f_i$ are
  sorted by increasing distance of $f_i\cap b$ to $x$.  We need to
  prove that $\Nj \lesssim |b|+1$.  Any two faces, $f_i$ and
  $f_{i+1}$, share exactly one common vertex $v_i \in \Vh$,
  $i=1,\ldots,\Nj-1$. We also denote by $v_0$ the vertex of $f_1$
  other than $v_1$, and by $v_{\Nj}$ the vertex of $f_{\Nj}$ other
  than $v_{\Nj-1}$.

  It is of course possible that $v_i$ coincides with $v_{i+1}$ for
  some $i = 1, \ldots, \Nj-2$. Hence, denote the indices $i$ of unique
  vertices $v_i$ as
  \begin{displaymath}
    \Is = \big\{i\in\{1,\ldots,\Nj-2\} \,:~ v_i\ne v_{i+1} \big\} 
    \cup \{0,\Nj-1,\Nj\},
  \end{displaymath}
  and let $\Is = \{i_1,\ldots,i_K\}$, where $i_k$ is an increasing
  sequence.

  If $K = 2$, then $\Nj = 1$. If $K = 3$ then $\Nj$ is bounded by the
  number of faces touching the vertex $v_{i_2}$, which is bounded by a
  constant depending only on the shape regularity of $\Th$. Hence,
  assume in the following that $K \geq 4$.

  \begin{figure}
    \includegraphics{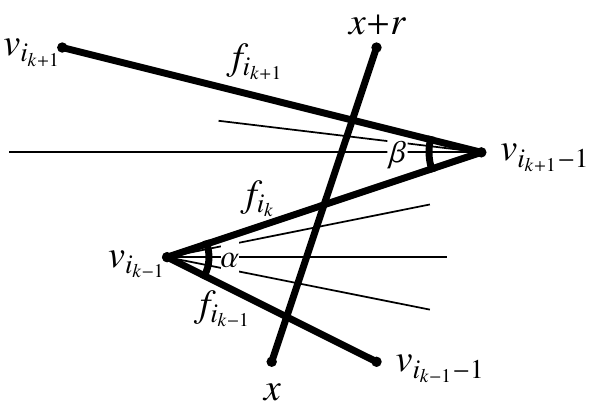}
    \caption{\label{fig:bound_counting_illustration} Illustration of
      counting the number of faces crossing a bond $b=(x,x+r)$.  The
      bond $b$ and the faces $f_{i_{k-1}}$, $f_{i_k}$ and
      $f_{i_{k+1}}$ are bold lines.  The rest of the faces $f\in J(b)$
      are normal lines.}
  \end{figure}

  Split all faces in $J(b)$ into groups of faces between $f_{i_{k-1}}$
  and $f_{i_{k+1}}$ ($k=2,4,\ldots,2\big\lfloor \smfrac
  K2\big\rfloor$) and, if $K$ is odd, the faces between $f_{i_{K-1}}$
  and $f_{i_K}$.  The number of faces in each group is bounded by a
  finite number that depends only on the shape regularity of $\Th$.
  To estimate the number of groups, notice that the distance between
  $b\cap f_{i_{k-1}}$ and $b\cap f_{i_{k+1}}$ can be bounded below in
  the following way (see illustration on Figure
  \ref{fig:bound_counting_illustration}):
    \begin{align*}
    \dist(b\cap f_{i_{k-1}}, b\cap f_{i_{k+1}})
    \geq~&
    \dist(f_{i_{k-1}}, f_{i_{k+1}})
    \\ =~&
    \min\{\dist(v_{i_{k-1}-1},f_{i_{k+1}}),\phantom{\vphantom{f}_{\vphantom{i}_{\mathstrut}}}\dist(v_{i_{k-1}},f_{i_{k+1}})\}
    \\ \geq~&
    \min\{\dist(v_{i_{k-1}-1},f_{i_{k}}),\phantom{\vphantom{f}_{\vphantom{i}_{\mathstrut+1}}}\dist(v_{i_{k-1}},f_{i_{k+1}})\},
  \end{align*}
  Denote $\alpha$ and $\beta$ to be angles formed by, respectively,
  the vertices $v_{i_{k-1}-1}, v_{i_{k-1}}, v_{i_{k+1}-1}$ and
  $v_{i_{k-1}}, v_{i_{k+1}-1}, v_{i_{k+1}}$ ({\it cf.} Figure
  \ref{fig:bound_counting_illustration}).  Then we obtain
  \begin{displaymath}
    \dist(b\cap f_{i_{k-1}}, b\cap f_{i_{k+1}})
    \geq
    \min\{|f_{i_{k-1}}| \sin\alpha,|f_{i_k}| \sin\beta\}
    \geq
    \min\{\sin\alpha,\sin\beta\},
  \end{displaymath}
  which is bounded below by a positive number that depends only on the
  shape regularity of $\Th$.  Thus, the number of such groups,
  $\big\lfloor \smfrac K2\big\rfloor$, is bounded by a constant
  multiple of $|b|$.

  This finally establishes the estimate $\Nj(b) = \#(J(b)) \lesssim
  |b|+1$.
\end{proof}

Combining \eqref{eq:cons:cons_mainest_A}, \eqref{eq:cons:defn_C1C2},
and \eqref{eq:Nj_estimate}, we deduce the following intermediate
result, which is interesting in its own right, since it could serve as
a basis for {\it a posteriori} error estimates.

\begin{lemma}
  \label{th:cons:cons_mainest_B}
  Let $y_h \in \Ysh$ such that $\mu := \min(\mu_\a(y_h), \mu_\c(y_h))
  > 0$; then
  \begin{equation}
    \label{eq:cons:cons_mainest_B}
    \b\< \del\Eqc(y_h) - \del\Ea(y_h), u_h \b\> \leq 
    C^{\rm model}_1 \, \b\| [\D y_h] \b\|_{\LL^p(\Fhc)} \, \| \D u_h
    \|_{\LL^{p'}(\Om)}
    \qquad \forall u_h \in \Ush,
  \end{equation}
  where $C^{\rm model}_1 = C' \sum_{r \in \Ldir} M_r(\mu |r|) |r|^3$
  and $C'$ depends only on the shape regularity of $\Th$.
\end{lemma}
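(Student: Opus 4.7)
The plan is to assemble the estimates already built up in this subsection; no genuinely new obstacle arises. I would first combine the pointwise bound \eqref{eq:cons:cons_10} with the edge-sum estimate \eqref{eq:cons:cons_mainest_A} to obtain
\[
\b\< \del\Eqc(y_h) - \del\Ea(y_h), u_h \b\> \leq C_1^{1/p'} C_2^{1/p} \Big( \sum_{f \in \Fhc} h_f |[\D y_h]_f|^p \Big)^{1/p} \| \D u_h \|_{\LL^{p'}(\Om)}.
\]
Since $\D y_h$ is piecewise constant on the macro-triangulation, the jump $[\D y_h]_f$ is constant on each edge $f$ of length $h_f$; hence $\sum_{f \in \Fhc} h_f |[\D y_h]_f|^p = \| [\D y_h] \|_{\LL^p(\Fhc)}^p$. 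Invoking the identity $C_1 = C_2$ from \eqref{eq:cons:defn_C1C2}, which is precisely the motivation for the symmetrising choice of the weights $a_r$, I obtain
\[
\b\< \del\Eqc(y_h) - \del\Ea(y_h), u_h \b\> \leq C_1 \, \b\| [\D y_h] \b\|_{\LL^p(\Fhc)} \, \| \D u_h \|_{\LL^{p'}(\Om)}.
\]

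It then remains to bound $C_1$ by the claimed $C^{\rm model}_1$. Starting from $C_1 = 2^{1/p} \sum_{r \in \Ldir} M_2(\mu|r|) |r|^{2+1/p} \Nj(r)^{1/p'}$, the key input is Lemma \ref{th:cons:Nj_est}: since every $r \in \Ldir$ satisfies $|r| \geq 1$, the bound $\Nj(r) \leq \Cj(|r|+1) \leq 2\Cj |r|$ holds. Using $1/p + 1/p' = 1$ yields $|r|^{2+1/p} \Nj(r)^{1/p'} \leq (2\Cj)^{1/p'} |r|^3$, and therefore
\[
C_1 \leq 2^{1/p}(2\Cj)^{1/p'} \sum_{r \in \Ldir} M_2(\mu|r|) \, |r|^3 \leq C' \sum_{r \in \Ldir} M_2(\mu|r|) \, |r|^3,
\]
with $C' := \sup_{p \in [1,\infty]} 2^{1/p}(2\Cj)^{1/p'}$ depending only on the shape regularity of $\Th$ through the constant $\Cj$ of Lemma \ref{th:cons:Nj_est}.

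The endpoint cases $p \in \{1, \infty\}$ follow by the conventions $1/\infty = 0$, exactly as in the proof of Lemma \ref{th:cons:lip_delEa}. The only step that genuinely required work, namely the combinatorial bound on the number of triangulation faces that a single bond can cross, has already been discharged in Lemma \ref{th:cons:Nj_est}; the present lemma is then essentially a book-keeping step that collects \eqref{eq:cons:cons_10}, \eqref{eq:cons:cons_mainest_A} and \eqref{eq:cons:defn_C1C2}, and converts the weighted edge sum into an $\LL^p(\Fhc)$ norm. I do not expect any subtle obstacle, save perhaps verifying that the replacement of the weighted sum over $\Jmp(b)$ by the integral over $\Fhc$ preserves all constants correctly when bonds meet edges at endpoints --- but this is already accounted for by the weights $w_{b,f}$ used to define $\Ncross$ and $\Nj$.
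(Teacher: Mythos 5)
Your argument is exactly the paper's: the lemma is stated there as the direct combination of \eqref{eq:cons:cons_10}, \eqref{eq:cons:cons_mainest_A}, \eqref{eq:cons:defn_C1C2} and the bound \eqref{eq:Nj_estimate}, and your bookkeeping (the identity $\sum_{f\in\Fhc} h_f|[\D y_h]_f|^p = \|[\D y_h]\|_{\LL^p(\Fhc)}^p$ for the piecewise-constant jumps, $C_1=C_2$, and $\Nj(r)\leq 2\Cj|r|$ giving $|r|^{2+1/p}\Nj(r)^{1/p'}\lesssim|r|^3$) fills in precisely those steps, with the constant depending only on the shape regularity as claimed. The proposal is correct and follows the same route as the paper.
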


\medskip \noindent Applying Lemma \ref{th:cons:est_jmp_Dyh} to
estimate $\b\| [\D y_h] \b\|_{\LL^p(\Fhc)}$ in
\eqref{eq:cons:cons_mainest_B}, we obtain the final modelling error
estimate.

\begin{lemma}[Modelling Error]
  \label{th:cons:modelest_C}
  Let $y \in \Ys$ such that
  $\mu := \min(\mu_\a(I_h y), \mu_\c(I_h y)) > 0$;
 then
  \begin{equation}
    \label{eq:cons:modelest_C}
    \Emodel_p(y) \leq \Cmodel \b\| h^{1/p'} \D^2 \tilde y \b\|_{\LL^p(\Omc)},
  \end{equation}
  where $\Cmodel = C \sum_{r \in \Ldir} M_2(\mu |r|) |r|^3$ and
  $C$ depends only on the shape regularity of $\Th$.
\end{lemma}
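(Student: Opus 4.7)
The plan is to obtain the modelling error estimate as a direct corollary of the intermediate bound in Lemma~\ref{th:cons:cons_mainest_B} combined with the jump-to-second-derivative estimate in Lemma~\ref{th:cons:est_jmp_Dyh}. Since both ingredients have already been established, the argument is essentially a composition, and no new analytical difficulty is expected.

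First, I would specialise Lemma~\ref{th:cons:cons_mainest_B} to the choice $y_h = I_h y$, which is admissible precisely under the hypothesis $\mu := \min(\mu_\a(I_h y), \mu_\c(I_h y)) > 0$ stated in the lemma. This yields, for every $u_h \in \Ush$,
\begin{displaymath}
  \b\< \del\Eqc(I_h y) - \del\Ea(I_h y), u_h \b\>
  \leq C^{\rm model}_1 \, \b\| [\D I_h y] \b\|_{\LL^p(\Fhc)} \, \| \D u_h \|_{\LL^{p'}(\Om)}.
\end{displaymath}
Taking the supremum over all $u_h \in \Ush$ with $\|\D u_h\|_{\LL^{p'}(\Om)} = 1$, and using the definition of $\Emodel_p(y) = \|\del\Eqc(I_h y) - \del\Ea(I_h y)\|_{\WW^{-1,p}_h}$, this converts into the intermediate estimate
\begin{displaymath}
  \Emodel_p(y) \leq C^{\rm model}_1 \, \b\| [\D I_h y] \b\|_{\LL^p(\Fhc)}.
\end{displaymath}

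Second, I would bound the jump norm on $\Fhc$ by invoking the global jump estimate \eqref{eq:cons:est_jmp_Dyh_Omc} of Lemma~\ref{th:cons:est_jmp_Dyh}, which gives
\begin{displaymath}
  \b\| [\D I_h y] \b\|_{\LL^p(\Fhc)} \leq \Cf \, 3^{1/p} \, \b\| h^{1/p'} \D^2 \tilde{y} \b\|_{\LL^p(\Omc)}
  \qquad \forall \tilde{y} \in \Pi_2(y).
\end{displaymath}
Composing the two inequalities yields \eqref{eq:cons:modelest_C} with $\Cmodel = C^{\rm model}_1 \Cf 3^{1/p}$. Absorbing $\Cf$ and the factor $3^{1/p}$, together with the shape-regularity-dependent $C'$ from $C^{\rm model}_1$, into a single constant $C$ depending only on the shape regularity of $\Th$, one arrives at the stated form $\Cmodel = C \sum_{r \in \Ldir} M_2(\mu |r|) |r|^3$.

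No genuine obstacle is anticipated: the bulk of the work has already been carried out in deriving Lemma~\ref{th:cons:cons_mainest_B} (the bond-level estimate, the jump-counting bound via $\Nj(b)$, and the optimal choice of the weights $a_r$) and Lemma~\ref{th:cons:est_jmp_Dyh} (the standard trace/interpolation estimate on element faces). The only minor point to verify is bookkeeping of the factor $h^{1/p'}$ rather than $h$ on the right-hand side, which comes directly from \eqref{eq:cons:est_jmp_Dyh_Omc}; this is why the modelling error has a slightly better mesh-size dependence than the coarsening error, a subtlety that is then deliberately discarded in the proof of Theorem~\ref{th:cons:mainest} by estimating $1 \leq h$.
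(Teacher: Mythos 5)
Your proposal is correct and is essentially identical to the paper's own argument: the paper obtains the modelling error estimate precisely by specialising Lemma~\ref{th:cons:cons_mainest_B} to $y_h = I_h y$ and then applying the jump estimate \eqref{eq:cons:est_jmp_Dyh_Omc} of Lemma~\ref{th:cons:est_jmp_Dyh} to bound $\|[\D I_h y]\|_{\LL^p(\Fhc)}$. Your bookkeeping of the constants and of the $h^{1/p'}$ factor also matches the paper's treatment.
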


\begin{remark}
  \label{rem:cons:scaling}
  At first glance it may seem that the terms $\| [\D y_h]
  \|_{\LL^p(\Fhc)}$ in \eqref{eq:cons:cons_mainest_B} and $\|
  h^{1/p'} \D^2 \tilde y \|_{\LL^p(\Omc)}$ in
  \eqref{th:cons:modelest_C} are not scale invariant. This is,
  however, deceiving. In our case, the mesh size $h$ is in fact
  replaced by the atomic scale $1$, and one should read
  \begin{displaymath}
    \b\| [\D y_h] \b\|_{\LL^p(\Fhc)}
    = \b\| 1^{1/p} [\D y_h] \b\|_{\LL^p(\Fhc)}, \quad \text{and} \quad
        \b\| h^{1/p'} \D^2 \tilde y \b\|_{\LL^p(\Omc)} = 
    \b\| 1^{1/p} h^{1/p'} \D^2 \tilde y \b\|_{\LL^p(\Omc)},
  \end{displaymath}
  which is again scale invariant if $1$ scales in the same way as
  $h$. Indeed, it can be checked that, had we formulated the
  entire analysis with scaled quantities $x \to \eps x$, $y \to \eps
  y$, and $\sum \to \eps^2 \sum$, then we would have obtained $\|
  \eps^{1/p} [\D y_h] \|_{\LL^p(\Fhc)}$ and $\| \eps^{1/p} h^{1/p'}
  \D^2 \tilde y \|_{\LL^p(\Omc)}$.
\end{remark}

\def\SmMD{\mathscr{S}_{\mB,h}(m,M,\Delta)}

\section{Stability}
\label{sec:stab}

\subsection{Main result}
The most natural notion of stability for variational problems is
positivity of the second variation (at certain deformations of
interest). We will establish such a result for homogeneous lattices
without defects, and use a perturbation argument to extend it to
nonlinear deformations. The effect of the vacancy sites will be
controlled by defining a ``stability index''. We give a rigorous
estimate on the stability index of separated single vacancies, and
numerical estimates for divacancies.

\paragraph{Stability estimate for a Bravais lattice}
\label{sec:stab:hom_latt}
We first state the main stability result for the case of a homogeneous
deformation and $\Vac = \emptyset$. This serves as reference point and
motivation for the general stability result below, which has a more
involved formulation. To formulate the first result, for $0 < m \leq
M$, we define the constants $c_n = c_n(m, M)$ and $c_n^\perp =
c_n^\perp(m,M)$ by
\begin{equation}
  \label{eq:stab:defn_cn}
  \begin{split}
    c_n :=~& \cases{
      \min_{s \in [m, M]} \smfrac{\varphi''(s)}{s^2}, & n = 1, \\[1mm]
      0\wedge\min_{s \in [m, M]} \smfrac{\ell_n^2 \varphi''(s
        \ell_n)}{s^2}, & n > 1,
    }\qquad \text{ and } \\
    c_n^\perp :=~& \cases{
      \min_{s \in [m, M]} \smfrac{\varphi'(s)}{s}, & n = 1, \\[1mm]
      0\wedge\min_{s \in [m, M]} \smfrac{\ell_n \varphi'(s
        \ell_n)}{s^3}, & n > 1,
    }
  \end{split}
\end{equation}
as well as $c = c(m, M) := \sum_{n = 1}^\infty c_n$, and $c^\perp =
c^\perp(m, M) := \sum_{n = 1}^\infty c_n^\perp$.

\begin{theorem}
  \label{th:stab:homstab}
  Suppose that Assumption \ref{AsmMesh} holds, and that $\Vac = \emptyset$.
  Let $\mB \in \R^{2 \times 2}_+$ with singular values $0 < m \leq M$;
  then
  \begin{displaymath}
    \b\< \ddel\Eqc(\yB) u_h, u_h \b\> \geq \gamma_{\rm hom} \|
    \mB^\transpose \D u_h \|_{\LL^2(\Om)}^2 \qquad \forall u_h \in \Ush,
  \end{displaymath}
  where $\gamma_{\rm hom} = \gamma_{\rm hom}(m, M) := \min(\smfrac34 c + \smfrac94 c^\perp,
  \smfrac94 c + \smfrac34 c^\perp)$.
\end{theorem}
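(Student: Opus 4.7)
The plan is to prove the estimate in two main steps: a discrete-to-continuum reduction that converts $\b\<\ddel\Eqc(\yB)u_h,u_h\b\>$ into a volume integral of the Cauchy--Born Hessian, followed by a pointwise hexagonal-symmetry argument that produces the constant $\gamma_{\rm hom}$.

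For the first step, since $\Vac = \emptyset$, every bond direction is present at every lattice site, and at the homogeneous deformation $\Da{b}\yB = \Dc{b}\yB \equiv \mB r_b$ along every bond, so $\phi''(\mB r_b)$ is constant on $b$ and depends only on $r_b$. Using the bond-integral form \eqref{eq:defn_Eqc} and differentiating twice,
\[
\b\<\ddel\Eqc(\yB)u_h,u_h\b\> = \sum_{b\in\Ba}\phi''(\mB r_b)[\Da{b}u_h,\Da{b}u_h] + \sum_{b\in\Bc}\mint_b\phi''(\mB r_b)[\Dc{b}u_h,\Dc{b}u_h]\db.
\]
The periodic bond-density lemma (Lemma~\ref{th:bond-dens-per}), applied element by element with $\D u_h$ piecewise constant on $\Th$ (and $\Th = \Tm$ on $\Oma$ by Assumption~\ref{AsmMesh}), converts a bond-integral sum over all bonds into $\int_\Om \pp^2 W(\mB)[\D u_h,\D u_h]\,\dV$. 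The atomistic bond terms on $\Ba$ are not a priori in bond-integral form, but using the identity $\Da{b}u_h = \mint_b \Dc{b}u_h\db$ together with the decomposition $\phi''(r)[v,v] = \varphi''(|r|)(\hat r\cdot v)^2 + \tfrac{\varphi'(|r|)}{|r|}\b(|v|^2 - (\hat r\cdot v)^2\b)$ (each piece being quadratic and convex in $v$), Jensen's inequality on a shell-by-shell basis---with the $0\wedge$ truncations in $c_n,c_n^\perp$ for $n > 1$ absorbing the sign-indefinite prefactors---reduces the problem to
\[
\b\<\ddel\Eqc(\yB)u_h,u_h\b\> \geq \int_\Om \pp^2 W(\mB)[\D u_h,\D u_h]\,\dV.
\]

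For the second step, I set $\mG := \mB^\transpose \D u_h$ and decompose $\Ldir$ into hexagonal shells $\{\mQ_6^j r_n : j = 1,\ldots,6\}$ via Lemma~\ref{th:L6_decomposition}. For each $\rho = \mQ_6^j r_n$, writing $s_{n,j} := |\mB\rho|/\ell_n \in [m,M]$, a short computation gives
\[
\varphi''(|\mB\rho|)\b(\hat r\cdot\mF\rho\b)^2 = \tfrac{\ell_n^2\varphi''(s_{n,j}\ell_n)}{s_{n,j}^2}\,\b((\mQ_6^j\hat r_n)^\transpose \mG (\mQ_6^j\hat r_n)\b)^2,
\]
with an analogous identity for the perpendicular piece that produces the prefactor $\tfrac{\ell_n\varphi'(s_{n,j}\ell_n)}{s_{n,j}^3}$. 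Bounding these scalar prefactors below by $c_n$ and $c_n^\perp$ respectively, summing over $j = 1,\ldots,6$ and applying the hexagonal identities
\[
\sum_{j=1}^6 \b((\mQ_6^j\hat r_n)^\transpose \mG (\mQ_6^j\hat r_n)\b)^2 = \tfrac32|\mG^\sym|^2 + \tfrac34|\tr\mG|^2, \qquad \sum_{j=1}^6 |\mG\mQ_6^j\hat r_n|^2 = 3|\mG|^2,
\]
from Lemma~\ref{th:hex_identities}, and then summing over all shells, yields a pointwise lower bound on $\pp^2 W(\mB)[\D u_h,\D u_h]$ of the form $\tfrac32(c - c^\perp)|\mG^\sym|^2 + \tfrac34(c - c^\perp)|\tr\mG|^2 + 3c^\perp|\mG|^2$. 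Minimising over $\mG \in \R^{2\times 2}$ (splitting into symmetric and skew parts, and using $|\tr\mG|^2 \leq 2|\mG^\sym|^2$ in 2D) produces exactly $\gamma_{\rm hom}|\mG|^2 = \gamma_{\rm hom}|\mB^\transpose \D u_h|^2$, and integration over $\Om$ concludes.

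The main obstacle is the first step, and specifically the Jensen-type passage from discrete atomistic bond contributions on $\Ba$ to their bond-integral counterparts: $\phi''(\mB r)$ is in general sign-indefinite for non-nearest-neighbour shells, and one must split the Hessian into its parallel and perpendicular convex pieces and treat each separately, relying on the $0\wedge$ truncations in $c_n,c_n^\perp$ to handle the negative prefactors cleanly. The second step is then a careful but essentially algebraic hexagonal-symmetry computation.
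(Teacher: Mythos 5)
Your step 2 contains a genuine gap that cannot be repaired pointwise. After the shell-by-shell hexagonal summation, the best pointwise lower bound you can obtain is $c\,|\mG|_{\rm el}^2 + c^\perp |\mQ_4\mG|_{\rm el}^2$ with $\mG = \mB^\transpose \D u_h$ (your expression $\smfrac32(c-c^\perp)|\mG^\sym|^2 + \smfrac34(c-c^\perp)|\tr\mG|^2 + 3c^\perp|\mG|^2$ is the same form, since $|\mG|_{\rm el}^2 + |\mQ_4\mG|_{\rm el}^2 = 3|\mG|^2$). But this form is \emph{not} bounded below by $\gamma_{\rm hom}|\mG|^2$ for all $\mG \in \R^{2\times 2}$: on skew matrices, e.g.\ $\mG = \mQ_4$, one has $\mG^\sym = 0$, $\tr\mG = 0$, so the form equals $3c^\perp|\mG|^2$, which is $0$ (or negative) in the typical stable regime $c>0$, $c^\perp \le 0$, whereas $\gamma_{\rm hom}|\mG|^2 = (\smfrac34 c + \smfrac94 c^\perp)|\mG|^2 > 0$. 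Hence minimising over all of $\R^{2\times2}$ yields at most $3c^\perp$, not $\gamma_{\rm hom}$, and your auxiliary inequality $|\tr\mG|^2 \le 2|\mG^\sym|^2$ points the wrong way when its coefficient is positive. The constant $\gamma_{\rm hom}$ is only available in \emph{integrated} form: the paper (Lemma \ref{th:stab:combined_terms} together with the proof of Lemma \ref{th:stab_novac_hom}) rewrites the form as $\smfrac34(c+c^\perp)|\mG|^2 + \smfrac32 c\,|\mG_{11}+\mG_{22}|^2 + \smfrac32 c^\perp|\mG_{12}-\mG_{21}|^2 - \smfrac32(c+c^\perp)\det\mG$ and exploits that $\det$ is a null Lagrangian for periodic fields, via the Legendre--Hadamard/Fourier argument that reduces the infimum over $u_h \in \Ush$ to rank-one matrices $w\otimes k$ (for which $\det = 0$, so the skew degeneracy disappears). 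This global step is the heart of the theorem and is missing from your proposal.

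There is also a smaller problem in step 1: Jensen's inequality only produces a \emph{lower} bound for bonds whose (truncated) coefficient is non-positive, i.e.\ for shells $n\ge 2$. For nearest-neighbour bonds in $\Ba$ the coefficients $c_1$, $c_1^\perp$ are in general positive (they must be, for stability), and Jensen then gives an upper bound, not a lower one. The paper handles these bonds with the exact identity $\Da{b}u_h = \Dc{b}u_h$ along nearest-neighbour atomistic bonds, \eqref{eq:stabprf:Da_Dc_NN}, which is available because under Assumption \ref{AsmMesh} the mesh coincides with the micro-triangulation in $\Oma$, so these bonds lie along element edges and $\Dc{b}u_h$ is constant on them; your argument needs this identity rather than Jensen for $n=1$. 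Relatedly, your displayed intermediate claim $\<\ddel\Eqc(\yB)u_h,u_h\> \ge \int_\Om \pp^2 W(\mB)[\D u_h,\D u_h]\dV$ is not what the truncation argument yields (nor is it needed); the correct intermediate object is the truncated quadratic form with coefficients $c_n, c_n^\perp$, as in the paper's operator $\H(\yB)$. Apart from these points, your reduction in step 1 follows the paper's route.
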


\medskip \noindent Theorem \ref{th:stab:homstab} is a special case of
Theorem \ref{th:stab:nodefect} below. A direct proof can be given by
first specializing the definition of $\H(y_h)$ in
\eqref{eq:stab:defn_H} to $y_h = \yB$ and $\Vac = \emptyset$, and then
applying Lemma \ref{th:stab_novac_hom}, with $\ol\H$ replaced with
$\H(\yB)$.

\paragraph{The vacancy stability index} 
\label{sec:stab:defn_kappa}
The generalisation of Theorem \ref{th:stab:homstab} requires the
following concept.
Recall that in \S\ref{sec:interp:vac} we have defined
the extension operator $\Ext : \Us \to \Us_\Ext$. We define the
{\em vacancy stability index} as
\begin{equation}
  \label{eq:defn_kappa}
  \kappa(\Vac) := \max \Big\{k > 0 \,:\,
  \sum_{b \in \Bnn} \b|r_b \cdot \Da{b}u_h\b|^2 \geq 
  k \sum_{b \in \Btotnn} \b|r_b \cdot \Da{b} \Ext u\b|^2 \text{ for all } 
  u \in \Us \Big\}.
\end{equation}
In Table \ref{tbl:vacstab}, we present numerically estimated values on
$\kappa(\Vac)$ for a few simple situations. In \S\ref{sec:vac:lemma}
we rigorously prove the bound $\kappa(\Vac) \geq 2/7$ for separated
single vacancies.

\begin{remark}[Optimality of the extension operator]
  Recall the definition of $\Phi_{\Btotnn}$ from
  \S\ref{sec:interp:vac}, and let $\Phi_{\Bnn}$ be defined analogously
  (replacing $\Btotnn$ with $\Bnn$ in its definition), then
  \eqref{eq:defn_kappa} can be rewritten as
  \begin{displaymath}
    \kappa(\Vac) = \max \Big\{k > 0 \,:\, \Phi_{\Bnn}(u) \geq k
    \Phi_{\Btotnn}(\Ext u) \text{ for all } 
    u \in \Us \Big\}.
  \end{displaymath}
  Since, for fixed $u$, $\Phi_{\Bnn}(u)$ is also fixed, and $\Ext u$
  is chosen to minimize the value of $\Phi_{\Btotnn}(\Ext u)$, it
  follows that among all possible extensions of $u$, $\Ext u$ gives
  the largest possible stability index.

  Moreover, we can characterise $\kappa(\Vac)$ in terms of an operator
  norm of $\Ext$. Let $\Us$ be equipped with the norm
  $\sqrt{\Phi_{\Bnn}}$ and $\Us_\Ext$ with the norm
  $\sqrt{\Phi_{\Btotnn}}$, then
  \begin{displaymath}
    \kappa(\Vac) = \inf_{u \in \Us \setminus\{0\}}
    \frac{\Phi_{\Bnn}(u)}{\Phi_{\Btotnn}(\Ext u)} =
    \frac{1}{\|\Ext\|_{L(\Us, \Us_\Ext)}^2}. \qedhere
  \end{displaymath}
\end{remark}

\begin{table}
  \begin{tabular}{r|ccc}
    Separation distance & 4 & 8 & 12 \\[1mm]
    \hline 
    &&& \\[-3mm]
    $\Vac = \emptyset$ & 1 & &  \\
    Vacancies & 0.28 & 0.39 & 0.41 \\
    Divacancies & 0.16 & 0.26 & 0.29
  \end{tabular}
  \medskip
  \caption{\label{tbl:vacstab} Numerically determined vacancy
    stability indices when $\Vac$
    consists of either single vacancies, or divacancies separated by
    ``separation distance'' (measured in Euclidean norm).}
\end{table}

\paragraph{The main stability result}
\label{sec:stab:notation}
Before we state the result, we introduce some additional notation. We
define a family of regions in the space of deformations: for $0 < m \leq M$ and
$\Delta > 0$ let
\begin{align*}
  \notag
  \SmMD := \b\{ y_h \in \YsBh 
  : \quad \mu_\a(y_h) \geq m \text{ and }\,& \mu_\c(y_h) \geq m; \\
  \label{eq:stab:SmMD}
  |\Da{b} y_h| \leq M |b| ~\forall b \in \Ba \text{ and }\,& 
  \| \D y_h|_T \| \leq M~\forall T \in \Thc;  \\
  \notag
  |\mB^{-1} \Da{b} y_h - r_b | \leq \Delta |b| ~\forall b \in \Ba
  \text{ and }\,& \| \mB^{-1} \D y_h|_T - \mI \| \leq \Delta ~\forall T
  \in \Thc \b\}.
\end{align*}
Next, for parameters $m, M, \Delta$, and for $\kappa := \kappa(\Vac)$,
we define {\small
  \begin{align*}
    \gamma_1 :=~& \min\b\{ (\smfrac34 \kappa - 3
    \sqrt{\kappa} \Delta - 3 \Delta^2) c_1, (\smfrac34 + 3 \Delta + 3
    \Delta^2)c_1\b\}
    +{\textstyle \sum_{n = 2}^\infty} \b( \smfrac34 + 3 \Delta + 3
    \Delta^2 \b) c_n, \\
    \gamma_1^\perp :=~& \min\b\{ (\smfrac94 \kappa - 3
    \sqrt{3\kappa} \Delta - 3 \Delta^2) c_1^\perp, (\smfrac94 
    + 3\sqrt{3} \Delta + 3 \Delta^2)c_1^\perp\b\}  
    + {\textstyle \sum_{n = 2}^\infty} \b( \smfrac94 + 3\sqrt{3} \Delta + 3
    \Delta^2 \b)  c_n^\perp, \\
    \gamma_2 :=~& \min\b\{ (\smfrac94\kappa - 6 \sqrt\kappa \Delta - 3
    \Delta^2) c_1, (\smfrac94 + 6 \Delta + 3\Delta^2) c_1 \b\} 
    + {\textstyle \sum_{n = 2}^\infty} \b( \smfrac94 + 6 \Delta + 3 \Delta^2 \b)
    c_n,  \text{ and}\\
    \gamma_2^\perp :=~& \min\b\{ (\smfrac34 \kappa - 2\sqrt{3\kappa}
    \Delta - 3 \Delta^2) c_1^\perp, (\smfrac34 + 2\sqrt{3} \Delta + 3
    \Delta^2) c_1^\perp \b\}
    +{\textstyle \sum_{n = 2}^\infty} \b( \smfrac34 + 2\sqrt{3} \Delta + 3
    \Delta^2\b) c_n^\perp.
  \end{align*}
} 
Finally, we define the coercivity constant $\gamma = \gamma(m, M,
\Delta, \kappa(\Vac))$ as
\begin{equation}
  \label{eq:defn_gamma}
  \gamma := \min(\gamma_1 + \gamma_1^\perp, \gamma_2 + \gamma_2^\perp).
\end{equation}
We will investigate the parameter region where $\gamma$ is positive in
\S\ref{sec:stab:sharpness}.

With the notation just introduced we can now formulate the main
stability result. The proof of Theorem \ref{th:stab:nodefect} is
given in \S\ref{sec:stab:prelimns}--\ref{sec:stabprf3} and is finalized in \S\ref{sec:stabprf3:final}.

\begin{theorem}
  \label{th:stab:nodefect}
  Suppose that Assumption \ref{AsmMesh} holds. Let $y_h \in \SmMD$ for
  constants $0 < m \leq M$ and $0 \leq \Delta \leq
  \sqrt{\kappa(\Vac)}/2$. Then $\Eqc$ is twice Gateaux-differentiable at
  $y_h$, and
  \begin{displaymath}
    \big\< \ddel \Eqc(y_h) u_h, u_h \big\> \geq \gamma \|
    \mB^\transpose \D u_h \|_{\LL^2(\Om)}^2 \qquad \text{for all } u_h \in \Us_h,
 \end{displaymath}
 where the coercivity constant $\gamma = \gamma(m, M, \Delta,
 \kappa(\Vac))$ is defined in \eqref{eq:defn_gamma}.
\end{theorem}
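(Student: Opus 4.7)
The plan is to expand the second variation via the bond-integral formulation \eqref{eq:defn_Eqc} and reduce to the homogeneous-lattice case in two stages: first handling the nonlinear deformation via a perturbation argument, then handling the vacancies via the stability index $\kappa(\Vac)$.

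First, I would differentiate \eqref{eq:defn_Eqc} twice at $y_h$ to obtain
\begin{displaymath}
  \b\< \ddel\Eqc(y_h) u_h, u_h \b\> = \sum_{b \in \Ba} \phi''(\Da{b} y_h)[\Da{b}u_h, \Da{b}u_h] + \sum_{b \in \Bc} \mint_b \phi''(\Dc{b} y_h)[\Dc{b}u_h, \Dc{b}u_h] \db.
\end{displaymath}
Since $\phi(r)=\varphi(|r|)$, the Hessian decomposes as
\begin{displaymath}
  \phi''(r)[s,s] = \frac{\varphi''(|r|)}{|r|^2}(r\cdot s)^2 + \frac{\varphi'(|r|)}{|r|}\Big(|s|^2 - \frac{(r\cdot s)^2}{|r|^2}\Big),
\end{displaymath}
which explains the appearance of the longitudinal and transversal modulus coefficients $c_n$ and $c_n^\perp$ in \eqref{eq:stab:defn_cn}: each bond of length $\ell_n$ contributes a longitudinal term bounded below by $c_n |b|^{-2}(r_b\cdot \Da{b}u_h)^2$ and a transversal term bounded below in an analogous way, after using the defining bounds $m \leq |\Da{b}y_h|/|b| \leq M$ available from $y_h \in \SmMD$.

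Next, I would convert the bond sums over $\Bc$ (and, where necessary, their atomistic counterparts over $\Btot$ via Lemma \ref{th:cons:aux_bdl}) into volume integrals using the periodic bond-density Lemma \ref{th:bond-dens-per}, shell by shell. For each $n$ and each direction class $\{\mQ_6^j r_n : j = 1,\dots,6\}$, the hexagonal sum identities \eqref{eq:quadratic_form_identity}--\eqref{eq:quartic_form_identity} produce the coefficients $\smfrac34$ and $\smfrac94$ acting on the symmetric and trace parts of $\mB^\transpose \D u_h$ that show up in $\gamma_1, \gamma_1^\perp, \gamma_2, \gamma_2^\perp$. In the homogeneous case $y_h = \yB$ with $\Vac = \emptyset$, this argument yields precisely Theorem \ref{th:stab:homstab}.

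For the general case I would then perform two perturbations. The first accounts for the nonlinearity of $y_h$: writing $\Da{b}y_h/|b| = \mB r_b/|b| + \mB w_b$ with $|w_b|\leq \Delta$ (from the third defining condition of $\SmMD$), expanding each radial factor $r_b\cdot \Da{b}u_h$ about the reference and estimating cross-terms by Cauchy--Schwarz produces the $3\Delta$, $3\sqrt{3}\Delta$ and $3\Delta^2$ corrections. The minimum structure in each of $\gamma_1,\gamma_1^\perp,\gamma_2,\gamma_2^\perp$ arises from choosing between keeping the sharp coefficient of $c_1$ (paying an error cost proportional to $\Delta$) and trivially discarding it using the sign of $c_1$. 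The second perturbation accounts for the vacancies: the identity \eqref{eq:quartic_form_identity} needs summation over all six nearest-neighbour directions at every lattice site, but vacancy sites break this pattern. I would extend $u_h$ by $\Ext$ (Proposition of \S\ref{sec:interp:vac}), write the full sum as $\sum_{b\in\Btotnn} = \sum_{b\in\Bnn} + \sum_{b\in\Btotnn\setminus\Bnn}$, and appeal to the definition \eqref{eq:defn_kappa} to absorb the missing bonds into a factor $\kappa(\Vac)$, producing the $\kappa$ factors multiplying $c_1$ and $c_1^\perp$ inside the minima.

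The main obstacle is the interaction between these two perturbations. The vacancy stability index is defined relative to the radial quadratic form $|r_b\cdot \Da{b}\Ext u|^2$ evaluated at the \emph{reference} lattice directions, but after the nonlinearity step we need to control an analogous form in the \emph{deformed} radial direction associated with $\mB r_b + \mB w_b$. Carefully tracking how the $\kappa$ factor is transferred through this change of direction is exactly what forces the hypothesis $\Delta \leq \sqrt{\kappa(\Vac)}/2$: this is the sharp threshold at which the discriminant-like expressions $\smfrac34\kappa - 3\sqrt{\kappa}\Delta - 3\Delta^2$, $\smfrac94\kappa - 6\sqrt{\kappa}\Delta - 3\Delta^2$ and their $\perp$-analogues are guaranteed to stay nonnegative, so that $\gamma$ in \eqref{eq:defn_gamma} remains a meaningful coercivity constant.
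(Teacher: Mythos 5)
Your outline follows the paper's route in its broad strokes (bond-integral Hessian, longitudinal/transversal split via \eqref{eq:dd_phi}, bond-density lemma plus the hexagonal identities of Lemma \ref{th:hex_identities}, a $\Delta$-perturbation to the reference state, and a $\kappa(\Vac)$-reduction of the missing nearest-neighbour bonds via $\Ext$), but two points are genuine gaps. First, your explanation of the hypothesis $\Delta \leq \sqrt{\kappa(\Vac)}/2$ is wrong: the expressions $\smfrac34\kappa - 3\sqrt{\kappa}\Delta - 3\Delta^2$ and their analogues are \emph{not} nonnegative up to this threshold (at $\Delta = \sqrt{\kappa}/2$ the first equals $-\smfrac32\kappa$), and the theorem does not assert $\gamma > 0$ at all --- positivity of $\gamma$ is a separate assumption imposed later in Theorem \ref{th:mainthm}. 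In the actual argument the restriction enters only through the optimization of free parameters: the perturbation step is done with weighted Cauchy inequalities $2ab \leq \alpha a^2 + \alpha^{-1}b^2$, one parameter $\alpha_{\ell_n}^{(\perp)}$ per shell, and after the $\kappa$-reduction the nearest-neighbour coefficient becomes $\min(\tilde{C}_1, \kappa \tilde{C}_1)$ with $\tilde{C}_1 = C_1 - \alpha_1|C_1|$. Lemma \ref{th:stabprf3:lemopt} shows the optimal choices are $\alpha_1 = 2\Delta/\sqrt{\kappa}$ (if $c_1>0$) resp.\ $\alpha_1 = 2\Delta$ (if $c_1 \leq 0$), and $\Delta \leq \sqrt{\kappa}/2$ is exactly what puts these maximizers in $(0,1]$ so that the formulae defining $\gamma$ in \eqref{eq:defn_gamma} are valid lower bounds. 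Moreover there is no need to ``transfer $\kappa$ through a change of direction'': the perturbation to reference directions (working with $v_h = \mB^\transpose u_h$ and $r_b$ in place of $\Da{b}y_h$) is performed \emph{first}, and \eqref{eq:defn_kappa} is then applied verbatim to the reference-direction nearest-neighbour form (with $u = \mQ_4^\transpose v_h$ for the transversal part); the $\sqrt{\kappa}\Delta$ cross terms come from optimizing $\alpha_1$ in the presence of the $\kappa$ factor, not from any change of direction.

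Second, your localization step is missing the sign structure that makes it legitimate. For a \emph{lower} bound you cannot invoke Lemma \ref{th:cons:aux_bdl} indiscriminately: it bounds atomistic differences \emph{above} by bond integrals, which goes the wrong way wherever the coefficient is positive. The paper's coefficients \eqref{eq:stab:defn_Crho} truncate all non-nearest-neighbour terms at zero precisely so that (i) Jensen's inequality converts $|\Da{b}u_h|^2$ into $\mint_b |\Dc{b}u_h|^2 \db$ in the correct direction for $b \in \Ba \setminus \Bnn$, and (ii) enlarging the bond set from $\B$ to $\Btot$ (and discarding defect bonds in favour of $\kappa$) only decreases those contributions; for nearest-neighbour atomistic bonds one instead needs the exact identity $\Da{b}u_h = \Dc{b}u_h$ along $b$, i.e.\ \eqref{eq:stabprf:Da_Dc_NN}, which is where Assumption \ref{AsmMesh} is used. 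Without these ingredients the ``shell by shell'' conversion to volume integrals and the subsequent hexagonal-symmetry computation do not yield a lower bound on $\ddel\Eqc(y_h)$, so as written your plan does not produce the constant $\gamma(m,M,\Delta,\kappa(\Vac))$ of \eqref{eq:defn_gamma}.
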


\medskip \noindent By choosing $\Omc=\emptyset$, we obtain the following
stability result for the atomistic energy as an immediate corollary.

\begin{corollary}
  Let $y \in \Ys$, and define
  \begin{align*}
    m := \mu_\a(y), \quad M := \max_{b \in \B} \frac{|\Da{b} y|}{|b|},
    \quad \text{and} \quad
    \Delta := \max_{b \in \B} \frac{| \mB^{-1} \Da{b} y_h - r_b|}{|b|}.
  \end{align*}
  If $\Delta \leq \sqrt{\kappa(\Vac)}/2$, then $\Ea$ is twice
  Gateaux-differentiable at $y$, and
  \begin{displaymath}
    \big\< \ddel \Ea(y) u, u \big\> \geq \gamma \|
    \mB^\transpose \D \bar u \|_{\LL^2(\Om)}^2 \qquad \text{for all } u \in \Us,
 \end{displaymath}
 where $\gamma = \gamma(m, M, \Delta, \kappa(\Vac))$ is defined in
 \eqref{eq:defn_gamma}.
\end{corollary}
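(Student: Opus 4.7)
The plan is to obtain the corollary as a direct specialization of Theorem \ref{th:stab:nodefect} by taking $\Omc = \emptyset$ (equivalently $\Oma = \Om$), in which case the a/c energy degenerates to the pure atomistic energy and all the coarse-graining structures collapse.

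First I would verify that the data of the corollary fits into the framework of Theorem \ref{th:stab:nodefect} after setting $\Omc = \emptyset$. In that case $\Bc = \emptyset$, so $\Ba = \B$, and the continuum term in \eqref{eq:defn_Eqc} disappears, giving $\Eqc \equiv \Ea$. Similarly $\Thc = \emptyset$, while the extended triangulation $\Th$ reduces to the canonical micro-triangulation $\Tm$ (Assumption A becomes trivial since $\pp\Oma$ coincides with the periodic frame). Consequently $\Ush = \Us$ with $I_h$ the identity, and for any $u \in \Us$ the macro-interpolant $u_h$ coincides with the micro-interpolant $\bar u$. In particular $\D u_h = \D \bar u$ pointwise a.e.\ in $\Om$.

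Next I would check that $y \in \SmMD$ for the choices of $m, M, \Delta$ given in the corollary. The two atomistic constraints in the definition of $\SmMD$, namely $\mu_\a(y_h) \geq m$ and the bounds involving $\Da{b}y_h$, hold by the very definition of $m$, $M$, and $\Delta$ in the corollary statement. The two continuum constraints (those involving $\mu_\c$, $T \in \Thc$, and $\D y_h|_T$) are vacuous because $\Omc = \emptyset$ and $\Thc = \emptyset$. Hence $y \in \SmMD$ and the hypothesis $\Delta \leq \sqrt{\kappa(\Vac)}/2$ is exactly the assumption of Theorem \ref{th:stab:nodefect}.

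Finally, applying Theorem \ref{th:stab:nodefect} gives, for every $u \in \Us = \Ush$,
\begin{displaymath}
  \b\< \ddel \Eqc(y) u, u \b\> \geq \gamma \b\| \mB^\transpose \D u_h \b\|_{\LL^2(\Om)}^2,
\end{displaymath}
with $\gamma = \gamma(m, M, \Delta, \kappa(\Vac))$ from \eqref{eq:defn_gamma}. Substituting the identifications $\Eqc = \Ea$ and $\D u_h = \D \bar u$ yields the claimed inequality. Since nothing beyond bookkeeping is required, there is no real obstacle; the only point meriting a sentence of justification is the reduction $\Eqc \to \Ea$ and $u_h \to \bar u$ under the choice $\Omc = \emptyset$, which is immediate from the definitions in \S\ref{sec:qc}.
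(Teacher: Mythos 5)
Your proposal is correct and follows the same route as the paper, which obtains the corollary precisely by taking $\Omc = \emptyset$ in Theorem \ref{th:stab:nodefect} so that $\Eqc$ reduces to $\Ea$, $\Ush$ to $\Us$, and $\D u_h$ to $\D\bar u$. You have merely spelled out the bookkeeping (vacuousness of the continuum constraints in $\SmMD$, membership $y \in \SmMD$ for the stated $m, M, \Delta$) that the paper leaves implicit in calling it an immediate corollary.
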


\begin{remark}
  The restriction $\Delta \leq \sqrt{\kappa}/2$ is imposed since our
  proof does not guarantee that $\gamma$ is a lower bound on the
  coercivity constant in this case. As a matter of fact, modifying our
  strategy of proof to include $\Delta > \sqrt{\kappa}/2$ would not in
  fact give a positive constant $\gamma$. This can be seen from the
  proof of Lemma \ref{th:stabprf3:lemopt}.
\end{remark}

\subsection{Proof of the stability result I: reduction to the
  Bravais lattice case}
\label{sec:stab:prelimns}

\paragraph{Representation of $\ddel\Eqc$}
In view of our assumptions on the potential $\varphi$, and since $y_h
\in \SmMD$, $m > 0$, it is clear that $\Eqc$ is twice differentiable
at $y_h$. The representation \eqref{eq:defn_Eqc} of the a/c energy
$\Eqc$ yields the following expression for the second variation
$\ddel\Eqc$: 
\begin{equation}
  \label{eq:ddelEqc:1}
  \< \ddel\Eqc(y_h) u_h, u_h \> = \sum_{b \in \Ba} \Da{b}u_h^\transpose \phi''(\Da{b}
  y_h) \Da{b} u_h
  + \sum_{b \in \Bc} \int_b \Dc{b}u_h^\transpose \phi''(\Dc{b}y_h) \Dc{b} u_h
  \db,
\end{equation}
for all $u_h \in \Ush$, where we recall that $\phi''(r)$ is understood
as the Hessian matrix of $\phi$. A straightforward calculation shows
that $\phi''$ can be written, in terms of $\varphi'$ and $\varphi''$,
as
\begin{equation}
  \label{eq:dd_phi}
  \phi''(r) = \varphi''(|r|) \smfrac{r}{|r|} \otimes \smfrac{r}{|r|}
  + \smfrac{\varphi'(|r|)}{|r|} \big( \mI - \smfrac{r}{|r|} \otimes \smfrac{r}{|r|}\big).
\end{equation}
We use the fact that $\smfrac{r}{|r|} \otimes \smfrac{r}{|r|}$ is the
orthogonal projection onto the space ${\rm span}\{r\}$ and that $(\mI
- \smfrac{r}{|r|} \otimes \smfrac{r}{|r|})$ is the orthogonal
projection onto ${{\rm span}\{r\}}^\perp$, to derive a convenient
alternative representation. Note that, using the notation
\begin{displaymath}
  a \times b = (\mQ_4 a) \cdot b \qquad \text{for } a, b \in \R^2,
\end{displaymath}
where $\mQ_4$ denotes a rotation through angle $\pi/2$, we have
\begin{displaymath}
  h^\transpose (r \otimes r) h = |h \cdot r|^2, 
  \quad \text{while} \quad
  h^\transpose (\mI - r \otimes r) h = |h|^2 - |r \cdot h|^2 = | h \times r|^2.
\end{displaymath}
Hence, we rewrite \eqref{eq:ddelEqc:1} as
\begin{align}
  \label{eq:ddelEqc:2}
   \< \ddel \Eqc(y_h) u_h, u_h \> =~& \sum_{b \in \Ba} \Big\{
    \smfrac{\varphi''(|\Da{b} y_h|)}{|\Da{b} y_h|^2} 
    |\Da{b} y_h \cdot \Da{b} u_h|^2
    + \smfrac{\varphi'(|\Da{b} y_h|)}{|\Da{b} y_h|^3} 
    |\Da{b} y_h \times \Da{b} u_h|^2 \Big\} \\
    \notag
    & + \sum_{b \in \Bc} \mint_b \Big\{ 
    \smfrac{\varphi''(|\Dc{b} y_h|)}{|\Dc{b} y_h|^2}  
    | \Dc{b}y_h \cdot \Dc{b} u_h|^2
    + \smfrac{\varphi'(|\Dc{b} y_h|)}{|\Dc{b} y_h|^3} 
    |\Dc{b}y_h \times \Dc{b}u_h|^2 \Big\} \db.
\end{align}

\paragraph{A general lower bound}
\label{sec:stab:nearhex:to_cont_derivatives}
Next, we construct a relatively crude lower bound on the Hessian
$\ddel\Eqc$, which will nevertheless be sufficient to obtain stability
estimates in a range of interesting deformations. Our goal is to
``localise'' the finite differences $\Da{b} u_h$ occurring in the
Hessian representation \eqref{eq:ddelEqc:2}, and to render the scalar
coefficients hexagonally symmetric.

Since $y_h \in \SmMD$, we can estimate the coefficients in
\eqref{eq:ddelEqc:2} by
\begin{displaymath}
  \smfrac{\varphi''(|\Da{b} y_h|)}{|\Da{b} y_h|^2} \geq C_{|b|} 
  \qquad \text{and} \qquad
  \smfrac{\varphi'(|\Da{b} y_h|)}{|\Da{b} y_h|^3} \geq C_{|b|}^\perp,
\end{displaymath}
with similar estimates for $b \in \Bc$, where 
\begin{equation}
  \label{eq:stab:defn_Crho}
  \begin{split}
    C_\rho :=~& \cases{
      \min_{s \in [m, M]} \smfrac{\varphi''(\rho s)}{(\rho s)^2}, & \rho = 1, \\[1mm]
      0\wedge\min_{s \in [m, M]} \smfrac{\varphi''(\rho s)}{(\rho
        s)^2}, & \rho > 1,
    }\qquad \text{ and } \\
    C_\rho^\perp :=~& \cases{
      \min_{s \in [m, M]} \smfrac{\varphi'(\rho s)}{(\rho s)^3}, & \rho = 1, \\[1mm]
      0\wedge\min_{s \in [m, M]} \smfrac{\varphi'(\rho s)}{(\rho
        s)^3}, & \rho > 1.  }
  \end{split}
\end{equation}
We note that these lower bounds do not depend anymore on $y_h$, and
moreover, they were constructed so that all coefficients for
non-nearest neighbour bonds are non-positive.

With this notation, we obtain from \eqref{eq:ddelEqc:2} that
\begin{equation}
  \label{eq:ddelEqc:bnd1}
  \begin{split}
    \< \ddel \Eqc(y_h) u_h, u_h \> \geq~& \sum_{b \in \Ba} \Big\{
    C_{|b|} |\Da{b} y_h \cdot \Da{b} u_h|^2 
    + C_{|b|}^\perp |\Da{b} y_h \times \Da{b} u_h|^2 \Big\} \\
    &  + \sum_{b \in \Bc} \mint_b \Big\{ 
    C_{|b|} | \Dc{b}y_h \cdot \Dc{b} u_h|^2 
    + C_{|b|}^\perp |\Dc{b}y_h \times \Dc{b}u_h|^2 \Big\} \db.
  \end{split}
\end{equation}
We now observe that we have constructed the extended mesh $\olTh$ in
such a way that in the atomistic region every nearest-neighbour bond
$b \in \Bnn$ lies on the edge of a triangle. As a result we have the
identity
\begin{equation}
  \label{eq:stabprf:Da_Dc_NN}
  \Da{b} u_h = \Dc{b} u_h(x) \qquad \text{for all } x \in {\rm int}(b),
  \text{ for all } b \in \Ba \cap \Bnn,
\end{equation}
which we will use heavily throughout. In particular, this implies that
\begin{equation}
  \label{eq:stab:10a}
  \begin{split}
    &\sum_{b \in \Bnn \cap \Ba} \Big\{C_{1} |\Da{b}y_h \cdot \Da{b} u_h|^2 + C_{1}^\perp
  |\Da{b} y_h \times \Da{b} u_h|^2 \Big\} \\
  & \hspace{2cm} = \sum_{b \in \Bnn \cap \Ba} 
  \mint_b \Big\{  C_{1} |\Da{b} y_h \cdot \Dc{b} u_h|^2 + C_{1}^\perp
  |\Da{b} y_h \times \Dc{b} u_h|^2 \Big\} \db.
\end{split}
\end{equation}

Our second observation is that, for $b \in \Ba \setminus \Bnn$ we have
$C_{|b|},C_{|b|}^\perp \leq 0$, and hence we can use
\eqref{eq:mintDc_Da} and Jensen's inequality to estimate
\begin{align}
  \notag
 &\sum_{b \in \Ba \setminus \Bnn} \Big\{
 C_{|b|} | \Da{b} y_h \cdot \Da{b} u_h|^2 
 + C_{|b|}^\perp |\Da{b} y_h \times \Da{b} u_h|^2 \Big\} \\
 \label{eq:stab:10c}
 =~& 
  \sum_{b \in \Ba \setminus \Bnn} \Big\{
  C_{|b|} \big| \Da{b} y_h \cdot {\textstyle \mint_b} \Dc{b} u_h \db \big|^2 
  + C_{|b|}^\perp \big| \Da{b} y_h \times {\textstyle \mint_b} \Dc{b} u_h \db
  \big|^2  \Big\}\\
  \notag
 \geq~& \sum_{b \in \Ba \setminus\Bnn}
 \mint_b \Big\{ C_{|b|} \big|\Da{b} y_h \cdot \Dc{b} u_h\big|^2  + 
 C_{|b|}^\perp \big|\Da{b} y_h \times \Dc{b} u_h\big|^2  \Big\} \db.
\end{align}
Inserting \eqref{eq:stab:10a} and \eqref{eq:stab:10c} into
\eqref{eq:ddelEqc:bnd1} we obtain the following estimate:
                        \begin{align}
    \label{eq:stab:defn_H}
    \big\< \ddel \Eqc(y_h) u_h, u_h \big\> \geq~& \sum_{b \in \Bc}
    \mint_b \Big\{ C_{|b|} |\Dc{b} y_h \cdot \Dc{b} u_h|^2 + C_{|b|}^\perp
    |\Dc{b} y_h \times \Dc{b} u_h|^2 \Big\} \db \\
    \notag
    & + \sum_{b \in \Ba} \mint_b \Big\{
    C_{|b|} |\Da{b} y_h \cdot \Dc{b}u_h|^2
    + C_{|b|}^\perp |\Da{b} y_h \times \Dc{b} u_h|^2 \Big\}
    \db \\
    \notag
    =:~& \< \H(y_h) u_h, u_h \>,
  \end{align}
  where $C_{|b|}, C_{|b|}^\perp$ are defined in
  \eqref{eq:stab:defn_Crho}.

\paragraph{The perturbation argument} 
\label{sec:stab:Lip_step}
In the next step, we will estimate the effect of replacing $\Da{r}y_h$
and $\Dc{r} y_h$ with $\mB r$. To that end, the following Lemma will be helpful.

\begin{lemma}
  \label{th:stab:pert_lemma}
  Suppose that $y_h \in \SmMD$; then, for all $g \in \R^2, x \in \Om,
  r \in \R^2$, and for all possible choices of $\alpha > 0$,
  \begin{equation}
    \label{eq:stab:pert_10a}
    \big|\,|\Dc{r} y_h(x) \cdot g|^2 - |\mB r \cdot g|^2 \big|
    \leq \alpha \b| \mB r \cdot g \b|^2 + \b(1 + \smfrac1\alpha\b)
    \Delta^2 |r|^2 |\mB^\transpose g|^2, \\
  \end{equation}
  Similarly, for all $g \in \R^2, x \in \L, r \in \Ldir$, and $\alpha
  > 0$, we have
  \begin{equation}
    \label{eq:stab:pert_10c}
    \big|\,|\Da{r} y_h(x) \cdot g|^2 - |\mB r \cdot g|^2 \big|
    \leq \alpha \b| \mB r \cdot g \b|^2 + \b(1 + \smfrac1\alpha\b)
    \Delta^2 |r|^2 |\mB^\transpose g|^2.
 \end{equation}
 The same inequalities remain true if ``$\cdot$'' is replaced with
 ``$\times$''.
\end{lemma}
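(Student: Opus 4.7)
The plan is elementary, resting on the scalar identity $A^2 - B^2 = 2B(A-B) + (A-B)^2$ combined with Young's inequality, together with the fact that the defining constraints of $\SmMD$ allow us to factor the ``error'' $\Dc{r}y_h(x) - \mB r$ through $\mB$.

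For \eqref{eq:stab:pert_10a}, set $A := \Dc{r}y_h(x) \cdot g$, $B := \mB r \cdot g$, and $e := \Dc{r}y_h(x) - \mB r$, so that $A - B = e \cdot g$. The identity and $2|B||A-B| \le \alpha B^2 + \alpha^{-1}(A-B)^2$ give
\[
  |A^2 - B^2| \le \alpha\, B^2 + \b(1 + \smfrac{1}{\alpha}\b)(e \cdot g)^2,
\]
so it suffices to prove $|e \cdot g| \le \Delta\,|r|\,|\mB^\transpose g|$. For $x$ in the interior of some $T \in \Th$ we have $\Dc{r}y_h(x) = \D y_h|_T\,r$, and hence
\[
  e = \D y_h|_T\,r - \mB r = \mB\b(\mB^{-1}\D y_h|_T - \mI\b)\,r =: \mB F r,
\]
with $\|F\| \le \Delta$ by the definition of $\SmMD$. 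Therefore
\[
  |e \cdot g| = |F r \cdot \mB^\transpose g| \le \|F\|\,|r|\,|\mB^\transpose g| \le \Delta\,|r|\,|\mB^\transpose g|,
\]
which closes \eqref{eq:stab:pert_10a}. Points $x$ on an interelement face are handled identically, using the element on the appropriate side of the one-sided derivative.

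The bound \eqref{eq:stab:pert_10c} follows from the same computation with $e := \Da{r}y_h(x) - \mB r = \mB\b(\mB^{-1}\Da{r}y_h(x) - r\b)$ and the constraint $|\mB^{-1}\Da{r}y_h(x) - r| \le \Delta|r|$ from the definition of $\SmMD$ (valid for $b = (x, x+r) \in \Ba$) taking the role of $\|F\| \le \Delta$. The cross-product versions follow upon substituting $g \mapsto \mQ_4^\transpose g$ throughout and using $a \times g = a \cdot \mQ_4^\transpose g$ together with $|\mQ_4^\transpose g| = |g|$. The only real obstacle is notational bookkeeping; the key observation is that the definition of $\SmMD$ places the error $e$ in the range of $\mB$ with a preimage of norm at most $\Delta\,|r|$, which is precisely what produces the $|\mB^\transpose g|$ factor on the right-hand side.
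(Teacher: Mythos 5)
Your proof of the two ``$\cdot$'' estimates \eqref{eq:stab:pert_10a} and \eqref{eq:stab:pert_10c} is correct and is essentially the paper's own argument: an elementary difference-of-squares identity plus a weighted Cauchy inequality $2ab\le \alpha a^2+\alpha^{-1}b^2$, with the key step being the factorisation $\Dc{r}y_h-\mB r=\mB(\mB^{-1}\D y_h|_T-\mI)r$ (respectively $\Da{r}y_h(x)-\mB r=\mB(\mB^{-1}\Da{r}y_h(x)-r)$), which together with the $\Delta$-constraints in $\SmMD$ gives $|e\cdot g|\le \Delta|r|\,|\mB^\transpose g|$. The paper writes $|A^2-B^2|=(|A|+|B|)\,\big||A|-|B|\big|$ instead of your identity $A^2-B^2=2B(A-B)+(A-B)^2$, but this is the same computation.

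The gap is in your last step, the ``$\times$'' case. Substituting $g\mapsto \mQ_4^\transpose g$ into \eqref{eq:stab:pert_10a} and using $a\times g=a\cdot(\mQ_4^\transpose g)$ does convert the left-hand side and the $\alpha$-term correctly, but the error term becomes $(1+\smfrac1\alpha)\Delta^2|r|^2|\mB^\transpose\mQ_4^\transpose g|^2$, and the observation $|\mQ_4^\transpose g|=|g|$ does not allow you to replace this by $|\mB^\transpose g|^2$: $\mB^\transpose$ and $\mQ_4^\transpose$ do not commute unless $\mB$ is a multiple of a rotation, and $|\mB^\transpose\mQ_4^\transpose g|$ may exceed $|\mB^\transpose g|$ by a factor up to $M/m$. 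In fact the ``$\times$'' inequality with $|\mB^\transpose g|^2$ on the right cannot be proved this way because it fails in general: take $\mB={\rm diag}(m,M)$ with $m<M$, $r=g=(1,0)^\transpose$, and $\D y_h=\mB(\mI+\Delta\,(0,1)^\transpose\otimes(1,0)^\transpose)$, so that $\mB r\times g=0$ while $|\Dc{r}y_h\times g|^2=\Delta^2M^2$, which exceeds $(1+\smfrac1\alpha)\Delta^2 m^2$ once $\alpha$ is moderately large. So your substitution actually produces the estimate in its correct form (with $|\mB^\transpose\mQ_4^\transpose g|^2$, equivalently $|\mB^\transpose\mQ_4 g|^2$), and the final identification with $|\mB^\transpose g|^2$ is a non sequitur; note that the paper's own proof disposes of this case with the single word ``analogous'', and the same subtlety would propagate to the perpendicular terms in \eqref{eq:stab:pert10} and \eqref{eq:stabprf:tilL} unless $\mB$ is conformal or the statement is adjusted accordingly.
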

\begin{proof}
  We verify the bound \eqref{eq:stab:pert_10a} by a straightforward
  algebraic manipulation (suppressing the argument $x$), using the
  fact that $\|\mB^{-1} \D y_h - \mI \| \leq \Delta$:
  \begin{align*}
    \big|\,|\Dc{r} y_h \cdot g|^2 - |\mB r \cdot g|^2 \big| 
    =~&
    \big|\,(|\Dc{r} y_h \cdot g| + |\mB r \cdot g|) \, (|\Dc{r} y_h \cdot g| - |\mB r \cdot g|)\big|
    \\ \leq ~&
    (|(\Dc{r} y_h-\mB r) \cdot g| + 2 |\mB r \cdot g|) \, |(\Dc{r}
    y_h - \mB r) \cdot g| \\
    \leq~& 2 |\mB r \cdot g| \Delta |r| |\mB^\transpose g| + \Delta^2
    |r|^2 | \mB^\transpose g |^2.
 \end{align*}
 Applying a weighted Cauchy inequality $2ab \leq \alpha a^2 +
 \alpha^{-1} b^2$ we obtain \eqref{eq:stab:pert_10a}. The proofs of
 \eqref{eq:stab:pert_10c}, and of the inequalities where ``$\cdot$''
 is replaced with ``$\times$'' are analogous.
\end{proof}

\noindent
Employing Lemma \ref{th:stab:pert_lemma} to the operator $\H(y_h$
defined in \eqref{eq:stab:defn_H}, we obtain
\begin{align}
  \notag
 \< \H(y_h) u_h, u_h \> 
    \geq~& \< \H(\yB) u_h, u_h \>  \\
    \label{eq:stab:pert10}
  & - \sum_{b \in \B} \mint_b  \Big\{
  \alpha_{|b|} |C_{|b|}| \b| \mB r_b \cdot \Dc{b} u_h \b|^2
  + \alpha_{|b|}^\perp |C_{|b|}^\perp| \b| \mB r_b \times \Dc{b} u_h \b|^2
  \Big\} \db \\
  \notag
  & - \Delta^2 \sum_{b \in \Btot} |b|^2 \Big\{ \b(1+\smfrac1{\alpha_{|b|}}\b)
  |C_{|b|}| + \b(1+\smfrac{1}{\alpha_{|b|}^\perp}\b)
  |C_{|b|}^\perp| \Big\} \mint_b \b| \mB^\transpose \Dc{b} u_h \b|^2 \db,
\end{align}
for all $y_h \in \SmMD$ and $u_h \in \Ush$. Note that, in the third
term, we have estimated the sum over $\B$ below by the sum over
$\Btot$.  We also remark that, for the time being, we retain maximal
flexibility in the our choice of the constants $\alpha_{|b|}$ and
$\alpha_{|b|}^\perp$. We will (partially) optimize over all possible
choices in the last step of our proof.

From here on, to simplify the notation, we define the transformed
displacement
\begin{displaymath}
  v_h := \mB^\transpose u_h.
\end{displaymath}
This means that we can replace $(\mB r_b \cdot \Dc{b}u_h)$ by $(r_b \cdot
\Dc{b} v_h)$, and so forth.

Since the algebraic structure of the first and second term in
\eqref{eq:stab:pert10} is identical it is natural to combine
them. Hence, we define  $\tilde{C}_\rho^{(\perp)} := C_\rho^{(\perp)} -
\alpha_\rho^{(\perp)} |C_\rho^{(\perp)}|$, and
\begin{align}
  \label{eq:stabprf:defn_tilH}
  \< \tilde\H u_h, u_h \> :=~& \sum_{b \in \B} \mint_b \Big\{
  \tilde{C}_{|b|} \b| r_b \cdot \Dc{b} v_h \b|^2
  + \tilde{C}_{|b|}^\perp \b| r_b \times \Dc{b} v_h \b|^2
  \Big\} \db , \quad \text{and} \\
  \notag
  \< \tilde{\mathscr{L}} u_h, u_h \> :=~& \sum_{b \in \Btot} |b|^2 
  \Big\{ \b(1+\smfrac1{\alpha_{|b|}}\b)
  |C_{|b|}| + \b(1+\smfrac{1}{\alpha_{|b|}^\perp}\b)
  |C_{|b|}^\perp| \Big\} \mint_b \b| \Dc{b} v_h \b|^2 \db.
\end{align}
Here and throughout the superscript $(\perp)$, e.g., in
$C_\rho^{(\perp)}$, refers to both $C_\rho$ or $C_\rho^{\perp}$.
Employing the periodic bond-density lemma, the decomposition of the
triangular lattice described in Lemma \ref{th:L6_decomposition}, and
the definition of the constants $c_n^{(\perp)} := \ell_n^4
C_{\ell_n}^{(\perp)}$, the operator $\tilde{\mathscr{L}}$ can be
rewritten as follows:
\begin{equation}
  \label{eq:stabprf:tilL}
  \< \tilde{\mathscr{L}} u_h, u_h \>  = (\tilde{L}+\tilde{L}^\perp) \| \D v_h
  \|_{\LL^2(\Om)}^2, \quad \text{where} \quad 
  \cases{ \,\, \tilde{L} = 3 \sum_{n = 1}^\infty \b(1+\smfrac1{\alpha_{\ell_n}}\b)
    |c_n|, & \text{ and} \\
    \tilde{L}^\perp = 3 \sum_{n = 1}^\infty \b(1+\smfrac1{\alpha_{\ell_n}^\perp}\b)
    |c_n^\perp|. }
\end{equation}

In summary so far, we have obtained that, if $y_h \in \SmMD$, then
\begin{equation}
  \label{eq:stabprf:summary_pert}
  \< \ddel\Eqc(y_h) u_h, u_h \> \geq \< \tilde\H u_h, u_h \> -
  \Delta^2 (\tilde{L}+\tilde{L}^\perp) \|\D v_h \|_{\LL^2}^2 \qquad \forall u_h \in \Ush,
\end{equation}
where $\tilde\H$ and $\tilde{L}$ are defined in \eqref{eq:stabprf:tilL}.

\paragraph{Extension to $\Btot$}
\label{sec:stabprf_ext}
In the next step, we use of the extension operator (see
\S\ref{sec:interp:vac}) and the definition of the stability index
$\kappa := \kappa(\Vac)$ (see \S\ref{sec:stab:defn_kappa}).

Distinguishing whether $\tilde{C}_1$ is positive or negative, using
the definition of $\kappa$ in the first case, we obtain 
\begin{align*}
    \sum_{b \in \Bnn} \tilde{C}_1 \b|r_b \cdot \Da{b} v_h\b|^2 
  \geq~&  \kappa \sum_{b \in \Btotnn} \tilde{C}_1 \b|r_b \cdot \Da{b} v_h \b|^2,
  \qquad \text{if~} \tilde{C}_1 \geq 0, \quad \text{and} \\
      \sum_{b \in \Bnn} \tilde{C}_1 \b|r_b \cdot \Da{b} v_h\b|^2 
  \geq~&
  \sum_{b \in \Btotnn} \tilde{C}_1 \b|r_b \cdot \Da{b} v_h\b|^2,
  \qquad \text{if~} \tilde{C}_1 \leq 0,
\end{align*}
which, combined, can be written as 
\begin{equation}
  \label{eq:vac:c1_bound}
  \sum_{b \in \Bnn} C_1 \mint_b \b|r_b \cdot \Dc{b} v_h\b|^2 \db
  \geq
  \min(\tilde{C}_1, \kappa \tilde{C}_1) \sum_{b \in \Btotnn} 
  \mint_b \b|r_b \cdot \Dc{b} v_h\b|^2 \db,
\end{equation}
For the ``perpendicular'' nearest-neighbour terms the same argument
(we now need to use \eqref{eq:defn_kappa} with $u = \mQ_4^\transpose
\mB^\transpose u_h = \mQ_4^\transpose v_h$), yields
\begin{equation}
  \label{eq:vac:c1p_bound}
  \sum_{b \in \Bnn} C_1^\perp \mint_b |r_b \times \Dc{b} v_h|^2 \db
  \geq
  \min(\tilde{C}_1^\perp, \kappa \tilde{C}_1^\perp)
  \sum_{b \in \Btotnn} \mint_b |r_b \times \Dc{b} v_h|^2 \db,
\end{equation}

Since all contributions from non-nearest-neighbours to the operator
$\tilde\H$ are non-positive, we can estimate
\begin{displaymath} 
    \begin{split}
    \sum_{b \in \B \setminus \Bnn} \mint_b C_{|b|} |r_b \cdot \Dc{b} v_h|^2 \db
    \geq~&
    \sum_{b \in \Btot\setminus\Btotnn} \mint_b C_{|b|} |r_b \cdot
    \Dc{b} v_h|^2 \db, \quad \text{and} \\
        \sum_{b \in \B\setminus \Bnn} \mint_b C_{|b|}^\perp |r_b \times \Dc{b} v_h|^2 \db
    \geq~&
    \sum_{b \in \Btot\setminus\Btotnn} \mint_b C_{|b|}^\perp |r_b \times \Dc{b} v_h|^2 \db.
  \end{split}
\end{displaymath}
Hence, defining the constants (recall that $\tilde{C}_\rho^{(\perp)} =
C_\rho^{(\perp)} - \alpha_\rho^{(\perp)} |C_\rho^{(\perp)}|$)
\begin{equation}
  \label{eq:stabprf:defn_barC}
  \ol{C}_\rho^{(\perp)}
    := \cases{
    \min(\tilde{C}_\rho^{(\perp)}, \kappa \tilde{C}_\rho^{(\perp)}), & \rho = 1, \\
    \tilde{C}_\rho^{(\perp)}, & \rho > 1,
  }
\end{equation}
we arrive at (recall that $v_h = \mB^\transpose u_h$)
\begin{align}
  \label{eq:stab:defn_olH}
  \big\< \tilde\H u_h, u_h \big\> \geq~& \sum_{b \in \Btot}
  \mint_b \Big\{ \ol{C}_{|b|} |r_b \cdot \Dc{b} v_h|^2 + \ol{C}_{|b|}^\perp
  |r_b \times \Dc{b} v_h|^2 \Big\} \db \\
  \notag
  =:~& \< \ol{\H} u_h, u_h \> \qquad \forall u_h \in \Ush.
\end{align}

\subsection{Proof of the stability result II: stability of the
  homogeneous lattice}
\label{sec:stab:nearhex}
Combining \eqref{eq:stab:defn_olH} and
\eqref{eq:stabprf:summary_pert}, we have shown so that that, for $y_h
\in \SmMD$,
\begin{equation}
  \label{eq:stab:summary_part1}
  \b\< \ddel\Eqc(y_h) u_h, u_h \b\> \geq 
  \b\< \ol{\H} u_h, u_h \b\> 
  -\Delta^2 (\tilde{L}+\tilde{L}^\perp) \| \mB^\transpose \D u_h \|_{\LL^2(\Om)}^2 \qquad \forall u_h
  \in \Ush,
\end{equation}
where the operator $\ol\H$ depends only on the parameters $m, M,
\Delta$, and $\kappa(\Vac)$ (and, strictly speaking, also on $\mB$
through the identification $v_h = \mB^\transpose u_h$). In the present
section, we will prove the following estimate for the operator
$\ol\H$.

\begin{lemma}
  \label{th:stab_novac_hom}
  The operator $\ol\H$ satisfies the lower bound
  \begin{equation}
    \label{eq:stab_novac_hom_1}
    \< \ol{\H} u_h, u_h \> \geq 
    \ol{\gamma} \| \mB^\transpose \D u_h \|_{\LL^2(\Om)}^2
    \qquad \forall u_h \in \Us_h,
  \end{equation}
  where $\ol{\gamma} := \min( \smfrac34 \bar{c} + \smfrac94
  \bar{c}^\perp, \smfrac34 \bar{c} + \smfrac94 \bar{c}^\perp)$ and
  where $\bar{c}^{(\perp)} := {\textstyle \sum_{n = 1}^\infty}
  \ell_n^4 \ol{C}_{\ell_n}^{(\perp)}$.
\end{lemma}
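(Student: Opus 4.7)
The plan is to convert the bond sums in \eqref{eq:stab:defn_olH} into volume integrals via the periodic bond-density lemma, carry out the angular sums using the hexagonal isotropy identity, and conclude by a Korn-type identity for periodic gradients.

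Writing $v_h = \mB^\transpose u_h$ as in the preceding proof, $\D v_h$ is piecewise constant on $\Th^\per$. Grouping bonds in $\Btot$ by direction and applying Lemma~\ref{th:bond-dens-per} elementwise gives, for each $r \in \Ldir$,
\[
\sum_{\substack{b \in \Btot \\ r_b = r}} \mint_b | r \cdot \Dc{b} v_h |^2 \db = \frac{1}{\det \mBhex} \int_\Om ( r^\transpose \D v_h\, r )^2 \dV,
\]
and analogously for the $\times$ terms, which produce the quadratic form $r \mapsto r^\transpose \mQ_4^\transpose \D v_h \, r$. By Lemma~\ref{th:L6_decomposition}, and since $\ol C^{(\perp)}_{|r|}$ is constant on each orbit $\{\mQ_6^j r_n : j = 1, \ldots, 6\}$, the quartic identity \eqref{eq:quartic_form_identity} applied to $\mG = \D v_h$ and to $\mG = \mQ_4^\transpose \D v_h$ yields
\[
\< \ol\H u_h, u_h \> = \frac{1}{\det \mBhex} \int_\Om \big[ \ol c\, F(\D v_h) + \ol c^\perp F^\perp(\D v_h) \big] \dV,
\]
where $F(\mG) := \smfrac{3}{2} |\mG^\sym|^2 + \smfrac{3}{4} (\tr \mG)^2$ and, exploiting $\mQ_4 \mQ_6 = \mQ_6 \mQ_4$, a short direct calculation gives $F^\perp(\mG) = \smfrac{3}{2} |\mG^\sym|^2 - \smfrac{3}{4} (\tr \mG)^2 + 3 |\mG^{\rm skew}|^2$.

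Next, decompose $\D v_h = \mG_S + \mu \mI + \omega \mQ_4$, where $\mG_S$ is symmetric and trace-free, $\mu = \smfrac{1}{2} \tr \D v_h$, and $\omega = \smfrac{1}{2}(\pp_1 v_{h,2} - \pp_2 v_{h,1})$. One checks pointwise that $F(\D v_h) = \smfrac{3}{2} |\mG_S|^2 + 6\mu^2$, $F^\perp(\D v_h) = \smfrac{3}{2} |\mG_S|^2 + 6\omega^2$, and $|\D v_h|^2 = |\mG_S|^2 + 2\mu^2 + 2\omega^2$. For periodic $v_h$, integrating by parts twice yields $\int_\Om \pp_1 v_{h,1} \pp_2 v_{h,2} \dV = \int_\Om \pp_2 v_{h,1} \pp_1 v_{h,2} \dV$, from which the Korn identity $\int_\Om |\mG_S|^2 \dV = 2 \int_\Om (\mu^2 + \omega^2) \dV$ follows, and in particular $\int_\Om |\D v_h|^2 \dV = 4 \int_\Om (\mu^2 + \omega^2) \dV$. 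Substituting gives
\[
\< \ol\H u_h, u_h \> = \frac{3}{\det \mBhex} \int_\Om \big[ (3\ol c + \ol c^\perp) \mu^2 + (\ol c + 3\ol c^\perp) \omega^2 \big] \dV \geq \frac{3}{\det \mBhex}\,\min(3\ol c + \ol c^\perp,\, \ol c + 3 \ol c^\perp) \int_\Om (\mu^2+\omega^2)\dV.
\]
Using $\det \mBhex = \smfrac{\sqrt 3}{2}$ together with $\smfrac{3}{4} \min(3\ol c + \ol c^\perp, \ol c + 3\ol c^\perp) = \min(\smfrac{3}{4} \ol c + \smfrac{9}{4} \ol c^\perp, \smfrac{9}{4} \ol c + \smfrac{3}{4} \ol c^\perp) = \ol\gamma$, this implies $\< \ol\H u_h, u_h\> \geq \smfrac{2}{\sqrt 3}\, \ol\gamma\, \| \mB^\transpose \D u_h \|_{\LL^2(\Om)}^2 \geq \ol\gamma\, \| \mB^\transpose \D u_h \|_{\LL^2(\Om)}^2$.

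The main obstacle is the Korn identity for periodic gradients: it collapses the three pointwise invariants $(|\mG_S|^2, \mu^2, \omega^2)$ into only $(\mu^2, \omega^2)$ after integration, which is precisely what permits the balanced $\smfrac{3}{4}$--$\smfrac{9}{4}$ coefficients appearing in $\ol\gamma$. A purely pointwise lower bound would yield only $\min(\smfrac{3}{2}(\ol c + \ol c^\perp), 3\ol c, 3 \ol c^\perp)/\det \mBhex$, which vanishes whenever either of $\ol c$ or $\ol c^\perp$ vanishes; exploiting periodicity is thus essential for the stated constant.
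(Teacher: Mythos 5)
Your argument is correct and arrives at the paper's constant, but its second half takes a genuinely different route. The first half coincides with the paper's proof: the periodic bond-density lemma (Lemma~\ref{th:bond-dens-per}) plus the orbit decomposition of Lemma~\ref{th:L6_decomposition} and the quartic identity \eqref{eq:quartic_form_identity} turn $\< \ol\H u_h, u_h\>$ into the integral of the constant-coefficient density $\bar c\,|\mG|_{\rm el}^2 + \bar c^\perp |\mQ_4\mG|_{\rm el}^2$ with $\mG = \D v_h$ (your $F$, $F^\perp$ agree with Lemma~\ref{th:stab:combined_terms}). The paper then regards this density as a fourth-order tensor $\bbC$ and invokes the Legendre--Hadamard characterisation of coercivity of constant-coefficient quadratic forms over periodic fields, reducing the constant to the minimum of $\smfrac34(\bar c + \bar c^\perp) + \smfrac32\bar c\,(w\cdot k)^2 + \smfrac32\bar c^\perp(w\times k)^2$ over unit rank-one directions, the determinant term vanishing because $\det(w\otimes k) = 0$. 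You instead decompose $\D v_h$ pointwise into deviatoric, dilatational and rotational parts and use $\int_\Om \det \D v_h \dV = 0$ for periodic fields; your ``Korn identity'' is exactly this null-Lagrangian statement, since $\det\mG = \mu^2 + \omega^2 - \smfrac12|\mG_S|^2$. Both proofs thus hinge on the same fact -- the determinant is a null Lagrangian, which is the only place periodicity enters -- but yours is more elementary and self-contained (no appeal to the Legendre--Hadamard theorem) and makes the sharpness mechanism explicit, while the paper's is shorter and applies verbatim to any constant-coefficient density.

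Two caveats. First, you retain the factor $1/\det\mBhex$ that a literal application of Lemma~\ref{th:bond-dens-per} produces, whereas the paper's displays \eqref{eq:stab:26} and \eqref{eq:stabprf:tilL} drop it; this is why the paper lands exactly on $\ol\gamma$ and you land on $\smfrac{2}{\sqrt3}\ol\gamma$. Your concluding comparison $\smfrac{2}{\sqrt3}\,\ol\gamma\,\|\mB^\transpose \D u_h\|_{\LL^2(\Om)}^2 \geq \ol\gamma\,\|\mB^\transpose \D u_h\|_{\LL^2(\Om)}^2$ is valid only when $\ol\gamma \geq 0$; since the $\ol C_{\ell_n}^{(\perp)}$ need not be positive, you should either state this restriction (harmless, as the lemma is only used in the regime where the resulting coercivity constant is positive) or simply state the conclusion with the constant $\smfrac{2}{\sqrt3}\ol\gamma$. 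Second, two minor points: the stated $\ol\gamma$ in the lemma contains a typo (both arguments of the min coincide), and you correctly read it as $\min(\smfrac34\bar c + \smfrac94\bar c^\perp,\ \smfrac94\bar c + \smfrac34\bar c^\perp)$; and ``integrating by parts twice'' should be justified for piecewise affine $v_h$, which has no second derivatives, by density of smooth periodic fields in $\HH^1_\per(\Om)^2$ or by writing $\det\D v_h$ as a divergence -- routine, but worth a sentence.
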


\begin{remark}
  The estimate \eqref{eq:stab_novac_hom_1} is sharp in the sense that,
  if $\Th = \Ta$, then 
  \begin{equation}
    \label{eq:stab_novac_hom_2}
    \lim_{N \to \infty} \inf_{u \in \Us}  
    \frac{\< \ol{\H}_\mB u, u \>}{\| \mB^\transpose \D \bar{u} \|^2} =
    \ol{\gamma}. 
  \end{equation}
  This statement follows immediately from the proof of Lemma
  \ref{th:stab_novac_hom}.
\end{remark}

\paragraph{Rewriting $\ol{\H}$}
\label{sec:stab:hom_1}
Application of the bond-density lemma to the definition of $\ol\H$ in
\eqref{eq:stab:defn_olH} yields
\begin{align}
  \notag
  \< \ol\H u_h, u_h \> 
  =~&
  \sum_{T \in \olTh} |T| \bigg\{
  \sum_{r \in \Ldir} \ol{C}_{|r|} \big|r \cdot \Dc{r} v_h|_T\big|^2
  + \sum_{r\in\Ldir} \ol{C}_{|r|}^\perp \big|r \times \Dc{r} v_h|_T\big|^2
  \bigg\}
  \\ 
  \label{eq:stab:26}
  =:~& \sum_{T \in \olTh} |T| \big\{ H_T[v_h] + H_T^\perp[v_h]\big\}.
\end{align} 

\paragraph{Computation of $H_T[v_h]$ and $H_T^\perp[v_h]$}
\label{sec:stab:nearhex:parallel}
Let $\mG := \D v_h = \mB^\transpose \D u_h$, and $\mG_T := \D v_h|_T$,
then we can rewrite $H_T[v_h]$, using Lemma \ref{th:L6_decomposition},
in the form
\begin{equation}
  \label{eq:stab:30}
  H_T[u_h] = \sum_{r \in \Ldir} \ol{C}_{|r|} 
  \big[ r^\transpose \mG_T r \big]^2 = \sum_{n = 1}^\infty
  \ol{C}_{\ell_n} \sum_{j = 1}^6 \big[ (\mQ_4^j r_n)^\transpose \mG_T (\mQ_4^j r_n) \big]^2.
\end{equation}
Exploiting the hexagonal symmetry of the inner sum, using Lemma
\ref{th:hex_identities}, \eqref{eq:quartic_form_identity}, and
recalling the definition of $\bar{c}$ from Lemma
\ref{th:stab_novac_hom}, we obtain
\begin{equation} 
  \label{eq:stab:36}
  H_T[v_h] = \b\{{\textstyle \sum_{n = 1}^\infty} \ell_n^4 \ol{C}_{\ell_n} \big\} |\mG_T|_{\rm el}^2
  = \bar{c} |\mG_T|_{\rm el}^2,
\end{equation}
where $|\mG|_{\rm el} := \smfrac32 |\mG^\sym|^2 + \smfrac34 |{\rm tr}\mG|^2$
({\it cf.} \eqref{eq:quartic_form_identity}).

Replacing $r$ with $\mQ_4 r$ in the above computations, we obtain,
moreover, that
\begin{equation}
  \label{eq:stab:46}
  H_T^\perp[v_h] = \big\{ {\textstyle \sum_{n = 1}^\infty \ell_n^4 \ol{C}_{\ell_n}^\perp} \big\}
  \big| \mQ_4 \mG_T \big|_{\rm el}^2 
  = \bar{c}^\perp \big|\mQ_4 \mG_T \big|_{\rm el}^2.
\end{equation}

\paragraph{Proof of Lemma \ref{th:stab_novac_hom}}
\label{sec:stab:nearhex:finalize}
Combining \eqref{eq:stab:26}, \eqref{eq:stab:36}, and
\eqref{eq:stab:46}, we obtain
\begin{equation}
  \label{eq:stab:50}
  \< \ol{\H} u_h, u_h \> = \sum_{T \in \olTh} |T| \Big\{ \bar{c}
  |\mG_T|_{\rm el}^2 + \bar{c}^\perp \big| \mQ_4 \mG_T\big|_{\rm el}^2 \Big\}
  =: \int_\Om \bbC_{i\alpha}^{j\beta} \mG_{i\alpha} \mG_{j\beta} \dV,
\end{equation}
using summation convention, for some fourth order tensor $\bbC$,
implicitly defined through this relation. Note, in particular, that
\eqref{eq:stab:50} extends the definition of $\ol\H$ to all of
$\HH^1_\per(\Om)^2$.  In the following lemma we compute a more
explicit representation of $\bbC$.

\begin{lemma}
  \label{th:stab:combined_terms}
  Let $|\cdot|_{\rm el}$ be defined as in \eqref{eq:quartic_form_identity},
  and let $\mG \in \R^{2\times 2}$, then
  \begin{align*}
    |\mG|_{\rm el}^2 =~& \smfrac34 |\mG|^2 + \smfrac32 (\mG_{11} + \mG_{22})^2
    - \smfrac32 \det \mG, \quad \text{and} \\
    \big| \mQ_4 \mG\big|_{\rm el}^2 =~& \smfrac34 |\mG|^2 + \smfrac32
    (\mG_{12}-\mG_{21})^2 - \smfrac32 \det \mG.
  \end{align*}
  In particular, we have, for $d = \smfrac32 (\bar{c} + \bar{c}^\perp)$
 \begin{equation}
   \label{eq:stab:52}
    \bbC_{i\alpha}^{j\beta} \mG_{i\alpha} \mG_{j\beta} 
    = \smfrac34 (\bar{c}+\bar{c}^\perp) |\mG|^2 + \smfrac32 \bar{c} |\mG_{11}+\mG_{22}|^2 +
    \smfrac32 \bar{c}^\perp |\mG_{12}-\mG_{21}|^2 - d \det \mG.
 \end{equation}
\end{lemma}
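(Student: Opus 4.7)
The proof is an elementary $2\times 2$ matrix computation, so my plan is to isolate a single ``master identity'' for the Frobenius norm of the symmetric part, and then apply it twice.

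First, I would use the orthogonal decomposition $|\mG|^2 = |\mG^\sym|^2 + |\mG^{\rm skew}|^2$, combined with the fact that in dimension two $|\mG^{\rm skew}|^2 = \tfrac{1}{2}(G_{12}-G_{21})^2$, to verify by direct expansion in coordinates the identity
\[
  |\mG^\sym|^2 \;=\; \tfrac{1}{2}|\mG|^2 + \tfrac{1}{2}(\tr \mG)^2 - \det\mG.
\]
(Equivalently, this is the 2D rearrangement $2|\mG^\sym|^2 - (\tr\mG)^2 = (G_{11}-G_{22})^2 + (G_{12}+G_{21})^2 = |\mG|^2 - 2\det\mG$.) Substituting this into the definition $|\mG|_{\rm el}^2 = \tfrac{3}{2}|\mG^\sym|^2 + \tfrac{3}{4}(\tr\mG)^2$ gives, after collecting terms, the first claimed identity
\[
  |\mG|_{\rm el}^2 = \tfrac{3}{4}|\mG|^2 + \tfrac{3}{2}(G_{11}+G_{22})^2 - \tfrac{3}{2}\det\mG.
\]

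For the second identity I would simply apply the first one to the rotated matrix $\mQ_4 \mG$. Since $\mQ_4 \in \SO(2)$ is orthogonal with $\det \mQ_4 = 1$, one has $|\mQ_4\mG|^2 = |\mG|^2$ and $\det(\mQ_4\mG) = \det\mG$; a one-line computation from the matrix form of $\mQ_4$ yields $\tr(\mQ_4\mG) = G_{12}-G_{21}$. Plugging these three facts into the first identity applied to $\mQ_4 \mG$ gives exactly the second claim.

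Finally, the definition of $\bbC$ via \eqref{eq:stab:50} and \eqref{eq:stab:26} means that the pointwise integrand of the quadratic form $\bbC_{i\alpha}^{j\beta}\mG_{i\alpha}\mG_{j\beta}$ equals $\bar{c}\,|\mG|_{\rm el}^2 + \bar{c}^\perp\,|\mQ_4\mG|_{\rm el}^2$, so a linear combination of the two identities with weights $\bar{c}$ and $\bar{c}^\perp$ produces \eqref{eq:stab:52} with $d = \tfrac{3}{2}(\bar{c}+\bar{c}^\perp)$. No step is conceptually difficult; the only place requiring some care is the sign bookkeeping in the master identity, which can be done either by brute-force expansion or by invoking the Cayley--Hamilton relation $\mG^2 - (\tr\mG)\mG + (\det\mG)\mI = 0$ and taking the trace to obtain $|\mG|^2 - 2\det\mG = (\tr\mG)^2 - 2\,|\mG^{\rm skew}|^2 - \ldots$; either route is strictly routine.
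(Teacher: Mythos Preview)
Your proposal is correct and matches the paper's approach: the paper simply says the first identity is a ``straightforward algebraic manipulation'', the second is ``an immediate consequence of the first'', and the third follows by combining them, which is exactly what you do (with the master identity $|\mG^\sym|^2 = \tfrac12|\mG|^2 + \tfrac12(\tr\mG)^2 - \det\mG$ making the manipulation clean).
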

\begin{proof}
  The first identity can be verified by a straightforward algebraic
  manipulation. The second identity is an immediate consequence of the
  first. The third identity follows by combining the first two.
\end{proof}

\medskip \noindent Using identity \eqref{eq:stab:52} we can now prove
Lemma \ref{th:stab_novac_hom}.

\begin{proof}[Proof of Lemma \ref{th:stab_novac_hom}]
  The Legendre--Hadamard condition (see, e.g., \cite{Giaquinta:1993a})
  states that
  \begin{displaymath}
    \inf_{\substack{v \in \HH^1_\per(\Om)^2 \\ \| \D v
        \|_{\LL^2} = 1}}  \int_\Om \bbC_{i\alpha}^{j\beta}
    (\D v)_{i\alpha} (\D v)_{j\beta} \dV
    = \min_{\substack{w, k \in \R^2 \\ |w| = |k| = 1}}
    \bbC_{i\alpha}^{j\beta} w_i w_j k_\alpha k_\beta =: \ol{\gamma}.
  \end{displaymath}
 Thus, we have reduced the task to testing $\bbC$ with rank-1
  matrices $w \otimes k$.  Using the definition of $\bbC$, identity
  \eqref{eq:stab:52}, and noting that $\det (w \otimes k) = 0$, we
  obtain
  \begin{equation}
    \label{eq:stab:64}
    \bbC_{i\alpha}^{j\beta} w_i w_j k_\alpha k_\beta = 
    \smfrac34 (\bar{c} + \bar{c}^\perp) |w|^2 |k|^2 + \smfrac32 \bar{c} (w \cdot k)^2 +
    \smfrac32 \bar{c}^\perp (w \times k)^2.
  \end{equation}

  If $\bar{c} \geq \bar{c}^\perp$ then \eqref{eq:stab:64} is minimised
  for $w \perp k$, and hence
  \begin{displaymath}
    \ol\gamma =
   \smfrac34(\bar{c}+\bar{c}^\perp) + \smfrac32 \bar{c}^\perp = \smfrac34 \bar{c} + \smfrac94 \bar{c}^\perp.
  \end{displaymath}
  If $\bar{c} \leq \bar{c}^\perp$ then \eqref{eq:stab:64} is
  minimised for $w = k$, and hence
  \begin{displaymath}
    \ol\gamma =
    \smfrac34(\bar{c}+\bar{c}^\perp) + \smfrac32 \bar{c} = \smfrac94 \bar{c} + \smfrac34 \bar{c}^\perp.
  \end{displaymath}
  Combining the two cases gives the stated result.
\end{proof}

\subsection{Proof of the stability result III: optimizing the parameters}
\label{sec:stabprf3}
Combining Lemma \ref{th:stab_novac_hom} with
\eqref{eq:stab:summary_part1}, we obtain the stability estimate
\begin{equation}
  \label{eq:stabprf3:5}
  \< \ddel\Eqc(y_h) u_h, u_h \> \geq \gamma \| \mB^\transpose \D u_h
  \|_{\LL^2}^2,
  \quad \text{where } \gamma = \ol\gamma - \Delta^2 (\tilde{L}+\tilde{L}^\perp),
\end{equation}
for all $y_h \in \SmMD$ and $u_h \in \Ush$.  The lower bound $\gamma$
still depends on the free parameters $\alpha_{\ell_n},
\alpha_{\ell_n}^\perp > 0$. Ideally, we would like to optimize
$\gamma$ over all possible choices, however, the double-minimization
problem in the definition of $\gamma$ makes this impractical. We will
choose the parameters so that they are optimal in the case, which is
the most important in our numerical computations. A more detailed
analysis would reveal, in fact, that our choice fairly close to
optimal.

For the following discussion, recall the definition of $c_n,
c_n^\perp$ from \eqref{eq:stab:defn_cn} and, with some abuse of
notation, let $\alpha_n^{(\perp)} := \alpha_{\ell_n}^{(\perp)}$.

\paragraph{Optimising for a special case.} 
\label{sec:stabprf3:opt}
We begin by noting that $\gamma$ can be rewritten in the form
(cf. \eqref{eq:defn_gamma})
\begin{equation}
  \label{eq:stabprf3:gammai}
  \gamma = \min\b( \gamma_1 + \gamma_1^\perp, \gamma_2 +
  \gamma_2^\perp \b), \quad \text{where} \quad
  \cases{ 
    \gamma_1 =\!\!& \smfrac34 \bar{c} - \Delta^2 \tilde{L}, \\
    \gamma_1^\perp =\!\!& \smfrac94 \bar{c}^\perp - \Delta^2
    \tilde{L}^\perp, \\
    \gamma_2 =\!\!& \smfrac94 \bar{c} - \Delta^2 \tilde{L}, \text{ and}\\
    \gamma_2^\perp =\!\!& \smfrac34 \bar{c}^\perp - \Delta^2
    \tilde{L}^\perp.
    }
\end{equation}
Near global minima of $\Eqc$ (over Bravais lattices) we expect that $c_1 > 0$
and $c_1^\perp \approx 0$, which suggests to optimise the parameters
$\alpha_n^{(\perp)}$ for the case $\gamma = \gamma_1 +
\gamma_1^\perp$. 

Recalling from \eqref{eq:stabprf:tilL} the definition of $\tilde{L}$,
and recalling that $c_n \leq 0$ for $n \geq 2$,
we can rewrite $\gamma_1$ in the form
\begin{align}
  \notag
  \gamma_1 =~& \Big(\min\Big\{ \smfrac34 (c_1 - \alpha_1 |c_1|),
 \smfrac34 \kappa ( c_1 - \alpha_1 |c_1| ) \Big\} - 3 \b( 1 +
 \smfrac1{\alpha_1} \b) \Delta^2 |c_1|\Big) \\
 \label{eq:stabprf3:15}
  & + \sum_{n = 2}^\infty \b( \smfrac34 + \smfrac34 \alpha_n + 3 \b(1
  + \smfrac{1}{\alpha_n} \b) \Delta^2 \b) c_n \\
  \notag
  =:~& \psi_1(\alpha_1) + \sum_{n = 2}^\infty \b( \smfrac34 + \smfrac34 \alpha_n + 3 \b(1
  + \smfrac{1}{\alpha_n} \b) \Delta^2 \b) c_n.
\end{align} 
We see immediately that $\alpha_n = 2\Delta$ is optimal for $n \geq
2$. For $n = 1$, the situation is more complicated and we treat it
separately in the following lemma.

\begin{lemma}
  \label{th:stabprf3:lemopt}
  Suppose that $\Delta \leq \sqrt{\kappa} / 2$; then 
 \begin{equation}
   \label{eq:stabprf3:20}
   \max_{\alpha_1 > 0} \psi_1(\alpha_1) = \min\b\{ (\smfrac34 \kappa - 3
   \sqrt{\kappa} \Delta - 3 \Delta^2) c_1, (\smfrac34 + 3 \Delta + 3
   \Delta^2)c_1  \b\}, 
  \end{equation}
  which is attained for $\alpha_1 = 2\Delta/\sqrt{\kappa}$ if $c_1 >
  0$ and for $\alpha_1 = 2 \Delta$ if $c_1 \leq 0$.
\end{lemma}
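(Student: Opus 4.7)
The function $\psi_1$ has the structure of a pointwise minimum of two linear functions of $A(\alpha_1) := c_1 - \alpha_1 |c_1|$ plus a convex perturbation $-3(1 + 1/\alpha_1)\Delta^2 |c_1|$, so the argument reduces to one-dimensional calculus after resolving the inner minimum. First I would note that $\kappa(\Vac) \leq 1$: this is immediate from the variational definition, because the values of $\Ext u$ on $\Vac$ do not appear in $\Phi_{\Bnn}$, hence $\Phi_{\Bnn}(u) = \Phi_{\Bnn}(\Ext u) \leq \Phi_{\Btotnn}(\Ext u)$. Using $\kappa \leq 1$, the inner minimum satisfies $\min\{\smfrac34 A, \smfrac34 \kappa A\} = \smfrac34\kappa A$ when $A \geq 0$ and $\smfrac34 A$ when $A \leq 0$. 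I then split by the sign of $c_1$, which, together with the sign of $1-\alpha_1$, controls the sign of $A(\alpha_1)$.

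In the case $c_1 > 0$, the branch $A(\alpha_1) \geq 0$ is precisely $\alpha_1 \in (0,1]$, where
\begin{displaymath}
\psi_1(\alpha_1) = c_1\b[\smfrac34 \kappa - 3\Delta^2 - \smfrac34\kappa\,\alpha_1 - 3\Delta^2/\alpha_1\b].
\end{displaymath}
Applying AM--GM to $-\smfrac34\kappa\,\alpha_1 - 3\Delta^2/\alpha_1$ yields the optimum $\alpha_1 = 2\Delta/\sqrt{\kappa}$, and the hypothesis $\Delta \leq \sqrt{\kappa}/2$ is exactly what forces this optimizer into $(0,1]$. The resulting value is $c_1(\smfrac34 \kappa - 3\sqrt{\kappa}\Delta - 3\Delta^2)$. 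To rule out $\alpha_1 > 1$ I would compute $\psi_1(1) = -6\Delta^2 c_1$, observe that $\psi_1$ is decreasing on $[1,\infty)$ in this branch, and use the algebraic identity $\smfrac34 \kappa + 3\Delta^2 - 3\sqrt{\kappa}\Delta = 3(\Delta - \sqrt{\kappa}/2)^2 \geq 0$ to confirm $\psi_1(1) \leq \psi_1^*$. The case $c_1 \leq 0$ is simpler: $A(\alpha_1) < 0$ for every $\alpha_1 > 0$, so $\psi_1(\alpha_1) = -|c_1|\b[\smfrac34(1+\alpha_1) + 3\Delta^2(1+1/\alpha_1)\b]$, and a single AM--GM step on the $\alpha_1$ and $1/\alpha_1$ terms produces the unconstrained optimum at $\alpha_1 = 2\Delta$ with value $c_1(\smfrac34 + 3\Delta + 3\Delta^2)$.

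To identify the optimal value with the stated minimum, I would record the elementary inequality
\begin{displaymath}
\b(\smfrac34 + 3\Delta + 3\Delta^2\b) - \b(\smfrac34\kappa - 3\sqrt{\kappa}\Delta - 3\Delta^2\b) = \smfrac34(1-\kappa) + 3\Delta(1+\sqrt{\kappa}) + 6\Delta^2 \geq 0,
\end{displaymath}
valid because $\kappa \leq 1$ and $\Delta \geq 0$. Multiplying by $c_1$ and comparing signs shows that the optimum found in Case $c_1 > 0$ is the smaller of the two candidate products (matching the min), and the optimum found in Case $c_1 \leq 0$ is likewise the smaller product; in both cases the attaining $\alpha_1$ is as claimed. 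The only delicate point is the boundary analysis at $\alpha_1 = 1$ in the case $c_1 > 0$, where the perfect-square identity $\smfrac34\kappa - 3\sqrt{\kappa}\Delta + 3\Delta^2 = 3(\Delta - \sqrt{\kappa}/2)^2$ is the algebraic crux that keeps the interior optimizer inside the correct branch. Its failure when $\Delta > \sqrt{\kappa}/2$ would push the optimizer across $\alpha_1 = 1$, turning the interior maximum into a boundary one and breaking positivity of $\gamma$, which is precisely the reason for the hypothesis $\Delta \leq \sqrt{\kappa}/2$ in Theorem~\ref{th:stab:nodefect}.
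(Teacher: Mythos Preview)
Your argument is correct and follows essentially the same route as the paper: split on the sign of $c_1$, resolve the inner minimum using $\kappa\le 1$, and optimise each resulting one-variable expression. The only cosmetic differences are that you invoke AM--GM where the paper differentiates directly, and that you make explicit both the justification of $\kappa\le 1$ and the perfect-square comparison $\smfrac34\kappa - 3\sqrt{\kappa}\Delta + 3\Delta^2 = 3(\Delta-\sqrt{\kappa}/2)^2$ to rule out the branch $\alpha_1>1$; the paper handles the latter more tersely by noting that the concave maximiser $\max(1,2\Delta)=1$ already lies in the closure of the other interval.
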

\begin{proof}
  {\it Case 1: $c_1 \leq 0$. } Assume, first, that $c_1 \leq 0$. In
  this case it is easy to see that
  \begin{displaymath}
    \psi_1(\alpha_1) = \b[\smfrac34 (1+\alpha_1) + 3 \b(1 +
    \smfrac{1}{\alpha_1}\b) \Delta^2\b] c_1.
  \end{displaymath}
  Hence, $\alpha_1 = 2 \Delta$ is optimal, and $\psi_1(2\Delta) =
  [\smfrac34 + 3 \Delta + 3 \Delta^2]c_1$.

  {\it Case 2: $c_1 > 0$. } We minimize $\psi_1$ separately over the
  intervals $(1, \infty)$ and $(0, 1]$. Suppose, first, that $\alpha_1
  > 1$, then
  \begin{displaymath}
    \psi_1(\alpha_1) = \b[\smfrac34 (1-\alpha_1) - 3 \b(1+
    \smfrac{1}{\alpha_1}\b) \Delta^2 \b] c_1.
  \end{displaymath}
  This is a strictly concave expression, which is maximised on $[1,
  \infty)$ at $\alpha_1 = \max(1, 2 \Delta) = 1$, due to the
  assumption that $\Delta \leq \sqrt{\kappa}/2 \leq 1/2$, and hence
  reduces to the next case.  
  
  On the interval $(0, 1]$ we have
  \begin{displaymath}
    \psi_1(\alpha_1) = \b[\smfrac34 \kappa(1-\alpha_1) - 3 \b(1 +
    \smfrac{1}{\alpha_1}\b) \Delta^2 \b] c_1,
  \end{displaymath}
  which is maximised on $(0, 1]$ for $\alpha_1 = \min(1, 2 \Delta /
  \sqrt{\kappa}) = 2 \Delta / \kappa$, and we have
  \begin{displaymath}
    \psi_1(2\Delta/\sqrt{\kappa}) = \b[\smfrac34 \kappa -
    3\sqrt\kappa\Delta - 3 \Delta^2 \b] c_1.
  \end{displaymath}

  To see that \eqref{eq:stabprf3:20} holds, it suffices to note that
  the first argument is automatically selected if $c_1 > 0$ and the
  second argument if $c_1 \leq 0$.
\end{proof}

\noindent If we insert $\alpha_n = 2 \Delta$ for $n \geq 2$, and the
value for $\alpha_1$ for which \eqref{eq:stabprf3:20} is attained,
into \eqref{eq:stabprf3:15}, then we obtain
\begin{equation}
  \label{eq:stabprf3:25}
  \begin{split}
    \gamma_1 =~& \min\b\{ (\smfrac34 \kappa - 3
    \sqrt{\kappa} \Delta - 3 \Delta^2) c_1, (\smfrac34 + 3 \Delta + 3
    \Delta^2)c_1\b\} \\
    &+{\textstyle \sum_{n = 2}^\infty} \b( \smfrac34 + 3 \Delta + 3 \Delta^2 \b) c_n.
  \end{split}
\end{equation}
Using analogous arguments, we choose $\alpha_n^\perp = 2\Delta /
\sqrt{3}$ for $n \geq 2$ and for $n = 1$ if $c_1^\perp \leq 0$; and
$\alpha_1^\perp = 2 \Delta / \sqrt{3\kappa}$ if $c_1^\perp > 0$ (note
that under the assumption $\Delta \leq \sqrt{\kappa}/2$ we also get
$\alpha_1^\perp \leq 1$). Inserting these values into
$\gamma_1^\perp$, we obtain
\begin{equation}
  \label{eq:stabprf3:27}
  \begin{split}
    \gamma_1^\perp =~& \min\b\{ (\smfrac94 \kappa - 3
    \sqrt{3\kappa} \Delta - 3 \Delta^2) c_1^\perp, (\smfrac94 
    + 3\sqrt{3} \Delta + 3 \Delta^2)c_1^\perp\b\}  \\
    & + {\textstyle \sum_{n = 2}^\infty} \b( \smfrac94 + 3\sqrt{3} \Delta + 3
    \Delta^2 \b)  c_n^\perp.
  \end{split}
\end{equation}
We observe that \eqref{eq:stabprf3:25} and \eqref{eq:stabprf3:27}
agree with the definitions given in \S\ref{sec:stab:notation}.

\paragraph{Concluding the proof of Theorem \ref{th:stab:nodefect}} 
\label{sec:stabprf3:final}
In the previous paragraph we have fixed the values for
$\alpha_n^{(\perp)}$, and we have seen that the resulting values for
$\gamma_1$ and $\gamma_1^\perp$ agree with the definitions in
\S\ref{sec:stab:notation}. A tedious but straightforward computation,
for which we skip the details, shows that, if $\gamma_2,
\gamma_2^\perp$ are defined by \eqref{eq:stabprf3:gammai}, then the
above choices for $\alpha_n^{(\perp)}$ yield precisely the formulae
given in \S\ref{sec:stab:notation} again. Combining these observations
with \eqref{eq:stabprf3:5} and \eqref{eq:stabprf3:gammai}, we obtain
the statement of Theorem \ref{th:stab:nodefect}.

\subsection{Stability index of separated vacancies}
\label{sec:vac:lemma}
In Table~\ref{tbl:vacstab} we have provided numerical (i.e.,
non-rigorous) estimates for vacancy stability indices. In this
section, we prove that the extension operator $\Ext$ can be defined in
such a way that $\kappa(\Vac) \geq 2/7$ if $\Vac$ consists only of
single vacancy sites, which are separated by a short distance. More
precisely, we will assume in this section that $\Vac$ satisfies the
separation condition
\begin{equation}
  \label{eq:interp:vacsep}
  x_1 \in \Vac, x_2 \in \Vac^\per \setminus \{x_1\}
  \quad \Rightarrow \quad 
  |x_1 - x_2| \geq 4.
\end{equation}

\begin{theorem}
  \label{th:vac:techlemma}
  Suppose that $\Vac$ satisfies the separation condition
  \eqref{eq:interp:vacsep}, then $\kappa(\Vac) \geq \frac27$.
\end{theorem}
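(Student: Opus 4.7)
The plan is to exploit the optimization characterisation of $\kappa$: to prove $\kappa(\Vac) \ge 2/7$ it suffices to exhibit a single extension $v \in \Us_\Ext$ with $v = u$ on $\L$ satisfying $\Phi_{\Btotnn}(v) \le \tfrac{7}{2}\,\Phi_{\Bnn}(u)$ for every $u \in \Us$, since $\Ext u$ is by definition the minimizer of $\Phi_{\Btotnn}$ over all such extensions.

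I would construct $v$ locally, one vacancy at a time. For each $x_0 \in \Vac^\per$, the separation hypothesis \eqref{eq:interp:vacsep} ensures all six nearest-neighbour sites $x_0 + r$, $r \in \Rnn$, lie in $\L$, so that $u(x_0+r)$ is available. I would set $v(x_0)$ equal to the pointwise minimizer of the local vacancy contribution,
\begin{equation*}
    v(x_0) \,:=\, \argmin_{w\in\R^2} \sum_{r\in\Rnn} \b|r\cdot(u(x_0+r) - w)\b|^2 \,=\, \tfrac{1}{3} \sum_{r\in\Rnn} (r\cdot u(x_0+r))\, r,
\end{equation*}
where the closed form uses $\sum_{r\in\Rnn} r\otimes r = 3\mI$ from Lemma \ref{th:hex_identities}. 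Since $v = u$ on $\L$ and $\Bnn\subset\Btotnn$, and since each vacancy generates exactly twelve bonds in $\Btotnn\setminus\Bnn$ (six outgoing, six incoming, contributing equally), I would then obtain the identity
\begin{equation*}
    \Phi_{\Btotnn}(v) - \Phi_{\Bnn}(u) \,=\, 2\sum_{x_0\in\Vac^\per}\sum_{r\in\Rnn} \b|r\cdot(u(x_0+r) - v(x_0))\b|^2.
\end{equation*}

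The core step is then a purely local, finite-dimensional quadratic-form inequality: for each $x_0$,
\begin{equation*}
    2\sum_{r\in\Rnn} \b|r\cdot(u(x_0+r) - v(x_0))\b|^2 \,\le\, \tfrac{5}{2}\, E_{x_0}(u), \qquad E_{x_0}(u) := \sum_{b\in\mathcal{B}_{x_0}} |r_b\cdot \Da{b}u|^2,
\end{equation*}
where $\mathcal{B}_{x_0}\subset\Bnn$ is a suitably chosen finite collection of nearest-neighbour bonds confined to a small neighbourhood of $x_0$ (for example, the twelve hexagonal edges between pairs of nearest neighbours of $x_0$, together with the edges linking the first to the second neighbour ring). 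The separation condition $|x_1 - x_2|\ge 4$ then allows the local sets $\mathcal{B}_{x_0}$ to be chosen pairwise disjoint across distinct vacancies, and summing the local estimates gives $\Phi_{\Btotnn}(v) - \Phi_{\Bnn}(u) \le \tfrac{5}{2}\,\Phi_{\Bnn}(u)$, which closes the argument.

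The hard part will be the local inequality, and I expect the constant $5/2$ (equivalently $\kappa = 2/7$) to be essentially sharp in view of the numerical value $0.28$ recorded in Table \ref{tbl:vacstab}. Its subtlety is that each bond in $\mathcal{B}_{x_0}$ controls only one scalar projection $r_b\cdot \Da{b}u$ rather than the full increment $\Da{b}u$, so $\mathcal{B}_{x_0}$ must contain bonds in enough independent directions to fully constrain the local displacement field. Concretely, I would take $\mathcal{B}_{x_0}$ to consist of all nearest-neighbour bonds among lattice sites at distance $\le 2$ from $x_0$, quotient out infinitesimal rigid motions (which lie in the common kernel of both quadratic forms), and verify the $5/2$ bound on the resulting finite-dimensional quotient by a Rayleigh-quotient computation, most likely supported by an explicit reconstruction of the differences $u(x_0+r) - u(x_0+r')$ from the available projected bond data via telescoping through the two neighbour rings.
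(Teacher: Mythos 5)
Your reduction is, in structure, the same as the paper's: exhibit an explicit extension at each vacancy, use the separation condition \eqref{eq:interp:vacsep} to decouple the vacancies into disjoint finite neighbourhoods, and reduce the claim to one finite-dimensional quadratic-form inequality per vacancy. Your choice $v(x_0)=\smfrac13\sum_{r\in\Rnn}(r\cdot u(x_0+r))\,r$ (the per-vacancy least-squares optimum) is legitimate and in fact, under \eqref{eq:interp:vacsep}, coincides with $\Ext u$ itself; the paper instead takes the plain average $\tilde\Ext u(x_0)=\smfrac16\sum_{r\in\Rnn}u(x_0+r)$, which also suffices because any admissible extension bounds the minimum from above. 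Your local inequality ``$B-A\le\smfrac52 A$'' is exactly the paper's bound $A^{(k)}\ge\kappa B^{(k)}$ rewritten with $\lambda=\kappa/(1-\kappa)=\smfrac25$, so the constant you target is the right one. Two bookkeeping points: the identity should sum over $\Vac$ (one period), not $\Vac^\per$, and you must keep the ordered-bond double counting consistent between the factor $2$ on the left and the multiplicity in $E_{x_0}(u)$, otherwise the target constant shifts by a factor $2$.

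The genuine gap is that the core local inequality is asserted rather than proved, and that is where essentially all of the content of the theorem lies: the value $2/7$ is not obtainable by soft arguments, and your own observation that it nearly matches the numerical index $0.28$ shows there is no slack to absorb a crude estimate. The paper's proof of this step occupies the bulk of the argument: it parametrizes the $36$ degrees of freedom on the $18$ surrounding vertices by a hexagonally equivariant basis $w^{(k,j)}$ satisfying \eqref{eq:vac:20}, which block-diagonalizes both quadratic forms into six explicit $6\times6$ blocks $A^{(k)},B^{(k)}$, verifies ${\rm Ker}\,A^{(k)}={\rm Ker}\,B^{(k)}$ for the degenerate modes $k=0,\pm1$ (this is exactly your ``quotient out rigid motions'' step, which itself needs proof), and computes the generalized eigenvalues $\lambda^{(0)}=\smfrac43$, $\lambda^{(\pm1)}=\smfrac{24}5$, $\lambda^{(\pm2)}=\smfrac25$, $\lambda^{(3)}=\smfrac9{11}$, so that $\lambda=\smfrac25$ and $\kappa=\smfrac27$. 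A placeholder ``Rayleigh-quotient computation'' does not establish the constant; moreover, because your extension differs from the paper's average, the matrices you would have to analyse are not those in the paper (although, since your $v(x_0)$ minimizes the local vacancy energy, the paper's verified inequality for $\tilde\Ext$ implies yours a posteriori). As written, the proposal proves $\kappa(\Vac)\ge\smfrac27$ only modulo this unperformed finite-dimensional verification.
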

\begin{proof}
  We define an alternative extension operator $\tilde\Ext$ as follows
  ({\it cf.} Figure \ref{fig:void_illustration}):
  \begin{equation}
    \label{eq:interp:vacext}
    (\tilde{\Ext} w)(x) := \frac{1}{6}\sum_{r \in \Rnn} w(x+r) \qquad \forall x \in \Vac^\per.
  \end{equation}
  Using the notation introduced in Figure \ref{fig:void_illustration}
  we aim to prove that
  \begin{equation}
   \label{eq:vac:techinequality}
    \sum_{b\in\B_2} |r_b \cdot \Da{b} u|^2
    \geq
    \kappa \sum_{b\in\B_1\cup\B_2} \b|r_b \cdot \Da{b} \tilde{\Ext}u\b|^2
    \qquad \forall u \in \Us,
  \end{equation}
  for $\kappa = \smfrac{2}{7}$.
    \begin{figure}
    \includegraphics[height=4cm]{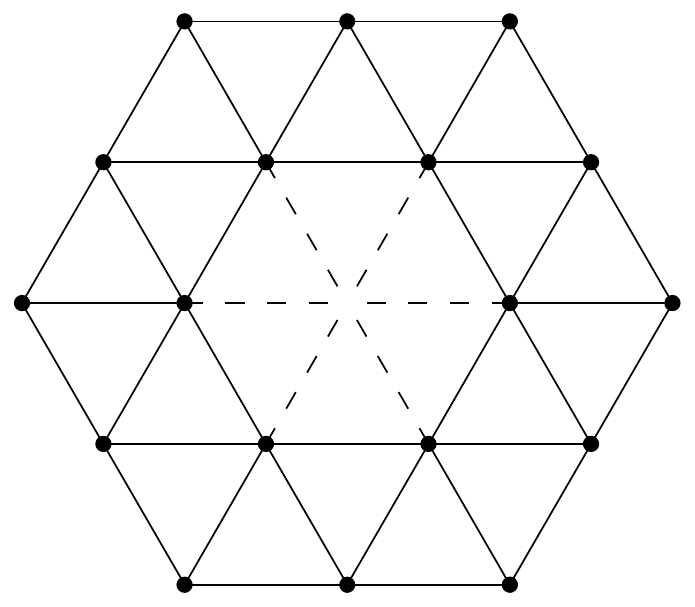}
    \caption{\label{fig:void_illustration}Neighbourhood of a void to illustrate the proof of Theorem \ref{th:vac:techlemma}.
            The bonds $\B_1$ are dashed, the bonds $\B_2$ are solid.  }
  \end{figure}

  Before we prove \eqref{eq:vac:techinequality}, let us discuss why
  this establishes the result. Firstly, \eqref{eq:vac:techinequality}
  and the separation condition \eqref{eq:interp:vacsep} imply
  immediately that
  \begin{displaymath}
        \sum_{b\in\B} |r_b \cdot \Da{b} u|^2
    \geq
    \kappa \sum_{b\in\Btot} \b|r_b \cdot \Da{b} \tilde{\Ext}u\b|^2
    \qquad \forall u \in \Us.
  \end{displaymath}
  Since the actual extension operator minimizes the right-hand side,
  we can replace $\tilde{\Ext}$ with $\Ext$, and hence obtain the
  result.

  To prove \eqref{eq:vac:techinequality}, we begin by noting that 18
  vertices of $\Th$ are involved in \eqref{eq:vac:techinequality},
  which correspond to 36 degrees of freedom for a transformed
  displacement $u$. We construct a basis of the space of these
  degrees of freedom $\{ w^{(k, j)} : -2 \leq k \leq 3, 1 \leq j \leq
  6 \}$ as follows: Firstly, we require that all basis functions
  satisfy the symmetry
  \begin{equation}
    \label{eq:vac:20}
    w^{(k,j)}(\mQ_6 \xi) = e^{\i k
    \arg(\xi)} \mQ_6 w^{(k,j)}(\xi).
  \end{equation}
  Secondly, we specify the nodal values
  \begin{align*}
    w^{(k,1)}(1,0) =~& (-\sqrt{3},0), \\
    w^{(k,2)}(1,0) =~& (0,3\i), \\
    w^{(k,3)}(\smfrac32,\smfrac{\sqrt3}2) =~& 3\, e^{\i k \frac{\pi}{6}} (\cos\smfrac{\pi}{6},-\sin\smfrac{\pi}{6}), \\
    w^{(k,4)}(\smfrac32,\smfrac{\sqrt3}2) =~& -\sqrt{3}\i\, e^{\i k \frac{\pi}{6}} (\sin\smfrac{\pi}{6},\cos\smfrac{\pi}{6}), \\
    w^{(k,5)}(2,0) =~& (\sqrt{3},0), \\
    w^{(k,6)}(2,0) =~& (0,3\i).
  \end{align*}
  Finally, for all vertices $\xi$ where $w^{(k,j)}(\xi)$ is still
  undefined we set $w^{(k, j)}(\xi) = (0,0)$.

  Consider the two quadratic forms
  \begin{align*}
    a[u] = \sum_{b\in\B_2} |r_b \cdot \Da{b} u|^2,
    \qquad \text{and} \qquad
    b[u] = \sum_{b\in\B_1\cup\B_2} |r_b \cdot \Da{b} u|^2.
  \end{align*}
  It turns out that the corresponding ``stiffness matrices'' with
  respect to the basis defined above have a block-diagonal structure,
  that is, if $u = \sum_{k = -2}^3 \sum_{j = 1}^6 U_{k,j} w^{(k,j)}$ then
  \begin{displaymath}
    a[u] = \sum_{k = -2}^3 \sum_{j,j' = 1}^6 A_{j,j'}^{(k)} U_{k,j}
    U_{k,j'}
    \quad \text{and} \quad
    b[u] = \sum_{k = -2}^3 \sum_{j,j' = 1}^6 B_{j,j'}^{(k)} U_{k,j}
    U_{k,j'}
  \end{displaymath}
  with the blocks
  \begin{align*}
    A^{(k)}
    = {\footnotesize
    \begin{pmatrix}
 3+\left(1+\cos \left(\frac{k \pi }{3}\right)\right) & \sin \left(\frac{k \pi }{3}\right) & \cos \left(\frac{k \pi }{6}\right) & \sin \left(\frac{k \pi
   }{6}\right) & 2 & 0 \\
 \sin \left(\frac{k \pi }{3}\right) & 3-\left(1+\cos \left(\frac{k \pi }{3}\right)\right) & -\sin \left(\frac{k \pi }{6}\right) & \cos \left(\frac{k \pi
   }{6}\right) & 0 & 0 \\
 \cos \left(\frac{k \pi }{6}\right) & -\sin \left(\frac{k \pi }{6}\right) & 1 & 0 & 0 & 0 \\
 \sin \left(\frac{k \pi }{6}\right) & \cos \left(\frac{k \pi }{6}\right) & 0 & 5 & 2 \sin \left(\frac{k \pi }{6}\right) & 2 \cos \left(\frac{k
   \pi }{6}\right) \\
 2 & 0 & 0 & 2 \sin \left(\frac{k \pi }{6}\right) & 3 & 0 \\
 0 & 0 & 0 & 2 \cos \left(\frac{k \pi }{6}\right) & 0 & 1
    \end{pmatrix}
   }
  \end{align*}
  and
  \begin{align*}
B^{(k)} = A^{(k)} + {\footnotesize
\begin{pmatrix}
 2-\frac12 \big(1-(-1)^k\big) \left(1+\cos \left(\frac{k \pi }{3}\right)\right) &
   \frac{1}{6}\big(1-(-1)^k\big) \sin \left(\frac{k \pi }{3}\right) & 0 & 0 & 0 & 0 \\
   \frac{1}{6}\big(1-(-1)^k\big) \sin \left(\frac{k \pi }{3}\right) &
   \frac{1}{18} \big(1-(-1)^k\big) \left(1+\cos \left(\frac{k \pi }{3}\right)\right) & 0 & 0 & 0 & 0 \\
 0 & 0 & 0 & 0 & 0 & 0 \\
 0 & 0 & 0 & 0 & 0 & 0 \\
 0 & 0 & 0 & 0 & 0 & 0 \\
 0 & 0 & 0 & 0 & 0 & 0
\end{pmatrix}}
  \end{align*}

  We need to find a maximal positive $\kappa$ such that $A^{(k)} \geq \kappa B^{(k)}$, in the sense of Hermitian matrices, for all $k$.
  Such a constant exists if ${\rm Ker} A^{(k)} \subset {\rm Ker} B^{(k)}$ for all $k$.
  An explicit constant $\kappa$ can be obtained if we can find minimal
  constants $\lambda^{(k)}$ such that, for some vector $v^{(k)} \notin
  {\rm Ker} A^{(k)}$,
  \begin{displaymath}
    A^{(k)} v^{(k)} = \lambda^{(k)} (B^{(k)} - A^{(k)}) v^{(k)}.
  \end{displaymath}
  In that case we would obtain $\kappa = \lambda / (1+\lambda)$, where
  $\lambda = \min_k \lambda^{(k)}$.  We perform these calculations
  separately for $k=0,\pm 1,\pm 2,3$.

  {\itshape Case $k=0$:} ${\rm Ker}(A^{(0)}) = {\rm Ker}(B^{(0)}) = {\rm span} \{v_0\}$, with $v_0=(0, 1, 0, -1, 0, 2)$; therefore we add $v_0 \otimes v_0$ to
  $A^{(0)}$ to make it strictly positive definite and solve
  \begin{displaymath}
    0=\det\big(v_0 \otimes v_0 + A^{(0)} - \lambda
    (B^{(0)}-A^{(0)})\big)
    = 72 (4 - 3 \lambda),
  \end{displaymath}
  to obtain that $\lambda^{(0)} = \smfrac43$.

  {\itshape Case $k=\pm 1$:} ${\rm Ker}(A^{(0)}) = {\rm Ker}(B^{(0)}) = {\rm span} \{v_0\}$, with $v_0=(\mp 1, \sqrt{3}, \pm \sqrt{3}, -1, \pm 1, \sqrt{3})$; therefore we add $v_0 \otimes v_0$ to
  $A^{(\pm 1)}$ and solve
  \begin{displaymath}
    0={\rm det}(v_0 \otimes v_0 + A^{(\pm 1)} - \lambda
    (B^{(\pm 1)}-A^{(\pm 1)})) 
    = 24 (24 - 5 \lambda),
  \end{displaymath}
  from where we find $\lambda^{(\pm 1)} = \smfrac{24}{5}$.

  {\itshape Case $k=\pm 2$:} In this case ${\rm Ker} A^{(\pm 2)} = {\rm Ker} B^{(\pm 2)} =
  \{0\}$; hence we solve
  \begin{displaymath}
    0={\rm det}(A^{(2)} - \lambda (B^{(2)}-A^{(2)})) = 6 (2 - 5 \lambda),
  \end{displaymath}
  to obtain that $\lambda^{(\pm 2)} = \smfrac{2}{5}$.
  
  {\itshape Case $k=3$:} In this case ${\rm Ker} A^{(3)} = {\rm Ker} B^{(3)} = \{0\}$;
  hence we solve
  \begin{displaymath}
    0={\rm det}(A^{(3)} - \lambda (B^{(3)}-A^{(3)})) 
    = 4 (9 - 11 \lambda),
  \end{displaymath}
  to obtain that $\lambda^{(3)} = \smfrac{9}{11}$.

  {\it Conclusion: } The smallest of the eigenvalues is given by
  \begin{displaymath}
    \lambda = \min_{k = -2, \dots, 3} \lambda^{(k)} 
    = \smfrac{2}{5},
  \end{displaymath}
  which gives the coercivity constant
  $
    \kappa = \smfrac{\lambda}{1+\lambda}
    = \smfrac{2}{7}.   $
\end{proof}

\subsection{Sharpness of the stability estimate}
\label{sec:stab:sharpness}
To understand whether Theorem \ref{th:stab:nodefect} is sharp, we
consider a homogeneous deformation $y_h = \yB$ and a Lennard-Jones or
Morse type interaction potential: we assume that there exists $\rturn
> 1$ (a turning point) such that
\begin{equation}
  \label{eq:a:LJ_type}
  \begin{array}{l@{\,}l@{\,}l@{\,}l@{\,}l}
    \varphi'(s) &\leq 0 \text{ for } s \in (0, 1),
    \qquad &\varphi'(s)
    &\geq 0 \text{ for } s \in (1, +\infty), \\
    \varphi''(s) &> 0 \text{ for } s \in (0, \rturn),
    \quad \text{and}
    \quad &\varphi''(s) &\leq 0 \text{ for } s \in (\rturn, +\infty).
  \end{array}
\end{equation}
These conditions are satisfied by the original Lennard-Jones
potential, and by the Morse potential.

If we also assume that $\Vac = \emptyset$, then \eqref{eq:stab:defn_H}
is the last approximation that we made, that is, all subsequent
calculations are sharp. In particular, for the case $\mB = m \mI$
our main approximation was to drop the non-negative non-nearest
neighbour terms
\begin{displaymath}
  \sum_{b \in \Bc \setminus \Bnn} 
  \smfrac{\varphi'(|\mB r_b|)}{|\mB r_b|^3} \big| \mB r_b \times \Dc{b}
  u_h|^2
  + \sum_{b \in \Ba \setminus \Bnn} 
  \smfrac{\varphi'(|\mB r_b|)}{|\mB r_b|^3} \int_b \big| \mB r_b \times
  \Da{b} u_h|^2 \db.
\end{displaymath}

Suppose for a moment that the atomistic region is empty then we could
have kept the terms in the analysis without any major modifications and
would have obtained the coercivity constant
\begin{displaymath}
  \tilde{\gamma} = \min\big( \smfrac34 c + \smfrac94 \tilde c^\perp,
  \smfrac94 c + \smfrac34 \tilde c^\perp \big), \qquad \text{where} \quad
  \tilde c^\perp = \sum_{n = 1}^\infty \varphi'(m\ell_n) \ell_n.
\end{displaymath}
We are interested in the case when $m > 1$ so that $\mB$ approaches the
region of instability. In that case we have
\begin{displaymath}
  \tilde\gamma = \gamma + \smfrac34 \sum_{n = 2}^\infty \varphi'(m \ell_n)
  \ell_n > \gamma.
\end{displaymath}
This shows that our estimate is {\em not} sharp, even for exact
triangular lattices. However, the gap is small in this case.

If, however, $\mB$ contains a significant shear component, then our
estimates are not particularly sharp as the numerical experiment shown
in Figure \ref{fig:stabsharphom} demonstrates. In this figure we plot
the zero level line of $\gamma$ in $(m, M)$ parameter space for
$\kappa \in \{0, 2/7\}$, and for $\Delta \in \{0, 0.02\}$. In
particular, the case $\kappa = 2/7, \Delta = 0.02$ corresponds to our
numerical experiment in \S\ref{sec:numerics:void}.

\begin{figure}
  \includegraphics[height=7cm]{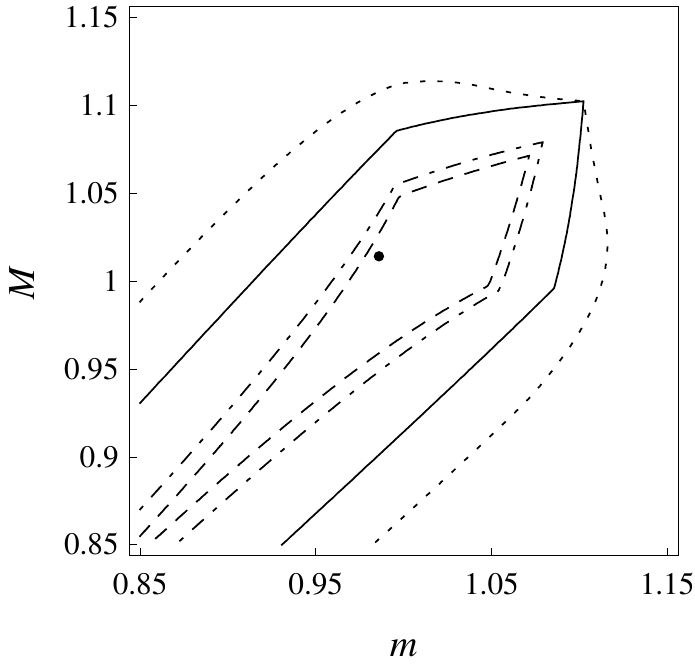}
  \caption{\label{fig:stabsharphom} Regions of stability in $(m, M)$
    parameter space. The dotted line is the boundary of the maximal
    region in $(m, M)$ space such that $\yB$ is stable in the full
    atomistic model for all $\mB$ with singular values $0 < m \leq
    M$. The full line is the zero level set of $\gamma(m, M, 0,
    \emptyset)$. The dot-dashed and the dashed lines are the zero level sets of $\gamma(m,
    M, 0, 2/7)$ and $\gamma(m, M, 0.02, 2/7)$ respectively, which corresponds to a vacancy defect.
    The point on the graph corresponds to $m$ and $M$ computed from the computed solution as described in \S\ref{sec:numerics:void}. }
\end{figure}

\section{A Priori Error Estimates}
\label{sec:apriori}
Having established consistency and stability of the a/c method
introduced in \S\ref{sec:qc}, we are now in a position to prove a
priori error estimates for the deformation gradient and for the
energy. For the statement of the following result recall the
definition of $\SmMD$ from \S\ref{sec:stab:notation}, and the
definition of $\Pi_2(y)$ from \S\ref{sec:interp:C1}.

Below, in \S\ref{sec:mesh_refinement}, we discuss the computational
complexity predicted by our error estimates, that is, we reformulate
them in terms of the number of degrees of freedom.

\begin{theorem}
  \label{th:mainthm}
  Suppose that Assumption \ref{AsmMesh} holds. Let $\mB \in \R^{2\times
    2}_+$, and let $y_\a \in \YsB$ be a solution of
  \eqref{eq:min_a_crit1} and $y_\qc \in \YsBh$ a solution of
  \eqref{eq:qc:min_qc}, such that the following {\em stability
    assumption} holds: there exist $0 < m \leq M$ and $\Delta > 0$
  such that $\gamma := \gamma(m,M,\Delta,\kappa(\Vac)) > 0$ (defined
  in \eqref{eq:defn_gamma}) and such that
  \begin{equation}
    \label{eq:apriori:stab_assm}
    (1-t) y_\qc + t I_h y_\a \in \SmMD \qquad \forall t \in [0, 1].
  \end{equation}

  Then, there exist constants $c_1$ and $c_2$, which depend only on
  the shape regularity of $\Th$, on $m$, and on $\mu_\a(y_\a)$, such
  that
  \begin{align}
    \label{eq:apriori:errest}
    \big\| \D \bar{y}_\a - \D y_\qc \big\|_{\LL^2(\Om)}
    \leq~&
    \frac{c_1}{\gamma} \inf_{\tilde y_\a \in \Pi_2(y_\a)}
    \big\| h \D^2\tilde{y}_\a \big\|_{\LL^2(\Omega_\c)},
    \quad \text{and}
	\\
    \label{eq:apriori:energy_err}
    \big| \Ea(y_\a) - \Eqc(y_\qc) \big|
    \leq~&
    \frac{c_2}{\gamma^2} \inf_{\tilde y_\a \in \Pi_2(y_\a)}
    \big\| h \D^2\tilde{y}_\a \big\|_{\LL^2(\Omega_\c)}^2.
  \end{align}
\end{theorem}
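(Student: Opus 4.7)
The plan follows the strategy sketched in \S\ref{sec:qc:errana}: combine the consistency estimate of Theorem \ref{th:cons:mainest} with the stability estimate of Theorem \ref{th:stab:nodefect}, linked by the first-order optimality conditions on $y_\a$ and $y_\qc$. Set $e_h := I_h y_\a - y_\qc \in \Ush$. Applying the fundamental theorem of calculus along the segment from $y_\qc$ to $I_h y_\a$ gives
\begin{equation*}
  \bigl\langle \del\Eqc(I_h y_\a) - \del\Eqc(y_\qc), e_h \bigr\rangle
  = \int_0^1 \bigl\langle \ddel\Eqc(y_\qc + t e_h)\, e_h, e_h \bigr\rangle \dt.
\end{equation*}
The hypothesis \eqref{eq:apriori:stab_assm} places every point of this segment in $\SmMD$, so Theorem \ref{th:stab:nodefect} bounds the integrand below (pointwise in $t$) by $\gamma \|\mB^\transpose \D e_h\|_{\LL^2}^2$. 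Combining \eqref{eq:qc:EL} applied to $e_h \in \Ush$ with \eqref{eq:min_a_crit1} applied to $e_h \in \Ush \subset \Us$, we have $\langle \del\Eqc(y_\qc), e_h\rangle = 0 = \langle \del\Ea(y_\a), e_h\rangle$, so the identity above reduces to
\begin{equation*}
  \gamma \|\mB^\transpose \D e_h\|_{\LL^2}^2 \leq \bigl\langle \del\Eqc(I_h y_\a) - \del\Ea(y_\a), e_h \bigr\rangle \leq \Econs_2(y_\a)\,\|\D e_h\|_{\LL^2}.
\end{equation*}
Invoking Theorem \ref{th:cons:mainest} with $p = 2$, dividing by $\|\D e_h\|_{\LL^2}$, and absorbing the minimum singular value of $\mB$ into $c_1$, I obtain the desired bound on $\|\D e_h\|_{\LL^2}$. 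Estimate \eqref{eq:apriori:errest} then follows by a triangle inequality with $\Om = \Oma \cup \Omc$: Assumption \ref{AsmMesh} yields $\bar y_\a = I_h y_\a$ on $\Oma$, while on $\Omc$ Lemma \ref{th:cor_interpbar} supplies $\|\D \bar y_\a - \D I_h y_\a\|_{\LL^2(\Omc)} \lesssim \|h \D^2 \tilde y_\a\|_{\LL^2(\Omc)}$.

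For \eqref{eq:apriori:energy_err} I decompose
\begin{equation*}
  \Ea(y_\a) - \Eqc(y_\qc) = \bigl[\Ea(y_\a) - \Ea(I_h y_\a)\bigr] + \bigl[\Ea(I_h y_\a) - \Eqc(I_h y_\a)\bigr] + \bigl[\Eqc(I_h y_\a) - \Eqc(y_\qc)\bigr].
\end{equation*}
A second-order Taylor expansion of $\Ea$ about $y_\a$ kills the linear term via $\del\Ea(y_\a) = 0$ (valid since $I_h y_\a - y_\a \in \Us$) and leaves a quadratic remainder bounded by $\|\D \bar y_\a - \D \ol{I_h y_\a}\|_{\LL^2(\Om)}^2$, and hence by $\|h \D^2 \tilde y_\a\|_{\LL^2(\Omc)}^2$ via Lemma \ref{th:cor_interpbar}. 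A Taylor expansion of $\Eqc$ about $y_\qc$ using $\del\Eqc(y_\qc) = 0$ bounds the third bracket by $C \|\D e_h\|_{\LL^2}^2$, which is $O\bigl(\gamma^{-2} (\inf \|h \D^2 \tilde y_\a\|_{\LL^2(\Omc)})^2\bigr)$ by the first part.

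The most delicate piece is the middle bracket, the \emph{energy modelling error} $\Ea(I_h y_\a) - \Eqc(I_h y_\a) = \sum_{b \in \Bc}\bigl[\phi(\Da{b} I_h y_\a) - \mint_b \phi(\Dc{b} I_h y_\a)\db\bigr]$. Since $\Da{b} I_h y_\a = \mint_b \Dc{b} I_h y_\a\db$ by \eqref{eq:mintDc_Da}, a second-order Taylor expansion of $\phi$ about the bond average annihilates the linear term (whose coefficient is the signed average of $\Dc{b} I_h y_\a - \Da{b} I_h y_\a$) and leaves a remainder of order $M_{|b|}'\, E_b(I_h y_\a)^2$ with $E_b$ as in \eqref{eq:cons:cons_11}. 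Summing over $b \in \Bc$, I reuse the jump estimate \eqref{eq:cons:cons_est_Eb} and Lemma \ref{th:cons:Nj_est} to obtain $\lesssim \|[\D I_h y_\a]\|_{\LL^2(\Fhc)}^2$, then apply Lemma \ref{th:cons:est_jmp_Dyh} with $p = 2$ to bound the result by $\|h^{1/2} \D^2 \tilde y_\a\|_{\LL^2(\Omc)}^2 \leq \|h \D^2 \tilde y_\a\|_{\LL^2(\Omc)}^2$ (using $h \geq 1$). The main obstacle is thus not conceptual but careful bookkeeping: the analysis of \S\ref{sec:cons:model} must be adapted so that bond-jumps enter quadratically rather than once, but its bond-by-bond structure carries over essentially unchanged.
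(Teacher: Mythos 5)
Your proposal is correct and follows essentially the same route as the paper: the $\HH^1$ bound via the fundamental theorem of calculus, the stability assumption, the two Euler--Lagrange conditions and Theorem \ref{th:cons:mainest}; and the energy bound via the identical three-term splitting, with ${\rm E}_1$ and ${\rm E}_3$ treated by Taylor expansion plus the Lipschitz bounds and the $\HH^1$ estimate, and the middle term by exactly the bond-average cancellation and reuse of \eqref{eq:cons:cons_mainest_A}, \eqref{eq:Nj_estimate} and Lemma \ref{th:cons:est_jmp_Dyh} that constitute the paper's Lemma \ref{th:apriori:Eest_lemma}. The only minor discrepancy is the final interpolation step for $\|\D \bar{y}_\a - \D I_h y_\a\|_{\LL^2(\Omc)}$, which the paper obtains directly from the two estimates of Lemma \ref{th:interp:int_err} via a triangle inequality through $\tilde{y}_\a$, whereas you cite Lemma \ref{th:cor_interpbar} (which bounds $\|\D\bar{y}_\a - \D\ol{I_h y_\a}\|$, the micro-interpolant of $I_h y_\a$); the bound you need is immediate from Lemma \ref{th:interp:int_err}, so this is a harmless slip.
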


\begin{remark}[The Stability Assumption]
  \label{rem:apriori:stab_assm}
  The only assumption in Theorem \ref{th:mainthm} that we have not
  justified rigorously is the stability condition
  \eqref{eq:apriori:stab_assm}. The assumption is fairly natural as it
  requires, essentially, that $y_\qc$ belongs to the basin of
  stability of the local minimizer $y_\a$.

  Nevertheless, one would prefer to make only assumptions on $y_\a$
  itself and establish the properties for $y_\qc$ and $I_h y_\a$
  rigorously.  However, short of proving the existence of atomistic
  and a/c solutions $y_\a, y_\qc$ such that
  \begin{equation}
    \label{eq:stab_assm_variant}
    \|\D \bar{y}_\a - \D y_\qc \|_{\LL^\infty} + \|\D \bar{y}_\a -
    \D I_h y_\a \|_{\LL^\infty}  \quad \text{is ``sufficiently small''},
  \end{equation}
  one cannot hope to remove it, except by postulating even stronger
  requirements, e.g., phrasing \eqref{eq:stab_assm_variant} as an
  assumption. 

  A rigorous estimate on $\|\D \bar{y}_\a - \D I_h y_\a
  \|_{\LL^\infty}$ requires a regularity theory for atomistic systems
  with defects, and we are currently unaware of any results in this
  direction.

  A rigorous estimate on $\|\D \bar{y}_\a - \D y_\qc \|_{\LL^\infty}$
  could, in principle, be achieved using the inverse function theorem
  \cite{OrtnerSuli:2008a, MakrOrtSul:qcf.nonlin, Ortner:qnl.1d}, but
  requires stability of $\ddel\Eqc(I_h y_\a)$ as an operator from
  (discrete variants of) $\WW^{1,\infty}$ to $\WW^{-1,\infty}$. For
  the discretized Laplace operator such results are classical for
  quasiuniform meshes \cite{RaSc1982}, and have recently been extended
  to locally refined meshes by Demlow {\it et al}
  \cite{DLSW:pre10}. These results give legitimate hope that
  assumption \eqref{eq:apriori:stab_assm} could be (partially)
  removed.
\end{remark}

\begin{proof}
  {\it 1. Error in the $\HH^1$-norm. } Let $e_h = I_h y_\a - y_\qc$,
  then there exists $\theta_h \in {\rm conv}\{I_h y_\a, y_\qc\}$
  such that
  \begin{displaymath}
    \big\< \ddel\Eqc(\theta_h) e_h, e_h \big\> = \int_0^1 \big\<
    \ddel\Eqc(y_\qc + t e_h) e_h, e_h \big\> \dt = \big\<
    \del\Eqc(I_h y_\a) - \del \Eqc(y_\qc), e_h \big\>. 
  \end{displaymath}
  Using the stability assumption \eqref{eq:apriori:stab_assm} to bound
  $\big\< \Eqc(\theta_h) e_h, e_h \big\>$ from below, and the fact
  that $\<\del\Eqc(y_\qc), e_h\> = 0$, we obtain
  \begin{displaymath}
    \gamma \| \D e_h \|^2_{\LL^2(\Om)}
    \leq \big\< \del\Eqc(I_h y_\a), e_h \big\>.
 \end{displaymath}
 We employ the consistency result, Theorem \ref{th:cons:mainest}, to
 estimate
 \begin{equation}
   \label{eq:apriori:20}
   \gamma \| \D e_h \|^2_{\LL^2(\Om)} \leq \Ccons
   \inf_{\tilde{y}_\a \in \Pi_2(y_a)} \b\| h \D^2 \tilde{y}
   \b\|_{\LL^2(\Omc)} \, \| \D e_h \|_{\LL^2},
 \end{equation}
 where $\Ccons$ depends on $\mu_a(y_\a)$ and $\mu_\c(I_h y_\a)$.

 Employing the interpolation error bounds \eqref{eq:interp:int_err_1}
 and \eqref{eq:interp:int_err_mu} to estimate
 \begin{align*}
   \| \D \bar{y}_\a - \D y_\qc \|_{\LL^2} 
      \leq~& \| \D \bar{y}_\a - \D I_h y_\a \|_{\LL^2(\Om)}
   + \| \D e_h \|_{\LL^2(\Om)} \\ 
      \leq~& \inf_{\tilde{y}_\a \in \Pi_2(y_\a)} \Big[
   \| \D \bar{y}_\a - \D \tilde{y}_\a \|_{\LL^2(\Omc)} + \| \D \tilde{y}_\a
   - \D I_h y_\a \|_{\LL^2(\Omc)} \Big] + \| \D e_h \|_{\LL^2(\Om)} \\ 
      \leq~& \inf_{\tilde{y}_\a \in \Pi_2(y_\a)}\b\| (\CImtil + \CIhtil
   h) \D^2 \tilde{y}_\a \b\|_{\LL^2(\Omc)} + \| \D e_h \|_{\LL^2(\Om)},
 \end{align*}
 applying \eqref{eq:apriori:20}, and noting that $h\geq 1$, we obtain
 \eqref{eq:apriori:errest} with $c_1 = \Ccons + \gamma (\CImtil +
 \CIhtil)$. This constant depends indeed only on the shape regularity
 of $\Th$, on $\mu_\a(y_\a)$, and on $\mu_c(I_h y_\a) \geq m$.

 {\it 2. Error in the energy. } To estimate the error in the energy,
 $|\Ea(y_\a) - \Eqc(y_\qc)|$, we first split it into
 \begin{align*}
   |\Ea(y_\a) - \Eqc(y_\qc)| \leq~& |\Ea(y_\a) - \Ea(I_h
   y_\a)| + |\Ea(I_h y_\a) - \Eqc(I_h y_\a)| \\
   & + |\Eqc(I_h y_\a) - \Eqc(y_\qc)| \\
   =:~& {\rm E}_1 + {\rm E}_2 + {\rm E}_3,
 \end{align*}
 and estimate the three terms ${\rm E}_j$, $j = 1,2,3$, separately.

 {\it 2.1. The term ${\rm E}_1$. } Since $y_\a \in \YsB$, and
 $\del\Ea(y_\a) = 0$, we can estimate
 \begin{align*}
   \b|\Ea(I_h y_\a)-\Ea(y_\a)\b| =~& \bigg|\b\< \del\Ea(y_\a), I_h y_\a - y_\a
   \b\>
    \\
    & +  \int_0^1 \b\< \del\Ea\b((1-t) y_\a + t I_h y_\a\b) - \del\Ea(y_\a), I_h y_\a -
   y_\a \b\> \dt \bigg| \\
   \leq~&  \int_0^1 \Big| \b\< \del\Ea\b((1-t) y_\a + t I_h y_\a\b) - \del\Ea(y_\a), I_h y_\a -
   y_\a \b\>\Big| \dt
 \end{align*}
 For each $t \in [0, 1]$ we use Lemma \ref{th:cons:lip_delEa} to
 further estimate
 \begin{displaymath}
   \b| \b\< \del\Ea\b((1-t) y_\a + t I_h y_\a\b) - \del\Ea(y_\a), I_h y_\a -
   y_\a \b\>\b| \leq t \CLa \b\| \D \bar{y}_\a - \D \ol{I_h y_\a} \b\|_{\LL^2}^2,
 \end{displaymath}
 where $\CLa$ depends on $\mu_\a(y_\a)$ and $\mu_\a(I_h y_\a) \geq
 \min\{\mu_\a(y_\a), m\}$, and apply \ref{th:cor_interpbar}, to obtain
 \begin{align}
   \notag
   \b|\Ea(I_h y_\a)-\Ea(y_\a)\b| \leq~& \max_{t \in [0, 1]}
   \b| \b\< \del\Ea\b((1-t) y_\a + t I_h y_\a\b) - \del\Ea(y_\a), I_h y_\a -
   y_\a \b\>\b| \\
   \leq~& 
   \label{eq:apriori:E18}
   C_1 \inf_{\tilde{y}_\a \in \Pi_2(y_\a)} \b\| h
   \D^2 \tilde{y}_\a \b\|_{\LL^2(\Omc)}^2,
 \end{align}
 where $C_1$ depends only on $\mu_\a(y_\a)$ and on $m$.

 {\it 2.2 The term ${\rm E}_3$. } The term ${\rm E}_3$ can be
 estimated in a similar manner as ${\rm E}_1$. Following closely the
 proof of the Lipschitz estimate for $\del\Ea$, Lemma
 \ref{th:cons:lip_delEa}, one can prove that, if $y_h^{(j)} \in \Ysh$,
 $j = 1, 2$, then
 \begin{displaymath}
   \b| \b\< \del\Eqc(y_h^{(1)}) - \del\Eqc(y_h^{(2)}), u_h \b\> \b|
   \leq \CLa \| \D y_h^{(1)} - \D y_h^{(2)} \|_{\LL^2(\Om)} \| \D u_h
   \|_{\LL^2(\Om)}
   \qquad \forall u_h \in \Ush,
 \end{displaymath}
 where $\CLa = \CLa(\min\{\mu_\c(y_h^{(1)}), \mu_\c(y_h^{(2)})
 \})$. Repeating the first part of the argument in step 2.1, and using
 the $\HH^1$-norm error estimate \eqref{eq:apriori:errest}, we obtain
 \begin{equation}
   \label{eq:apriori:E19}
   |\Eqc(I_h y_\a) - \Eqc(y_\qc)|
   \leq C_3' \b\| \D I_h y_\a - \D y_\qc \b\|_{\LL^2}^2
   \leq C_3 \inf_{\tilde{y}_\a \in \Pi_2(y_\a)} \b\| h
   \D^2 \tilde{y}_\a \b\|_{\LL^2(\Omc)}^2,
 \end{equation}
 where $C_3'$ and $C_3$ depend on $m$ and on the shape regularity of
 $\Th$, and $C_3$ depends also on $\gamma$.

 {\it 2.3. The term ${\rm E}_2$. } Estimating this term requires a little
 more work. In Lemma \ref{th:apriori:Eest_lemma} below, we prove that
 \begin{equation}
   \label{eq:apriori:E20}
   \b|\Ea(I_h y_\a) - \Eqc(I_h y_\a)\b| \leq C_2 \inf_{\tilde{y} \in
      \Pi_2(y)} \b\| h^{1/2} \D^2 \tilde{y}_\a \b\|_{\LL^2(\Omc)}^2,
 \end{equation}
 where $C_2$ depends only on $\mu_\c(I_h y_\a) \geq m$, and on the
 shape regularity of $\Th$.

 {\it 2.4. Conclusion. } Combining \eqref{eq:apriori:E18},
 \eqref{eq:apriori:E19}, and \eqref{eq:apriori:E20} yields the energy
 error estimate \eqref{eq:apriori:energy_err} and concludes the proof
 of the theorem.
\end{proof}

\begin{lemma}
  \label{th:apriori:Eest_lemma}
 Let $y_h \in \Ysh$; then
  \begin{equation}
    \label{eq:apriori:E25}
    \b|\Ea(y_h) - \Eqc(y_h)\b| \leq C^E_1 \b\|[\D y_h] \b\|_{\LL^2(\Omc)}^2,
  \end{equation}
  where $C^E_1 = c_1' \sum_{r \in \Ldir} M_2(\mu_\c(y_h) |r|)
  |r|^4$, and $c_1'$ depends on the shape regularity of $\Th$.

  Moreover, if $y \in \Ys$, and $\mu_\c(I_h y) > 0$, then
 \begin{equation}
    \label{eq:apriori:E26}
    \b|\Ea(I_h y) - \Eqc(I_h y)\b| \leq C^E_2 \inf_{\tilde{y} \in
      \Pi_2(y)} \b\| h^{1/2} \D^2 \tilde{y} \b\|_{\LL^2(\Omc)}^2,
  \end{equation}
  where $C^E_2 = c_2' \sum_{r \in \Ldir} M_2(\mu_\c(I_h y) |r|)
  |r|^4$, and $c_2'$ depends on the shape regularity of $\Th$.
\end{lemma}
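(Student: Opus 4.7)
\emph{Plan.} The starting point is the identity
\begin{equation*}
\Ea(y_h) - \Eqc(y_h) = \sum_{b \in \Bc} \b[\phi(\Da{b}y_h) - \mint_b \phi(\Dc{b}y_h)\,\db\b],
\end{equation*}
which is immediate from \eqref{eq:defn_Ea} and \eqref{eq:defn_Eqc}, since the atomistic bonds $b \in \Ba$ appear identically in both energies. The key new observation, which distinguishes this estimate from the consistency analysis, is that identity \eqref{eq:mintDc_Da} exhibits $\Da{b}y_h$ as the \emph{mean} of $\Dc{b}y_h$ along $b$. A second-order Taylor expansion of $\phi$ about $\Da{b}y_h$,
\begin{equation*}
\phi(\Dc{b}y_h(t)) = \phi(\Da{b}y_h) + \phi'(\Da{b}y_h)\cdot\b(\Dc{b}y_h(t) - \Da{b}y_h\b) + R_b(t),
\end{equation*}
therefore has its linear term eliminated upon averaging over $b$, leaving only the quadratic remainder. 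Bounding the integral remainder by $\smfrac12 M_2(\mu_\c(y_h)|b|) |\Dc{b}y_h(t) - \Da{b}y_h|^2$ yields the bondwise estimate
\begin{equation*}
\b|\phi(\Da{b}y_h) - \mint_b \phi(\Dc{b}y_h)\,\db\b| \leq \smfrac12 M_2(\mu_\c(y_h)|b|)\, E_b(y_h)^2,
\end{equation*}
with $E_b(y_h)^2 = \mint_b |\Dc{b}y_h - \Da{b}y_h|^2\,\db$ precisely the quantity of \eqref{eq:cons:cons_11}.

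From here the argument recycles the machinery of \S\ref{sec:cons:model}. Applying \eqref{eq:cons:cons_est_Eb} with $p=2$ gives $E_b(y_h)^2 \leq \Nj(b)\sum_{f \in \Jmp(b)} w_{b,f}|[\Dc{b}y_h]_f|^2$; summing over $b \in \Bc$ and interchanging the order of summation (exactly as in the passage to \eqref{eq:cons:cons_20}) produces a sum over faces $f \in \Fhc$ weighted by $\Ncross(f,r)$ and $\Nj(r)$. Substituting the estimate $\Ncross(f,r) \leq 2|r|h_f$ from \eqref{eq:cons:cons_25}, the bound $\Nj(b) \lesssim |b|+1$ from Lemma \ref{th:cons:Nj_est}, and the trivial $|[\Dc{r}y_h]_f| \leq |r|\,|[\D y_h]_f|$, and collecting powers of $|r|$ into $\sum_{r \in \Ldir} M_2(\mu_\c(y_h)|r|)|r|^4$, establishes \eqref{eq:apriori:E25} (with the right-hand side read as $\sum_{f \in \Fhc} h_f |[\D y_h]_f|^2 \sim \|[\D y_h]\|_{\LL^2(\Fhc)}^2$ via shape regularity, in the convention of \S\ref{sec:cons:edges}).

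For the second estimate \eqref{eq:apriori:E26}, I would apply the first with $y_h := I_h y$ and then invoke Lemma \ref{th:cons:est_jmp_Dyh}, specifically \eqref{eq:cons:est_jmp_Dyh_Omc}, with $p=2$, to obtain
\begin{equation*}
\b\|[\D I_h y]\b\|_{\LL^2(\Fhc)}^2 \leq C \b\|h^{1/2}\D^2 \tilde y\b\|_{\LL^2(\Omc)}^2 \qquad \forall\, \tilde y \in \Pi_2(y);
\end{equation*}
taking the infimum over $\tilde y \in \Pi_2(y)$ delivers the claim.

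The one point requiring genuine care is the uniform validity of $\|\phi''\| \leq M_2(\mu_\c(y_h)|b|)$ along the entire segment from $\Da{b}y_h$ to $\Dc{b}y_h(t)$ appearing in the Taylor remainder: although both endpoints have norm at least $\mu_\c(y_h)|b|$, a convex combination could, in principle, lie closer to the origin. This is handled by a convexity argument using the monotonicity of $M_2$ and the fact that the two endpoint vectors are nearly parallel (since $|\Dc{b}y_h(t) - \Da{b}y_h|$ is controlled by the total variation of $\Dc{b}y_h$ across $b$, which, in the regime where the estimate is non-trivial, is small compared to $\mu_\c(y_h)|b|$); the structure and $|b|$-homogeneity of the estimate are unaffected and only the constants $c_1',c_2'$ absorb the resulting factor. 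Apart from this, the proof is bookkeeping on top of the consistency machinery already in place.
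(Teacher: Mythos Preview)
Your proposal is correct and follows the paper's proof essentially line for line: the same Taylor expansion about $\Da{b}y_h$, the same cancellation of the linear term via \eqref{eq:mintDc_Da}, the same identification with $E(y_h)^2$ from \eqref{eq:cons:cons_11} for $p=2$ and $a_b=1$, followed by \eqref{eq:cons:cons_mainest_A}, \eqref{eq:Nj_estimate}, and then Lemma~\ref{th:cons:est_jmp_Dyh} for \eqref{eq:apriori:E26}. Your closing caveat about the Lipschitz bound along the Taylor segment is more cautious than the paper, which simply asserts $M_{|b|} = M_2(\mu_\c(y_h)|b|)$ without further comment.
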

\begin{proof}
  First note that the difference $\Ea(y_h) - \Eqc(y_h)$ depends only
  on continuum bonds:
  \begin{displaymath}
    \Ea(y_h) - \Eqc(y_h) = \sum_{b \in \Bc} \bigg\{ \phi(\Da{b} y_h)
    - \mint_b \phi(\Dc{b} y_h) \db \bigg\}.
  \end{displaymath}
  For each $b \in \Bc$, we have
  \begin{align*}
    \phi(\Dc{b} y_h) =~& \phi(\Da{b} y_h) + \phi'(\Da{b} y_h) \cdot (
    \Dc{b} y_h - \Da{b} y_h) \\
    & + \int_0^1 \Big[ \phi'\big( t \Dc{b} y_h + (1-t) \Da{b} y_h \big)
    - \phi'(\Da{b} y_h) \Big] \dt \cdot (\Dc{b} y_h - \Da{b} y_h) 
  \end{align*}
  Since $\phi'(\Da{b} y_h)$ is a constant on the bond $b$ and since
  $\mint_b( \Dc{b} y_h - \Da{b} y_h)\db = 0$ ({\it cf.}
  \eqref{eq:mintDc_Da}), we obtain, using the Lipschitz bound for
  $\phi'$ inside the integral over $t$,
  \begin{align*}
    \bigg|\mint_b \big[\phi(\Dc{b} y_h) - \phi(\Da{b} y_h)\big] \db\bigg|
    \leq~& \smfrac12 M_{|b|} \mint_b \big|\Dc{b} y_h - \Da{b} y_h\big|^2 \db,
  \end{align*}
  where $M_{|b|} = M_2(\mu_\c(y_h) |b|)$.

  Summing over all bonds $b \in \Bc$ yields the estimate
  \begin{equation}
    \label{eq:apriori:E30}
    \b|\Ea(y_h) - \Eqc(y_h)\b| \leq 
    \frac12 \sum_{b \in \Bc} M_{|b|}
    \mint_b \big|\Dc{b} y_h - \Da{b} y_h\big|^2 \db,
  \end{equation}
  which is precisely the same expression as $E(y_h)^2$ defined in
  \eqref{eq:cons:cons_11}, with $p = 2$ and $a_b = 1$. Hence, we can
  use \eqref{eq:cons:cons_mainest_A} and \eqref{eq:Nj_estimate} to
  obtain
  \begin{displaymath}
    \b|\Ea(y_h) - \Eqc(y_h)\b| \leq C^E_1 \b\| [\D y_h ] \b\|_{\LL^2(\Fhc)}^2,
  \end{displaymath}
  where $\Fhc$ and $[\D y_h]$ was defined in \S\ref{sec:cons:edges};
  with constants $C^E_1 = c_1' \sum_{r \in \Ldir} M_2(\mu_\c(y_h)
  |r|) |r|^4$, where $c_1'$ depends only on the shape regularity of
  $\Th$. This concludes the proof of \eqref{eq:apriori:E25}.

  The estimate \eqref{eq:apriori:E26} follows immediately from Lemma
  \ref{th:cons:est_jmp_Dyh}.
\end{proof}

\subsection{Optimal meshes}
\label{sec:mesh_refinement}
In this subsection we give an informal discussion of refinement rates
of the mesh, in order to obtain error estimates in terms of the number
of degrees of freedom. Moreover, this discussion provides heuristics
on how to choose atomistic region sizes in relation to finite element
meshes. For the sake of generality (and simplicity), we will slightly
deviate from the assumptions and results of our analysis. Throughout
this section, we will liberally make use of the symbols $\lesssim$ and
$\eqsim$ to indicate bounds up to constants that are independent of
the mesh parameters (but may depend on the shape regularity).

Consider a domain $\Om$ of diameter $O(N)$, an atomistic region of
diameter $O(K)$ such that $\smfrac{K}{N} \leq C < 1$ (i.e., the atomistic
region does not occupy most of the domain $\Om$), with a defect in the
centre of the atomistic region. We conjecture that
\eqref{eq:apriori:errest} holds for general $p \in [1, \infty]$, that
is,
\begin{equation}
  \label{eq:apriori:errest_p}
  \big\| \D \bar{y}_\a - \D y_\qc \big\|_{\LL^p(\Om)}
  \lesssim
  \inf_{\tilde{y}_\a \in \Pi_2(y_\a)} 
  \big\| h \D^2\tilde{y}_\a \big\|_{\LL^p(\Omc)}.
\end{equation}
The main ingredient to prove \eqref{eq:apriori:errest_p} is a
stability estimate for $\ddel\Eqc(y_h)$ (for certain $y_h \in \Ysh$)
as an operator between (discrete variants of) $\WW^{1,p}$ and
$\WW^{-1,p}$. Such a result would be very technical to establish,
however, there is some hope that the techniques recently developed in
\cite{DLSW:pre10} could be used as a starting point to achieve this.

We assume that, for some ``good'' interpolant $\tilde{y}_\a$ (e.g.,
the HCT interpolant discussed in Remark \ref{rem:hct}) we have the
following decay property:
\begin{equation}
  \label{eq:apriori:decay_assm}
  \b|\D^2 \tilde y_\a(x)\b| \eqsim r^{-\beta}, 
\end{equation}
where $\beta > 0$, and where $r$ denotes the distance from the
defect. For example, it can be observed numerically that $\beta = 2$
for a dislocation \cite{FrankMerwe1949}, and, as observed in our own
numerical experiments, $\beta=3$ for a vacancy.

We consider a finite element mesh $\Th$ with the mesh size function
$h(r) \eqsim h_K (r/K)^\alpha$, where $h_K \geq 1$ and $\alpha > 0$
are the refinement parameters that we want to optimize.  Note that we
have shown \eqref{eq:apriori:errest} only under the assumption that $h
= 1$ on $\partial\Om_\a$, which would require us to choose $h_K\eqsim
1$.  However, for the sake of argument, we might assume that
\eqref{eq:apriori:errest_p} still holds for more general $h_K$
(possibly by replacing $\Omc$ with an enlarged region on the
right-hand side of \eqref{eq:apriori:errest_p}). Remarkably, our analysis
below shows that $h_K \eqsim 1$ is in fact a quasi-optimal choice.

In terms of the various parameters introduced above, the conjectured error
estimate \eqref{eq:apriori:errest_p} can be rewritten as
\begin{equation}
  \label{eq:apriori:defn_Err}
	\big\| \D \bar{y}_\a - \D y_\qc \big\|_{\LL^p(\Om)}
\lesssim
    \big\| h \D^2\tilde{y}_\a \big\|_{\LL^p(\Omega_\c)}
\eqsim
\bigg(
\int_K^N \big(h_K \big(\smfrac rK\big)^\alpha\,r^{-\beta}\big)^p \,r\dr
\bigg)^{1/p}
=:{\rm Err},
\end{equation}
and the number of degrees of freedom approximated by
\begin{equation}
  \label{eq:apriori:defn_DoF}
  {\rm DoF} := K^2 + \int_K^N \frac{1}{h(r)^2} r\,\dd r
  = K^2 + \int_K^N \frac{r}{h_K^2 (r/K)^{2\alpha}} \dd r.
\end{equation}

In the following paragraphs we will obtain heuristic optimal
choices for the mesh parameters, $\alpha$ and $h_K$, in terms of $K$,
$p$, and $\beta$. It turns out that $\alpha = \beta p / (2+p)$ and
$h_K \eqsim 1$ are always quasi-optimal. The remaining results are
summarized in Table~\ref{tbl:conv_rates}.  The most interesting
situations, which are $p=2,\infty$ (corresponding to energy and
$W^{1,\infty}$ norms) and $\beta=2,3$ (corresponding to dislocations
and vacancies, or possibly more general defects with zero Burgers
vectors), are covered by the first two rows. In the case $p = 2$ and
$\beta = 3$ (vacancy), for which the error estimate
\eqref{eq:apriori:errest_p} was rigorously proved, we obtain ${\rm
  Err} \eqsim {\rm DoF}^{-1}$.

\begin{table}
  \begin{center}
  \begin{tabular}{c|c|c||c|c}
    & \S & Parameter Regime & ${\rm Err}$ & ${\rm DoF}$ \quad \\
    \hline
    \hline 
    &&&& \\[-4mm]
    1. & \S\ref{sec:apriori:agt1} &  $\beta > 1$ and $p >
    \smfrac{2}{\beta - 1}$ &  ${\rm DoF}^{1/p - \beta/2}$ & $K^2$ \\[1mm]
    \hline
    &&&& \\[-4mm]
    2.& \S\ref{sec:apriori:aeq1} & $\beta > 1$ and $p =
    \smfrac{2}{\beta - 1}$ & ${\rm DoF}^{-1/2}
    (\log\smfrac{N}{K})^{1/2+1/p}$ & $K^2 \log\smfrac{N}{K}$  \\[1mm]
    \hline
    &&&& \\[-4mm]
    3. & \S\ref{sec:apriori:alt1} & $\beta \leq 1$ \,or\, $p
    <\smfrac{2}{\beta - 1}$ & ${\rm DoF}^{-1/2} N^{1/2+1/p - \beta/2}$
    & $K^2 \b( \smfrac{N}{K} \b)^{2-2\alpha}$
  \end{tabular}
  \bigskip
  \end{center}
  \caption{\label{tbl:conv_rates} Convergence rates for $\|\D
    \bar{y}_\a - \D y_\qc \|_{\LL^p(\Om)}$ in terms of degrees of
    freedom for the optimised size of the atomistic region and finite element
    mesh. In all cases $\alpha = \beta p  / (2+p)$ and $h_K \eqsim 1$ are
    quasi-optimal, leaving the atomistic domain size, $K$, as the remaining free parameter. All quantities are understood as approximate orders of magnitude.}
\end{table}

\paragraph{Equidistribution principle}
We begin by applying the error equidistribution principle to obtain
the optimal value for $\alpha$ (see \cite[Sec. 5]{DeDeOd:1985} for the
case $p = 2$, which is readily generalized).

Consider a vertex $q$ at distance $r$ from the defect, with local
mesh size $h(q) \equiv h(r)$. The error contribution of a degree of
freedom associated with this vertex can be approximately estimated as
\begin{displaymath}
    \big|h(r) \D^2 \tilde y_\a\big|^p h(r)^2
  \eqsim
  \big(\smfrac rK\big)^{2\alpha} \big(h_K \big(\smfrac rK\big)^\alpha\,r^{-\beta}\big)^p
  h_K^2
  =
  r^{\alpha (2+p)-\beta p}\,K^{-\alpha (2+p)}\,h_K^{p+2}.
\end{displaymath}
From the equidistribution principle, this quantity
should not depend on $r$, i.e., $\alpha (2+p)-\beta p = 0$, from
where we find that $\alpha = \smfrac{p}{2+p}\,\beta$.

We now consider three cases: $\alpha > 1$, $\alpha=1$, and
$\alpha<1$. If $\beta > 1$ then these three cases correspond,
respectively, to $p>\smfrac2{\beta-1}$, $p=\smfrac2{\beta-1}$, and
$p<\smfrac2{\beta-1}$. If $\beta \leq 1$ then $\alpha < 1$ always
holds.

\paragraph{Case 1: $\alpha > 1 \Leftrightarrow  (\beta > 1 \text{ and
  } p>\smfrac2{\beta-1})$} 
\label{sec:apriori:agt1}
In this case, since $2 - 2 \alpha < 0$, the approximate number of
degrees of freedom is given by
\begin{align*}
	{\rm DoF}
		\eqsim K^2 + \frac{N^{2-2\alpha} - K^{2 - 2\alpha}}{h_K^2
          (2-2\alpha)}
        \eqsim
	K^2 + h_K^{-2} K^2
	\eqsim
	K^2
	.
\end{align*}
The error can be estimated as
\begin{align}
{\rm Err}
=~& \label{eq:mesh_refinement:error_estimate_generic}
\smfrac 1{p(\beta-\alpha)-2}\,
h_K K^{2/p-\beta} \Big(1-\big(\smfrac KN\big)^{p(\beta-\alpha)-2} \Big)^{1/p}
\\ \notag
\eqsim~&
h_K K^{2/p-\beta}
\eqsim h_K {\rm DoF}^{1/p-\beta/2}
,
\end{align}
Since the estimate for ${\rm DoF}$ does not depend on $h_K$, the
optimal choice for $h_K$ is $h_K \eqsim 1$, and the resulting
convergence rate is therefore ${\rm Err} \eqsim {\rm
  DoF}^{1/p-\beta/2}$.

\begin{remark}
  In the present case one can show directly (without using the
  equidistribution principle) that $h_K\eqsim 1$ and any $\alpha$
  such that $1<\alpha<\beta-\smfrac2p$, including $\alpha=\smfrac
  p{p+2}\,\beta$, are quasi-optimal, i.e., the error for this choice
  differs from the error for the best choice by at most a constant
  factor. This constant, however, tends to infinity as $\alpha$ tends
  to $1$ or to $\beta-\smfrac2p$.
\end{remark}

\begin{remark}
  Dropping the error equidistribution assumption and allowing
  $\alpha=1$, while still assuming $p > 2 / (\beta - 1)$, yields
\begin{equation}
\label{eq:mesh_refinement:error_estimate_radial}
{\rm Err} 
\eqsim
{\rm DoF}^{1/p-\beta/2}\, \big(\log \smfrac NK\big)^{\beta/2-1/p},
\end{equation}
which is clearly suboptimal in comparison with
\eqref{eq:mesh_refinement:error_estimate_generic}, but may be
acceptable for relatively small systems.  For instance, in the
numerical experiments shown in \S\ref{sec:numerics} we used $4\leq
K\leq 64$, $N=128$, $\beta=3$, and $p=2$, in which case the error
estimate is at most 4 times larger than for the optimal mesh.

The advantage of the choice $\alpha=1$ is that it is relatively easy
to construct such a mesh: e.g., for a hexagonal region one can
consider a mesh $\Th$ consisting of hexagonal layers (i.e., hexagonal
rings), each of the 6 sides of the layer is refined $M$ times, so that
the typical size of a triangle at distance $r$ is $h_T \eqsim \smfrac
rM$; see Figure~\ref{fig:1void-radial}.  The condition $h_K \eqsim 1$
corresponds to $M\eqsim K$.
\end{remark}

\paragraph{Case 2: $\alpha = 1 \Leftrightarrow (\beta > 1 \text{ and }
  p=\smfrac2{\beta-1})$}
\label{sec:apriori:aeq1}
In this case, $h(r) \eqsim r h_K / K$, and hence the error and the
number of degrees of freedom can be estimated as
\begin{align*}
  {\rm Err}
  \eqsim~&
  h_K K^{-1} \big(\log \smfrac NK\big)^{1/p}, \quad \text{and} \\
	{\rm DoF}
	\eqsim~& 
	K^2 + \log\smfrac NK h_K^{-2} K^2.
		\end{align*}
For fixed ${\rm Err}$, we wish to choose $K$ and $h_K$ to minimize
${\rm DoF}$. Upon solving this constrained minimization problem in two
variables (a slightly tedious but straightforward computation), one
obtains for the optimal choices of $K$ and $h_K$ that $K {\rm Err}
\eqsim (\log\smfrac{N}{K})^{1/p}$, and hence $h_K \eqsim
1$. Inserting these into the above expression for ${\rm DoF}$ one
obtains
\begin{displaymath}
  {\rm Err} \eqsim {\rm DoF}^{-1/2} \b(\log\smfrac{N}{K}\b)^{1/2 +
    1/p}
  \quad \text{and} \quad 
  {\rm DoF} \eqsim K^2 \log\smfrac{N}{K}.
\end{displaymath}

\paragraph{Case 3: $\alpha < 1 \Leftrightarrow (\beta \leq 1 \text{ or
  } p < \smfrac2{\beta-1})$}
\label{sec:apriori:alt1}
In this case we obtain the following estimates on ${\rm Err}$ and
${\rm DoF}$:
\begin{align*}
{\rm Err}
\eqsim~&
h_K K^{-p\beta/(2+p)} N^{2/p-2\beta/(2+p)} = h_K K^{-\alpha} N^{2(1-\alpha)/p}, \quad \text{and} \\
	{\rm DoF}
	\eqsim~&
			K^2 + h_K^{-2} K^{2 p\beta/(2+p)} N^{2-2 p\beta/(2+p)}
        = K^2 + h_K^{-2} K^{2\alpha} N^{2 - 2\alpha}
	.
\end{align*}
Solving again the constrained optimization problem of minimizing ${\rm
  DoF}$ subject to keeping ${\rm Err}$ fixed, we obtain $K {\rm Err}
\eqsim K^{1-\alpha} N^{(1-\alpha)/p}$, which yields once again $h_K
\eqsim 1$, 
\begin{displaymath}
  {\rm Err} \eqsim {\rm DoF}^{-1/2} N^{1/2+1/p-\beta/2}, \quad \text{and} 
  \quad {\rm DoF} \eqsim K^2 \b( \smfrac{N}{K} \b)^{2 - 2\alpha}.
\end{displaymath}

\section{Numerical Examples}
\label{sec:numerics}
We conducted several numerical experiments to confirm the convergence
rates obtained in \S\ref{sec:mesh_refinement}, and to experimentally
verify stability of the a/c method near bifurcation points, where our
stability analysis does not apply.

In all tests, the effective region of periodicity was a hexagon
centered at the origin with each side of the length $N=128$, as
illustrated in Fig.~\ref{fig:1void}.  A defect was placed near the
origin.  One can show that such a hexagonal region can be embedded
into a larger periodic cell $\mBhex (0,3N]^2$, thus reducing the
hexagonal symmetry to the square symmetry as was assumed in \S\ref{sec:a}--\ref{sec:apriori}.

The atomistic region formed a smaller hexagon also centered at the
origin whose side contained $K$ atoms, as illustrated in
Fig.~\ref{fig:1void:zoom} for $K=8$.  In the continuum region, either
an algebraically refined mesh with $|T| \eqsim h_K (r/K)^{3/2}$ (where
$r$ is the distance from $T\in\Th$ to the defect) or a radial mesh
$|T| \eqsim h_K (r/K)$ was constructed (see Fig.~\ref{fig:1void} for
an example of the algebraically refined mesh).  The parameter
$\alpha=\smfrac32$ is an optimal parameter for $\beta=3$ and $p=2$
({\it cf.} Table \ref{tbl:conv_rates}).  The a/c interface thus formed
a hexagon each side of which was subdivided into intervals with length
$h_K$, $1\leq h_K\leq K$ (the illustration on Fig.~\ref{fig:1void} is
for $h_K=2$).

\subsection{Vacancy}\label{sec:numerics:void}
We consider an example with a single vacancy defect. The macroscopic
strain $\mB$ is chosen as
\begin{displaymath}
  \mB = \begin{pmatrix}
  1.01 & 0.01 \\ 0 & 0.99 \end{pmatrix}.
\end{displaymath}

\begin{figure}
  \subfigure[]{\label{fig:1void:large_view}\includegraphics[height=6cm]{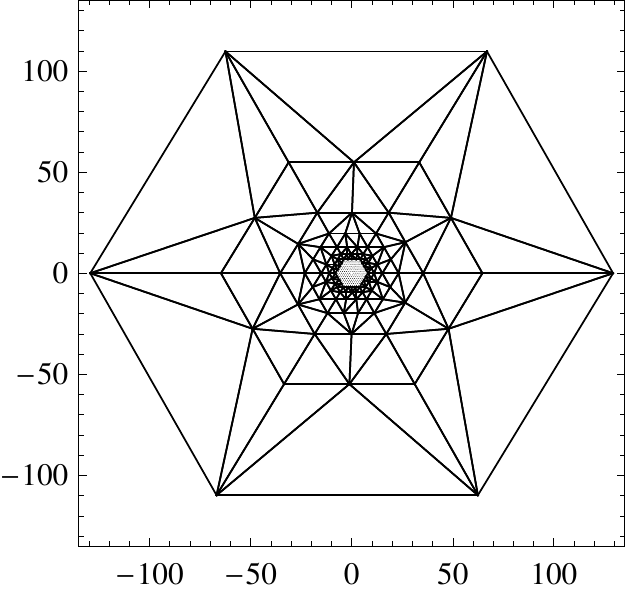}}
  \qquad
  \subfigure[]{\label{fig:1void:zoom}\includegraphics[height=6cm]{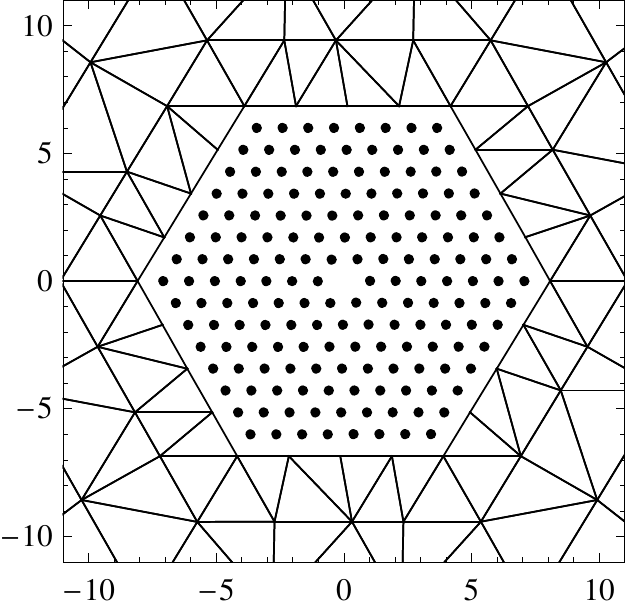}}
  \caption{\label{fig:1void}
  	Illustration of the region and the algebraically refined mesh for $K=8$, $h_K=2$, and $\alpha=3/2$.
  }
\end{figure}

A nonlinear conjugate gradient solver with linesearch
\cite{Shewchuk1994} was used to find a stable equilibrium of the
atomistic system.  A simple Laplace preconditioner was used to
accelerate convergence.  The atoms were interacting with the
Lennard-Jones potential with the cut-off distance $3.1$, measured in
the reference hexagonal configuration.

\begin{figure}
  \includegraphics{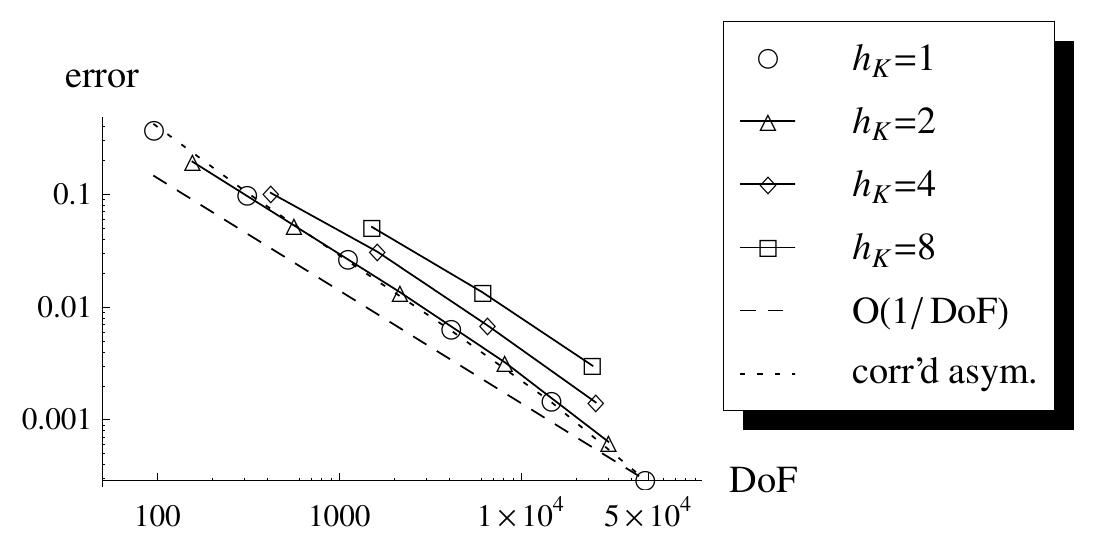}
  \caption{\label{fig:1void_error_dof} Error of the computed solutions
    as a function of the number of degrees of freedom (DoF) for
    various choices of $h_K$.  It is seen that the choice $h_K\in
    \{1,2\}$ is optimal.  Moreover, a first-order convergence, ${\rm
      Err}\eqsim {\rm DoF}^{-1}$, is clearly observed.  This is also
    predicted in the estimate
    \eqref{eq:mesh_refinement:error_estimate_generic}, which is
    plotted with a dotted line.  }
\end{figure}

In Figure~\ref{fig:1void_error_dof} we plot the relative error,
$\frac{\| \D \bar{y}_\a - \D y_\qc \|_{\LL^2(\Om)}}{\| \D \bar{y}_\a -
  \D y_\mB \|_{\LL^2(\Om)}}$ against the number of degrees of freedom
(DoF).  We observe first order convergence, for the optimal choices
$h_K=1$ or $h_K=2$, which is in agreement with predictions made in
\S\ref{sec:mesh_refinement}.  What is remarkable, is that the
error estimate \eqref{eq:mesh_refinement:error_estimate_generic} gives
an excellent approximation to the magnitude of the actual error
(compare the solid and the dotted graphs in Figure
\ref{fig:1void_error_dof}).  This indicates that the error estimates
obtained in the present paper are qualitatively accurate.

\begin{figure}
  \subfigure[A radial mesh.]{\label{fig:1void-radial}\includegraphics{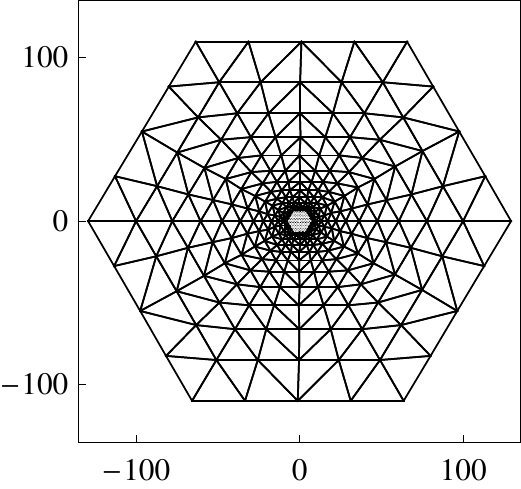}}
  \qquad
  \subfigure[Graph of error.]{\includegraphics{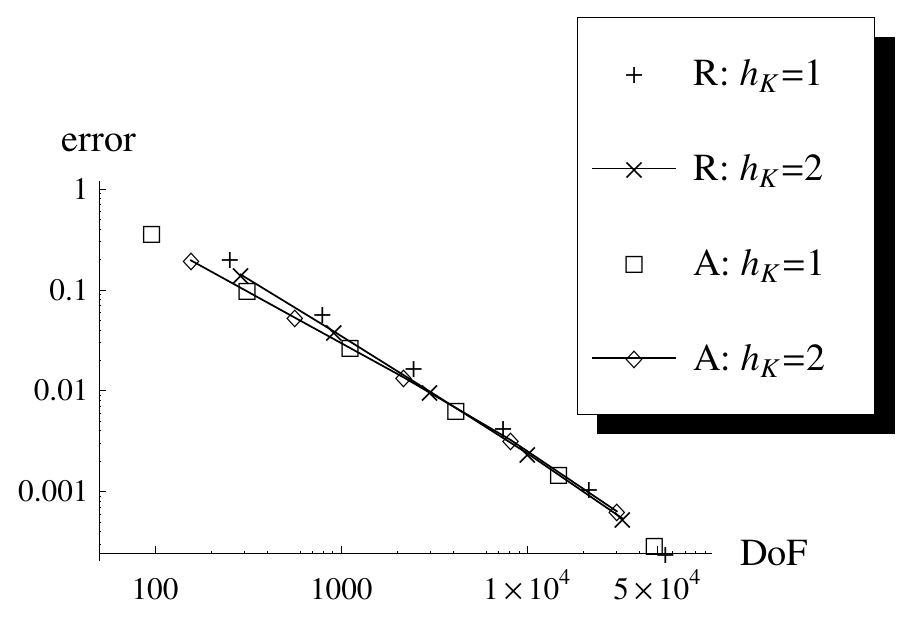}}
  \caption{\label{fig:1void_alg_vs_rad} Error of the computed
    solutions as a function of the number of degrees of freedom (DoF)
    for the algebraically refined mesh with $|T|\eqsim h_K
    \big(\smfrac rK\big)^{3/2}$ (marked ``A'' in the legend) and the
    radial mesh (see the illustration on the left) with $|T|\eqsim h_K \big(\smfrac rK\big)$ (marked ``R'' in
    the legend), for $h_K \in \{1,2\}$.  No essential difference in
    results between these two meshes is observed.  A more pronounced
    difference may appear for larger (or infinite) domains.  }
\end{figure}

It is also interesting to compare the algebraically refined mesh with
$\alpha=\smfrac32$ and the radial mesh with $\alpha=1$.  The error for
these two meshes is plotted in Figure \ref{fig:1void_alg_vs_rad}.  We
observe that there is only a negligible difference in the error.  This
is in correspondence with the estimate
\eqref{eq:mesh_refinement:error_estimate_radial}: the effect of the
term $\log\smfrac NK$ can only be observed only for a large ratio $N/K$.

\subsection{Collapsed Cavity}\label{sec:numerics:dipole}
The second test case is a collapsed cavity defect, as considered in
\cite{Shapeev:2010a}.  This defect is formed by removing eight atoms
and applying a macroscopic compression to force the cavity to collapse
and form two edge dislocations (see
Figure~\ref{fig:dipole-illustration} and \cite{Shapeev:2010a} for a
detailed test case description). Since they have opposite Burgers'
vectors we obtain again $\beta = 3$ for the analysis in
\S\ref{sec:mesh_refinement}.

The results, presented in Figure~\ref{fig:dipole_error_dof} are
similar to the single vacancy case, the main difference being that one
requires larger $K$ to represent the defect and that for the fixed
$(K,h_K)$ the error is higher than for the single vacancy case due to
a slightly ``stronger'' defect.
\begin{figure}
  \subfigure[Illustration of the defect.]{\label{fig:dipole-illustration}\includegraphics{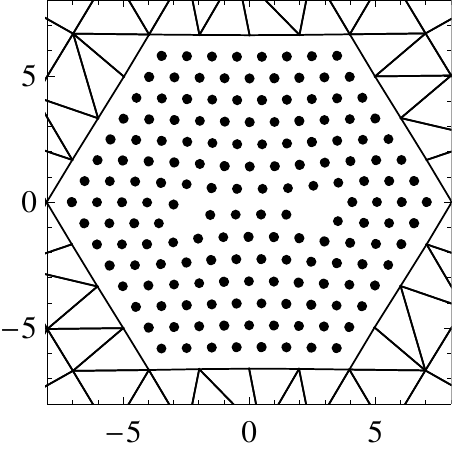}}
  \qquad
  \subfigure[Graph of error.]{\label{fig:dipole_error_dof}\includegraphics{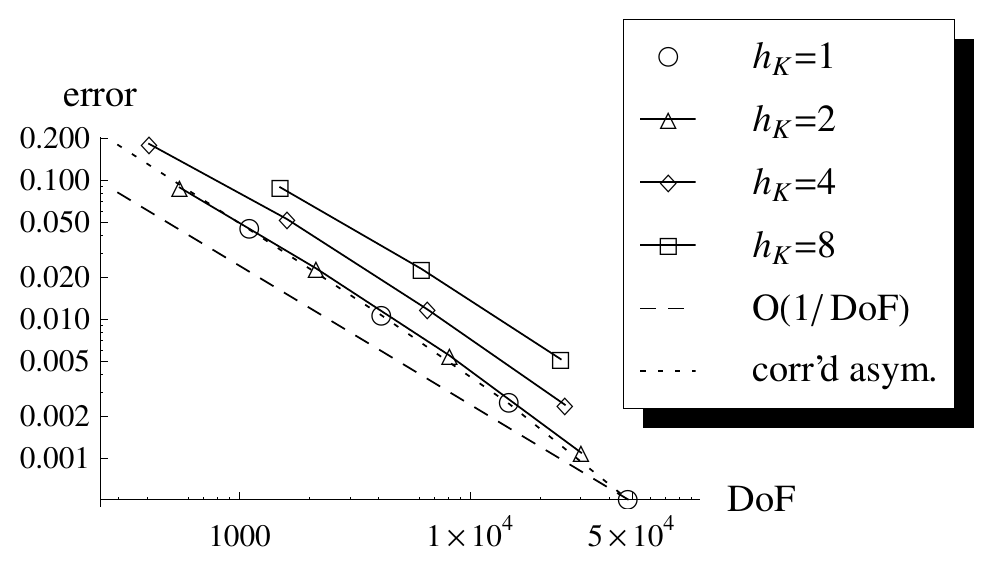}}
  \caption{Error of the computed
    solutions for the collapsed cavity test as a function of the
    number of degrees of freedom (DoF) for various choices of $h_K$.
    As in the single vacancy test, we observe (1) the choice
    $h_K\in\{1, 2\}$ are optimal, (2) a first-order convergence in
    DoF, and (3) a remarkable correspondence between the actual error
    and the estimate
    \eqref{eq:mesh_refinement:error_estimate_generic}, plotted with
    dotted line.  }
\end{figure}

\subsection{Stability Test for a Vacancy}\label{sec:numerics:stabvoid}
In addition to investigating the error in the a/c method, in terms of
the number of degrees of freedom, we also conducted a series of
numerical experiments to explore the stability regions of the a/c
coupling \eqref{eq:defn_Eqc}.

Our first test case was similar to the one in \S\ref{sec:numerics:void}, the only difference being that the
macroscopic strain now depends on a parameter $t$:
\begin{displaymath}
  \mB = 
  \begin{pmatrix} 
    1 & 0 \\ 0 & 1+t 
  \end{pmatrix}.
\end{displaymath}
The parameter $t$ is gradually increased from $0$. For each value of
$t$ the atomistic and a/c solutions are computed using Newton's method
taking the previous critical point as the initial guess. In each step,
the lowest eigenvalue of $\ddel\Ea$ (respectively, $\ddel\Eqc$)
(ignoring the two zero eigenvalues corresponding to translations) is
used to determine whether the computed solution is a stable
equilibrium, and thus determine the critical parameter $t_\a$
(respectively, $t_\qc$).  Only radial meshes were used.

The results of the experiment are displayed in Table
\ref{tab:numerics:stabvoid}. We observe at least a quadratic
convergence rate $|t_\a - t_\qc| \lesssim {\rm DoF}^{-2}$, and in
particular, that the a/c method is stable up to this bifurcation
point. The quadratic convergence rate might be attributed to the
well-known superconvergence of eigenvalues \cite{StrangFix2008}.

\begin{table}
\begin{center}
\begin{tabular}{|r|r|c|cc|}
\hline
  $K$ &    DoF & $t_\qc, t_\a$    & $a$ & $b$ \\ \hline
  $4$ &    288 & $0.06104434$ &         & \\
  $8$ &    912 & $0.05962851$ & 2.15    & 3.57\\
 $16$ &   2976 & $0.05950837$ & 2.19    & 3.73\\
 $32$ &   9984 & $0.05949904$ & 2.53    & 4.42\\
 $64$ &  32256 & $0.05949861$ & 2.57    & 4.36\\ \hline
exact & 105338 & $0.05949859$ &         & \\
\hline
\end{tabular}
\bigskip
\end{center}
\caption{Results of the stability test described in
  \S\ref{sec:numerics:stabvoid}. $K=4,8,\ldots,64$ and $h_K=2$ are the
  mesh parameters, DoF is the number of degrees of freedom, $t_\qc,
  t_\a$ are the computed critical parameters, $a, b$ are estimated
  convergence rates: $|t_\qc-t_\a| \approx {\rm DoF}^a$, and
  $|t_\qc-t_\a| \approx K^b$.} 
	\label{tab:numerics:stabvoid}
\end{table}

\subsection{Stability Test for a Bravais Lattice}\label{sec:numerics:stab_nodefect}
\begin{figure}
  \includegraphics{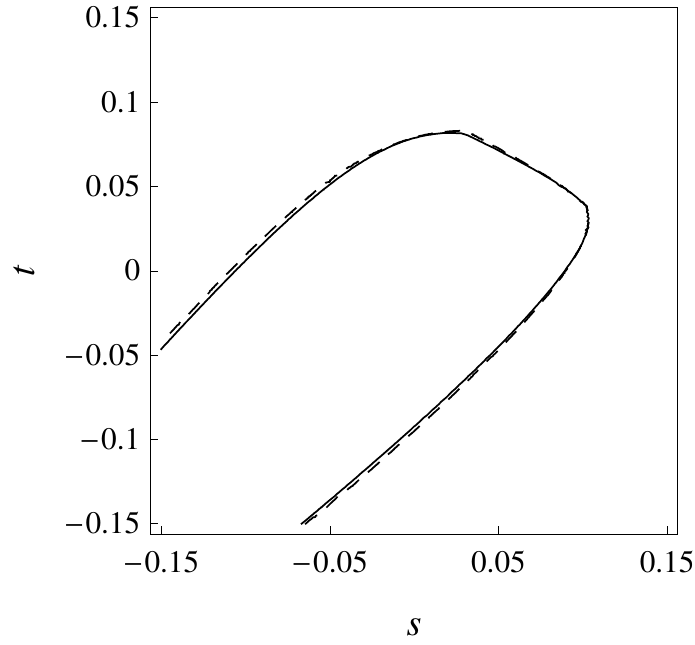}
  \caption{\label{fig:stability_region_computed} Stability regions of
    the atomistic model (solid line) and the a/c method for $K=16$ and
    $M=8$ (dashed line).  The axis variables, $s$ and $t$, are the
    parameters for the macroscopic strain
    \eqref{eq:stabregion_deformation}.  One can observe that the
    stability region of the a/c method contains the stability region
    for the atomistic model, and that the discrepancy is ``small''.  }
\end{figure}
Our second stability test is conducted with a two parameter family of
the macroscopic strains
\begin{equation}
  \label{eq:stabregion_deformation}
  \mB = \begin{pmatrix} 1+s & 0.1 \\ 0 & 1+t \end{pmatrix}
\end{equation}
for a lattice with no defects.
In the $(s,t)$-plane we compared two regions of stability: the region
of the stability of the atomistic model (as $N \to \infty$; {\it cf.}
\cite{Hudson:stab}), and the region of stability of the a/c method,
for $K=16$ and $h_K=2$.  The results are shown in Figure
\ref{fig:stability_region_computed}.  We observe that the stability
region of the a/c method contains the stability region for the
atomistic model, but that they are comparable up to numerical errors.

We believe that the minor visual difference between the two regions is
caused by a finite size of the domain and the discretization of the
continuum region.  It would require extensive calculations to verify
that the stability region of the a/c method indeed convergences to the
stability region of the atomistic model as ${\rm DoF} \to \infty$.

\section*{Conclusion}
\label{sec:conclusion}
We have presented a comprehensive {\it a priori} error analysis of a
practical energy based atomistic/continuum coupling method recently
proposed in \cite{Shapeev:2010a}, admitting simple lattice defects in
the domain. The method (and the analysis) are valid in two dimensions,
for pair-potential interactions.

The main theoretical question left open in our analysis is whether the
a/c method is stable up to bifurcation points. This is a question
first posed in \cite{Dobson:qce.stab} as a fundamental step in
understanding a/c methods. Our numerical experiments in
\S\ref{sec:numerics:stabvoid} and \S\ref{sec:numerics:stab_nodefect}
indicate that the error in the stability regions between the atomistic
model and the a/c method is indeed ``small'', however, establishing
such a result rigorously appears to be challenging.

Among the other interesting questions motivated by our analysis are:
(1) Rigorously establishing the stability assumption
\eqref{eq:apriori:stab_assm}, for example, following the discussion in
Remark \ref{rem:apriori:stab_assm}. (2) Developing a regularity theory
for crystal defects, to make the analysis in
\S\ref{sec:mesh_refinement} rigorous. In particular, this would allow
for optimal {\it a priori} mesh refinement and remove the need for
mesh adaptivity. (3) Extending the analysis to other classes of
defects.  While treating impurities should be straightforward with the
present techniques, other defects with zero Burgers vector such as
interstitials, or dislocation dipoles, require a more advanced account
of stability.  An extension to dislocations would in addition require
a more general consistency analysis as dislocations do not have an
underlying reference configuration, which is a Bravais lattice.

\appendix

\section{Proofs of Some Auxiliary Results}
\label{sec:app_proofs}

\begin{proof}[Proof of Lemma \ref{th:hex_identities}]
  {\it 1. Proof of \eqref{eq:quadratic_form_identity}: } The first
  result is motivated by the observation that the quadratic form
  \begin{displaymath}
    a[r] = \sum_{j = 1}^6 \b| \mG \mQ_6^j r \b|^2
  \end{displaymath}
  has hexagonal symmetry, that is, $a[\mQ_6 r] = a[r]$ for all $r \in
  \R^2$. Suppose that $a$ is represented by the symmetric matrix $\mA
  \in \R^{2 \times 2}$, $a[r] = r^\transpose \mA r$, then
  \begin{displaymath}
    \mQ_6^\transpose \mA \mQ_6 = \mA.
  \end{displaymath}
  By equating the entries in this matrix one obtains that $\mA$ must
  in fact be a multiple of the identity. In particular, this implies
  that $a[r] = a[e_1]$, for $|r| = 1$, and a direct computation yields
  \eqref{eq:quadratic_form_identity}.

  {\it 2. Proof of \eqref{eq:quartic_form_identity}: } The second
  result is motivated by the observation that the map $\mG \mapsto
  \sum_{j = 1}^6 \big[(\mQ_6^j r)^\transpose \mG (\mQ_6^jr) \big]^2$
  defines a fourth-order tensor with hexagonal symmetry, and the usual
  major and minor symmetries. It is well-known that such a tensor is
  isotropic and must therefore take the form given in
  \eqref{eq:quartic_form_identity} (though with still undermined
  Lam\'{e} parameters). Having observed this, it is more convenient
  however, to prove the result by a direct algebraic computation.
        
  Clearly the expression on the left-hand side of
  \eqref{eq:quartic_form_identity} depends only on $\mG^\sym$, hence
  we assume without loss of generality that $\mG = \mG^\sym$.

  Let $\mR$ be a rotation matrix such that $r = \mR e_1$, where
  $e_1 = (1, 0)$, then
  \begin{displaymath}
    q[r] 
    :=
    \sum_{j = 1}^6 \big[(\mQ_6^j r)^\transpose \mG (\mQ_6^jr)
    \big]^2 
    =
   \sum_{j = 1}^6 \big[ (\mQ^j_6 e_1)^\transpose (\mR^\transpose \mG \mR) (\mQ^j_6
    e_1) \big]^2. 
  \end{displaymath}
  Noting that $\mQ^j_6 e_1 = (\cos\smfrac{\pi j}{3}, \sin\smfrac{\pi
    j}{3})$, that $\mQ_6^{j+3} = - \mQ_6^j$, and that $\mR^\transpose \mG
  \mR$ is symmetric, i.e.,
  \begin{displaymath}
    \mR^\transpose \mG \mR = 
    \begin{pmatrix} 2 a & 2 c \\ 2 c & 2 d \end{pmatrix},
  \end{displaymath}
  for some real numbers $a, c, d$, we can explicitly compute
  \begin{align*}
    q[r]
    =~&
    2 \sum_{j = 1}^3 \big( 2 a \cos^2\smfrac{\pi j}{3} + 4 c \cos\smfrac{\pi j}{3}\sin\smfrac{\pi j}{3} + 2 d \sin^2\smfrac{\pi j}{3} \big)^2
    \\ =~&
    2 \sum_{j = 1}^3 \big( (a+d) + (a-d) \cos\smfrac{2 \pi j}{3} + 2 c \sin\smfrac{2 \pi j}{3}\big)^2
    .
\end{align*}
After expanding the squares and simplifying the sum we obtain
\begin{align*}
  q[r] =~& 3 (a+d)^2 + 6 a^2  + 6 d^2 + 12 c^2
  \\ =~&
  \smfrac34\big|\tr (\mR^\transpose \mG \mR)\big|^2 + \smfrac32 \big|\mR^\transpose \mG \mR\big|^2
  =
  \smfrac34 |\tr\mG|^2 + \smfrac32 | \mG |^2
  .  \qedhere
\end{align*}
\end{proof}

\begin{proof}[Proof of Theorem \ref{th:Eqc_practical}]
  For each $b \in \Bc$ we have $\chi_{\Omc^\per} = 1$ on the entire
  segment $b$, and hence we obtain
  \begin{displaymath}
    \sum_{b \in \Bc} \mint_b \phi(\Dc{b} y_h) \db =
    \sum_{b \in \Bc} \mint_b \chi_{\Omc^\per} \phi(\Dc{b} y_h) \db.
  \end{displaymath}
  Recall from \S\ref{sec:a:bonds} that $\Btot$ denotes the set
  of all bonds between any two lattice sites, including
  vacancies. In particular, $\Bc \subset \Btot$, and hence,
 \begin{equation}
    \label{eq:prf_Eqc_practical:10}
    \sum_{b \in \Bc} \mint_b \phi(\Dc{b} y_h) \db =
    \sum_{b \in \Btot} \mint_b \chi_{\Omc^\per}\phi(\Dc{b} y_h) \db
    - \sum_{b \in \Btot \setminus \Bc} \mint_b \chi_{\Omc^\per} \phi(\Dc{b} y_h) \db.
  \end{equation}
  Note that, since we assumed that the continuum region contains no
  vacancies the second group contributes only to the energy in a
  neighbourhood of the atomistic/continuum interface.

  We first focus on the first term on the right-hand side of
  \eqref{eq:prf_Eqc_practical:10}. Using the additivity of the
  characteristic functions, and the fact that $\Dc{r} y_h = (\D
  y_h|_T) \, r$ in each element $T$ (including the element edges that
  are parallel to $r$) we have
  \begin{align*}
    \notag
    \sum_{b \in \Btot} \mint_b \chi_{\Omc^\per} \phi(\Dc{b} y_h) \db =~&
    \sum_{T \in \Thc} \sum_{b \in \Btot}
    \mint_b \chi_{T^\per} \phi\b((\D y_h|_T) \, r_b\b) \db \\
    \notag
    =~& \sum_{T \in \Thc} \sum_{r \in \Ldir} \phi\b((\D y_h|_T)\, r\b) \bigg[
    \sum_{x \in \OmL} \mint_x^{x+r} \chi_{T^\per} \db \bigg].
  \end{align*}
  We can now apply the periodic bond-density lemma, and insert the
  definition of the Cauchy--Born stored energy density, to obtain
  \begin{align}
    \notag
    \sum_{b \in \Btot} \mint_b \chi_{\Omc^\per} \phi(\Dc{b} y_h) \db 
    =~& \sum_{T \in \Thc} \frac{1}{\det \mBhex}\, |T| \sum_{r \in \Ldir} \phi\b((\D y_h|_T) \, r\b) \\
    \label{eq:prf_Eqc_practical:15}
    =~& \sum_{T \in \Thc} |T| W(\D y_h|_T)
    = \int_{\Omc} W(\D y_h) \dV.
  \end{align}

  The stated decomposition of $\Eqc$ is obtained by combining
  \eqref{eq:prf_Eqc_practical:15} and \eqref{eq:prf_Eqc_practical:10}.
\end{proof}

\begin{proof}[Proof of Lemma \ref{th:Dbaryh_Dyh_macro}]
  To prove this result we employ the bond density lemma.  Assume, in
  addition, that $p < \infty$. Since all norms involved are
  effectively weighted $\ell^p$-norms, one can obtain the case $p =
  \infty$ as the limit $p \nearrow \infty$.

  If $p \leq 2$, set $C_1 := \sqrt{\smfrac23}$; if $p > 2$, set $C_1 =
  \sqrt{\smfrac23} 3^{(p-2)/(2p)}$. With that definition, and using
  \eqref{eq:quadratic_form_identity}, we get
  \begin{displaymath}
    | \mG |_2 = \sqrt{\smfrac{2}{3}} \Big( {\textstyle \sum_{j = 1}^3}
    |\mG a_j|^2 \Big)^{1/2}
    \leq C_1 \Big({\textstyle \sum_{j = 1}^3} |\mG \a_j|_2^p \Big)^{1/p}
    \qquad \forall \mG \in \R^{2 \times 2} 
      \end{displaymath}
  In particular, we have
  \begin{equation}
    \label{eq:aux:Dbarvh_Dvh:10}
    \| \D\bar{y}_h \|_{\LL^p(\Om)}^p = \| |\D \bar{y}_h| \|_{\LL^p(\Om)}^p
        \leq C_1^p \sum_{j = 1}^3  \int_\Om \b|\Dc{\a_j} \bar{y}_h \b|^p \dV.
  \end{equation}
  
  Fix some $j \in \{0, 1, 2\}$; then, using the periodic bond density
  lemma, and the fact that $\{\chi_{\tau^\per} : \tau \in \Tm \}$ is a
  partition of unity for $\R^2$, we have
  \begin{align}
    \notag
    \int_\Om \b| \Dc{\a_j} \bar{y}_h \b|^p \dV 
        =~& \sum_{\tau \in \Tm}
    |\tau| \b| \Dc{\a_j} \bar{y}_h|_\tau \b|^p 
    \notag
        = \sum_{\tau \in \Tm}  \b| \Dc{\a_j} \bar{y}_h|_\tau \b|^p
    \sum_{x \in \OmL} \mint_{x}^{x + \a_j} \chi_{\tau^\per} \db \\
            =~& \sum_{x \in \OmL} \sum_{\tau \in \Tm}
    \mint_{x}^{x + \a_j}  \b| \Dc{\a_j} \bar{y}_h \b|^p \chi_{\tau^\per} \db
        \label{eq:aux:Dbarvh_Dvh:15}
    = \sum_{x \in \OmL} \mint_{x}^{x + \a_j} 
    \b| \Dc{\a_j} \bar{y}_h \b|^p \db.
 \end{align}
  We have also used the fact that $\Dc{\a_j} \bar{y}_h$ is
  continuous across edges that have direction $\a_j$.

  Due to the specific choice of the triangulation $\Tm$ it follows
  that $\Dc{\a_j} \bar{y}_h$ is constant along each bond $(x, x+ \a_j)$,
  and hence
  \begin{displaymath}
        \mint_{x}^{x + \a_j} 
    \b| \Dc{\a_j} \bar{y}_h \b|^p \db = \b| \Da{\a_j} \bar{y}_h \b|^p
    = \b| \Da{\a_j} y_h \b|^p 
    = \bigg| \mint_{x}^{x+ \a_j} \Dc{\a_j} y_h \db \bigg|^p
    \leq \mint_{x}^{x+ \a_j} \b| \Dc{\a_j} y_h \b|^p \db,
  \end{displaymath}
  where we employed Jensen's inequality in the last step.

  Inserting this estimate into \eqref{eq:aux:Dbarvh_Dvh:15}, and
  reversing the argument in \eqref{eq:aux:Dbarvh_Dvh:15}, we obtain
  \begin{align*}
    \int_\Om \b| \Dc{\a_j} \bar{y}_h \b|^p \dV 
        \leq~& \sum_{x \in \OmL} \mint_{x}^{x+ \a_j} \b| \Dc{\a_j}
    y_h \b|^p \db \\
        =~& \sum_{T \in \Th} \sum_{x \in \OmL} \mint_x^{x+ \a_j}
    \b| \Dc{\a_j} y_h \b|^p \chi_{T^\per} \db \\
        =~& \sum_{T \in \Th} |T| \,  \b| \Dc{\a_j} y_h|_T \b|^p = \b\|
    \Dc{\a_j} y_h \b\|_{\LL^p(\Om)}^p.
  \end{align*}

  Inserting this estimate back into \eqref{eq:aux:Dbarvh_Dvh:10}, we
  deduce that
  \begin{displaymath}
    \| |\D \bar{y}_h|_2 \|_{\LL^p(\Om)}^p
        \leq C_1^p  \int_\Om \sum_{j = 1}^3 \b| \Dc{\a_j} y_h \b|^p \dV.
  \end{displaymath}
  Let $C_2 = \sqrt{\smfrac{3}{2}}$ if $p > 2$, and $C_2 =
  \sqrt{\smfrac{3}{2}} 3^{(2-p)/(2p)}$ if $p \leq 2$,
  then
  \begin{displaymath}
    \Big({\textstyle \sum_{j = 1}^3} \b| \mG \a_j \b|^p
    \Big)^{1/p} \leq C_2 |\mG|_2 \qquad \forall \mG \in \R^{2 \times 2}.
  \end{displaymath}
  This gives the stated estimate,
  \begin{displaymath}
    \| |\D \bar{y}_h|_2 \|_{\LL^p(\Om)} \leq C_1 C_2  \b\|
    |\D y_h|_2 \b\|_{\LL^p(\Om)}
  \end{displaymath}
  with $C_1 C_2 = \max(3^{(p-2)/(2p)}, 3^{(2-p)/(2p)}) \leq \sqrt{3}$.
\end{proof}

A technical ingredient in the proof of Lemma \ref{th:cons:est_jmp_Dyh}
and Lemma \ref{th:Dbaryh_Dyh_micro} is a trace inequality for
piecewise constant functions. In its proof we use the following
well-known trace identity (contained, for example, in the proof of Lemma 2 in
\cite{OrPr:2011}).

\begin{lemma}
  \label{th:app_trace_id}
  Let $f$ be a face of a non-degenerate simplex $T \subset \R^d$,
  $q_f$ the corner of $T$ not contained in $f$, and $|f|$ the
  $(d-1)$-dimensional area of $f$; then
  \begin{equation}
    \label{eq:cons:trace_eq}
    \frac{|T|}{|f|} \int_f w \ds = \int_T w \dV + \frac12 \int_T
    (x-q_f) \cdot \D w \dV \qquad \forall w \in \WW^{1,1}(T).
  \end{equation}
\end{lemma}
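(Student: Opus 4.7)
The plan is to apply the divergence theorem to the radial-type vector field $F(x) := w(x)(x - q_f)$. I would first assume $w \in \CC^\infty(\ol T)$ and recover the $\WW^{1,1}(T)$ case by a density argument at the end. A direct computation gives
\[
 \D \cdot F(x) = d\, w(x) + (x - q_f) \cdot \D w(x),
\]
where $d$ denotes the ambient dimension; the factor $1/2$ appearing in the statement corresponds to the two-dimensional setting of this paper, in which $1/d = 1/2$.

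Next I would analyse the boundary contribution. The boundary $\pp T$ consists of the face $f$ together with the $d$ other faces $f'$, each of which contains $q_f$. For any such $f'$, the point $q_f$ lies in the affine hull of $f'$, hence the vector $x - q_f$ is parallel to $f'$ and therefore orthogonal to the outward normal $\nu_{f'}$; consequently those faces contribute nothing to $\int_{\pp T} w\, (x - q_f) \cdot \nu \ds$. On the remaining face $f$, the scalar $(x - q_f) \cdot \nu_f$ is constant along $f$ and equals the altitude $h$ of $T$ from $q_f$ to $f$. The elementary volume relation $|T| = h |f|/d$ then gives $(x - q_f) \cdot \nu_f \equiv d |T|/|f|$.

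Combining these two computations via the divergence theorem yields
\[
 \int_T \bigl[d\, w + (x - q_f) \cdot \D w\bigr] \dV \;=\; \frac{d |T|}{|f|} \int_f w \ds,
\]
and dividing by $d$ delivers the stated identity. To extend from $\CC^\infty(\ol T)$ to $\WW^{1,1}(T)$, I would approximate $w$ by mollification and pass to the limit on both sides, using continuity of the trace map $\WW^{1,1}(T) \to \LL^1(\pp T)$ and Lebesgue dominated convergence in the volume integrals.

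I anticipate no substantial obstacle: the proof is essentially a one-line application of the divergence theorem to the correctly chosen vector field. The only point that might warrant an explicit remark is the orthogonality $(x - q_f) \cdot \nu_{f'} = 0$ on faces through $q_f$, which reduces to the observation that $q_f$ is an affine combination of the vertices of any $f'$ with $q_f \in f'$, so $x - q_f$ lies in the affine hull of $f'$ and thus in a hyperplane perpendicular to $\nu_{f'}$.
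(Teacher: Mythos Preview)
Your proof is correct. The paper does not actually prove this lemma; it merely cites it as a ``well-known trace identity (contained, for example, in the proof of Lemma~2 in \cite{OrPr:2011})''. Your argument via the divergence theorem applied to the vector field $F(x) = w(x)(x-q_f)$ is exactly the standard proof one finds in such references, and all the steps you outline are valid. Your remark that the constant $\tfrac12$ is specific to $d=2$ (the general constant being $\tfrac1d$) is also correct and worth noting, since the paper states the lemma for a simplex in $\R^d$ but writes the two-dimensional constant; only the case $d=2$ is ever used.
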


\begin{proof}[Proof of Lemma \ref{th:cons:est_jmp_Dyh}]
  Let $y_h = I_h y$ and $\tilde{y} \in \Pi_2(y)$. Since $\tilde y \in
  \CC^1(\R^d)$, we have the following estimate,
  \begin{displaymath}
    h_f \b| [\D y_h]_f \b| = \bigg|\int_f [\D (y_h - \tilde{y}) ] \ds \bigg|
    \leq \bigg|\int_f \D (y_h-\tilde{y})^+ \ds \bigg| + \bigg|\int_f \D (y_h -
    \tilde{y})^- \ds \bigg|.
  \end{displaymath}
  We deduce from \eqref{eq:cons:trace_eq}, choosing $w = \D (y_h -
  \tilde y)$ and $T = T_\pm$, that
 \begin{displaymath}
   \frac{|T_{\pm}|}{h_f} 
   \bigg|\int_f \D (y_h-\tilde{y})^\pm \ds \bigg| 
   \leq \b\| \D y_h - \D \tilde{y} \b\|_{\LL^1(T_\pm)} + \smfrac12 h_{T_\pm}
   \b\| \D^2 \tilde{y} \b\|_{\LL^1(T_\pm)}.
  \end{displaymath}
  Note, moreover, that $|T_\pm|/h_f \geq \frac{1}{C_f'} h_T$, where
  $C_f'$ depends only on the shape regularity of~$T_\pm$.

  Recalling that $y_h = I_h y$, we can use Lemma
  \eqref{eq:interp:int_err_1} to deduce that
  \begin{displaymath}
    \frac{h_{T^\pm}}{C_f'} \bigg|\int_f \D (y_h-\tilde{y})^\pm \ds \bigg| 
    \leq \b(\CIhtil+ \smfrac12\b) h_{T^\pm} \b\| \D^2 \tilde{y} \b\|_{\LL^1(T_\pm)},
  \end{displaymath}
  which immediately yields \eqref{eq:cons:est_jmp_Dyh} for $p=1$:
  \begin{equation}
    \label{eq:cons:est_jmp_Dyh_p.eq.1}
    \b\| [\D y_h]_f \b\|_{\LL^1(f)} \leq 
    C_f'\b(\CIhtil+ \smfrac12\b)  \b\| \D^2 \tilde{y} \b\|_{\LL^1(T_+
      \cup T_-)}.
  \end{equation}

  Using similar calculations it is also easy to prove the estimate for
  $p=\infty$:
  \begin{displaymath}
    \b|[\D y_h]_f \b| \leq 2 \CIhtil \b\| h \D^2 \tilde{y}
    \b\|_{\LL^\infty(T_+ \cup T_-)}.
  \end{displaymath}
  Applying the Riesz--Thorin interpolation theorem, we obtain
  \eqref{eq:cons:est_jmp_Dyh} for all $p$.  (Alternatively, one could
  derive this by applying a H\"{o}lder inequality to
  \eqref{eq:cons:est_jmp_Dyh_p.eq.1}; however, this would lead to a
  worse constant for $p>1$.)

  The estimate \eqref{eq:cons:est_jmp_Dyh_Omc} is an immediate
  consequence of \eqref{eq:cons:est_jmp_Dyh}.
\end{proof}

\noindent The following lemma will be used in the proof of Lemma \ref{th:Dbaryh_Dyh_micro}:
\begin{lemma}
  \label{th:app_traceineq_P0}
  Let $f \in \Fm$, $f \subset \tau \in \Tm$ and let $w : \tau \to
  \R^k$ be piecewise constant with respect to the mesh $\Th$; then
  \begin{displaymath}
    |\tau|\,\Big|\int_f w \ds \Big| \leq \b\| w
    \b\|_{\LL^1(\tau)} +  \smfrac12 \b\| [w]
    \b\|_{\LL^1(\Fhper \cap {\rm int}(\tau))}.
  \end{displaymath}
\end{lemma}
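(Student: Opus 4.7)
The plan is to extend the trace identity of Lemma \ref{th:app_trace_id} from $\WW^{1,1}(\tau)$ to piecewise constant functions, interpreting the gradient as a vector-valued measure supported on the interior macro-faces of $\tau$.

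First I would establish the identity
\begin{equation*}
\frac{|\tau|}{|f|} \int_f w \ds \,=\, \int_\tau w \dV \,+\, \smfrac12 \sum_{f' \in \Fhper \cap {\rm int}(\tau)} \int_{f'} (x-q_f) \cdot \nu_{f'}\, [w] \ds,
\end{equation*}
where $q_f$ is the vertex of $\tau$ opposite $f$ and $[w]$ is the jump of $w$ across $f'$ with a consistent choice of unit normal $\nu_{f'}$. This can be proved directly by the divergence theorem: decompose $\tau$ into the subcells $\tau\cap T$, $T\in\Th$, on each of which $w$ is constant, and use ${\rm div}(x-q_f) = 2$ in $\R^2$. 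Each interior macro-face $f' \subset {\rm int}(\tau)$ is visited from both sides and produces the jump term; on the two faces of $\partial\tau$ meeting at $q_f$ the vector $x-q_f$ is tangential, hence contributes nothing; only the integral over $f$ survives on $\partial\tau$, with coefficient $(x-q_f)\cdot\nu_f = 2|\tau|/|f|$. This is essentially the same argument as in the $\WW^{1,1}$-case, and it reproduces the identity of Lemma \ref{th:app_trace_id} in the limit of smooth $w$.

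Second I would exploit the special geometry of micro-elements: since $\tau \in \Tm$ is a unit equilateral triangle, we have $|f| = 1$ and $\diam(\tau) = 1$, so $|(x-q_f) \cdot \nu_{f'}| \leq |x-q_f| \leq 1$ for all $x \in \tau$. Multiplying the identity by $|f|=1$ and applying the triangle inequality immediately yields the desired bound
\begin{equation*}
|\tau|\,\Big|\int_f w \ds\Big| \,\leq\, \b\| w \b\|_{\LL^1(\tau)} \,+\, \smfrac12 \b\|[w]\b\|_{\LL^1(\Fhper \cap {\rm int}(\tau))}.
\end{equation*}
No step poses a serious obstacle; the only mild subtlety is the justification of the trace identity for merely piecewise constant $w$, but the divergence-theorem computation above settles it without recourse to mollification.
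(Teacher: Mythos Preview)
Your argument is correct and takes a genuinely different route from the paper. The paper first applies Lemma~\ref{th:app_trace_id} to a smooth approximation $w_\eps \in \WW^{1,1}(\tau)^k$, obtaining $|\tau|\,|\int_f w_\eps\,\ds| \leq \|w_\eps\|_{\LL^1(\tau)} + \smfrac12 \int_\tau |\D w_\eps|\,\dV$, and then passes to the limit using the density of $\WW^{1,1}$ in ${\rm BV}$ with respect to the strict topology, identifying $|D'w|({\rm int}(\tau))$ and bounding it above by $\|[w]\|_{\LL^1(\Fhper\cap{\rm int}(\tau))}$ via an integration-by-parts computation. Your approach bypasses the BV machinery entirely: by decomposing $\tau$ into the polygonal subcells $\tau\cap T$ and applying the divergence theorem to the linear field $x\mapsto x-q_f$ on each, you obtain the exact trace identity for piecewise constant $w$ directly, with the jump terms appearing explicitly. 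This is more elementary and arguably cleaner for the specific class of functions at hand; the paper's route, on the other hand, would extend verbatim to any $w\in{\rm BV}(\tau)^k$, which is not needed here but explains the choice. Both arrive at the same inequality with the same constants, using $|f|=1$ and $\diam(\tau)=1$ in the final step.
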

\begin{proof}
  Assume, first, that $w_\eps \in \WW^{1,1}(\tau)^k$, then, noting
  that ${\rm length}(f) = 1$, \eqref{eq:cons:trace_eq} implies
  \begin{displaymath}
    |\tau|\, \bigg|\int_f w_\eps \ds\bigg| \leq \int_\tau |w_\eps| \dV +
    \frac12 \int_\tau |\D w_\eps| \dV.
  \end{displaymath}
  
  Since $\WW^{1,1}(\tau)^k$ is dense in ${\rm BV}({\rm int}(\tau))^k$
  (which contains all piecewise constant functions w.r.t.\ $\Th$) in
  the strict topology \cite[Sec. 5.2.2]{EvGa:1992}, it follows that
  \begin{displaymath}
    |\tau|\, \bigg|\int_f w \ds\bigg| \leq \int_\tau |w| \dV +
    \frac12 |D' w|({\rm int}(\tau))
  \end{displaymath}
  as well, where $|D' w|$ denotes the total variation measure of
  $w$. Using integration by parts it is straightforward to show that
  \begin{displaymath}
    |D' w|({\rm int}(\tau)) := \sup_{\substack{\psi \in
        \CC^1_0(\tau)^{k \times 2}
        \\ |\psi| \leq 1}} \int_\tau w \cdot {\rm div} \psi \dV 
    \leq \b\| [w] \b\|_{\LL^1(\Fhper \cap {\rm int}(\tau))}. \qedhere
  \end{displaymath}
\end{proof}

\begin{proof}[Proof of Lemma \ref{th:Dbaryh_Dyh_micro}]
  Fix an edge $f \in \Fm$, $f \subset \tau$, such that $f = (q,
  q+\a_j)$, then, using Lemma \ref{th:app_traceineq_P0}, we have
  \begin{align*}
    \b| (\D \bar{y}_h|_\tau) \a_j \b| = 
    \b|\Da{\a_j} y_h(q)\b| =~& \bigg| \int_f \D y_h \a_j \ds \bigg| \\
    \leq~& |\tau|^{-1} \Big[ \| \D y_h \a_j \|_{\LL^1(\tau)} + \smfrac12 \b\| [\D y_h
    \a_j] \b\|_{\LL^1(\Fhper \cap {\rm int}(\tau))} \Big].
  \end{align*}
  
  There exists a constant $C_3$, depending only on the shape
  regularity of $\Th$, such that ${\rm length}(\Fhper \cap {\rm
    int}(\tau)) \leq C_3$; hence, H\"{o}lder's inequality yields
  \begin{align*}
    \b| (\D \bar{y}_h|_\tau) \a_j \b| \leq |\tau|^{1/p' - 1} \| \D y_h
    \a_j \|_{\LL^p(\tau)} + \smfrac12 C_3^{1/p'} |\tau|^{-1} 
    \b\| [\D y_h \a_j] \b\|_{\LL^p(\Fhper \cap {\rm int}(\tau))}.
  \end{align*}
  Summing over $j = 1, 2, 3$, applying Lemma \ref{th:hex_identities},
  \eqref{eq:quadratic_form_identity}, and noting that all constants
  can be bounded independently of $p$, we obtain the result.

  We remark that, for $p = 2$, a careful computation yields the
  inequality
  \begin{displaymath}    
    \| \D \bar{y}_h \|_{\LL^2(\tau)}^2 \leq 2 \| \D y_h
    \|_{\LL^2(\tau)}^2 + \smfrac{2}{3^{1/4}} C_3 \b\| [\D y_h ]
    \b\|_{\LL^2(\Fhper \cap {\rm int}(\tau))}^2. \qedhere
  \end{displaymath}
\end{proof}

\section{A Simplified Consistency Result}
\label{sec:simplecons}
In this appendix, we present an alternative consistency error
estimate, which yields weaker results, but requires fewer technical
tools. Further simplifications (e.g., removing the need to extend
deformations and displacements to vacancy sites) can be achieved if
one assumes that $\phi$ has a finite cut-off radius, and that all
``active'' bonds $b \in \Ba$ are resolved exactly (by giving $\Th$
full atomistic resolution in a sufficiently large neighbourhood of
$\Oma$).

\begin{theorem}
  \label{th:simplecons}
  Suppose that Assumption \ref{AsmMesh} holds.  Let $y \in \Ys$ such that
  $\mu := \min\{ \mu_\a(y), \mu_\c(I_h y), \mu_\c(\bar{y}) \} > 0$;
  then, for all $\tilde y \in \Pi_2(y)$,
  \begin{align*}
    \b\| \del\Eqc(I_h y ) - \del\Ea(y) \b\|_{\WW^{-1,p}_h} 
        \leq~& C^{\rm coarse} \bigg(
    \sum_{T \in \Th} |T| \b( h_T \|\D^2 \tilde{y} \|_{\LL^\infty(T)} \b)^p \bigg)^{1/p} \\
    & + C^{\rm model} \bigg( \sum_{\substack{\tau \in \Ta \\\tau \subset \Omc}} 
    \sum_{r \in \Ldir} K_{|r|} \| \D^2 \tilde{y} \|_{\LL^\infty(\omega_{\tau, r})}^p 
    \bigg)^{1/p}. 
  \end{align*}
  where $C^{\rm coarse} = C_1 \sum_{r \in \Ldir} M_2(\mu r)|r|^2$ with
  $C_1$ depending only on the shape regularity of $\Th$, $K_r =
  M_2(\mu |r|) |r|^3$, $C^{\rm model} = (5/2)^{1/p} (\sum_{r \in \Ldir}
  K_{r})^{1/p'}$, and the neighbourhoods $\omega_{\tau, r} \subset\Omc$ are
  defined as follows: \\[-6mm]
  \begin{equation}
    \label{eq:simplecons:defn_omtaur}
    \omega_{\tau, r} := {\rm conv}\Big( \bigcup \b\{ 
    b \in \Bc : r_b=r \text{ and } {\rm length}(b \cap \tau) > 0 \b\}\Big).
  \end{equation}
  \vspace{-6mm}
\end{theorem}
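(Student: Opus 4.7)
The plan is to follow the overall structure of the proof of Theorem \ref{th:cons:mainest}, but to use the alternative coarsening/modeling split sketched in Remark \ref{rem:choice_of_splitting}:
\[
\del\Eqc(I_h y) - \del\Ea(y) = \big[\del\Eqc(I_h y) - \del\Eqc(\bar y)\big] + \big[\del\Eqc(\bar y) - \del\Ea(y)\big].
\]
The advantage of this split (compared with the one via $\del\Ea(I_h y)$) is that, with the micro-interpolant $\bar y$ as intermediate, the identity $\Da{b}\bar y = \Da{b} y$ (valid because $\bar y(x)=y(x)$ on $\Lhex$) forces the contribution of all $b\in\Ba$ to the second bracket to vanish, so only $b\in\Bc$ enter the pure modeling error and no $h$-factor arises there.

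For the first (coarsening) bracket I would proceed as in Remark \ref{rem:choice_of_splitting}: a H\"older inequality together with the pointwise bound $|\Dc{b}(I_h y - \bar y)|/|b| \leq \|\D(I_h y - \bar y)\|_{\LL^\infty(T)}$ on each macro-element $T$, combined with the periodic bond-density lemma, reduces the estimate to $\CLa\,\|\D(I_h y - \bar y)\|_{\LL^p(\Om)}$. Lemma \ref{th:cor_interpbar} then converts the latter into $\|h\,\D^2 \tilde y\|_{\LL^p(\Omc)}$ for any $\tilde y \in \Pi_2(y)$, and the trivial bound $\|h\D^2\tilde y\|_{\LL^p(T)}^p \leq |T|\,(h_T\|\D^2\tilde y\|_{\LL^\infty(T)})^p$ produces the first term of the claimed estimate with constant proportional to $\CLa = \sum_r M_2(\mu|r|)|r|^2$.

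For the second (modeling) bracket, I would apply Lipschitz continuity of $\phi'$ bond-by-bond. Since $\Da{b} y = \mint_b \Dc{b}\bar y\,\db$, the key quantity to bound on each $b \in \Bc$ is the oscillation $|\Dc{b}\bar y(z) - \mint_b\Dc{b}\bar y\,\db|$. Inserting $\tilde y \in \Pi_2(y)$ and using the triangle inequality splits this into the micro-interpolation error $|\Dc{b}(\bar y - \tilde y)(z)| \leq C|r_b|\|\D^2\tilde y\|_{\LL^\infty(\tau_z)}$ (which follows from Lemma \ref{th:interp:int_err}) and the smooth-function oscillation $|\Dc{b}\tilde y(z) - \mint_b\Dc{b}\tilde y\,\db| \leq |r_b|^2\|\D^2\tilde y\|_{\LL^\infty(b)}$; using $|r_b|\geq 1$, both are dominated by $|r_b|^2\|\D^2\tilde y\|_{\LL^\infty(\omega_b)}$, where $\omega_b$ denotes the union of micro-triangles that intersect $b$. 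A weighted H\"older inequality with the specific weights $a_b = |r_b|^{(2p-3)/p}$ is then designed so that $K_{|r_b|} = M_2(\mu|r_b|)|r_b|^3$ emerges as the weight on the consistency side, while a direct computation shows that the test-function side produces precisely $(\sum_r K_r)^{1/p'}$.

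The main remaining step, which I expect to be the principal obstacle, is the combinatorial re-indexing of $\sum_{b\in\Bc} K_{|r_b|}\|\D^2\tilde y\|_{\LL^\infty(\omega_b)}^p$ as a double sum over micro-triangles $\tau\in\Ta$ with $\tau\subset\Omc$ and directions $r\in\Ldir$. This is achieved by assigning each bond to the micro-triangle containing its midpoint: for fixed $\tau$ and $r$ only a uniformly bounded number of bonds of direction $r$ have midpoint in $\tau$, and each such bond satisfies $\omega_b \subset \omega_{\tau,r}$ by the definition \eqref{eq:simplecons:defn_omtaur}. The numerical constant $5/2$ in $C^{\rm model}$ is precisely what drops out of this bond-counting combined with the crude bound $|r_b|+|r_b|^2\leq 2|r_b|^2$ used in the pointwise oscillation estimate.
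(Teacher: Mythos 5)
Your overall route matches the paper's: the same splitting through the micro-interpolant $\bar y$, the cancellation of the atomistic bonds in the modelling term, Lipschitz bounds on $\phi'$ combined with a weighted H\"{o}lder inequality and the bond-density lemma, elementwise $\LL^\infty$ interpolation, and a final reorganisation over micro-triangles. However, two of your steps do not go through as written. In the coarsening part, the quantity to be controlled is a sum over $b \in \Bc$ of bond integrals of $|\Dc{b}(I_h y - \bar y)|^p$, and this cannot be reduced to $\CLa\,\|\D(I_h y - \bar y)\|_{\LL^p(\Om)}$ by the bond-density lemma: $\D(I_h y - \bar y)$ is piecewise constant only on the common refinement of $\Th$ and $\Tm$, whose cells do not have lattice vertices, and this is exactly the obstruction recorded in Remark~\ref{rem:choice_of_splitting}. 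Your alternative pointwise bound $|\Dc{b}(I_h y-\bar y)| \leq |b|\,\|\D(I_h y-\bar y)\|_{\LL^\infty(T)}$ does work, but it yields $\big(\sum_{T\in\Th}|T|\,\|\D(I_h y-\bar y)\|_{\LL^\infty(T)}^p\big)^{1/p}$ rather than the $\LL^p$-norm, so Lemma~\ref{th:cor_interpbar} (an $\LL^p$ estimate whose proof uses precisely the jump machinery of \S\ref{sec:aux:aux} that this simplified argument is meant to avoid) is not the right tool. The repair, which is what the paper does, is to insert $\tilde y$ inside the bond sums and apply the two $\LL^\infty$ interpolation estimates of Lemma~\ref{th:interp:int_err} element by element: the macro part ($I_h y - \tilde y$) gives the first term of the theorem, while the micro part ($\tilde y - \bar y$, with $\CImtil \leq 3/2$) is transferred to the modelling side and is the actual origin of $5/2 = 1 + 3/2$ in $C^{\rm model}$; your bond-counting explanation of the $5/2$ is not where that constant comes from.

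In the modelling part, assigning each bond to the micro-triangle containing its midpoint and invoking $\omega_b \subset \omega_{\tau,r}$ is a genuine gap: $\omega_b$ contains whole micro-triangles that merely touch $b$, and these can protrude transversally outside the convex hull \eqref{eq:simplecons:defn_omtaur}, which is confined to the lattice lines of direction $r$ that actually cross the single triangle $\tau$; so the inclusion fails in general. The paper needs no such inclusion and no crude bond count: it splits each bond integral over the micro-triangles traversed by $b$ using the partition of unity $\{\chi_{\tau^\per}\}$, bounds the integrand on $b\cap\tau$ directly by $\|\D^2\tilde y\|_{\LL^\infty(\omega_{\tau,r_b})}$ (here $\tau$ ranges over the triangles crossed by $b$, so the required locality is automatic), and then uses Lemma~\ref{th:bond-dens-per} to bound $\sum_{b:\,r_b=r}\mint_b\chi_{\tau^\per}\db$ by $|\tau|/\det\mBhex$; this is what produces the $|\tau|$-weights and the stated constant $C^{\rm model}=(5/2)^{1/p}\big(\sum_{r\in\Ldir}K_r\big)^{1/p'}$. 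Your midpoint assignment would at best reproduce the same structure with larger, unspecified constants, and the missing inclusion would still have to be patched.
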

\begin{proof} 
  {\it 1. Alternative splitting. } Fix $y \in \Ys$,
  $\tilde y \in \Pi_2(y)$, and recall the definition of $\bar y$ from
  \S\ref{sec:interp:P1}. This time, we split the consistency error
  differently: 
  \begin{align*}
  \b\< \del\Eqc(I_h y) - \del\Ea(y), u_h \b\>
    =~& \b\< \del\Eqc(I_h y) - \del\Eqc(\bar y), u_h \b\>  
  +  \b\< \del\Eqc(\bar y) - \del\Ea(y), u_h \b\> \\
    =:~& {\rm E}^{\rm coarse} + {\rm E}^{\rm model},
\end{align*}
where $\del\Eqc(\bar y)$ is defined through the bond integral formula \eqref{eq:defn_Eqc}. 

{\it 2. Coarsening error. } The coarsening contribution to the consistency error is defined as follows: 
\begin{align*}
  {\rm E}^{\rm coarse} =~& \b\< \del\Eqc(I_h y) - \del\Eqc(\bar y), u_h \b\> \\
  =~& \sum_{b \in \Ba} \b[ \phi'(\Da{b} I_h y) - \phi'(\Da{b} y) \b] \cdot \Da{b} u_h 
  + \sum_{b \in \Bc} \mint_b \b[ \phi'(\Dc{b} I_h y) - \phi'(\Dc{b} \bar y) \b] \cdot \Dc{b} u_h \db.
\end{align*}
With only minor modifications of the proof of Lemma \ref{th:cons:lip_delEa}, we can prove that
\begin{equation}
  \label{eq:simplecons:lipEqc}
  {\rm E}^{\rm coarse} \leq \bigg(\sum_{b \in \B} M_{|b|}' |b|^{-p} \mint_b \chi_{\Omc^\per} \b|\Dc{b} I_h y - \Dc{b} \bar{y} \b|^p \db \bigg)^{1/p} \, \CLa^{1/p'} \| \D u_h \|_{\LL^{p'}(\Om)},
\end{equation}
where $\CLa = \CLa(\mu) = \sum_{r \in \Ldir} M_2(\mu|r|) |r|^2$. We can avoid the technical results in \S\ref{sec:aux:aux}, by estimating the interpolation error directly in \eqref{eq:simplecons:lipEqc}.

Let the norm $\|\cdot\|_\B$ be defined by
\begin{displaymath}
  \|w\|_\B := \bigg(\sum_{b \in \B}  M_{|b|}' |b|^{-p} \mint_b \chi_{\Omc^\per}
  \b|\Dc{b} w \b|^p \db, \bigg)^{1/p},
\end{displaymath}
and let $\tilde{y} \in \Pi_2(y)$, then
\begin{displaymath}
 \| I_h y - \bar{y} \|_{\B} \leq \| I_h y - \tilde{y} \|_{\B}
  + \| \tilde{y} - \bar{y} \|_\B.
\end{displaymath}

We apply the interpolation error estimate, Lemma \ref{th:interp:int_err}, 
for $p = \infty$, and the bond density lemma, to bound
\begin{align}
  \notag
  \| I_h y - \tilde{y} \|_{\B}^p  \leq~& \sum_{T \in \Th}
  \bigg(\sum_{r \in \Ldir} M_{|r|}'\bigg) \,  \Big( \CIhtil  h_T \| \D^2 \tilde{y} \|_{\LL^\infty(T)}^p\Big)
  \sum_{x \in \Lhex} \mint_{x}^{x+r} \chi_{T^\per} \db \\
  \label{eq:simplecons:coarse1}
  =~& \CLa(\mu) \CIhtil \sum_{T \in \Th} |T| \b( h_T \| \D^2 \tilde{y} \|_{\LL^\infty(T)} \b)^p.  
\end{align} 
By the same argument, using \eqref{eq:interp:int_err_mu} instead of \eqref{eq:interp:int_err_1}, we also obtain 
\begin{equation}
  \label{eq:simplecons:coarse2}
  \| \tilde{y} - \bar{y} \|_{\B}^p \leq \smfrac32 \CLa(\mu) 
  \sum_{\tau \in \Ta} |\tau| \| \D^2 \tilde{y} \|_{\LL^\infty(\tau)}^p,
\end{equation}
where we have also used the fact, which is easy to establish, that
$\CImtil \leq 3/2$ for $p = \infty$.

The bound \eqref{eq:simplecons:coarse1} gives the first term in the
consistency error estimate. We will not combine
\eqref{eq:simplecons:coarse2} with the coarsening error, but instead
combine it with the modelling error.

{\it 3. Modelling error. } The modelling error contribution is defined by
\begin{equation}
  \label{eq:simplecons:Emodel}
  {\rm E}^{\rm model} = \b\< \del\Eqc(\bar{y}) - \del\Ea(y), u_h \b\> 
  = \sum_{b \in \Bc} \mint_b \b[ \phi'(\Dc{b} \bar{y}) - \phi'(\Da{b} y) \b] \cdot \Dc{b} u_h \db,
\end{equation}
where we used \eqref{eq:mintDc_Da}, and the fact that the bonds treated atomistically cancel. Applying the local Lipschitz estimate to $\phi'$, H\"{o}lder's inequality, and the bond density lemma, we bound \eqref{eq:simplecons:Emodel} above by
\begin{align}
  \notag
    {\rm E}^{\rm model} \leq~& 
    \sum_{b \in \Bc} M_{|b|}' \mint_b \b(|b|^{-1-1/p'} \b| \Dc{b} \bar{y} - \Da{b} y \b|\b) \, \b( |b|^{-1+1/p'} | \Dc{b} u_h |\b) \db  \\
        \label{eq:simplecons:40}
    \leq~& \bigg( \sum_{b \in \Bc} M_{|b|}' |b|^{-2p+1} 
    \mint_b |\Dc{b} \bar{y} - \Da{b} y |^p \db \bigg)^{1/p} \, C_1^{1/p'} \| \D u_h \|_{\LL^{p'}}, 
\end{align}
where $C_1 = \sum_{r \in \Ldir} M_2(\mu |r|) |r|^3$. 

We now split the first group over elements. To that end, we use the
definition of $\omega_{\tau, r}$ given in
\eqref{eq:simplecons:defn_omtaur}, and estimate
\begin{align*}
  \mint_b \chi_{\tau^{\per}} |\Dc{b} \bar{y} - \Da{b} y |^p \db 
    =~&   \mint_b \chi_{\tau^\per} \Big|\Dc{r_b} \bar{y} - {\textstyle \mint_b} \Dc{r_b} \tilde{y} \db \Big|^p \db \\
    \leq~& \b\| \Dc{r_b}^2 \tilde{y} \b\|_{\LL^\infty(\omega_{\tau, r_b})}^p
  \leq |b|^{2p} \| \D^2 \tilde{y} \|_{\LL^\infty(\omega_{\tau, r_b})}^p.
\end{align*}
In addition, we note that, since $\Da{b} y = \Dc{b} \bar{y}$ for all $b \in \Bnn$, all nearest-neighbour terms vanish. Hence, we obtain
\begin{align*}
  \sum_{b \in \Bc} M_{|b|}' |b|^{-2p+1}  \mint_b |\Dc{b} \bar{y} - \Da{b} y |^p \db 
  \leq~& \sum_{\substack{\tau \in \Ta  \\ \tau \subset \Omc}} |\tau|
  \sum_{r \in \Ldir \setminus \Rnn} M_{2}(\mu|r|) |r|^{3} \,
  \b\| \D^2\tilde{y} \b\|_{\LL^\infty(\omega_{\tau, r})}^p.
\end{align*}

Combining this estimate with \eqref{eq:simplecons:coarse2}, we arrive
at
\begin{align*}
  &{\rm E}^{\rm model} + \|\tilde{y} - \bar{y} \|_{\B} \CLa^{1/p'} \| \D u_h \|_{\LL^{p' }}  \\
    \leq~& \b(\smfrac52\b)^{1/p} \CLa^{1/p'} \bigg( \sum_{\substack{\tau \in \Ta \\\tau \subset \Omc}} \sum_{r \in \Ldir} M_2(\mu|r|) |r|^3 \| \D^2 \tilde{y} \|_{\LL^\infty(\omega_{\tau, r})}^p \bigg)^{1/p} \, \| \D u_h \|_{\LL^{p'}}.
\end{align*}
This yields the second term in the consistency error estimate.
\end{proof}

\bibliographystyle{plain}
\bibliography{qce.pair}

\end{document}